\definecolor{darkgreen}{rgb}{0.0, 0.6, 0.13}
\newtheorem{thm}{Theorem}[section]
 \newtheorem{cor}[thm]{Corollary}
 \newtheorem{lem}[thm]{Lemma}
 \newtheorem{prop}[thm]{Proposition}
\newtheorem{claim}[thm]{Claim}
 \theoremstyle{definition}
 \newtheorem{df}[thm]{Definition}
 \theoremstyle{remark}
 \newtheorem{rem}[thm]{Remark}
 \numberwithin{equation}{section}
\def\sqw{\hbox{\rlap{\leavevmode\raise.3ex\hbox{$\sqcap$}}$
\sqcup$}}
\def\findem{\ifmmode\sqw\else{\ifhmode\unskip\fi\nobreak\hfil
\penalty50\hskip1em\null\nobreak\hfil\sqw
\parfillskip=0pt\finalhyphendemerits=0\endgraf}\fi}
\begin{document}
\title[Invariant Gibbs measures and  global strong solutions for 2D NLS]{Invariant Gibbs measures and global strong solutions for nonlinear Schr\"{o}dinger equations in dimension two}
\author[Yu Deng]{Yu Deng$^1$}
\address{$^1$ Department of Mathematics, University of Sourthern California, Los Angeles,  CA 90089, USA }
\email{yudeng@usc.edu}
\thanks{$^1$Y. D. is funded in part by NSF-DMS-2246908 and Sloan Fellowship.}
\author[Andrea R. Nahmod]{Andrea R. Nahmod$^2$}
\address{$^2$ 
Department of Mathematics,  University of Massachusetts,  Amherst MA 01003}
\email{nahmod@math.umass.edu}
\thanks{$^2$ A.N. is funded in part by NSF DMS-2052740, DMS-2101381 and the Simons Foundation Collaboration Grant on Wave Turbulence (Nahmod's Award ID 651469)}
\author[Haitian Yue]{Haitian Yue$^3$}
\address{$^3$Institute of Mathematical Sciences, ShanghaiTech University, Shanghai, 201210, China}
\email{yuehaitian@shanghaitech.edu.cn}
\thanks{$^3$ H.Y. was partially supported by the Shanghai Technology Innovation Action Plan (No.22JC1402400), and by a Chinese overseas high-level young talents program (2022).}

\subjclass[2020]{35Q55, 35R60, 60B20, 60H25, 60H30, 82B10, 37E20}

\keywords{Nonlinear Schr\"odinger equation, invariance of Gibbs measures, probabilistic well-posedness, propagation of randomness.}

\dedicatory{Dedicated to the memory of Professor Jean Bourgain}

\begin{abstract}
We consider the defocusing nonlinear Schr\"odinger equation on $\mathbb T^2$ with Wick ordered power nonlinearity, and prove almost sure global well-posedness with respect to the associated Gibbs measure. The heart of the matter is the uniqueness of the solution as limit of solutions to canonically truncated systems, and the invariance of the Gibbs measure under the global dynamics follows as a consequence. The proof relies on the novel idea of \emph{random averaging operators}.
\end{abstract}

\maketitle

\section{Introduction} In this paper we study the (defocusing) Wick ordered nonlinear Schr\"{o}dinger equation on the torus $\mathbb{T}^2= (\mathbb{R}/2\pi\mathbb{Z})^2$,
\begin{equation}\label{nls}
\left\{
\begin{split}(i\partial_t+\Delta)u&=W^{2r+1}(u),\\
u(0)&=u_{\mathrm{in}},\end{split}
\right.
\end{equation} where $r$ is a given positive integer, $W^{2r+1}$ is the Wick ordered power nonlinearity of degree $2r+1$, which will be defined below. We prove that, almost surely with respect to the associated Gibbs measure, the equation (\ref{nls}) has a global \emph{strong} solution, which is the unique limit of solutions to the canonical finite dimensional truncations. This solution map keeps the Gibbs measure invariant.

For $r=1$ (cubic nonlinearity) this was proved by Bourgain \cite{Bourgain}; the results for $r\geq 2$ are new. We remark that in \cite{OT} Oh and Thomann constructed almost sure global weak solutions to (\ref{nls}) with respect to the Gibbs measure, such that at any time the law of these random solutions is again given by the Gibbs measure. The main point of the current paper is the almost sure \emph{uniqueness} of the solution with respect to the Gibbs measure.

\subsection{Setup and the main theorem} In this section we setup the problem and state our main theorem. For a review of the background and previous results, see Section \ref{previous}.
\subsubsection{Wick ordering and Gibbs measure}\label{wick} We will fix a probability space $(\Omega,\mathcal{B},\mathbb{P})$, and a set of independent complex Gaussian random variables $\{g_k\}_{k\in\mathbb{Z}^2}$ defined on $\Omega$ that are normalized, i.e. $\mathbb{E}g_k=0$ and $\mathbb{E}|g_k|^2=1$, such that the law of $g_k$ is rotationally symmetric.

Let $\mathcal{V}=\mathcal{S}'(\mathbb{T}^2)$ be the space of distributions on $\mathbb{T}^2$. We define the $\mathcal{V}$-valued random variable
\begin{equation}\label{random}f=f(\omega):\omega\mapsto\sum_{k\in\mathbb{Z}^2}\frac{g_k(\omega)}{\langle k\rangle}e^{ik\cdot x},\quad \omega\in\Omega.
\end{equation} Let $\mathrm{d}\rho$ be the Wiener measure on $\mathcal{V}$, defined for Borel sets $E\subset\mathcal{V}$ by \begin{equation}\rho(E)=\mathbb{P}(f^{-1}(E)),\end{equation} so $\mathrm{d}\rho$ is the law of the random variable $f$. This measure $\mathrm{d}\rho$ is a countably additive Gaussian measure supported in $\bigcap_{\varepsilon>0}H^{-\varepsilon}(\mathbb{T}^2)$, which we henceforth denote by $H^{0-}(\mathbb{T}^2)$ (similarly $H^{s-}(\mathbb{T}^2)=\bigcap_{\varepsilon>0}H^{s-\varepsilon}(\mathbb{T}^2)$), but not in $L^2(\mathbb{T}^2)$ (see e.g. Bogachev \cite{Bogachev}). Define the spectral truncation $\Pi_N$ by
\begin{equation}\label{truncN}\mathcal{F}_x\Pi_Nu(k) :=\mathbf{1}_{\langle k\rangle\leq N}\cdot\mathcal{F}_xu(k),
\end{equation} where $\mathcal{F}_x$ is the space Fourier transform, $\langle k\rangle :=\sqrt{|k|^2+1}$ and $\mathbf{1}_P$ denotes the indicator function of a set or property $P$, and define the expectation of the truncated mass,
\begin{equation}\label{expect}
\sigma_N:=\frac{1}{(2\pi)^2}\mathbb{E}\|\Pi_Nf(\omega)\|_{L^2}^2=\sum_{\langle k\rangle\leq N}\frac{1}{\langle k\rangle^2}\sim \log N.
\end{equation}

For each $N$ and each $p\geq 0$, define the Wick ordered powers,
\begin{equation}\label{wickpoly}
\begin{aligned}
W_N^{2p}(u)&=\sum_{j=0}^p(-1)^{p-j}{p\choose j}\frac{\sigma_N^{p-j}p!}{j!}|u|^{2j},\\
W_N^{2p+1}(u)&=\sum_{j=0}^p(-1)^{p-j}{{p+1}\choose {p-j}}\frac{\sigma_N^{p-j}p!}{j!}|u|^{2j}u,
\end{aligned}
\end{equation} and the canonical finite dimensional truncations for (\ref{nls}),
\begin{equation}\label{truncnls}
\left\{
\begin{split}(i\partial_t+\Delta)u_N&=\Pi_NW_N^{2r+1}(u_N),\\
u_N(0)&=\Pi_Nu_{\mathrm{in}}.\end{split}
\right.
\end{equation}  
The following proposition ensures the convergence of the right hand side of (\ref{truncnls}) as $N\to\infty$, and provides the definition of $W^{2r+1}(u)$ in (\ref{nls}).
\begin{prop}\label{nonlin} Let $n$ be a nonnegative integer. Then almost surely in $u$ with respect to the Wiener measure $\mathrm{d}\rho$, the limit
\begin{equation*}\lim_{N\to\infty}W_N^n(\Pi_Nu)=\lim_{N\to\infty}\Pi_NW_N^n(\Pi_Nu)\end{equation*} exists in $H^{0-}(\mathbb{T}^2)$. We will denote this limit by $W^n(u)$.
\end{prop}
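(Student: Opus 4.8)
The plan is to recognize $W_N^n(\Pi_N u)$ as a finite sum (over $j$) of terms of the form $c_{n,j,N}\,\sigma_N^{m}\,|\Pi_N u|^{2j}$ or $c_{n,j,N}\,\sigma_N^{m}\,|\Pi_N u|^{2j}\Pi_N u$, and to recognize each such monomial, after Wick renormalization, as an element of a fixed finite sum of homogeneous Wiener chaoses. Concretely, writing $u = f(\omega) = \sum_k \frac{g_k}{\langle k\rangle} e^{ik\cdot x}$, the quantity $\Pi_N W_N^n(\Pi_N u)$ expands on the Fourier side into a sum over frequency tuples $(k_1,\dots,k_n)$ with appropriate sign conventions, and the combinatorial coefficients in \eqref{wickpoly} are exactly those that cancel all self-contractions (pairings of a $g_{k_i}$ with a $\overline{g_{k_{i'}}}$ at equal frequency). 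The net effect is that the Fourier coefficient $\widehat{\Pi_N W_N^n(\Pi_N u)}(k)$ equals $\mathbf 1_{\langle k\rangle \le N}$ times a sum over the remaining ``no self-contraction'' tuples of $\prod \langle k_i\rangle^{-1}$ times a product of Gaussians that, by the Wick/Itô isometry, lies in the direct sum of Gaussian chaoses of order $\le n$.

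The key steps, in order, are: (i) expand $\Pi_N W_N^n(\Pi_N u)$ on the Fourier side and use the Wick combinatorics to write its $k$-th Fourier coefficient as a sum over non-self-contracted tuples; (ii) for two parameters $N < M$, form the difference $\widehat{\Pi_M W_M^n(\Pi_M u)}(k) - \widehat{\Pi_N W_N^n(\Pi_N u)}(k)$, which is supported on tuples in which at least one frequency exceeds $N$; (iii) compute the second moment $\mathbb{E}\,\| W_M^n(\Pi_M u) - W_N^n(\Pi_N u)\|_{H^{-\varepsilon}}^2$ by the Wiener isometry, reducing it to a multilinear lattice sum $\sum_k \langle k\rangle^{-2\varepsilon}\sum_{k_1+\dots\pm\dots = k} \prod \langle k_i\rangle^{-2}$ restricted to tuples with a frequency $> N$; (iv) show this sum converges (for every $\varepsilon>0$, indeed the full sum with no truncation is finite because $\sum_k \langle k\rangle^{-2-2\varepsilon}<\infty$ in dimension two and the convolution structure only helps), hence it tends to $0$ as $N\to\infty$, so $(W_N^n(\Pi_N u))_N$ is Cauchy in $L^2(\mathrm d\rho; H^{-\varepsilon})$; (v) pass to an almost sure convergent subsequence, upgrade to full almost sure convergence and to convergence for every $\varepsilon>0$ simultaneously by a standard Borel--Cantelli plus Gaussian hypercontractivity argument (moments of fixed-order chaoses are all comparable, so tail bounds are summable along dyadic $N$). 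Finally, $\Pi_N W_N^n(\Pi_N u) = W_N^n(\Pi_N u)$ in the limit because $\Pi_N \to \mathrm{Id}$ on $H^{-\varepsilon}$ and the limit already lies in $H^{-\varepsilon}$; this gives the asserted identification of the two limits.

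The main obstacle is step (iv): verifying that the relevant multilinear lattice convolution sums $\sum \prod \langle k_i\rangle^{-2}$, weighted by $\langle k\rangle^{-2\varepsilon}$ and restricted by the constraint $k_1 \pm k_2 \pm \dots = k$ together with the ``at least one large frequency'' condition, are finite and decay in $N$. In dimension two this is exactly the borderline case where $\sum_{k\in\mathbb Z^2}\langle k\rangle^{-2} = +\infty$, so one cannot afford to be cavalier: one must exploit the convolution constraint to ``spend'' the gain, using that in a product $\langle k_1\rangle^{-2}\langle k_2\rangle^{-2}$ summed over $k_1+k_2 = k$ fixed one recovers a factor $\sim \langle k\rangle^{-2}\log\langle k\rangle$, and iterating this $n-1$ times leaves a summable power of $\langle k\rangle$ after including the $\langle k\rangle^{-2\varepsilon}$ weight. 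The bookkeeping of which frequency is forced to be large (to extract the $N$-decay) while keeping all intermediate sums convergent is the technical heart; everything else is the standard Wick-chaos convergence machinery.
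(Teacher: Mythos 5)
The paper does not give its own proof of Proposition \ref{nonlin}; it states that a proof can be found in Oh--Thomann \cite{OT}, tracing back to the constructive QFT literature (Glimm--Jaffe \cite{GJ1}, Simon \cite{Simon}, Nelson \cite{Nel1,Nel2}). Your outline is the standard argument from those sources: Fourier-side expansion, the Wick combinatorics of \eqref{wickpoly} canceling self-contractions, the Wick isometry reducing $\mathbb{E}\|W_M^n(\Pi_M u)-W_N^n(\Pi_N u)\|_{H^{-\varepsilon}}^2$ to a multilinear lattice convolution over tuples with at least one frequency exceeding $N$, convergence of that convolution via the $d=2$ borderline bound $\sum_{k_1+k_2=m}\langle k_1\rangle^{-2}\langle k_2\rangle^{-2}\lesssim\langle m\rangle^{-2}\log\langle m\rangle$ (your self-correction of the overhasty ``the convolution structure only helps'' is apt; at this threshold one genuinely has to spend a small power), and the upgrade to almost sure convergence via Gaussian hypercontractivity plus Borel--Cantelli. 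The one step to state more carefully is the Borel--Cantelli input: for the tails to be summable you need a quantitative rate $\mathbb{E}\|W_{2^{j+1}}^n - W_{2^j}^n\|_{H^{-\varepsilon}}^2 \lesssim 2^{-cj}$ for some $c>0$, not merely that the second moment tends to zero; that rate is obtained by trading a small power off the weight on the frequency constrained to exceed $2^j$, which you flag as the ``bookkeeping'' heart of the matter but do not carry out. This is a routine gap rather than a conceptual one; your approach is correct and matches the proof the paper delegates to \cite{OT}.
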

For each $N$, we also define the truncated potential energy
\begin{equation}\label{potentialenergy}V_N[u]:=\frac{1}{r+1}\frac{1}{(2\pi)^2}\int_{\mathbb{T}^2}W_N^{2r+2}(\Pi_Nu)\,\mathrm{d}x.
\end{equation} By Proposition \ref{nonlin}, the limit quantity
\begin{equation}\label{hamilton}V[u] =\lim_{N\to\infty}V_N[u]=\frac{1}{r+1}\frac{1}{(2\pi)^2}\int_{\mathbb{T}^2}W^{2r+2}(u)\,\mathrm{d}x\end{equation} is defined $\mathrm{d}\rho$-almost surely in $u$. We can verify that (\ref{truncnls}) is a finite dimensional Hamiltonian system with Hamitonian
\begin{equation}\label{defham}\mathcal{H}_N[u]:=\frac{1}{(2\pi)^2}\int_{\mathbb{T}^2}|\nabla u|^2\,\mathrm{d}x+V_N[u].
\end{equation}This $\mathcal{H}_N[u]$, as well as the mass $\mathcal{M}[u]:=\frac{1}{(2\pi)^2}\int_{\mathbb{T}^2}|u|^2\,\mathrm{d}x,$ is conserved under the flow (\ref{truncnls}).
\begin{prop}\label{gibbsm} Define the measure $\mathrm{d}\mu$ by
\begin{equation}\label{defgibbs}\mathrm{d}\mu= Z^{-1}e^{-V[u]}\,\mathrm{d}\rho,\qquad Z:=\int_{\mathcal{V}}e^{-V[u]}\,\mathrm{d}\rho(u).\end{equation} Then $\mathrm{d}\mu$ is mutually absolutely continuous with $\mathrm{d}\rho$, and the Radon-Nikodym derivative $Z^{-1}e^{-V[u]}$ belongs to $L^q(\mathrm{d}\rho)$ for any $1\leq q<\infty$. We call this $\mathrm{d}\mu$ the \emph{Gibbs measure} for (\ref{nls}).
\end{prop}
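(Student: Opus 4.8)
The plan is to reduce the proposition to a single uniform-in-$N$ estimate---the exponential integrability of the truncated densities $e^{-V_N[u]}$---and then pass to the limit $N\to\infty$ via Proposition \ref{nonlin}. Observe first that everything else is soft once we know $e^{-V[u]}\in L^q(\mathrm{d}\rho)$ for all $1\le q<\infty$: taking $q=1$ gives $Z=\int_{\mathcal V}e^{-V[u]}\,\mathrm{d}\rho<\infty$, while $V[u]$ is (by Proposition \ref{nonlin} and \eqref{hamilton}) a finite real number $\mathrm{d}\rho$-a.s., so $0<e^{-V[u]}<\infty$ $\mathrm{d}\rho$-a.s., whence $Z>0$ and the density $Z^{-1}e^{-V[u]}$ in \eqref{defgibbs} is $\mathrm{d}\rho$-a.s. finite and strictly positive. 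A measure whose density against $\mathrm{d}\rho$ is $\mathrm{d}\rho$-a.s. finite and strictly positive has exactly the same null sets as $\mathrm{d}\rho$, which is the asserted mutual absolute continuity. So the whole statement rests on $e^{-V[u]}\in\bigcap_{q<\infty}L^q(\mathrm{d}\rho)$.

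As preparation I would record the moment bounds for $V_N[u]$. Expanding \eqref{wickpoly}, each $V_N[u]$---and each increment $V_N[u]-V_M[u]$---is a finite linear combination of space integrals of multilinear monomials in the Gaussians $\{g_k\}_{k\in\mathbb{Z}^2}$, hence lies in the inhomogeneous Wiener chaos of order at most $2r+2$; by Gaussian hypercontractivity (Nelson's inequality) its $L^q(\mathrm{d}\rho)$ norm is controlled, for every finite $q$, by $C(q,r)$ times its $L^2(\mathrm{d}\rho)$ norm, and a direct frequency count (using that the relevant convolution sums of $\langle k\rangle^{-2}$ under the momentum constraint converge in dimension two) shows $\|V_N[u]-V_M[u]\|_{L^2(\mathrm{d}\rho)}\to0$ as $M,N\to\infty$. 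Thus $(V_N[u])_N$ is Cauchy, hence convergent, in every $L^q(\mathrm{d}\rho)$; since it also converges $\mathrm{d}\rho$-a.s. to $V[u]$ by Proposition \ref{nonlin}, the limits coincide and $V[u]\in\bigcap_{q<\infty}L^q(\mathrm{d}\rho)$. Only these uniform moment bounds, not the convergence per se, will be needed below.

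The heart of the matter, and what I expect to be the main obstacle, is the uniform exponential bound
\[
\sup_{N}\big\|e^{-V_N[u]}\big\|_{L^q(\mathrm{d}\rho)}<\infty\qquad\text{for every }1\le q<\infty .
\]
This genuinely requires the defocusing structure, since hypercontractivity alone yields only polynomial moments of $V_N[u]$. By \eqref{wickpoly} the top-degree term of $W_N^{2r+2}(\Pi_N u)$ is $|\Pi_N u|^{2r+2}\ge0$, so $V_N[u]$ equals $\tfrac{1}{(r+1)(2\pi)^2}\int_{\mathbb{T}^2}|\Pi_N u|^{2r+2}\,\mathrm{d}x$ plus lower-degree terms carrying powers of $\sigma_N\sim\log N$; the Wick renormalization is arranged precisely so that those terms have vanishing $\rho$-expectation, but they are not dominated pointwise by the positive term uniformly in $N$ (one has $V_N[u]\ge -C(\log N)^{r+1}$ and no better in general). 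The remedy is the Nelson--Bourgain scheme: split $\Pi_N u$ into its frequencies $\langle k\rangle\le M$ and its frequencies $M<\langle k\rangle\le N$; keep the positivity of $\int_{\mathbb{T}^2}|\Pi_N u|^{2r+2}\,\mathrm{d}x$ up to an arbitrarily small multiplicative loss; absorb the mixed low--high contributions into it; and control the remaining Wick-renormalized high-frequency pieces together with the finite-dimensional (hence smooth) low-frequency pieces via the moment bounds of the previous step and a large-deviation argument, with $M$ optimized as a slowly growing function of the deviation level so that the resulting exponential moments converge with an $N$-independent bound. For $r=1$ this is exactly Bourgain's construction in \cite{Bourgain}; for $r\ge 2$ the Wiener-chaos bookkeeping is heavier but the argument proceeds along the same lines and is carried out in \cite{OT}. (Alternatively, the Bou\'e--Dupuis variational formula for $-\log\mathbb{E}_\rho[e^{-V_N[u]}]$ gives a streamlined route to the same uniform bound, the extra factor $q$ being harmless.)

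With this in hand the proof closes. By the preparatory step $V_{N}[u]\to V[u]$ $\mathrm{d}\rho$-a.s., hence $e^{-V_{N}[u]}\to e^{-V[u]}$ $\mathrm{d}\rho$-a.s., and Fatou's lemma combined with the uniform bound gives, for every $1\le q<\infty$,
\[
\big\|e^{-V[u]}\big\|_{L^q(\mathrm{d}\rho)}\le\liminf_{N\to\infty}\big\|e^{-V_{N}[u]}\big\|_{L^q(\mathrm{d}\rho)}<\infty .
\]
This is precisely the statement from which, as explained in the first paragraph, the finiteness and positivity of $Z$ in \eqref{defgibbs}, the mutual absolute continuity of $\mathrm{d}\mu$ and $\mathrm{d}\rho$, and the asserted $L^q(\mathrm{d}\rho)$-integrability of the Radon--Nikodym derivative $Z^{-1}e^{-V[u]}$ all follow.
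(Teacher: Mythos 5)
The paper does not prove Proposition~\ref{gibbsm} itself: it cites Glimm--Jaffe, Simon, Nelson and Da~Prato--Tubaro and states that ``as stated, a proof of these propositions can be found in \cite{OT}.'' Your sketch faithfully reconstructs exactly the route those references take---reduce everything to the uniform-in-$N$ exponential integrability $\sup_N\|e^{-V_N[u]}\|_{L^q(\mathrm{d}\rho)}<\infty$, obtain it via Nelson's estimate exploiting the positivity of the top-degree term $\int|\Pi_N u|^{2r+2}$ together with hypercontractive moment bounds on the lower chaos pieces, and close with Fatou---so this is the same approach the paper delegates, correctly identified and correctly assembled.
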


Propositions \ref{nonlin} and \ref{gibbsm} stem from seminal works of Nelson \cite{Nel1,Nel2} and of Glimm-Jaffe \cite{GJ1} in the context of quantum field theory (see also Simon \cite{Simon} and Da Prato-Tubaro \cite{DaT}). As stated, a proof of these propositions can be found in \cite{OT}.

\subsubsection{The main theorem} We can now state our main theorem.
\begin{thm}\label{main} There exists a Borel set $\Sigma\subset\mathcal{V}$ with $\mu(\mathcal{V}\backslash\Sigma)=0$, such that $W^{2r+1}(u)\in H^{0-}(\mathbb{T}^2)$ is well-defined for $u\in\Sigma$. Furthermore:
\begin{enumerate}
\item For each $u_{\mathrm{in}}\in\Sigma$ and each $t\in\mathbb{R}$, the solution $u_N(t)$ to (\ref{truncnls}) converges to a unique limit \begin{equation}\label{limittrunc}\lim_{N\to\infty}u_N(t)=u(t)\end{equation} in $H^{0-}(\mathbb{T}^2)$, and $u(t)\in\Sigma$ for each $t\in\mathbb{R}$. This $u(t)$ solves (\ref{nls}) in the distributional sense.\\

\item  The limit $u(t)$ in \eqref{limittrunc} defines, for each $t\in\mathbb{R}$, a map from $\Sigma$ to itself: $u(t)=: \Phi_tu_{\mathrm{in}}$. These maps then satisfy the usual group properties, and keep the Gibbs measure $\mathrm{d}\mu$ invariant, namely
\begin{equation}\mu(E)=\mu(\Phi_t(E))
\end{equation} for any Borel set $E\subset\Sigma$.
\end{enumerate}
\end{thm}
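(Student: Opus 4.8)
The plan is to (i) develop a probabilistic local well-posedness theory for the truncated equations \eqref{truncnls} with bounds \emph{uniform in} the truncation parameter $N$, (ii) upgrade local to global existence on a set of full $\mu$-measure via Bourgain's invariant-measure argument applied to the finite-dimensional truncations, and (iii) pass to the limit $N\to\infty$ to obtain the maps $\Phi_t$ and the invariance of $\mathrm{d}\mu$. For Step (i), write $u_N=\psi_N+w_N$, where $\psi_N(t)=e^{it\Delta}\Pi_Nf$ is the free evolution of the truncated random data (possibly replaced by an evolution adapted to the low-frequency part of the solution, i.e.\ a random averaging operator). Iterating Duhamel's formula, expand $w_N$ as a finite sum of explicitly computable multilinear expressions in $\psi_N$ — organized by the $(2r+1)$-ary tree/Feynman-diagram combinatorics of the nonlinearity, with the resonant sub-diagrams cancelled by the Wick renormalization — plus a remainder $\Xi_N$ solving a fixed-point equation in a Bourgain space $X^{s,b}([-T,T])$ with $0<s<1/2<b$. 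I expect that (a) the explicit stochastic terms are, $\rho$-a.s., bounded uniformly in $N$ in $C([-T,T];H^{0-})$ and converge there as $N\to\infty$, with the relevant $X^{s',b'}$-norms dominated by random constants having (stretched-)exponential tails; and (b) for $T$ depending on those constants with at most a logarithmic loss, and uniformly in $N$, the map defining $\Xi_N$ is a contraction on a small ball of $X^{s,b}$. This is where the multilinear random estimates on $\mathbb{T}^2$ (bilinear and $L^p$ Strichartz for small $r$, random-tensor/random-averaging bounds in general) and the renormalization structure do the real work.

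\emph{Local to global.} From the above, for each $\delta>0$ there is a set of $\rho$-measure $\ge 1-\delta$, independent of $N$, on which $u_N$ exists on $[-T_\delta,T_\delta]$ with uniform bounds, converges in $C([-T_\delta,T_\delta];H^{0-})$ to a distributional solution $u$ of \eqref{nls}, and $u(t)$ again lies in the set where $W^{2r+1}$ is defined; the tail bounds above, together with the $L^q(\mathrm{d}\rho)$-integrability of the Gibbs density from Proposition \ref{gibbsm}, make $T_\delta\gtrsim(\log(1/\delta))^{-\theta}$, which is all that is needed. The truncated Gibbs measures $\mathrm{d}\mu_N=Z_N^{-1}e^{-V_N[\Pi_Nu]}\,\mathrm{d}\rho$ factor over low and high frequencies; the low-frequency factor is invariant under the Hamiltonian flow \eqref{truncnls} by Liouville's theorem together with the conservation of $\mathcal{M}$ and $\mathcal{H}_N$, while the high-frequency Gaussian factor is untouched, so $\mathrm{d}\mu_N$ is invariant under $\Phi^N_t$, and $\mathrm{d}\mu_N\to\mathrm{d}\mu$ with densities bounded in every $L^q(\mathrm{d}\rho)$ uniformly in $N$. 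Letting $B_{N,T}$ be the bad set where the local estimates fail at scale $T$, one has $\mu_N(B_{N,T})\lesssim\delta$; by invariance $\mu_N\big(\bigcup_{|j|\le T_*/T}\Phi^N_{-jT}B_{N,T}\big)\lesssim (T_*/T)\delta$, and because $T\sim(\log 1/\delta)^{-\theta}$ shrinks only logarithmically one may choose $\delta=\delta(m)$ so that this is $\le 2^{-m}$ on a set on which $u_N$ exists and stays bounded up to time $T_*=m$. Intersecting over $m$ and using the convergence statement to pass to $N\to\infty$ yields a Borel set $\Sigma$ with $\mu(\mathcal{V}\setminus\Sigma)=0$ on which $u_N(t)$ converges for every $t\in\mathbb{R}$.

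\emph{Flow map and invariance.} Define $\Phi_t u_{\mathrm{in}}=\lim_N u_N(t)$ on $\Sigma$; after possibly shrinking $\Sigma$ to a still-full-measure subset stable under all $\Phi_t$, one has $\Phi_t:\Sigma\to\Sigma$ with $\Phi_0=\mathrm{id}$ and $\Phi_{t+s}=\Phi_t\circ\Phi_s$, the group law descending from that of the $\Phi^N_t$ together with uniqueness of the limit. Finally, for bounded continuous $F$,
\begin{equation*}
\int F(\Phi_t u)\,\mathrm{d}\mu(u)=\lim_{N\to\infty}\int F(\Phi^N_t u)\,\mathrm{d}\mu_N(u)=\lim_{N\to\infty}\int F(u)\,\mathrm{d}\mu_N(u)=\int F(u)\,\mathrm{d}\mu(u),
\end{equation*}
using $u_N(t)\to u(t)$ $\mu$-a.s., $\mathrm{d}\mu_N\to\mathrm{d}\mu$, and the uniform $L^q$-density bounds to pass the limit through the integral, and the middle equality being the invariance of $\mu_N$ under $\Phi^N_t$; this gives $\mu(E)=\mu(\Phi_t E)$ for Borel $E\subset\Sigma$.

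\emph{Main obstacle.} The crux is the uniform local theory for $r\ge 2$: the Gibbs measure lives at an $L^2$-supercritical regularity, where no deterministic local theory exists, so the argument must extract all of its smoothing from the random structure. Controlling the higher Picard iterates of a degree-$(2r+1)$ nonlinearity — the proliferating tree diagrams, the precise role of Wick renormalization in removing the divergent resonances, and, crucially, obtaining every estimate uniformly in $N$ so that one gets convergence rather than mere compactness (which is what separates this from the weak-solution construction of Oh--Thomann) — is the technical heart of the matter, and is where the random-tensor / random-averaging machinery is required.
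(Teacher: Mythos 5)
Your global architecture matches the paper's: establish a probabilistic local theory uniform in the truncation $N$, globalize via invariance of the truncated Gibbs measures $\mathrm{d}\mu_N^\circ$ using Bourgain's argument with a logarithmic-in-$\delta$ time step, and then pass to the limit to get the maps $\Phi_t$ and the invariance of $\mathrm{d}\mu$. The ``local to global'' and ``flow map and invariance'' paragraphs are substantively sound modulo technical care (the paper instead proves invariance in Proposition~\ref{last} by combining compactness of $E$ with the stability estimate of Proposition~\ref{prop:stability}, but a dominated-convergence argument along the lines you sketch, using total-variation convergence $\mu_N\to\mu$ and a.s.\ convergence $\Phi^N_t u\to\Phi_t u$, also works).

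The genuine gap is in the local theory, and it is exactly the gap the paper is written to close. You propose to write $u_N=\psi_N+w_N$ with $\psi_N=e^{it\Delta}\Pi_N f$, expand $w_N$ as a \emph{finite} tree/Feynman-diagram sum of multilinear stochastic objects, and solve for the remainder $\Xi_N$ by contraction in $X^{s,b}$ with $0<s<1/2$. But as explained in Section~\ref{method} around \eqref{seconditer}, this is the step that fails for $r\geq 2$: the second Picard iterate $u^{(2)}$ lands only in $X^{\frac12-,\frac12+}$ owing to high--low interactions, and no finite Picard iteration improves the remainder past $X^{\frac12-,\frac12+}$, which is deterministically supercritical on $\mathbb{T}^2$ when $r\geq 2$ ($s_c=1-\frac1r\to1$). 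So the fixed-point equation for $\Xi_N$ cannot close in any $X^{s,b}$ with $s<1/2$, and the ``finite sum plus $X^{s,b}$-remainder'' ansatz is not an option. Your parenthetical ``possibly replaced by\dots a random averaging operator'' gestures at the right idea but is never developed, and the real work is precisely there: one must (i) replace the low-frequency factor $|P_{\ll N}u|^{2r}$ by $|u_{\ll N}|^{2r}$ so that it is \emph{independent} of the high-frequency Gaussian mode; (ii) avoid a finite tree expansion in favour of an \emph{infinite} iteration packaged as a linear operator $\mathcal{P}_{N,L}$ acting on the free evolution, defined by solving a linear equation (cf.\ \eqref{defpsi}) rather than iterating Duhamel a fixed number of times; and (iii) propagate, by induction on dyadic frequency, two specific norm bounds on $\mathcal{P}_{N,L}$ --- the operator norm $\|\cdot\|_{Y^b}\lesssim L^{-\delta_0}$ and the Hilbert--Schmidt norm $\|\cdot\|_{Z^b}\lesssim N^{\frac12+}L^{-\frac12}$ as in \eqref{normintro}/\eqref{aprioriest} --- which encode the randomness of the paracontrolled part and allow the low-frequency input of the multilinear estimates to be effectively replaced by a linear flow. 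The stronger remainder bound $\|z_N\|_{X^b}\leq N^{-1+\gamma}$ (regularity $\approx H^{1-}$, deterministically subcritical for every $r$) is also essential; $s<1/2$ is not enough. These structural ideas, together with the gauge transform of Section~\ref{prep0} that removes the worst resonant interactions and the large-deviation/counting estimates of Section~\ref{prep2}, are the content of the paper, and without them the sketched local theory does not close.

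A smaller but still real issue: the globalization step requires more than uniform local bounds. Because the ansatz \eqref{ansatz1} conditions high-frequency modes on low-frequency data and one iterates on short intervals, you need the commutator/stability estimates of Proposition~\ref{prop:stability} --- in particular $\|v_N-\Pi_N v_{N'}\|_{X^b(J)}\leq N^{-1+\gamma}$ and the perturbation bound with logarithmic loss in each iteration --- to transfer information between different truncation levels and glue local solutions. These are nontrivial consequences of the refined multilinear estimates (notably the $\Gamma$-condition gain \eqref{mainmult4}), not of a generic local theory.
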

\begin{rem}\label{fixep}  In proving Theorem \ref{main}, we will replace $H^{0-}(\mathbb{T}^2)$ by $H^{-\varepsilon}(\mathbb{T}^2)$ where $0<\varepsilon\ll 1$ and throughout the proof we will fix this $\varepsilon$ (we then take a countable intersection in $\varepsilon$).
\end{rem}
\begin{rem}
The Wick ordering (\ref{wickpoly}) is needed in order to have a meaningful solution theory, due to the infinite mass in the support $H^{0-}(\mathbb{T}^2)$ of the Gibbs measure $\mathrm{d}\mu$; see e.g. \cite{GO15}.
\end{rem}
\begin{rem}\label{powerof2}  (1) Since the Gibbs measure $\mathrm{d}\mu$ is mutually absolutely continuous with the Gaussian measure $\mathrm{d}\rho$, part (1) of Theorem \ref{main} can also be viewed as an almost sure global well-posedness result with random initial data $u_{\mathrm{in}}=f(\omega)$ as in (\ref{random}).

(2) The truncation frequency $N$ in (\ref{truncnls}) can be any positive number. For simplicity, in the proof below we will assume that $N\in 2^{\mathbb{Z}}$. The general case follows from placing $N$ between $N'/2$ and $N'$ where $N'\in 2^{\mathbb{Z}}$, and analyzing the difference $u_{N'}-u_N$ in the same way as $u_{N'}-u_{N'/2}$.

(3) The solution $u(t)$ defined in Theorem \ref{main} is unique in the following sense. Suppose we replace the truncation in (\ref{truncnls}) by any other canonical truncation; say we replace $\Pi_N$ in (\ref{truncN}) by the multiplier with symbol $k\mapsto\varphi(\langle k\rangle/N)$ in both (\ref{expect}) and the second line of (\ref{truncnls}), where $\varphi(z)$ is either $\mathbf{1}_{|z|\leq 1}$ or a smooth function supported in $|z|\leq 1$ and $\varphi(0)=1$. We may also keep the $\Pi_N$ in the first line of (\ref{truncnls}) unchanged, or replaced it by $1$. Then in any case Theorem \ref{main} remains true, moreover the set $\Sigma$ and the limit $u(t)$ obtained \emph{do not depend on $\varphi$}. The proof is basically the same, with only minor adjustments in a few places. We will not pursue these matters here.
\end{rem}

\begin{rem}\label{2rem}
(1)  Theorem \ref{main} is part of the program of constructing invariant Gibbs measures and studying their dynamics for the (renormalized) defocusing nonlinear Schr\"odinger equation (\ref{nls}), see Section \ref{results1} below. With Bourgain \cite{Bourgain94} completing all cases with $d=1$ and  Theorem \ref{main} completing all cases with $d=2$ after Bourgain \cite{Bourgain}, the remaining cases that are expected to be solvable{\footnote{Gibbs measures for \eqref{nls} are available only for $d=1$ (both focusing and defocusing), $d=2$ (defocusing only), and $(d,r)=(3,1)$ (defocusing only). This is related to the existence/nonexistence of $\phi_d^{2r+2}$ theories, see for example Aizenman \cite{Aiz}, Fr\"{o}hlich \cite{Fro}, Brydges-Slade \cite{BrySl} and  Aizenman-Duminil Copin \cite{ADC}. The white noise measure is always formally invariant under (\ref{nls}), but is compatible with the dynamics only when $(d,r)=(1,1)$.
}} are the invariance of Gibbs measure for $(d,r)=(3,1)$, and invariance of white noise for $(d,r)=(1,1)$. We expect both to be strictly harder than Theorem \ref{main} as they are \emph{critical in the probabilistic scaling}, see Section \ref{probsc}.

(2) The corresponding problem of constructing invariant Gibbs measures for nonlinear \emph{wave} equations, is in general much easier than the Schr\"{o}dinger problem, due to the derivative gain in Duhamel's formula. Indeed, this has been completely solved, due to the results of  Friedlander \cite{Fre85}  (see Zhidkov \cite{Zhi94} under more restrictive assumptions on the nonlinearity)  in dimension $d=1$, Bourgain \cite{Bou99} in dimension $d=2$ (see also  Oh-Thomann \cite{OT2}) and the recent result of the authors with Bringmann \cite{BDNY22} for $(d,r)=(3,1)$ proved after the current paper was submitted.
\end{rem}
\subsection{A review of previous results}\label{previous}  We start by reviewing previous results and methods on PDEs in the probabilistic setting. As the literature is now extensive, we will put emphasis on the works most relevant to the current paper.
\subsubsection{Invariant measures}\label{results1} Since the pioneering works of Lebowitz-Rose-Speer \cite{LRS} and Bourgain \cite{Bourgain94,Bourgain}, there have been numerous results regarding invariant measures for nonlinear dispersive equations. In general, for any Hamiltonian dispersive equation we may construct the associated Gibbs measure
\begin{equation}\label{gibbsgen}\mathrm{d}\mu\sim e^{-\beta H}\prod_{x}\mathrm{d}x,
\end{equation} where $\beta>0$ is a fixed parameter (which we may set to be $1$) and $H$ is the Hamiltonian. The definition (\ref{gibbsgen}) is only formal; in some cases it can be justified by using the Gaussian measure as a reference measure and writing $\mathrm{d}\mu$ as a weighted Wiener measure.  For example the Hamiltonian for (\ref{nls}) is 
\[H=\frac{1}{(2\pi)^2}\int_{\mathbb{T}^2}(|\nabla u|^2+\frac{1}{r+1}W^{2r+2}(u))\,\mathrm{d}x\] and the Gibbs measure
\[\mathrm{d}\mu\sim \underbrace{\exp\bigg[\frac{-1}{r+1}\frac{1}{(2\pi)^2}\int_{\mathbb{T}^2}W^{2r+2}(u)\,\mathrm{d}x\bigg]}_{\textrm{weight}}\cdot\underbrace{\exp\bigg[\frac{-1}{(2\pi)^2}\int_{\mathbb{T}^2}|\nabla u|^2\,\mathrm{d}x\bigg]\prod_{x\in\mathbb{T}^2}\,\mathrm{d}x}_{\textrm{Gaussian measure}}\] can be rigorously defined as a weighted Wiener measure\footnote{Strictly speaking the measure defined in Proposition \ref{gibbsm} involves an additional weight which is an exponential of the $L^2$ mass. As the mass is also conserved, this does not affect any invariance properties.}, as in Proposition \ref{gibbsm}. Defining such Gibbs-type measures and studying their properties under various dynamics is a major problem in constructive quantum field theory.

The Gibbs measure $\mathrm{d}\mu$ for a given dispersive equation  is \emph{formally} invariant due to a ``formal Liouville's Theorem" and the conservation of Hamiltonian. It is of great interest to establish this invariance rigorously, as this would be the first step in studying the global dynamics from the statistical ensemble point of view. In \cite{Bourgain94, Bourgain}, Bourgain developed a systematic way of showing the invariance of $\mathrm{d}\mu$ from the invariance of finite dimensional Gibbs measures, provided we have local well-posedness or almost sure local well-posedness with respect to $\mathrm{d}\mu$.

Therefore, justifying the invariance of $\mathrm{d}\mu$ (and other similar formally invariant measures) basically reduces to proving almost sure local well-posedness on the support of $\mathrm{d}\mu$. As this support is very rough in high dimensions (namely $H^{1-\frac{d}{2}-}$ for (\ref{nls}) in dimension $d$), most known results are limited to one dimension, or require strong symmetry. For  the Schr\"{o}dinger equation (\ref{nls}) on the torus $\mathbb{T}^d$, Bourgain \cite{Bourgain94} solved the case $d=1$, and extended this to $d=2$ and $r=1$ in \cite{Bourgain}. These are the only results known for the nonlinear Schr\"odinger equation prior to the current paper. More is known for wave equations as noted in Remark \ref{2rem} (2) above; see \cite{Fre85, Zhi94, Bou99, OT2, BDNY22}.

Apart from the standard Schr\"{o}dinger and wave models on tori, there are many results, again mostly in one dimension or under radial symmetry, where the invariance of corresponding Gibbs measures (or of associated weighted Wiener measures) are justified for various dispersive models on various background manifolds (see e.g.  \cite{Tz0, Oh, Tz, TTz, NORS,NRSS, dS, Deng,Deng2,DTV, Giordi, Thomann,Sy} and references therein). We also mention the compactness method of Alveberio and Cruzeiro \cite{AC} which explores the tightness of the sequence of finite dimensional measures and applies the theorems of Prokhorov and Skorokhod to obtain existence of weak solutions (see e.g. \cite{BTT, NPST, OT, WangYue}). These are less related to the current paper and we will not elaborate further.

\subsubsection{Probabilistic well-posedness theory}\label{results2} It has long been known that PDEs with \emph{randomness} generally behave better in terms of local well-posedness (i.e. probabilistic well-posedness holds below the deterministic well-posedness threshold). Progress  has been made in two parallel directions: random initial data problems and stochastically forced problems.

The first results along this line are due to the seminal works by Bourgain \cite{Bourgain, Bourgain2}  in the random data setting  and later to Da Prato-Debussche \cite{DD, DD2}  in the stochastic setting. The idea in both cases is to make a linear-nonlinear decomposition and observe the effect of \emph{probabilistic smoothing}. For example, in \cite{Bourgain}, the equation (\ref{nls}) with $r=1$ on $\mathbb{T}^2$ is studied with random initial data in $H^{-\varepsilon}$ for some $0<\varepsilon\ll 1$, in which (\ref{nls}) is deterministically ill-posed. However with randomness we may construct solutions to (\ref{nls}) that have the form $u=e^{it\Delta}u(0)+v$, where $u(0)\in H^{-\varepsilon}$ is the random initial  data, and $v$ belongs to some \emph{positive} Sobolev space in which (\ref{nls}) is well-posed. In other words, the solution contains a rough random part $u_{\mathrm{lin}}:=e^{it\Delta}u(0)$ and a smooth remainder $v$. The point here is that, even though $u_{\mathrm{lin}}$ is rough, it has an  explicit random structure which allows us to control the nonlinear interactions between $u_{\mathrm{lin}}$ and $u_{\mathrm{lin}}$, and between $u_{\mathrm{lin}}$ and $v$, in a more regular space.

Until recently the method of Bourgain, as well as its higher order variants which include some nonlinear interactions  of $u_{\mathrm{lin}}$ with itself into the rough random part\footnote{This usually results in a finite or infinite tree expansion.}, has been the dominant strategy of exploiting randomness in the local well-posedness theory for dispersive and wave equations with random data. After Bourgain's pioneering work, there has been substantial success  using this method  (for a sample of works, we refer the readers to \cite{Bourgain, BTlocal, CoOh, Deng, BB4, NS, HYue, Bringmann, KMV, BOhP1, DLM0, KM} and references therein). However this method by itself has its limitations and does not lead to optimal results in most cases.

A few years ago, a series of important works emerged, which revolutionized the study of local well-posedness for stochastically forced PDEs, in fact reaching the optimal exponents in the parabolic case. These include the \emph{theory of regularity structures} of Hairer \cite{Hairer0, Hairer, Hairer2, Hairer3} and the \emph{para-controlled calculus} of Gubinelli-Imkeller-Perkowski \cite{GIP, GIP2}. A third method based on Wilsonian renormalization group analysis was independently proposed by Kupiainen in \cite{Kupia}. 

The theory of regularity structures is based on the local-in-space properties of solutions at fine scales (so it is particularly suitable for parabolic equations); it builds a general theory of distributions which includes the profiles coming from the noise, and allows for multiplications and analysis of the nonlinearity. Since its success with the KPZ equation \cite{Hairer0} and the $\Phi^4_3$ model \cite{Hairer}, this theory has been  further developed by Hairer and collaborators and is now powerful enough to solve a wide range of problems that are subcritical according to a suitable parabolic scaling. We will not get into the details, but we refer the reader to \cite{ FrH, Hairer3, Hairer4, Hairer5, HLab,ChW, MouWeXu, MouWe3} and references therein for nice expositions of these ideas.

The theory of para-controlled calculus, which is in spirit the point of departure of the present paper,  takes a different approach and is based on the following idea.
 In the approach of Bourgain and of Da Prato-Debussche mentioned above,  some nonlinear interactions between $u_{\mathrm{lin}}$ and $v$ may not have enough regularity despite  $v$ being more regular than $u_{\mathrm{lin}}$. However, a key observation is that, the only bad terms here are the high-low interactions where the high frequencies come from $u_{\mathrm{lin}}$ and the low frequencies come from $v$, and such terms  can be \emph{para-controlled} by the high-frequency inputs (which are nonlinear interactions of $u_{\mathrm{lin}}$ with itself).  Here  $f$ being para-controlled by $g$ simply means that $f$ equals the high-low paraproduct of $g$ with some other function $h$, up to a smoother remainder. With such structure, these para-controlled terms can be shown to have similar randomness structures as the nonlinear interactions of $u_{\mathrm{lin}}$ with itself, and thus can be handled similarly as in Bourgain's or Da Prato-Debussche's approach, leaving an even smoother remainder. The para-controlled calculus also has a higher order variant, see \cite{BaBe, BaBeFr}. We refer the reader to \cite{CCh,GP, GP2, GP3, GP4, MouWe3, Perk, BaBe, BaBe2, BaBeFr} and references therein for expositions of these ideas and some other recent developments on this method.

Finally, we would like to mention two recent results of Gubinelli-Koch-Oh \cite{GKO} and Bringmann \cite{Bringmann}. In \cite{GKO} the authors applied a version of para-controlled calculus to the stochastic wave equation setting, and obtained almost sure local well-posedness for a quadratic wave equation with additive white noise on $\mathbb{T}^3$. This relied on several new ingredients, including the analysis of a random operator (which is different from and unrelated to the random averaging operator introduced in the current paper). 

In \cite{Bringmann} the author studied the nonlinear wave equation with quadratic derivative nonlinearity on $\mathbb{R}^3$ and improved the known well-posedness threshold with random initial data, again by analyzing high-low interactions. This work introduced an improved para-controlled scheme that accounts for all the bad terms in the absence of linear smoothing effects, and also made the very important observation that the high-frequency and low-frequency parts can in fact be made \emph{independent}. See Section \ref{earlymethod} for more detailed discussions. 
\begin{rem}\label{discuss} We note from previous discussions that, in the same dimension and for the same nonlinearity, the probabilistic improvement (defined as the difference between exponents of the deterministic $H^s$ well-posedness threshold and the obtained probabilistic $H^s$ well-posedness threshold) is always much smaller for \emph{Schr\"{o}dinger equations} compared to wave and heat equations.

There are two reasons for this. First, heat equations are compatible with H\"{o}lder spaces $C^s$,  which scale much higher than $H^s$, but a function with independent Gaussian Fourier coefficients that belongs to $H^s$ will automatically belong to $C^{s-}$ due to Khintchine's inequality. This allows for a scaling at a higher regularity
and hence be in a better situation when studying heat equations. Such advantage cannot be exploited for  Schr\"{o}dinger and wave equations, since $C^s$ spaces are not compatible even with their linear evolution, and cannot be used in any well-posedness theory.

Second, the Duhamel evolution for the heat equation gains two derivatives, and for the wave equation gains one\footnote{Note the \emph{derivative} wave equation studied in \cite{Bringmann} behaves similarly to Schr\"{o}dinger as the one derivative gain is cancelled by the derivative nonlinearity. Therefore the probabilistic improvement obtained in \cite{Bringmann} is also a tiny amount compared to other results for wave equations.}. This allows for room to apply Sobolev embedding, and also reduces the task of controlling the nonlinearity to the task of making sense of products, which is still hard but at least more manageable. In comparison, the Schr\"{o}dinger Duhamel evolution has no smoothing effect, and it can be challenging to close the estimates even when the relevant products are well-defined.
\end{rem}
\subsection{Difficulties and the strategy}\label{method}

We now turn to the proof of Theorem \ref{main}. This proof consists of two parts: (a) proving almost sure local well-posedness for (\ref{nls}) on the support of the Gibbs measure, and (b) applying formal invariance to extend local solutions to global ones. Since part (b) is essentially an adaptation of Bourgain's classical proof \cite{Bourgain}, we shall focus on the local theory in part (a). For exposition simplicity we will also replace $W^{2r+1}(u)$ by the pure power $|u|^{2r}u$ in the discussion below.

The obvious difficulty here is that the Gibbs measure $\mathrm{d}\mu$ is supported in $H^{0-}$, while the (deterministic) scaling threshold, below which (\ref{nls}) is ill-posed, is $s_c=1-\frac{1}{r}\to 1$ as $r\to\infty$. In the language of Remark \ref{discuss}, we need to obtain a probabilistic improvement  $\approx 1$. Therefore, it is important to understand exactly how randomness allows us to beat scaling. This is contained in the notion of \emph{probabilistic scaling}, which we discuss below.
\subsubsection{The probabilistic scaling}\label{probsc} Consider the nonlinear Schr\"odinger equation
\begin{equation}\label{nlsex}(i\partial_t+\Delta)u=|u|^{2r}u,\quad u(0)=u_{\mathrm{in}}
\end{equation} on $\mathbb{T}^d$. The scaling critical threshold for (\ref{nlsex}) is
\begin{equation}\label{detersc} 
s_c=\frac{d}{2}-\frac{1}{r},
\end{equation} and (\ref{nlsex}) is expected to be locally well-posed in $H^s$ only if $s\geq s_c$. This can be demonstrated in multiple ways, but the one most relevant to us is as follows. Suppose the initial data $u_{\mathrm{in}}$ has Fourier transform $\mathcal{F}_xu_{\mathrm{in}}(k)$ supported in $|k|\sim N$ with $|\mathcal{F}_xu_{\mathrm{in}}(k)|\sim N^{-\alpha}$ with $\alpha=s+\frac{d}{2}$, then $\|u_{\mathrm{in}}\|_{H^s}\sim 1$. If local well-posedness holds then we should expect that the second iteration (say at time $t=1$),
\[u^{(1)}:=\int_0^1 e^{i(1-t')\Delta}(|e^{it'\Delta}u_{\mathrm{in}}|^{2r}e^{it'\Delta}u_{\mathrm{in}})\,\mathrm{d}t',\] satisfies $\|u^{(1)}\|_{H^s}\lesssim 1$. 
By performing Fourier expansions, we essentially get
\begin{equation}\label{naivedeter}\mathcal{F}_xu^{(1)}(k)\sim\sum_{\substack{k_1-\cdots+k_{2r+1}=k\\|k_j|\lesssim N}}\frac{1}{\langle\Sigma\rangle}\prod_{j=1}^{2r+1}\widehat{u_{\mathrm{in}}}(k_j),\quad \Sigma=|k|^2-|k_1|^2+\cdots-|k_{2r+1}|^2,\end{equation} where complex conjugates are omitted. In the worst scenario this gives, up to logarithmic factors,
\[|\mathcal{F}_xu^{(1)}(k)|\lesssim N^{-(2r+1)\alpha}\sup_{m\in\mathbb{Z}}\# S_m,\] where $S_m=\{(k_1,\cdots,k_{2r+1}):k_1-\cdots+k_{2r+1}=k, \, |k_j|\lesssim N, \, \Sigma=m \}.$  By dimension counting, we expect that $\#S_m\lesssim N^{2rd-2}$, so in order for $\|u^{(1)}\|_{H^s}\lesssim 1$ we need $-(2r+1)\alpha+2rd-2\leq -\alpha,$ or equivalently $s\geq s_c.$

Now, in the \emph{random data} setting, suppose the Fourier coefficients of initial data $\{\widehat{u_{\mathrm{in}}}(k)\}$ are \emph{independent Gaussians} of size $N^{-\alpha}$. The sum (\ref{naivedeter}) will then be a sum of products of independent Gaussian random variables, which is reminiscent of the classical Central Limit Theorem. Recall that in the latter we have a sum of $M$ independent random objects of unit size, and under certain general conditions, this sum scales only like $\sqrt{M}$ as opposed to $M$ if without randomness. In the same way, we would expect essentially a `square root gain' here, that is,
\[|\mathcal{F}_xu^{(1)}(k)|\lesssim N^{-(2r+1)\alpha}\sup_{m\in\mathbb{Z}}(\# S_m)^{\frac{1}{2}}\lesssim N^{-(2r+1)\alpha+rd-1},\] so in order for $\|u^{(1)}\|_{H^s}\lesssim 1$ it suffices to have $-(2r+1)\alpha+rd-1\leq -\alpha$, or equivalently
\begin{equation}\label{probscale}s\geq s_p:=-\frac{1}{2r}.
\end{equation} Note that $s_p$ is independent of the dimension and that we always have $s_p\leq s_c$. We will call this $s_p$ the critical threshold for \emph{probabilistic scaling}\footnote{This is associated with Gaussian random variables, but by the Central Limit Theorem, the scaling is the same for more general types of random variables.}.   We refer the reader to \cite{DNY24} for further discussions about the probabilistic scaling paradigm.

\begin{rem}\label{probscrem}
With the above heuristics, it is natural to expect that (\ref{nls}) will be almost surely locally well-posed with random initial data in $H^{s-}(\mathbb{T}^d)$, i.e. 
\[u_{\mathrm{in}}=\sum_{k\in\mathbb{Z}^d}\frac{g_k}{\langle k\rangle^\alpha}e^{ik\cdot x},\quad \alpha=s+\frac{d}{2},\] in any \emph{probabilistically subcritical} space with $s>s_p$.

Indeed, after the current paper was submitted, the authors have completed the paper \cite{DNY22} that establishes this result. The proof in \cite{DNY22} is based on the {\emph {theory of random tensors}}, which is the natural extension of the main technique of the current paper, i.e. the method of {\emph{random averaging operators}} introduced in Section \ref{rao} below.
\end{rem}
\subsubsection{Discussions of earlier methods}\label{earlymethod} With the scaling heuristics in Section \ref{probsc}, note that the support of the Gibbs measure $\mathrm{d}\mu$, which is $H^{0-}$, is \emph{above} the probabilistic critical space $H^{s_p-}$, where $s_p=-\frac{1}{2r}$. Therefore it is reasonable to believe that almost sure local well-posedness of (\ref{nls}) should hold in the support of $\mathrm{d}\mu$. However, the justification of such heuristics is far from trivial, due to the intrinsic difficulties associated with the Schr\"{o}dinger equation explained in Remark \ref{discuss}.

To  motivate the method of random averaging operators introduced in  this paper, we first discuss the possibility of applying existing methods\footnote{We will not discuss the regularity structure theory \cite{Hairer}, as it relies on local expansions in physical space and is thus not compatible with Schr\"{o}dinger or wave equations.} to the setting of (\ref{nls}), the ideas behind these other methods, and why these ideas do not work here. Below we set $u_{\mathrm{in}}=f(\omega)$ in (\ref{nls}).

\smallskip
\uwave{Method 1: Bourgain-Da Prato-Debussche.} We start with the classical idea of Bourgain \cite{Bourgain} and Da Prato-Debussche \cite{DD,DD2}. As discussed in Section \ref{results2}, this idea is based on the following observation:
\begin{itemize}
\item Observation 1: nonlinear components of random data solutions always have higher regularity than linear ones.
\end{itemize}

Now, suppose we apply this idea to (\ref{nls}), which leads to the ansatz $u(t)=e^{it\Delta}f(\omega)+w$, where $w$ belongs to $C_t^0H_x^s$, or more precisely $X^{s,\frac{1}{2}+}$ (see Section \ref{funcspace} for relevant definitions) for some positive $s$. In particular, this $w$ will contain components of form 
\begin{equation}\label{seconditer}
u^{(1)}(t)=I(|e^{it\Delta}f(\omega)|^{2r}e^{it\Delta}f(\omega)),\quad IF(t):=\int_0^t e^{i(t-t')\Delta}F(t')\,\mathrm{d}t'.\end{equation}However, it is shown in \cite{Bourgain} that even when $r=1$ (and obviously also for larger $r$), the $u^{(1)}$ defined in (\ref{seconditer}) belongs to $X^{s,\frac{1}{2}+}$ \emph{only for $s<\frac{1}{2}$}. As the space $X^{\frac{1}{2}-,\frac{1}{2}+}$ is still supercritical with respect to deterministic scaling for $d=2$ and $r\geq 2$, there will be no hope of solving (\ref{nls}) using the above ansatz. We may perform higher order Picard iterations, but it turns out that regardless of the order, there is always some contribution in the remainder that has regularity $X^{\frac{1}{2}-,\frac{1}{2}+}$, and the problem persists.

\smallskip
\uwave{Method 2: Para-controlled calculus.} Next we may try the idea of para-controlled calculus of Gubinelli-Imkeller-Perkowski \cite{GIP} (and Gubinelli-Koch-Oh \cite{GKO} for wave). This is based on the following observation, in addition to Observation 1 above:
\begin{itemize}
\item Observation 2: in probabilistically subcritical settings, the bad regularity of the nonlinear (and subsequent non-explicit) terms only come from high-low interactions.
\end{itemize}

Now, applying this idea to (\ref{nls}) would lead to the ansatz
\begin{equation}\label{ansatzintro2}u=u_{\mathrm{lin}}+X+Y,
\end{equation} where
\begin{equation}\label{ansatzintro2-2}u_{\mathrm{lin}}=e^{it\Delta }f(\omega),\quad X=\sum_{N}I(P_Nu_{\mathrm{lin}}\cdot |P_{\ll N}u|^{2r}),
\end{equation} $I$ is as in (\ref{seconditer}), and $P_N$ are the standard Littlewood-Paley projections. The term $X$ para-controlled by  $u_{\mathrm{lin}}$ will be constructed in some less regular space (say $X^{\frac{1}{2}-,\frac{1}{2}+}$), which allows the remainder $Y$ to be constructed in a more regular space (say $X^{1-,\frac{1}{2}+}$).

However, since the Duhamel operator $I$ gains no derivative, the term $Y$ will contain contributions of form
\begin{equation}\label{ansatz-defz}Z:=\sum_{N}I(P_N X\cdot |P_{\ll N}u|^{2r}),\end{equation} which in fact also has regularity $X^{\frac{1}{2}-,\frac{1}{2}+}$. This shows that the ansatz (\ref{ansatzintro2}) is still not sufficient for (\ref{nls}).

\smallskip
\uwave{Method 3: Bringmann's method.} Bringmann's paper \cite{Bringmann}  introduced a refinement to the para-controlled ansatz in \cite{GIP,GKO} designed for derivative wave equations (where the Duhamel evolution gains no derivative), based on the following observation in addition to Observations 1 and 2 above:
\begin{itemize}
\item Observation 3: the further terms of form $Z$ and similar as in (\ref{ansatz-defz}) can be packed in a para-linearized solution, and moreover the high and low frequency inputs are independent.
\end{itemize}

More precisely, \cite{Bringmann} suggests to modify the ansatz to
\begin{equation}\label{ansatzintro2-3}u=u_{\mathrm{lin}}+X+Y,
\end{equation} where $X$ is the solution to the para-linearized equation
\begin{equation}\label{ansatzintro2-4}X=\sum_{N}I(P_N(u_{\mathrm{lin}}+X)\cdot |P_{\ll N^\alpha}u|^{2r}),\end{equation} for some $\alpha\in(0,1)$, which is constructed in the less regular space $X^{\frac{1}{2}-,\frac{1}{2}+}$, and the remainder $Y$ is constructed in the more regular space $X^{1-,\frac{1}{2}+}$. Moreover, the low frequency input $P_{\ll N^\alpha}u$ is essentially \emph{independent} with $P_Nu_{\mathrm{lin}}$, which is a crucial observation first made in \cite{Bringmann}. This allows for an inductive construction, and also allows to use more powerful probabilistic tools such as multilinear Wiener chaos estimates.

However, the above ansatz is still not sufficient to control both $X$ and $Y$ in the desired regularities, due to the following reason. Since $Y$ contains high-high interactions, where by ``high" we mean frequencies $\gtrsim N^\alpha$, it is easy to see that $\alpha$ has to be chosen close to $1$ for large $r$, in order for $Y$ to be bounded in $X^{1-,\frac{1}{2}+}$; in fact a calculation similar to those in Section \ref{probsc} shows that $\alpha\geq 1-2/r$. But when $\alpha$ is close to $1$, the expression of $X$ will involve terms
\[\sum_{N}I(P_Nu_{\mathrm{lin}}\cdot |P_{\ll N^\alpha}X|^{2r}).\] To control this contribution, even with independence between $P_Nu_{\mathrm{lin}}$ and $P_{\ll N^\alpha}X$, we would still need to bound the power $|P_{\ll N^\alpha}X|^{2r}$ in a suitable space; however in the ansatz of \cite{Bringmann} we only know that $X$ is a function in $X^{\frac{1}{2}-,\frac{1}{2}+}$, which does not imply any useful control for $|P_{\ll N^\alpha}X|^{2r}$ due to super-criticality of this space. Of course we may also exploit the equation (\ref{ansatzintro2-4}) satisfied by $X$, but each iteration of this equation produces higher powers of $X$, which again cannot be bounded using the ansatz of \cite{Bringmann}.

Note that when $r=2$, it might be possible to carry out the scheme of \cite{Bringmann} with a small $\alpha$ by doing some refined analysis (which is by no means immediate in view of all the difficulties for the Schr\"odinger equation in Remark \ref{discuss}). Our approach, which is described below, allows instead for a \emph{unified} treatment for all values of $r$ by synthesizing the main underlying ideas and crucially capturing the true randomness structure of the solution.
\subsubsection{Random averaging operators}\label{rao} It is now clear that, to solve (\ref{nls}), it is necessary to exploit all three observations in Section \ref{earlymethod} (in particular it is very important to exploit the independence between high and low frequency inputs as first done by Bringmann in \cite{Bringmann}), but this is still not enough. In fact, the missing piece is the following observation, in addition to Observations 1--3 in Section \ref{earlymethod}:
\begin{itemize}
\item Observation 4: The input $P_{\ll N^\alpha}u$ in (\ref{ansatzintro2-4}) is \emph{not} an arbitrary function in $X^{\frac{1}{2}-,\frac{1}{2}+}$; it has its own randomness structure, which must be captured in order for the ansatz to be complete.
\end{itemize}

This randomness property of $P_{\ll N^\alpha}u$ cannot be captured by any norm of the \emph{para-controlled term} $X$ defined in (\ref{ansatzintro2-4}). Indeed, to see and exploit this randomness property, we need to perform a {\bf shift of paradigm}, by turning the focus from the para-controlled term $X$ to the \textbf{linear operator} defined by
\begin{equation}\label{randomop0}\mathcal{Q}:y\to z,\quad\mathrm{where}\quad z=\sum_NI(P_N(y+z)\cdot|P_{\ll N^\alpha}u|^{2r});\end{equation} note that by this definition we have $X=\mathcal{Q}(u_{\mathrm{lin}})$. In practice it is more convenient to consider the simpler operator
\begin{equation}\label{randomop}\mathcal{P}:y\mapsto \sum_NI(P_{N}y\cdot|P_{\ll N^\alpha}u|^{2r})
\end{equation} which obeys the same estimates; in fact from (\ref{randomop0}) and (\ref{randomop}) it is easy to see that $\mathcal{Q}=\mathcal{P}(1-\mathcal{P})^{-1}$.  Now we shall \emph{extract all the randomness properties of $P_{\ll N^\alpha}u$, as well as properties of the multilinear expression $|\cdot|^{2r}$ when applied to these random objects, and turn them into two particular norm bounds for the operator $\mathcal{P}$ defined in (\ref{randomop}) (or $\mathcal{Q}$ in (\ref{randomop0})).}

This is the key idea behind our method of {\bf random averaging operators.} The norms we choose are the operator norm $\|\cdot\|_{\mathrm{OP}}$ and the Hilbert-Schmidt norm $\|\cdot\|_{\mathrm{HS}}$ with $\mathcal{P}$ (or $\mathcal{Q}$) viewed as a linear operator from the space $X^{s,\frac{1}{2}+}$ to itself; this does not depend on $s$ so we may in fact choose $s=0$. When the input $y$ in (\ref{randomop}) is a linear Schr\"{o}dinger flow, we get an operator from $H^s$ to $X^{s,\frac{1}{2}+}$, and will also measure the corresponding operator and Hilbert-Schmidt norms. Suppose the maximum frequency of $P_{\ll N^\alpha}u$ in (\ref{randomop}) is $L\ll N^\alpha$, then roughly speaking, these norm bounds will look like
\begin{equation}\label{normintro}\|\mathcal{P}\|_{\mathrm{OP}}\lesssim L^{-\delta_0},\quad \|\mathcal{P}\|_{\mathrm{HS}}\lesssim N^{\frac{1}{2}+\delta_1}L^{-\frac{1}{2}},
\end{equation} where $\delta_1\ll\delta_0\ll 1$. Note that these bounds are obviously false for arbitrary functions $P_{\ll N^\alpha}u\in X^{\frac{1}{2}-,\frac{1}{2}+}$, so they indeed encode the (implicit) randomness structure of $P_{\ll N^\alpha}u$, which is reflected via the multilinear expression $|\cdot|^{2r}$.

We also remark that the choices of the operator and Hilbert-Schmidt norms and the bounds in (\ref{normintro}) are natural in the following sense: the operator norm bound in (\ref{normintro}) is what guarantees the solvability of the para-linearized equation (\ref{ansatzintro2-4}), and the Hilbert-Schmidt norm bound in (\ref{normintro}) is what guarantees that the term $X$ defined in (\ref{ansatzintro2-4}) belongs to $X^{\frac{1}{2}-,\frac{1}{2}+}$. Moreover, the estimate (\ref{normintro}) is preserved under multiplication (which is important in the proof), due to the simple inequalities
\[\|\mathcal{P}_1\mathcal{P}_2\|_{\mathrm{OP}}\leq \|\mathcal{P}_1\|_{\mathrm{OP}}\|\mathcal{P}_2\|_{\mathrm{OP}},\quad \|\mathcal{P}_1\mathcal{P}_2\|_{\mathrm{HS}}\leq \min(\|\mathcal{P}_1\|_{\mathrm{HS}}\|\mathcal{P}_2\|_{\mathrm{OP}},\|\mathcal{P}_1\|_{\mathrm{OP}}\|\mathcal{P}_2\|_{\mathrm{HS}}).\] See Proposition \ref{localmain} for details.

 These operators $\mathcal{P}$ and $\mathcal{Q}$, which will be of central importance in our proof, depend on the object $P_{\ll N^\alpha}u$ that has an implicit randomness structure.  Moreover, in the Fourier variables, this operator can be viewed as a weighted average taken over smaller scales $L\ll N^\alpha$. We thus call it a \emph{random averaging operator}, which explains the name of our method.
 \subsubsection{The full ansatz}\label{secansatz}
 We can now describe the full ansatz of the solution $u$ to (\ref{nls}), with random averaging operators. On the surface we consider the same framework as in \cite{Bringmann}, namely $u=u_{\mathrm{lin}}+X+Y$, where $X$ is defined as in (\ref{ansatzintro2-4}) and $Y\in X^{1-,\frac{1}{2}+}$ is a smooth remainder. However, in our ansatz, instead of merely estimating $X$ in $X^{\frac{1}{2}-,\frac{1}{2}+}$, we exploit the fact that $X$ equals a random averaging operator applied to $u_{\mathrm{lin}}$, where this random averaging operator satisfies a bound of form (\ref{normintro}). To be precise, choosing $\alpha<1$ close to $1$, we further write that  \begin{equation}\label{fullansatz}
X=\mathcal{Q}(u_{\mathrm{lin}}),
\end{equation} so we have the ansatz
\begin{equation}\label{fullansatz2}
u=u_{\mathrm{lin}}+\mathcal{Q}(u_{\mathrm{lin}})+Y.
\end{equation}Here the random averaging operator $\mathcal{Q}=\mathcal{P}(1-\mathcal{P})^{-1}$, and $\mathcal{P}$ has the form
 \[\mathcal{P}=\sum_{N}\sum_{L\ll N^\alpha}\mathcal{P}_{NL},\] where $\mathcal{P}_{NL}$ has the form (\ref{randomop}) with the maximum frequency of $P_{\ll N^\alpha}u$ in (\ref{randomop}) being $L$.  This $\mathcal{P}_{NL}$ is a Borel function of $(g_k(\omega))_{\langle k\rangle\leq L}$, which is independent with $e^{it\Delta}P_Nf(\omega)$, and satisfies (\ref{normintro}); the operator $\mathcal{Q}$ has a similar decomposition into $\mathcal{Q}_{NL}$ that satisfies the same bounds (\ref{normintro}). See Section \ref{decompsol} for the precise formulas.
 
 With the ansatz (\ref{fullansatz2}), the proof of local well-posedness then proceeds by inducting on frequencies to show (\ref{normintro}) and to bound $Y$ in $X^{1-,\frac{1}{2}+}$. In fact, suppose these are true for components of frequency $\ll N^\alpha$, then the bounds (\ref{normintro}) and high-low frequency independence imply that the part of $P_{\ll N^\alpha}u$ involving the random averaging operators really behaves like a linear Schr\"{o}dinger flow, so in $\mathcal{P}_{NL}$, see (\ref{randomop}), we can effectively assume that $P_{\ll N^\alpha}u$ is replaced by either a linear flow or a smooth function in $X^{1-,\frac{1}{2}+}$. Hence (\ref{normintro}) follows from large deviation estimates for multilinear Gaussians and a $T^*T$ (random matrix) argument like the one of Bourgain \cite{Bourgain}, and the estimate for $Y$ follows from standard contraction mapping arguments. See Sections \ref{structuresol} and \ref{multiest} for details.
\subsection{Further discussions} The notion of probabilistic scaling introduced in Section \ref{probsc} is a general philosophy and is not specific to (\ref{nls}); in fact it can be extended to more general situations. These include, but are not limited to, the following ones.
\subsubsection{Wave equations} For the wave equation (say with a power nonlinearity as in (\ref{nlsex})) we can apply the same heuristics as in Section \ref{probsc}. However, due to the gain of one derivative in the Duhamel fomrmula, instead of (\ref{naivedeter}) we essentially have that
\begin{equation}\label{naivedeter2}\mathcal{F}_xu^{(1)}(k)\sim\frac{1}{\langle k\rangle}\sum_{\substack{k_1-\cdots+k_{2r+1}=k\\|k_j|\lesssim N}}\frac{1}{\langle\Sigma\rangle}\prod_{j=1}^{2r+1}\widehat{u_{\mathrm{in}}}(k_j),\quad \Sigma=|k|-|k_1|+\cdots-|k_{2r+1}|.\end{equation}

Assume now $|k|\sim N$, then compared to (\ref{naivedeter}) we gain an extra factor $N^{-1}$ due to the antiderivative, while in the dimension counting argument we gain one less power of $N$ as $\Sigma$ is now linear instead of quadratic. In the deterministic setting this leads to the same scaling condition as the Schr\"{o}dinger equation, but in the probabilistic setting this  trade-off leads to a \emph{better} bound than  in the Schr\"{o}dinger case as the one-dimension disadvantage gets `square-rooted' by exploiting randomness as explained above. This then gives
\[|\mathcal{F}_xu^{(2)}(k)|\lesssim N^{-(2r+1)\alpha-1}N^{rd-\frac{1}{2}},\] which leads to a \emph{lower} probabilistic scaling threshold, namely  $s_p^{\mathrm{wave}}=-\frac{3}{4r}$.

However, unlike Schr\"{o}dinger, there is also a  `high-high to low' interaction, namely $|k|\sim 1$ in (\ref{naivedeter2}), that needs to be addressed. A similar calculation using randomness and counting bounds yields heuristically that
\[|\mathcal{F}_x{u^{(2)}}(k)|\lesssim N^{-(2r+1)\alpha}N^{rd-\frac{1}{2}},\] which leads to the restriction $s\geq s_p':=\frac{-d-1}{2(2r+1)}$. Thus it is reasonable to conjecture that the wave equation is almost surely locally well-posed in $H^s\times H^{s-1}$ for 
\[s>\max(s_p^{\mathrm{wave}},s_p')=\max\big(-\frac{3}{4r} , -\frac{d+1}{4r+2}\big).\] in particular when $(d,r)=(3,1)$ the conjectured threshold is $H^{-\frac{2}{3}}$, which is below $H^{-\frac{1}{2}-}$ where the Gibbs measure is supported, consistent with the recent positive result \cite{BDNY22}.

\subsubsection{Other dispersion relations and/or nonlinearities} For more general dispersion relations on $ \mathbb T^d$, say $\Lambda=\Lambda(k)$, the only thing above that changes is the counting bound for the set
\[S_m=\{(k_1,\cdots,k_{2r+1}):k_1-\cdots+k_{2r+1}=k,\,\,\Sigma:=\Lambda(k)-\Lambda(k_1)+\cdots-\Lambda(k_{2r+1})\in[m,m+1]\}.\] In contrast to parabolic equations (see \cite{Hairer}) where the exact form of  the elliptic operator is irrelevant once the order is fixed, here the properties of $S_m$ depend crucially on the choice of $\Lambda$, and have to be analyzed on a case by case basis. For simple dispersion relations like Schr\"{o}dinger, wave or gravity water wave (where $\Lambda(k)=\sqrt{|k|}$) this is doable, but when $\Lambda$ gets more complicated (say a high degree polynomial), determining the optimal local well-posedness threshold requires getting sharp bounds for $\#S_m$, which in itself may be a hard problem in analytic number theory.

For derivative nonlinearities, the scaling heuristics can still be carried out and the value of $s_p$ can be calculated in the same way as before (since such heuristics essentially take into account only the high-high interactions). However the actual almost sure well-posedness threshold may be strictly higher than $s_p$ due to high-low interactions and derivative loss (in the same way that the deterministic theory for quasilinear equations does not quite reach scaling, see e.g. \cite{KRS,ST}), which may be worth looking at first in some simple models. There is also the possibility of exponential nonlinearities but they are more of an `endpoint' nature and will not be discussed here.

\subsubsection{Stochastic equations} We may also consider wave and heat equations with additive noise (Schr\"{o}dinger is also possible but has worse behavior), say of form 
\begin{equation}\label{additive}(\partial_t^2-\Delta)u=|u|^{2r}u+\zeta,\quad\mathrm{or}\quad (\partial_t-\Delta)u=|u|^{2r}u+\zeta,
\end{equation} where $\zeta$ is the spacetime white noise which is essentially (after discretizing the time Fourier variable)
\[\zeta=\sum_{k,\xi}g_{k,\xi}e^{i(k\cdot x+\xi t)},\] where $g_{k,\xi}$ are independent normalized Gaussians.

The heat case of (\ref{additive}) has been studied extensively, see \cite{IW} and the references in Section \ref{results2}. In this case we can confirm that the scaling heuristics of Section \ref{probsc} are consistent with that of \cite{Hairer}. Indeed, note that for (\ref{additive}) the linear evolution $e^{it\Delta}u_{\mathrm{in}}$ in Section \ref{probsc} is replaced by the linear noise term
\[\psi(t)=\int_0^te^{(t-t')\Delta}\zeta(t')\,\mathrm{d}t'\sim\sum_{k,\xi}\frac{g_{k,\xi}}{|k|^2+|\xi|}e^{i(k\cdot x+\xi t)},\] which belongs to $C_t^0H_x^{s-}$ for $s=-\frac{d}{2}+1$. The goal would then be to guarantee that the second iteration
\[u^{(1)}(t)=\int_0^te^{(t-t')\Delta}(|\psi(s)|^{2r}\psi(t'))\,\mathrm{d}t'\] belongs to the same space. By similar arguments, this time also taking into account the time Fourier variable, we can show that this leads to the restriction $r(d-2)<2$, which coincides with the subcriticality condition introduced in \cite{Hairer} in the case of (\ref{additive}).

For the wave case of (\ref{additive}), similar calculations lead to the subcriticality condition $r(d-2)<\frac{3}{2}$, which is consistent with the results in \cite{GKO,GKO2}. In both cases, due to the particular choice of white noise, the high-to-low interactions studied in (1) above give the same condition on $(d,r)$.
\subsubsection{Other geometries} Considering more general geometries in addition to $\mathbb{T}^d$, will lead to different scenarios. For compact manifold the canonical randomization would be based on the spectral expansion of the Laplacian, in which case the probabilistic scaling depends on the \emph{global geometry} of the underlying manifold. This is because the randomized data is \emph{not} localized and has the same amplitude at each point of the domain. In comparison, the \emph{deterministic scaling} threshold does not depend on the geometry because it corresponds to the data zoomed out at a point, which is localized.

For non-compact manifolds (say $\mathbb{R}^d$), the canonical randomization based on Laplacian eigenfunctions (i.e. $e^{i\xi\cdot x}$) would lead to initial data with infinite $L^2$ mass, which is not compatible\footnote{It is however compatible with the wave equation due to finite speed of propagation; in particular the result of \cite{BDNY22} is expected to be true also for the Gibbs measure on $\mathbb{R}^3$. We will not discuss this here but see \cite{BDNY22}.} with the Schr\"{o}dinger equation (\ref{nls}). There is another kind of ``Wiener randomization" based on dividing the Fourier space into unit boxes and randomizing on each box (see e.g. \cite{ZhF, LM, BOP}), which produces localized initial data (which is not preserved by the linear flow). In the $\mathbb{R}^d$ case, this leads to the the critical threshold $s_{p}=-3/(4r)$ which is lower than $\mathbb{T}^d$.
\smallskip

\subsection{Notations and choice of parameters}\label{notations} In this section we collect some of the notations and conventions that will be used in the proof. Throughout the paper, the space and time Fourier transforms will be respectively fixed as
\begin{equation}\label{fouriert}(\mathcal{F}_xu)(k)=\frac{1}{(2\pi)^2}\int_{\mathbb{T}^2}e^{-ik\cdot x}f(x)\,\mathrm{d}x,\quad (\mathcal{F}_tu)(\xi)=\frac{1}{2\pi}\int_{\mathbb{R}}e^{-i\xi t}f(x)\,\mathrm{d}t.
\end{equation} We will be working in $(k,t)$ or $(k,\xi)$ variables, instead of the $x$ variable; so we will abbreviate $(\mathcal{F}_xu)(k)$ simply as $u_k$, and will abuse notations and write $u=u_k(t)$. The symbol $\widehat{u}$ will always represent time Fourier transform (or, for the second formula in (\ref{twistfourier2}) below, the corresponding two-dimensional time Fourier transform), so $(\mathcal{F}_{t,x}u)(k,\xi)=\widehat{u_k}(\xi)$.

Let the space mean $\mathcal{A}$ be defined by $\mathcal{A}u=  (\mathcal{F}_xu)(0) = u_0$ (this may depend on time $t$ if $u$ does). Define the twisted spacetime Fourier transform
\begin{equation}\label{twistfourier}\widetilde{u}_k(\lambda)=\widetilde{u}(k,\lambda)=\widehat{u_k}(\lambda-|k|^2).
\end{equation} We also need to study functions $h_{kk^*}(t)$ of variables $k,k^*\in\mathbb{Z}^2$ and $t\in\mathbb{R}$, and $\mathfrak{h}_{kk'}(t,t')$ of variables $k,k'\in\mathbb{Z}^2$ and $t,t'\in\mathbb{R}$; for these we will define
\begin{equation}\label{twistfourier2}\widetilde{h}_{kk^*}(\lambda)=\widehat{h_{kk^*}}(\lambda-|k|^2),\quad\mathrm{and}\quad \widetilde{\mathfrak{h}}_{kk'}(\lambda,\lambda')=2\pi \, \widehat{\mathfrak{h}_{kk'}}(\lambda-|k|^2,|k'|^2-\lambda'),
\end{equation}where $\lambda$ and $\lambda'$ are Fourier variables corresponding to $t$ and $t'$ respectively.

Recall that $\langle k\rangle=\sqrt{|k|^2+1}$, and $\mathbf{1}_P$ is the indicator function. The cardinality of a finite set $E$ will be denoted by $|E|$ or by $\#E$. We will be using smooth cutoff functions $\chi=\chi(z)$ which equal $1$ for $|z|\leq 1$ and equal $0$ for $|z|\geq 2$. For any Schwartz function $\varphi$ and any $0<\tau\ll 1$, we will define $\varphi_{\tau}(t)=\varphi(\tau^{-1}t)$.

For a complex number $z$ define $z^+:=z$ and $z^- :=\overline{z}$; we will also use the notation $z^{\iota}$ where $\iota$ will always be $\pm$. In the proof we will encounter tuples $(k_1,\cdots,k_n)$, or maybe $(k_1^*,\cdots,k_n^*)$, with associated signs $\iota_1,\cdots,\iota_n\in\{\pm\}$; they are usually linked by some equation $\iota_1k_1+\cdots+\iota_nk_n=d$ or $\iota_1|k_1|^2+\cdots+\iota_n|k_n|^2=\alpha$,
 where $d$ and $\alpha$ are given, or by some expression $g_{k_1^*}^{\iota_1}\cdots g_{k_n^*}^{\iota_n}$. 
\begin{df}\label{dfpairing}
In the above context we say $(k_i,k_j)$ is a \emph{pairing} in $\{k_1,\cdots,k_n\},$ if $k_i=k_j$ and $\iota_i=-\iota_j$. We say a pairing is \emph{over-paired} if $k_i=k_j=k_\ell$ for some $\ell\not\in\{i,j\}$. Pairings and over-pairings in $\{k_1^*,\cdots,k_n^*\}$ are defined similarly.
\end{df}   For example, suppose $k=k_1-k_2+k_3+d$. If $k_1=k_2$, then $(k_1,k_2)$ is a pairing in $\{k_1,k_2,k_3\}$; if $k=k_1$ then $(k,k_1)$ is a pairing in $\{k,k_1,k_2,k_3\}$. If $k=k_1=k_2\neq k_3$, then $(k_1,k_2)$ is over-paired if considered as a pairing in $\{k,k_1,k_2,k_3\}$, but not if considered a pairing in $\{k_1,k_2,k_3\}$.

Recall Remark \ref{powerof2} (2) that for the truncation $\Pi_N$ defined in (\ref{truncN}), $N$ will be a power of two that is also $\gtrsim 1$. The same applies to other capital letters like $M$, $L$, $R$, etc.. Define also $\Pi_N^{\perp}=\mathrm{Id}-\Pi_N$ and $\Delta_N=\Pi_N-\Pi_{\frac{N}{2}}$, so that
\begin{equation*}(\Delta_Nu)_k=\mathbf{1}_{N/2<\langle k\rangle\leq N}\cdot u_k.\end{equation*} Let $\mathcal{V}_N$ and $\mathcal{V}_N^\perp$ be the ranges of $\Pi_N$ and $\Pi_N^\perp$. For $N_1,\cdots,N_n$, we will define $\max^{(j)}(N_1,\cdots,N_n)$ to be the $j$-th maximal element among them, and denote it by $N^{(j)}$. 
\begin{df} \label{borel}
 For any $N$ as above, we denote by $\mathcal{B}_{\leq N}$  the $\sigma$-algebra generated by the random variables $g_k$ for $\langle k\rangle\leq N$, and by $\mathcal{B}_{\leq N}^+$  the smallest $\sigma$-algebra containing both $\mathcal{B}_{\leq N}$ and the $\sigma$-algebra generated by the random variables $|g_k|^2$ for $k\in\mathbb{Z}^2$.
\end{df} 

Recall that $\varepsilon$ is fixed by Remark \ref{fixep}. Let $1\gg\delta_0\gg\delta$ be two fixed small positive constants depending on $r$ and $\varepsilon$ (think of $\delta_0=\delta^{1/50}$). Define the parameters
\begin{equation}
\label{defparam}
\gamma=\delta^{\frac{3}{4}},\quad \gamma_0=\delta^{\frac{5}{4}},\quad\kappa=\delta^{-4},\quad b=\frac{1}{2}+\delta^4,\quad b_1=b+\delta^4,\quad b_2=b-\delta^6,\quad a_0=2b-10\delta^6,
\end{equation} then we have the following hierarchy:
\begin{equation}\label{hierarchy}\varepsilon\gg\delta_0\gg
\gamma\gg\delta\gg\gamma_0\gg \delta\gamma_0\gg b-\frac{1}{2}=b_1-b=\kappa^{-1}\gg\delta^6.
\end{equation} Denote by $\theta$ any positive quantity that is small enough depending on $\delta$ (for example $\theta\ll\delta^{50}$). This $\theta$ may take different values at different places. Let $C$ be any large absolute constant depending only on $r$, and $C_\theta$ be any  constant depending on $\theta$. Unless otherwise stated, the constants in the $\lesssim$, $\ll$ and $O(\cdot)$ symbols will depend on $C_\theta$. Finally,  if some statement $S$ about a random variable holds with probability $\mathbb{P}(S)\geq 1-C_\theta e^{-A^\theta}$ for some quantity $A>0$ and with given $\theta$ and $C_{\theta}$ independent of $A$,  we will say this $S$ is \emph{$A$-certain}. 

The rest of the paper is organized as follows. In Section \ref{prep0} we introduce the gauge transform and reduce to a favorable nonlinearity, and define the norms that will be used in the proof below. In Section \ref{structuresol} we identify the precise structure of the solution according to the ideas of Section \ref{method}, and reduce the local well-posedness to some multilinear estimates, namely Proposition \ref{multi0}. In Section \ref{prep2} we then set up the necessary tools (large deviation and counting estimates) needed in the proof of Proposition \ref{multi0}, and Section \ref{multiest} contains the proof itself. Finally in Section \ref{global} we apply an adapted version of Bourgain's argument to extend local solutions to global ones and finish the proof of Theorem \ref{main}.

\subsection{Acknowledgment} The second author thanks Hendrik Weber for helpful comments regarding references on the $\phi^4$ model.

\section{Equations, measures and norms}\label{prep0} In this section we make some preparations for the proof of Theorem \ref{main}. These include definitions of Wick ordering and gauge transform, properties of Gibbs and Gaussian measures and their finite dimensional truncations, and choices of function and operator norms and linear estimates. 
\subsection{Wick ordering and a gauge transform} We start by defining the Wick ordering and the gauge transform. Consider a general polynomial $\mathcal{M}_n(u)$ or $\mathcal{H}_n(u)$ of degree $n$, defined by
\begin{equation}\label{orderm}[\mathcal{M}_n(u)]_k=\sum_{\iota_1k_1+\cdots+\iota_n k_n=k}a_{kk_1\cdots k_n}u_{k_1}^{\iota_1}\cdots u_{k_n}^{\iota_n},
\end{equation} \begin{equation}\label{orderm2}[\mathcal{H}_n(u)]_{kk'}=\sum_{\iota_1k_1+\cdots+\iota_n k_n+\iota k'=k}a_{kk'k_1\cdots k_n}u_{k_1}^{\iota_1}\cdots u_{k_n}^{\iota_n},
\end{equation} where $a_{kk_1\cdots k_n}$ and $a_{kk'k_1\cdots k_n}$ are constants. Recall the definition of pairings in Definition \ref{dfpairing}.
\begin{df}
 We say the polynomial in \eqref{orderm} is \emph{input-simple}, if $a_{kk_1\cdots k_n}=0$ unless each pairing in $\{k_1,\cdots,k_n\}$ is over-paired. Similarly we say it is \emph{simple}, if $a_{kk_1\cdots k_n}=0$ unless each pairing in $\{k,k_1,\cdots,k_n\}$ is over-paired, and we say the polynomial in (\ref{orderm2}) is simple, if $a_{kk'k_1\cdots k_n}=0$ unless each pairing in $\{k,k',k_1,\cdots,k_n\}$ is over-paired. These notions also apply to multilinear forms.
 \end{df}
For $m: =\mathcal{A}|u|^2$, define the following polynomials of degree $n\in\{2p,2p+1\}$ (this $u$ may also be replaced by $v$):
\begin{equation}\label{pairfree}
\begin{aligned}:\mathrel{|u|^{2p}}:&=\sum_{j=0}^p(-1)^{p-j}{p\choose j}\frac{m^{p-j}p!}{j!}|u|^{2j},\\
:\mathrel{|u|^{2p}u}:&=\sum_{j=0}^p(-1)^{p-j}{{p+1}\choose {p-j}}\frac{m^{p-j}p!}{j!}|u|^{2j}u.
\end{aligned}
\end{equation} We will see in the proof of Proposition \ref{simpleterms} that each of these is input-simple.

Define a gauge transform $v_N=\mathcal{G}_Nu_N$ associated with (\ref{truncnls}) by \begin{equation}\label{gauge}v_N(t)=u_N(t)\cdot\exp\bigg((r+1)i\int_0^t\mathcal{A}[W_N^{2r}(u_N)]\,\mathrm{d}t'\bigg).
\end{equation} Then $u_N$ solves (\ref{truncnls}) if and only if $v_N$ solves the gauged equation
\begin{equation}\label{gauged}\left\{
\begin{split}(i\partial_t+\Delta)v_N&=\Pi_N\mathcal{Q}_N(v_N),\\
v_N(0)&=\Pi_Nu_{\mathrm{in}},
\end{split}
\right.
\end{equation} where
\begin{equation}\label{nonlinN}\mathcal{Q}_N(v)=W_N^{2r+1}(v)-(r+1)\mathcal{A}[W_N^{2r}(v)]v.
\end{equation} Since the gauge transform does not change the $t=0$ data, we will write $v_{\mathrm{in}}=u_{\mathrm{in}}$. The inverse of $\mathcal{G}_N$ is given by \begin{equation}\label{invgauge}u_N(t)=v_N(t)\cdot\exp\bigg(-(r+1)i\int_0^t\mathcal{A}[W_N^{2r}(v_N)]\,\mathrm{d}t'\bigg),
\end{equation} since by (\ref{wickpoly}), if $v=e^{i\alpha}u$ where $\alpha\in\mathbb{R}$, then $W^{n}(u)=W^{n}(v)$ for even $n$ and $W^n(u)=e^{i\alpha}W^n(v)$ for odd $n$. Now assume $v_N$ is a solution to (\ref{gauged}). Let $m_N$ be the truncated mass, which is conserved under (\ref{gauged}),
\begin{equation}\label{truncmass}m_N:=\mathcal{A}|v_N|^2=\sum_{\langle k\rangle\leq N}|(u_{\mathrm{in}})_k|^2,
\end{equation} and let $m_N^*:=m_N-\sigma_N$ where $\sigma_N$ is as in \eqref{expect}. Note that $m_N$ and $m_N^{\ast}$ are random terms if $u_{\mathrm{in}}=f(\omega)$ as in (\ref{random}). The following proposition give us a useful formula for $\mathcal{Q}_N$.
\begin{prop}\label{simpleterms} We have
\begin{equation}\label{formulaqn}\mathcal{Q}_N(v_N)=\sum_{l=0}^r{{r+1}\choose {r-l}}\frac{(m_N^*)^{r-l}r!}{l!}\mathcal{N}_{2l+1}(v_N),
\end{equation} where
\begin{equation}\label{quintcubic}\mathcal{N}_{2l+1}(v)=:\mathrel{|v|^{2l}v}:-(l+1)(\mathcal{A}:\mathrel{|v|^{2l}}:)v.
\end{equation} Here $\mathcal{N}_{2l+1}$ is a simple polynomial of degree $2l+1$. By standard procedure, we can define a $(2l+1)$-multilinear form, which we still denote by $\mathcal{N}_{2l+1}$, such that it reduces to $\mathcal{N}_{2l+1}(v)$ when all inputs equal to $v$.
\end{prop}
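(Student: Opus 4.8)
The plan is to separate the statement into two essentially independent parts: (A) the algebraic identity \eqref{formulaqn}, and (B) the structural claims that each polynomial in \eqref{pairfree} is input-simple and that $\mathcal N_{2l+1}$ in \eqref{quintcubic} is simple (in the sense of Definition \ref{dfpairing} and the ensuing definitions).

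For (A), I would first reduce the computation to one scalar variable. Put $P_p^\sigma(y)=\sum_{j=0}^p(-1)^{p-j}\binom pj\frac{\sigma^{p-j}p!}{j!}y^j$ and $Q_p^\sigma(y)=\sum_{j=0}^p(-1)^{p-j}\binom{p+1}{p-j}\frac{\sigma^{p-j}p!}{j!}y^j$, so that by \eqref{wickpoly} and \eqref{pairfree}, $W_N^{2p}(v)=P_p^{\sigma_N}(|v|^2)$, $W_N^{2p+1}(v)=Q_p^{\sigma_N}(|v|^2)v$, while $:\mathrel{|v|^{2p}}:\,=P_p^{m_N}(|v|^2)$ and $:\mathrel{|v|^{2p}v}:\,=Q_p^{m_N}(|v|^2)v$, where $m_N=\mathcal A|v_N|^2$. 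Two elementary facts suffice: the generating function $\sum_{p\ge0}P_p^\sigma(y)\,\frac{t^p}{p!}=\frac1{1+\sigma t}\exp\!\big(\frac{yt}{1+\sigma t}\big)$ (equivalent to the classical Laguerre generating function, via $P_p^\sigma(y)=(-1)^p p!\,\sigma^p L_p(y/\sigma)$), and the relation $Q_p^\sigma=\frac1{p+1}\frac{d}{dy}P_{p+1}^\sigma$. Since $\sigma_N=m_N-m_N^{\ast}$, the substitution $s=t/(1-tm_N^{\ast})$ satisfies $\frac{t}{1+\sigma_N t}=\frac{s}{1+m_N s}$ and $1+\sigma_N t=(1-tm_N^{\ast})(1+m_N s)$; inserting it into the generating function yields the shift identity $P_p^{\sigma_N}(y)=\sum_{l=0}^p\binom pl\frac{p!}{l!}(m_N^{\ast})^{p-l}P_l^{m_N}(y)$, and applying $\frac1{p+1}\frac{d}{dy}$ (using $\frac{d}{dy}P_l^{m_N}=l\,Q_{l-1}^{m_N}$) gives the same identity for $Q$ with $\binom pl$ replaced by $\binom{p+1}{p-l}$. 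Multiplying the $Q$-identity by $v$, applying $\mathcal A$ to the $P$-identity with $p=r$, and substituting into \eqref{nonlinN},
\[
\mathcal Q_N(v)=\sum_{l=0}^r\frac{r!}{l!}(m_N^{\ast})^{r-l}\Big(\binom{r+1}{r-l}\,:\mathrel{|v|^{2l}v}:\;-\;(r+1)\binom rl(\mathcal A:\mathrel{|v|^{2l}}:)v\Big);
\]
the binomial identity $(l+1)\binom{r+1}{r-l}=(r+1)\binom rl$ rewrites the bracket as $\binom{r+1}{r-l}\mathcal N_{2l+1}(v)$, which is \eqref{formulaqn}. The degree count for $\mathcal N_{2l+1}$ is immediate since $m_N$ carries degree $2$, and the polarization to a $(2l+1)$-multilinear form is the usual symmetrization.

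For (B), I would work on the Fourier side. Expanding $m_N^{p-j}|v|^{2j}=(\sum_q|v_q|^2)^{p-j}(|v|^2)^j$ and collecting contributions, I would establish closed formulas for the coefficient of a monomial $\prod_q v_q^{n_q}\bar v_q^{n_q'}$: in $:\mathrel{|v|^{2p}}:$ it equals $\frac{(p!)^2}{\prod_q n_q!\,n_q'!}\prod_q f(n_q,n_q')$ with $f(a,b)=\sum_{c\ge0}(-1)^c\binom ac\binom bc\,c!$, so the vanishing $f(1,1)=0$ forces input-simplicity. For $:\mathrel{|v|^{2p}v}:$ I would peel off the distinguished factor $v$ and reduce to the analogous coefficient of a bidegree-$(p,p)$ monomial in $Q_p^m(|v|^2)$; writing $\frac1{p+1-\sum c_q}=\int_0^1 t^{p-\sum c_q}\,dt$ turns that coefficient into $(p+1)!\,p!\int_0^1\prod_q\phi_{n_q,n_q'}(t)\,dt$ with $\phi_{a,b}(t)=\sum_{c}\frac{(-1)^c t^{a-c}}{c!(a-c)!(b-c)!}$, and the key observation $\phi_{a-1,b}=\phi_{a,b}'$ telescopes the resulting $t$-integral, for a monomial with an isolated pairing at a value $w$, down to $(p+1)!\,p!\prod_{q\ne w}\phi_{n_q,n_q'}(0)$; this vanishes because $\sum_q(n_q-n_q')=1$ forces some $q\ne w$ with $n_q>n_q'$, for which $\phi_{n_q,n_q'}(0)=0$. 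Hence both families in \eqref{pairfree} are input-simple.

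Finally, for simplicity of $\mathcal N_{2l+1}$: with a fresh function $w$, $\mathcal N_{2l+1}$ is simple if and only if the degree-$(2l+2)$ polynomial $\mathcal A[\mathcal N_{2l+1}(v)\bar w]$ is input-simple (the $\bar w$ records the output frequency with the sign convention of Definition \ref{dfpairing}). Writing $D_{\bar w}=\sum_q\bar w_q\,\partial_{\bar v_q}$, the chain rule applied to $\mathcal A:\mathrel{|v|^{2(l+1)}}:\,=\mathcal A\,P_{l+1}^{m_N}(|v|^2)$, together with $D_{\bar w}|v|^2=v\bar w$, $D_{\bar w}m_N=\mathcal A(v\bar w)$, $\partial_yP_{l+1}^m=(l+1)Q_l^m$ (the definition of $Q_l^m$), and $\partial_mP_{l+1}^m=-(l+1)^2P_l^m$ (a Laguerre identity), gives
\[
D_{\bar w}\big[\mathcal A:\mathrel{|v|^{2(l+1)}}:\big]=(l+1)\,\mathcal A[:\mathrel{|v|^{2l}v}:\bar w]-(l+1)^2(\mathcal A:\mathrel{|v|^{2l}}:)\,\mathcal A(v\bar w)=(l+1)\,\mathcal A[\mathcal N_{2l+1}(v)\bar w].
\]
Since $:\mathrel{|v|^{2(l+1)}}:$ is input-simple, so is $\mathcal A:\mathrel{|v|^{2(l+1)}}:$; and for an input-simple monomial $M$, every term of $D_{\bar w}M$, namely $M\,\bar v_q^{-1}\bar w_q$ for $q$ with $\bar v_q\mid M$, is again input-simple because the new factor $\bar w_q$ supplies a third index of value $q$, over-pairing any pairing at $q$. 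Thus $\mathcal A[\mathcal N_{2l+1}(v)\bar w]$ is input-simple and $\mathcal N_{2l+1}$ is simple. I expect the main obstacle to be keeping the combinatorial bookkeeping in part (B) honest --- tracking multiplicities when a frequency is repeated, distinguishing over-pairing in $\{k_1,\dots,k_n\}$ from over-pairing in $\{k,k_1,\dots,k_n\}$, and verifying the equivalence used in the last step --- rather than any single hard estimate.
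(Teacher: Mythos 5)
Your proposal is correct, but in several places it routes the computation differently from the paper, and it's worth comparing.

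For the algebraic identity \eqref{formulaqn}, the paper (equations \eqref{prop3.2:eq3} and \eqref{prop3.2:eq4}) just interchanges sums and uses the binomial expansion of $(m_N-m_N^*)^{r-k}$; that is entirely elementary. You instead package $P_p^\sigma, Q_p^\sigma$ as Laguerre polynomials, use the Laguerre generating function and the M\"obius substitution $s=t/(1-tm_N^{\ast})$ to obtain the shift identities $P_p^{\sigma_N}=\sum_l\binom{p}{l}\frac{p!}{l!}(m_N^{\ast})^{p-l}P_l^{m_N}$ (and its $Q$-analogue via $Q_p^\sigma=\frac1{p+1}\partial_yP_{p+1}^\sigma$), then close with $(l+1)\binom{r+1}{r-l}=(r+1)\binom rl$. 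This is correct and more structural, at the cost of importing a classical identity. For simplicity of $\mathcal N_{2l+1}$, both you and the paper reduce to showing $\mathcal A[\mathcal N_{2l+1}(v)\bar w]=\frac1{l+1}D_{\bar w}\mathcal A:\!|v|^{2(l+1)}\!:$ is input-simple; the paper verifies the needed algebraic identity $\mathcal A(\bar v:\!|v|^{2l}v\!:)-(l+1)m\,\mathcal A:\!|v|^{2l}\!:=\mathcal A:\!|v|^{2l+2}\!:$ by direct coefficient matching, while you derive it in one line from $\partial_yP_{l+1}^m=(l+1)Q_l^m$ and $\partial_mP_{l+1}^m=-(l+1)^2P_l^m$. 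Both are fine; yours is shorter once the Laguerre machinery is in place. One caveat on wording: the reason $D_{\bar w}$ preserves input-simplicity is not that $\bar w_q$ ``supplies a third index over-pairing any pairing at $q$'' — replacing a $\bar v_q$ by a $\bar w_q$ leaves the multiplicity count at every frequency unchanged, so no new isolated pairing can appear — but the conclusion is right.

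Two smaller remarks on part (B). Your $f$-factorisation $[\mathcal X](:\!|v|^{2p}\!:)=(p!)^2\prod_q f(a_q,b_q)/(a_q!b_q!)$ is exactly what the paper's \eqref{totalcoef} becomes after you commute the $l$-sum through the $c$-sums; the telescoping the paper invokes is the statement $f(1,1)=0$. For the odd case $:\!|v|^{2p}v\!:$ your peel-and-integrate argument does work: the sum over ``which $v$ to peel'' becomes $\frac{d}{dt}\prod_q\phi_{a_q,b_q}(t)$ by the product rule, the fundamental theorem of calculus kills the $t$-integral, and $\phi_{1,1}(1)=0$ together with $\phi_{a,b}(0)=0$ for $a>b$ gives the vanishing; but that crucial product-rule step is elided in your prose, and you are working harder than necessary. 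The identity $\binom{p+1}{p-j}(p-j)!\,(j+1)!=(p+1)!$ shows that the coefficient of a bidegree-$(p+1,p)$ monomial in $:\!|v|^{2p}v\!:$ itself factors as $(p+1)!\,p!\prod_q f(a_q,b_q)/(a_q!b_q!)$ — the troublesome $1/(p+1-\sum c_q)$ only shows up if you compute $Q_p^m(|v|^2)$ by itself, without the outer $v$ — so the odd case follows exactly as the even one, with no integral representation needed.
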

\begin{proof} First we prove (\ref{formulaqn}). By the definition of $\mathcal{Q}_N(v)$, see (\ref{nonlinN}), it will suffice to obtain that
\begin{equation}\label{prop3.2:eq1}
 W_N^{2r+1}(v_N) =\sum_{l=0}^r{{r+1}\choose {r-l}}\frac{(m_N^*)^{r-l}r!}{l!} :\mathrel{|v_N|^{2l}v_N}:
\end{equation} and
\begin{equation}\label{prop3.2:eq2}
(r+1)W_N^{2r}(v_N) =\sum_{l=0}^r{{r+1}\choose {r-l}}\frac{(m_N^*)^{r-l}r!}{l!}(l+1):\mathrel{|v_N|^{2l}}:.
\end{equation}

By the definition of $:\mathrel{|v|^{2l}v}:$, see (\ref{pairfree}), and combinatorial identities, we have
\begin{align}
\text{RHS of ({\ref{prop3.2:eq1}})} & = \sum_{l=0}^r  {r+1 \choose r-l} (m_N^*)^{r-l}r! \sum_{k=0}^l (-1)^{l-k} {l+1\choose l-k} \frac{m_N^{l-k}}{k!}\mathrel{|v_N|^{2k} v_N}\nonumber\\
\label{prop3.2:eq3}&= \sum_{k=0}^r (-1)^{r-k} {r+1\choose r-k}\frac{r!}{k!}\mathrel{|v_N|^{2k} v_N}
\sum_{l=k}^r {r-k\choose l-k}m_N^{l-k} (-m_N^*)^{r-l},
\end{align}
which implies (\ref{prop3.2:eq1}) due to binomial expansion.

Similarly we can calculate
\begin{align}
\text{RHS of ({\ref{prop3.2:eq2}})} & = \sum_{l=0}^r  {r+1 \choose r-l} (m_N^*)^{r-l}r! \, (l+1)\, \sum_{k=0}^l (-1)^{l-k} {l\choose k} \frac{m_N^{l-k}}{k!}\mathrel{|v_N|^{2k}}\nonumber\\
&= (r+1)\sum_{k=0}^r (-1)^{r-k} {r\choose k}\frac{r!}{k!}\mathrel{|v_N|^{2k}}
\sum_{l=k}^r {r-k\choose l-k}m_N^{l-k} (-m_N^*)^{r-l},\label{prop3.2:eq4}
\end{align}
which implies (\ref{prop3.2:eq2}).

Next we prove that $:\mathrel{|v|^{2p}}:$ and $:\mathrel{|v|^{2p}v}:$ are input-simple. Working in Fourier space, for any monomial
\[\mathfrak{X}:=(v_{k_1})^{a_1}(\overline{v_{k_1}})^{b_1}\cdots (v_{k_n})^{a_n}(\overline{v_{k_n}})^{b_n},\] where the $k_j$'s are different, $a_j$ and $b_j$ are nonnegative integers, we will calculate the coefficient of $\mathfrak{X}$ in the polynomial $:\mathrel{|v|^{2p}}:$ and $:\mathrel{|v|^{2p}v}:$, and will prove that this coefficient is zero provided $a_1=b_1=1$. Now clearly the coefficient of $\mathfrak{X}$ in $|v|^{2p}$ and $|v|^{2p}v$, denoted by $[\mathfrak{X}](|v|^{2p})$ and $[\mathfrak{X}](|v|^{2p}v)$, are
\begin{equation*}[\mathfrak{X}](|v|^{2p})=\frac{(p!)^2}{a_1!\cdots a_n!b_1!\cdots b_n!},\quad [\mathfrak{X}](|v|^{2p}v)=\frac{p!(p+1)!}{a_1!\cdots a_n!b_1!\cdots b_n!},
\end{equation*} under the assumptions $b_1+\cdots+b_n=p$ and $a_1+\cdots+a_n=p$ (or $a_1+\cdots+a_n=p+1$). Recall that $m=\mathcal{A}|v|^2$, we can calculate that
\begin{align}[\mathfrak{X}](:\mathrel{|v|^{2p}}:)&=\sum_{l=0}^p(-1)^{p-l}\frac{p!}{l!}{p\choose l}(p-l)!(l!)^2\sum_{c_1+\cdots+c_n=p-l}\, \prod_{s=1}^n\frac{1}{c_s!(a_s-c_s)!(b_s-c_s)!}\nonumber\\
&=(-1)^p(p!)^2 \sum_{l=0}^p(-1)^{l}\sum_{c_1+\cdots+c_n=p-l}\, \prod_{s=1}^n\frac{1}{c_s!(a_s-c_s)!(b_s-c_s)!}.\label{totalcoef}
\end{align}
 Now suppose $a_1=b_1=1$, then $c_1\in\{0,1\}$; clearly the terms for $l$ and with $c_1=0$ exactly cancel the terms for $l+1$ and with $c_1=1$, so $[\mathfrak{X}](:\mathrel{|v|^{2p}}:)=0$. Similarly we can prove $[\mathfrak{X}](:\mathrel{|v|^{2p}v}:)=0$.

Finally, we prove that $\mathcal{N}_{2p+1}(v)=:\mathrel{|v|^{2p}v}:-(p+1)(\mathcal{A}:\mathrel{|v|^{2p}}:)v$ is simple. By definition it suffices to prove that
\begin{equation*}\mathcal{A}(\mathcal{N}_{2p+1}(v)\overline{v})=\mathcal{A}(\overline{v}:\mathrel{|v|^{2p}v}:)-(p+1)m \, \mathcal{A}:\mathrel{|v|^{2p}}:
\end{equation*} is input-simple. We will actually show that this equals $\mathcal{A}:\mathrel{|v|}^{2p+2}:$ whence the result will follow. In fact, by (\ref{pairfree}) we have
\begin{equation*}
\begin{split}\mathcal{A}:\mathrel{|v|^{2p+2}}:&=\sum_{l=0}^{p+1}(-1)^{p-l+1}{{p+1}\choose l}\frac{(p+1)!}{l!}m^{p-l+1}\mathcal{A}|v|^{2l},\\
\mathcal{A}(\overline{v}:\mathrel{|v|^{2p}v}:)&=\sum_{l=1}^{p+1}(-1)^{p-l+1}{{p+1}\choose l}\frac{p!}{(l-1)!}m^{p-l+1}\mathcal{A}|v|^{2l},\\
-(p+1)m\, \mathcal{A}:\mathrel{|v|^{2p}}:&=\sum_{l=0}^{p}(-1)^{p-l+1}(p+1){{p}\choose l}\frac{p!}{l!}m^{p-l+1}\mathcal{A}|v|^{2l},
\end{split}
\end{equation*} so the first line equals the sum of the second and third lines by direct calculation.
\end{proof}
\begin{rem}\label{proper} Later on we will consider general multilinear forms $\mathcal{N}_{n}$ which are simple, and can be written as
\begin{equation}\label{multiform}[\mathcal{N}_{n}(v^{(1)},\cdots,v^{(n)})]_{k}=\sum_{\iota_1k_1+\cdots +\iota_nk_{n}=k}a_{kk_1\cdots k_{n}}(v_{k_1}^{(1)})^{\iota_1}\cdots (v_{k_{n}}^{(n)})^{\iota_n}.
\end{equation} 
We may assume the coefficient $a_{kk_1\cdots k_{n}}$ is symmetric in the $k_j$'s for which $\iota_j=+$, and also symmetric in the $k_j$'s for which $\iota_j=-$. Moreover, we assume that this coefficient only depends on the \emph{set of pairings} among $\{k,k_1,\cdots,k_{n}\}$.

The multilinear form $\mathcal{N}_{2l+1}$ corresponding to (\ref{quintcubic}) satisfies the above properties, and we will assume without any loss of generality that $\iota_j=+$ (i.e. $\mathcal{N}_{2l+1}$ is linear in $v^{(j)}$) for $j$ odd, and $\iota_j=-$ (i.e. $\mathcal{N}_{2l+1}$ is conjugate linear in $v^{(j)}$) for $j$ even.
\end{rem}
\subsection{Finite and infinite dimensional measures} We now summarize some properties of the infinite dimensional and finite dimensional (or truncated) Gaussian and Gibbs measures, that will be used later in the proof.

Recall that $\mathcal{V}_N$ and $\mathcal{V}_N^\perp$ are respectively the ranges of the projections $\Pi_N$ and $\Pi_N^\perp$. We will identify $\mathcal{V}$ with $\mathcal{V}_N\times\mathcal{V}_N^\perp$. Let $\mathrm{d}\rho_N$ and $\mathrm{d}\rho_N^\perp$ be the Gaussian measures defined on $\mathcal{V}_N$ and $\mathcal{V}_N^\perp$ respectively, such that $\mathrm{d}\rho=\mathrm{d}\rho_N\times\mathrm{d}\rho_N^\perp$. Define the measures $\mathrm{d}\mu_N^\circ$ on $\mathcal{V}_N$ and $\mathrm{d}\mu_N$ on $\mathcal{V}$ by
\begin{equation}\label{meastrunc}\mathrm{d}\mu_N^\circ=Z_N^{-1}e^{-V_N[u]}\,\mathrm{d}\rho_{N},\quad \mathrm{d}\mu_N=Z_N^{-1}e^{-V_N[u]}\,\mathrm{d}\rho;\quad Z_N=\int_{\mathcal{V}_N}e^{-V_N[u]}\,\mathrm{d}\rho_{N}(u),
\end{equation} then we have that $\mathrm{d}\mu_N=\mathrm{d}\mu_N^\circ\times\mathrm{d}\rho_N^\perp$. Recall also the measure $\mathrm{d}\mu$ defined in Proposition \ref{gibbsm}; all these are probability measures.
\begin{prop}\label{measurefact} When $N\to\infty$ we have $Z_N\to Z$, with $0<Z<\infty$. The sequence $Z_N^{-1}e^{-V_N[u]}$ converges to $Z^{-1}e^{-V[u]}$ almost surely, and also in $L^q(\mathrm{d}\rho)$ for any $1\leq q<\infty$. The measure $\mathrm{d}\mu_N$ converges to $\mathrm{d}\mu$ in the sense that the total variation of $\mu_N-\mu$ converges to $0$. Finally, the measure $\mathrm{d}\mu_N^\circ$ is invariant under the flows of (\ref{truncnls}) and (\ref{gauged}).
\end{prop}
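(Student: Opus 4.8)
The plan is to dispatch the four assertions in turn. The first three are measure-theoretic consequences of ``convergence plus uniform integrability'', and for them I will use two facts that are already part of the construction behind Propositions~\ref{nonlin} and \ref{gibbsm} (following \cite{GJ1,Simon,Nel1,Nel2,DaT,OT}): \emph{(i)} $V_N[u]\to V[u]$ almost surely with respect to $\mathrm{d}\rho$ --- indeed $V_N[u]$ is a fixed constant times the pairing of $W_N^{2r+2}(\Pi_N u)$ with the constant function $1$, and from $W_N^{2r+2}(\Pi_Nu)\to W^{2r+2}(u)$ in $H^{0-}$ together with the Wiener-chaos bound $\|V_N-V\|_{L^2(\mathrm{d}\rho)}\lesssim N^{-\theta}$ and Borel--Cantelli along the dyadic sequence one gets a.s.\ convergence; and \emph{(ii)} the Nelson-type bound $\sup_N\|e^{-V_N[u]}\|_{L^q(\mathrm{d}\rho)}<\infty$ for every $q<\infty$ (equivalently, a uniform tail estimate $\rho(V_N[u]<-\lambda)\le Ce^{-c\lambda^{\alpha}}$). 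From \emph{(i)}, $e^{-V_N[u]}\to e^{-V[u]}$ a.s.; from \emph{(ii)}, for each $q<\infty$ the family $\{e^{-qV_N[u]}\}_N$ is bounded in some $L^{q_1}(\mathrm{d}\rho)$ with $q_1>1$, hence uniformly integrable, so Vitali's theorem gives $e^{-V_N[u]}\to e^{-V[u]}$ in $L^q(\mathrm{d}\rho)$ for every $q<\infty$. Taking $q=1$ and using that $V_N$ depends only on $\Pi_Nu$ yields $Z_N=\int_{\mathcal V}e^{-V_N[u]}\,\mathrm{d}\rho\to\int_{\mathcal V}e^{-V[u]}\,\mathrm{d}\rho=Z$; here $Z<\infty$ since $e^{-V}\in L^1(\mathrm{d}\rho)$, and $Z>0$ since $V[u]<\infty$ $\rho$-a.s.\ so $e^{-V[u]}>0$ $\rho$-a.s.

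Because $Z_N\to Z\in(0,\infty)$ we have $\sup_NZ_N^{-1}<\infty$, and the identity $Z_N^{-1}e^{-V_N[u]}-Z^{-1}e^{-V[u]}=Z_N^{-1}(e^{-V_N[u]}-e^{-V[u]})+(Z_N^{-1}-Z^{-1})e^{-V[u]}$ then shows, via the first paragraph and $e^{-V}\in L^q(\mathrm{d}\rho)$, that $Z_N^{-1}e^{-V_N[u]}\to Z^{-1}e^{-V[u]}$ both a.s.\ and in every $L^q(\mathrm{d}\rho)$. Since $\mathrm{d}\mu_N$ and $\mathrm{d}\mu$ are precisely these densities against the common reference measure $\mathrm{d}\rho$, the total variation of $\mu_N-\mu$ equals $\|Z_N^{-1}e^{-V_N[u]}-Z^{-1}e^{-V[u]}\|_{L^1(\mathrm{d}\rho)}$, which is the $q=1$ case and hence tends to $0$.

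For the invariance of $\mathrm{d}\mu_N^\circ$, work in the coordinates $(u_k)_{\langle k\rangle\le N}$ on $\mathcal V_N\cong\mathbb{C}^{d_N}$ (with $d_N=\#\{k\in\mathbb{Z}^2:\langle k\rangle\le N\}$): there $\mathrm{d}\rho_N=c_N\,e^{-\sum_{\langle k\rangle\le N}\langle k\rangle^2|u_k|^2}\,\mathrm{d}L$ with $\mathrm{d}L$ Lebesgue measure, and for $u\in\mathcal V_N$ one has $\sum_{\langle k\rangle\le N}\langle k\rangle^2|u_k|^2=\tfrac{1}{(2\pi)^2}\int_{\mathbb{T}^2}|\nabla u|^2\,\mathrm{d}x+\mathcal M[u]=(\mathcal H_N[u]-V_N[u])+\mathcal M[u]$, so $\mathrm{d}\mu_N^\circ=c_NZ_N^{-1}\,e^{-\mathcal H_N[u]-\mathcal M[u]}\,\mathrm{d}L$. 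As \eqref{truncnls} is a finite-dimensional Hamiltonian ODE in these coordinates, its flow $\Phi_t$ (global, since $\mathcal H_N+\mathcal M$ is coercive on $\mathcal V_N$) preserves $\mathrm{d}L$ by Liouville's theorem and conserves $\mathcal H_N$ and $\mathcal M$, hence preserves $\mathrm{d}\mu_N^\circ$. By \eqref{gauge}, the flow of the gauged equation \eqref{gauged} is $\Psi_t(w)=e^{i\psi_t(w)}\Phi_t(w)$, with $\psi_t(w)=(r+1)\int_0^t\mathcal A[W_N^{2r}(\Phi_{t'}(w))]\,\mathrm{d}t'$ real-valued. Now \eqref{truncnls} is equivariant under the global phase rotation $R_\beta:u\mapsto e^{i\beta}u$ (since $W_N^{2r+1}(e^{i\beta}u)=e^{i\beta}W_N^{2r+1}(u)$), $W_N^{2r}$ is phase-invariant, and $\mathrm{d}\mu_N^\circ$ is $R_\beta$-invariant; consequently $\Phi_t\circ R_\beta=R_\beta\circ\Phi_t$, $\psi_t\circ R_\beta=\psi_t$, and $\Psi_t\circ R_\beta=R_\beta\circ\Psi_t$. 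One then lifts $\Phi_t$ to the skew product $\widetilde\Psi_t(w,\beta)=(\Phi_t(w),\beta+\psi_t(w))$ on $\mathcal V_N\times\mathbb{T}^1$, which preserves $\widetilde\mu:=\mathrm{d}\mu_N^\circ\otimes\tfrac{\mathrm{d}\beta}{2\pi}$ (the base preserves $\mathrm{d}\mu_N^\circ$, the fibre is a $w$-dependent rotation preserving $\mathrm{d}\beta$); since the action map $\Theta(w,\beta)=e^{i\beta}w$ satisfies $\Theta_\ast\widetilde\mu=\mathrm{d}\mu_N^\circ$ and $\Theta\circ\widetilde\Psi_t=\Psi_t\circ\Theta$, we conclude $(\Psi_t)_\ast\mathrm{d}\mu_N^\circ=\mathrm{d}\mu_N^\circ$.

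The only genuinely hard ingredient is fact \emph{(ii)}, the uniform-in-$N$ exponential integrability of $e^{-V_N[u]}$ (Nelson's estimate, via hypercontractivity on Wiener chaos); but this is exactly what underlies the construction of $\mathrm{d}\mu$ in Proposition~\ref{gibbsm} and is available from the cited works, so here the real work is bookkeeping --- extracting the uniform bound and the a.s.\ convergence, applying Vitali, and reading off $Z_N\to Z$ and the total-variation statement --- together with the finite-dimensional Liouville computation, of which the one non-automatic point is the global-phase conditioning used to transfer invariance from \eqref{truncnls} to the gauged system \eqref{gauged}.
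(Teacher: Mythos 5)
Your argument is correct, and the invariance of $\mathrm{d}\mu_N^\circ$ under the gauged flow \eqref{gauged} is established by a genuinely different mechanism than the one in the paper. The paper reduces gauge invariance to showing that the gauge transform $\mathcal{G}_N$ itself (viewed as a map of $\mathcal{V}_N$ at a fixed time) preserves $\mathrm{d}\mu_N^\circ$: since $\mathcal{H}_N$ and $\mathcal{M}$ are gauge-invariant, one only needs $\mathcal{G}_N$ to preserve Lebesgue measure $\mathrm{d}\mathcal{L}_N$, and this is checked by going to polar coordinates $(r_k,\theta_k)$ where $\mathcal{G}_N$ fixes each $r_k$ and shifts each $\theta_k$ by a common $F$ that depends only on the $r_j$ and the differences $\theta_j-\theta_\ell$, forcing the Jacobian to equal $1$ (the rank-one phase perturbation has $\sum_j \partial F/\partial\theta_j=0$). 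You instead exploit the global $U(1)$-equivariance of the truncated NLS flow directly, lifting $\Phi_t$ to a product-measure-preserving skew product on $\mathcal{V}_N\times\mathbb{T}^1$ and pushing forward along the action map $\Theta(w,\beta)=e^{i\beta}w$; this replaces the Jacobian computation by the equivariance identities $\Phi_t\circ R_\beta=R_\beta\circ\Phi_t$, $\psi_t\circ R_\beta=\psi_t$, together with the triangularity of $(w,\beta)\mapsto(\Phi_t w,\beta+\psi_t(w))$. The two proofs hinge on the same structural fact (the gauge phase is a rigid rotation depending only on phase-invariant data), but your formulation is coordinate-free and avoids the determinant bookkeeping, at the cost of introducing the auxiliary circle factor. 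For the first three assertions, the paper simply cites \cite{OT}; your sketch (a.s.\ convergence of $V_N$, Nelson's uniform exponential bound, Vitali, then $Z_N\to Z\in(0,\infty)$ and the total-variation identity for densities against the common reference $\mathrm{d}\rho$) is the standard route underlying that citation and is accurate.
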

\begin{proof} The convergence results are proved in \cite{OT}. The measure $\mathrm{d}\mu_N^\circ$ is invariant under (\ref{truncnls}), because the latter is a finite dimensional Hamiltonian system, and \[\mathrm{d}\mu_N^\circ(u_N)=\frac{1}{E_N}e^{-\mathcal{H}_N[u_N]-\mathcal{M}[u_N]}\,\mathrm{d}\mathcal{L}_N(u_N)\] is its Gibbs measure (weighted by another conserved quantity), where $E_N$ is some positive constant, $\mathcal{H}_N$ and $\mathcal{M}$ are as in (\ref{defham}), and $\mathrm{d}\mathcal{L}_N$ is the Lebesgue measure on the finite dimensional space $\mathcal{V}_N$.

To prove that $\mathrm{d}\mu_N^\circ$ is invariant under (\ref{gauged}), it suffices to show that it is preserved\footnote{For fixed time $t$, we can view $\mathcal{G}_N$ as a mapping from $\mathcal{V}_N$ to itself, by requiring $u_N$ to solve (\ref{truncnls}).} by the gauge transform $\mathcal{G}_N$. In fact, by (\ref{wickpoly}) and (\ref{defham}) we know $\mathcal{H}_N[u_N]=\mathcal{H}_N[v_N]$ and $\mathcal{M}[u_N]=\mathcal{M}[v_N]$, so it suffices to prove that $\mathcal{G}_N$ preserves the Lebesgue measure $\mathrm{d}\mathcal{L}_N$. Working in the coordinates $(r_k,\theta_k)_{\langle k\rangle\leq N}$ and $(r_k^*,\theta_k^*)_{\langle k\rangle\leq N}$, which are defined by $(u_N)_k=r_k e^{i\theta_k}$ and $(v_N)_k=r_k^* e^{i\theta_k^*}$, we can write the measure $\mathrm{d}\mathcal{L}_N$ as
\begin{equation}\label{measureN}\mathrm{d}\mathcal{L}_N=\prod_{\langle k\rangle\leq N}r_k\mathrm{d}r_k\mathrm{d}\theta_k.
\end{equation} If $v_N=\mathcal{G}_Nu_N$, then we have $r_k^*=r_k$ and $\theta_k^*=\theta_k+F((r_j,\theta_j)_{\langle j\rangle\leq N})$, where $F$ may also depend on $t$, but does not depend on $k$. Moreover, by (\ref{wickpoly}) and (\ref{gauge}) we know that $F$ actually depends only on $r_j$ and on the differences $\theta_j-\theta_\ell$, which are invariant under the mapping $\theta_k\mapsto\theta_k^*$. It then follows that the transformation $(r_k,\theta_k)\mapsto (r_k^*,\theta_k^*)$ preserves the measure (\ref{measureN}), by a simple calculation of its Jacobian.
\end{proof}
\subsection{Function spaces and linear estimates}\label{funcspace} From now on we will work with the equation (\ref{gauged}) with the nonlinearity defined by (\ref{formulaqn}) and (\ref{quintcubic}), which has the form (\ref{multiform}). Recall the well-known $X^{s,b}$ spaces (where $b$ may be replaced by $b_1$ or $b_2$)
\begin{equation}\label{xsb}\|u\|_{X^{s,b}}=\|\langle k\rangle^s\langle \lambda\rangle^b\,\widetilde{u}_k(\lambda)\|_{\ell_k^2L_\lambda^2}.
\end{equation} We will mostly consider $s=0$ and will denote $X^{0,b}=X^b$. In addition we introduce matrix norms which measure the functions $h=h_{kk^*}(t)$ and $\mathfrak{h}=\mathfrak{h}_{kk'}(t,s)$, namely 
\begin{align}
\label{matrixnorm1}\|h\|_{Y^{b}}&=\|\langle\lambda\rangle^{b}\, \widetilde{h}_{kk^*}(\lambda)\|_{\ell_{k^*}^2\to\ell_{k}^2L_\lambda^2},&\|\mathfrak{h}\|_{Y^{b,b}}&=\|\langle\lambda\rangle^{b}\langle \lambda'\rangle^{-b}\, \widetilde{\mathfrak{h}}_{kk'}(\lambda,\lambda')\|_{\ell_{k'}^2L_{\lambda'}^2\to\ell_{k}^2L_\lambda^2},\\
\label{matrixnorm2}
\|h\|_{Z^{b}}&=\|\langle\lambda\rangle^{b} \, \widetilde{h}_{kk^*}(\lambda)\|_{\ell_{k,k^*}^2L_\lambda^{2}},&\|\mathfrak{h}\|_{Z^{\widetilde{b},b}}&=\|\langle\lambda\rangle^{\widetilde{b}}\langle \lambda'\rangle^{-b} \, \widetilde{\mathfrak{h}}_{kk'}(\lambda,\lambda')\|_{\ell_{k,k'}^2L_{\lambda,\lambda'}^2},
\end{align} where $\widetilde{b}\in\{b,b_1\}$, $\|\cdot\|_{\ell_{k^*}^2\to\ell_{k}^2L_\lambda^2}$ and $\|\cdot\|_{\ell_{k'}^2L_\lambda'^2\to\ell_{k}^2L_\lambda^2}$ represent the operator norms of linear operators with the given kernels, for example
\begin{equation}\label{lptolp}\|\mathfrak{h}\|_{Y^{b,b}}=\sup\bigg\{\bigg\|\sum_{k'}\int\mathrm{d}\mu\cdot\langle \lambda\rangle^{b}\langle\lambda'\rangle^{-b}\, \widetilde{\mathfrak{h}}_{kk'}(\lambda,\lambda')y_{k'}(\lambda')\bigg\|_{\ell_k^2L_\lambda^2}\,:\, \|y_{k'}(\lambda')\|_{\ell_{k'}^2L_{\lambda'}^2}=1\bigg\}.
\end{equation} By definition we can verify that \begin{equation}\label{matrixnorm0}\|\mathfrak{h}\|_{Y^{b,b}}=\sup_{\|y\|_{X^b} \,=\,1}\bigg\|\sum_{k'}\int\mathrm{d}t'\cdot \mathfrak{h}_{kk'}(t,t')y_{k'}(t')\bigg\|_{X^{b}}.
\end{equation}For any of the above spaces, we can localize them in the standard way to a time interval $J$,
\begin{equation}\label{localize}\|u\|_{\mathcal{Z}(J)}=\inf\{\|v\|_{\mathcal{Z}}:v\equiv u\textrm{ on }J\}.
\end{equation} We will need the following simple estimates.
\begin{prop} The norms $\|h\|_{Y^{b}}$ and $\|\mathfrak{h}\|_{Y^{b,b}}$ do not increase, when $\widetilde{h}$ or $\widetilde{\mathfrak{h}}$ is multiplied by a function of $(k,\lambda)$, or a function of $k^*$ (or $(k',\lambda')$ for $\widetilde{\mathfrak{h}}$), which is at most $1$ in the $l^\infty L^\infty$ or $l^\infty$ norms.

Next, if $H$ is defined by
\begin{equation}\label{operatorbd4}\widetilde{H}_{kk^*}(\lambda) =\sum_{k'}\int \mathrm{d}\lambda'\cdot\widetilde{\mathfrak{h}}_{kk'}(\lambda,\lambda')\, \widetilde{h}_{k'k^*}(\lambda'),
\end{equation} where $\widetilde{\mathfrak{h}}_{kk'}(\lambda,\lambda')$ is supported in $|k-k'|\lesssim L$, then for any $\alpha>0$ we have \begin{equation}\label{operatorbd3}\bigg\|\bigg(1+\frac{|k-k^*|}{L}\bigg)^{\alpha}H\bigg\|_{Z^{b}}\lesssim \|\mathfrak{h}\|_{Y^{b,b}}\cdot\bigg\|\bigg(1+\frac{|k'-k^*|}{L}\bigg)^{\alpha}h\bigg\|_{Z^b}.
\end{equation}
\end{prop}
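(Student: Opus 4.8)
The claim has three parts, and the plan is to treat them in the order stated. For the first part---that $\|h\|_{Y^b}$ and $\|\mathfrak h\|_{Y^{b,b}}$ do not increase under pointwise multiplication of $\widetilde h$ (or $\widetilde{\mathfrak h}$) by a bounded function---I would argue directly from the operator-norm definitions (\ref{matrixnorm1}) and (\ref{lptolp}). Multiplying $\widetilde h_{kk^*}(\lambda)$ by a function $m(k,\lambda)$ with $\|m\|_{\ell^\infty L^\infty}\le 1$ amounts to precomposing the operator $y\mapsto \langle\lambda\rangle^b\langle\lambda'\rangle^{-b}\sum_{k'}\int\widetilde{\mathfrak h}\,y$ with the ``diagonal'' multiplier $M$ sending $z_k(\lambda)\mapsto m(k,\lambda)z_k(\lambda)$ on $\ell_k^2 L_\lambda^2$; since $\|M\|_{\ell_k^2 L_\lambda^2\to\ell_k^2 L_\lambda^2}=\|m\|_{\ell^\infty L^\infty}\le 1$, and operator norms are submultiplicative under composition, the bound follows. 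The case of multiplication by a function of $k^*$ only (or of $(k',\lambda')$ only) is handled symmetrically by postcomposing---or rather precomposing on the input side---with the corresponding diagonal multiplier on $\ell_{k^*}^2$ (resp.\ $\ell_{k'}^2 L_{\lambda'}^2$), whose norm is the $\ell^\infty$ (resp.\ $\ell^\infty L^\infty$) norm of the function. So this part is essentially bookkeeping about composing bounded diagonal operators.

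For the second part---the estimate (\ref{operatorbd3}) for the composed kernel $H$ defined by (\ref{operatorbd4})---the key point is the frequency-localization hypothesis $\mathrm{supp}\,\widetilde{\mathfrak h}_{kk'}\subset\{|k-k'|\lesssim L\}$, which lets us move the weight $(1+|k-k^*|/L)^\alpha$ past the convolution in $k$. Writing $1+\frac{|k-k^*|}{L}\lesssim \big(1+\frac{|k-k'|}{L}\big)\big(1+\frac{|k'-k^*|}{L}\big)$ by the triangle inequality, and noting $1+\frac{|k-k'|}{L}\lesssim 1$ on the support of $\widetilde{\mathfrak h}$, we get
\begin{equation*}
\Big(1+\tfrac{|k-k^*|}{L}\Big)^\alpha\big|\widetilde H_{kk^*}(\lambda)\big|\lesssim \sum_{k'}\int \mathrm d\lambda'\cdot \big|\widetilde{\mathfrak h}_{kk'}(\lambda,\lambda')\big|\cdot \Big(1+\tfrac{|k'-k^*|}{L}\Big)^\alpha\big|\widetilde h_{k'k^*}(\lambda')\big|.
\end{equation*}
Then, for each fixed $k^*$, I would view $k'\mapsto \big(1+\frac{|k'-k^*|}{L}\big)^\alpha \widetilde h_{k'k^*}(\cdot)$ as an element of $X^b$ (in the $(k',\lambda')$ variables), apply the $Y^{b,b}$ bound in its form (\ref{matrixnorm0}) to produce an element of $X^b$ in the $(k,\lambda)$ variables with norm $\le\|\mathfrak h\|_{Y^{b,b}}$ times the input $X^b$ norm, and finally take the $\ell^2$ norm over $k^*$. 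Since $\|\cdot\|_{Z^b}$ is precisely the $\ell^2_{k^*}$ norm of the $X^b$ norm in the remaining variables (by (\ref{matrixnorm2}) with weight $\langle\lambda\rangle^b$), Minkowski's inequality in $\ell^2_{k^*}$ interchanged with the operator bound gives exactly (\ref{operatorbd3}).

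The main obstacle---really the only place requiring care---is making the application of the $Y^{b,b}$ operator bound rigorous at the level of the $Z^b$ norm: one must be careful that the twisted Fourier normalizations in (\ref{twistfourier})--(\ref{twistfourier2}) match up so that $\widetilde H$ is indeed the twisted transform of the composition $H$, and that the absolute values introduced above do not break the operator-norm estimate (they don't, since $Y^{b,b}$ bounds the operator with kernel $\widetilde{\mathfrak h}$ and taking $|\widetilde{\mathfrak h}|$ only helps by the first part of the proposition applied with the bounded function $\widetilde{\mathfrak h}/|\widetilde{\mathfrak h}|$, or more simply by a direct Schur-type estimate). A secondary subtlety is the Minkowski interchange of $\ell^2_{k^*}$ with the $X^b$ norm in $(k,\lambda)$: since $X^b$ is itself an $\ell^2 L^2$ norm, this is just $\ell^2_{k^*}\ell^2_k L^2_\lambda$ versus $\ell^2_k L^2_\lambda \ell^2_{k^*}$ of a nonnegative function, which is an equality, so no loss occurs. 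I expect the whole argument to be short once these normalization checks are dispatched.
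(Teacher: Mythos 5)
Your treatment of the first part is fine and matches the paper's (terse) proof: multiplication by $m(k,\lambda)$ with $\|m\|_{\ell^\infty L^\infty}\le 1$ is post-composition with a diagonal contraction on the output side, and multiplication by a function of $k^*$ (resp.\ $(k',\lambda')$) is pre-composition with a contraction on the input side; operator norms are submultiplicative.

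The second part has a genuine gap at the absolute-value step. After bounding
\begin{equation*}
\Big(1+\tfrac{|k-k^*|}{L}\Big)^\alpha\big|\widetilde H_{kk^*}(\lambda)\big|\lesssim \sum_{k'}\int \mathrm d\lambda'\, \big|\widetilde{\mathfrak h}_{kk'}(\lambda,\lambda')\big|\, \Big(1+\tfrac{|k'-k^*|}{L}\Big)^\alpha\big|\widetilde h_{k'k^*}(\lambda')\big|,
\end{equation*}
you now need to control the operator with kernel $|\widetilde{\mathfrak h}|$ by $\|\mathfrak h\|_{Y^{b,b}}$, i.e.\ you need $\||\widetilde{\mathfrak h}|\|_{Y^{b,b}}\lesssim\|\widetilde{\mathfrak h}\|_{Y^{b,b}}$. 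This is false in general, and your justification has the inequality backwards: applying the first part with the phase $\widetilde{\mathfrak h}/|\widetilde{\mathfrak h}|$ (which has modulus $\le 1$) gives $\|\widetilde{\mathfrak h}\|_{Y^{b,b}}\le\||\widetilde{\mathfrak h}|\|_{Y^{b,b}}$, not the reverse. For a band kernel of width $L$ with random signs, the signed operator has norm $\sim\sqrt L$ while the unsigned one has norm $\sim L$, so the loss is real and can be large; the hypothesis gives you no Schur-type control on $\widetilde{\mathfrak h}$ that would rescue the computation.

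The paper sidesteps this by never putting absolute values inside the operator. Fixing $k^*=0$ by translation, one decomposes $\widetilde H$ and $\widetilde h$ into dyadic shells in $|k|$ (resp.\ $|k'|$). The band condition $|k-k'|\lesssim L$ forces $M'\sim M$ in $\widetilde H^M=\sum_{M'\sim M}\big(\text{operator applied to }\widetilde h^{M'}\big)$, and on each shell the weight $(1+|k|/L)^\alpha$ is essentially the constant $(M/L)^\alpha$, so it pulls out. One can then apply the $Y^{b,b}$ operator bound shell by shell to genuine $X^b$ functions $\widetilde h^{M'}$ (no absolute values on the kernel), and square-sum. You should replace the pointwise-weight-transfer step by this dyadic decomposition; the rest of your outline (Minkowski in $k^*$, the identification of $Z^b$ as $\ell^2_{k^*}$ of $X^b$) is sound.
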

\begin{proof} The first statement follows directly from definition (\ref{lptolp}). Now let us prove (\ref{operatorbd3}). We may fix $k^*$ and by translation invariance, we may assume $k^*=0$. Relabeling  $\widetilde{H}_{k0}(\lambda) =:\widetilde{H}_k(\lambda)$ and $\widetilde{h}_{k'0}(\lambda) =:\widetilde{h}_{k'}(\lambda)$ we may decompose \begin{equation*}\widetilde{H}_k(\lambda)=\sum_{M\geq L}(\widetilde{H}^M)_k(\lambda);\quad (\widetilde{H}^M)_k(\lambda)=\left\{
\begin{aligned}&\mathbf{1}_{|k|\sim M}\widetilde{H}_k(\lambda),&M&> L,\\
&\mathbf{1}_{|k|\lesssim L}\widetilde{H}_k(\lambda),&M&= L,
\end{aligned}
\right.\end{equation*} and similarly for $\widetilde{h}$, so that we have
\begin{equation}\label{dydsum} \bigg\|\langle\lambda\rangle^b\bigg(1+\frac{|k|}{L}\bigg)^{\alpha}\widetilde{H}_{k}(\lambda)\bigg\|_{\ell_{k}^2L_\lambda^2}^2\sim\sum_{M\geq L}L^{-2\alpha}M^{2\alpha}\|\langle\lambda\rangle^b(\widetilde{H}^M)_k(\lambda)\|_{\ell_k^2L_\lambda^2}^2
\end{equation} and similarly for $h$. Since $\widetilde{\mathfrak{h}}$ is supported in $|k-k'|\lesssim L$, we have
\begin{equation*}|(\widetilde{H}^M)_k(\lambda)|\leq\sum_{M'\sim M}\bigg|\sum_{k'}\int\mathrm{d}\lambda'\cdot\widetilde{\mathfrak{h}}_{kk'}(\lambda,\lambda')(\widetilde{h}^{M'})_{k'}(\lambda')\bigg|,
\end{equation*} therefore
\begin{equation*}\|\langle \lambda\rangle^{b}(\widetilde{H}^M)_k(\lambda)\|_{\ell_k^2L_\lambda^2}^2\lesssim\|\langle \lambda\rangle^{b}\langle\lambda'\rangle^{-b}\widetilde{\mathfrak{h}}_{kk'}(\lambda,\lambda')\|_{\ell_{k'}^2L_{\lambda'}^2\to\ell_k^2L_\lambda^2}^2\sum_{M'\sim M}\|\langle \lambda'\rangle^{b}(\widetilde{h}^{M'})_{k'}(\lambda')\|_{\ell_{k'}^2L_\lambda'^2}^2,
\end{equation*} which, combined with (\ref{dydsum}), implies (\ref{operatorbd3}).
\end{proof}
Let $\chi$ be a smooth cutoff as in Section \ref{notations}, and define the time truncated Duhamel operator
\begin{equation}\label{duhameloper}\mathcal{I}F(t):=\chi(t)\int_0^te^{i(t-t')\Delta}\chi(t')F(t')\,\mathrm{d}t'.
\end{equation}
\begin{lem}\label{duhamelest} We have $2\, \mathcal{I}F(t)=\mathcal{J}F(t)-\chi(t)e^{it\Delta}\mathcal{J}F(0)$, where $\mathcal{J}$ is defined by
\begin{equation}\label{defjop}\mathcal{J}F(t)=\chi(t)\bigg(\int_{-\infty}^t-\int_t^{\infty}\bigg)e^{i(t-t')\Delta}\chi(t')F(t')\,\mathrm{d}t'.
\end{equation} Moreover we have the formula
\begin{equation}\label{trunckernel}\widetilde{\mathcal{J}F}(k,\lambda)=\int_{\mathbb{R}}\mathcal{J}(\lambda,\mu)\widetilde{F}(k,\mu)\,\mathrm{d}\mu,\quad |\partial_{\lambda,\mu}^\alpha\mathcal{J}(\lambda,\mu)|\lesssim_{\alpha,A}\frac{1}{\langle \lambda-\mu\rangle^{A}}\frac{1}{\langle \mu\rangle}.
\end{equation}
\end{lem}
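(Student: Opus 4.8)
\textbf{Proof plan for Lemma \ref{duhamelest}.}

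The plan is to prove the two assertions in sequence, starting from the identity relating $\mathcal{I}$ and $\mathcal{J}$, and then establishing the kernel formula \eqref{trunckernel}.

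First I would verify the identity $2\mathcal{I}F(t)=\mathcal{J}F(t)-\chi(t)e^{it\Delta}\mathcal{J}F(0)$. Expanding the definition \eqref{defjop}, one has
\[
\mathcal{J}F(t)=\chi(t)\int_{-\infty}^t e^{i(t-t')\Delta}\chi(t')F(t')\,\mathrm{d}t'-\chi(t)\int_t^{\infty}e^{i(t-t')\Delta}\chi(t')F(t')\,\mathrm{d}t'.
\]
Evaluating at $t=0$ gives $\mathcal{J}F(0)=\int_{-\infty}^0 e^{-it'\Delta}\chi(t')F(t')\,\mathrm{d}t'-\int_0^\infty e^{-it'\Delta}\chi(t')F(t')\,\mathrm{d}t'$, so that $\chi(t)e^{it\Delta}\mathcal{J}F(0)=\chi(t)\int_{-\infty}^0 e^{i(t-t')\Delta}\chi(t')F(t')\,\mathrm{d}t'-\chi(t)\int_0^\infty e^{i(t-t')\Delta}\chi(t')F(t')\,\mathrm{d}t'$. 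Subtracting, the contributions over $(-\infty,0)$ cancel and the ones over $(0,\infty)$ combine: one is left with $2\chi(t)\int_0^t e^{i(t-t')\Delta}\chi(t')F(t')\,\mathrm{d}t'=2\mathcal{I}F(t)$, using $\mathbf{1}_{(-\infty,t)}-\mathbf{1}_{(t,\infty)}-(\mathbf{1}_{(-\infty,0)}-\mathbf{1}_{(0,\infty)})=2\,\mathbf{1}_{(0,t)}$ for $t>0$ (and the analogous identity for $t<0$, where the roles of $0$ and $t$ are swapped and the sign of the $\int_0^t$ term flips consistently with the Duhamel convention). This is purely bookkeeping and poses no real difficulty.

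Second I would establish the kernel representation \eqref{trunckernel}. Taking the twisted spacetime Fourier transform and recalling \eqref{twistfourier}, the operator $\mathcal{J}$ acts on each frequency $k$ by a scalar-in-$k$ convolution-type kernel in the $\lambda$ variable once one conjugates out the free evolution $e^{it\Delta}$: writing $v(t):=e^{-it\Delta}F(t)$ in Fourier so that $\widehat{v_k}(\mu-|k|^2)=\widetilde F(k,\mu)$, the expression $\mathcal{J}F(t)$ becomes, fiberwise in $k$, $e^{it\Delta}$ applied to $\chi(t)\big(\int_{-\infty}^t-\int_t^\infty\big)\chi(t')v_k(t')\,\mathrm{d}t'$. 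The operator $g\mapsto \chi(t)(\int_{-\infty}^t-\int_t^\infty)\chi(t')g(t')\,\mathrm{d}t'$ has a convolution kernel given (up to the two outer/inner cutoffs) by $\mathrm{sgn}(t-t')$, whose Fourier transform is (a constant times) the principal value $\mathrm{p.v.}\,1/(i\mu)$; multiplying by the smooth compactly supported cutoffs $\chi$ on both sides turns this into a kernel $\mathcal{J}(\lambda,\mu)$ that is the $\lambda$–Fourier transform of $\chi$ convolved against $\mathrm{p.v.}\,1/(i\mu)$ convolved against (the $\mu$–Fourier transform of) $\chi$. The factor $\langle\mu\rangle^{-1}$ in \eqref{trunckernel} comes from the $1/\mu$ decay of the principal value part (the cutoffs smooth out the singularity at $\mu=0$ so the bound holds uniformly, giving $\langle\mu\rangle^{-1}$ rather than $|\mu|^{-1}$), while the rapidly decaying factor $\langle\lambda-\mu\rangle^{-A}$ for every $A$ comes from the outer cutoff $\chi(t)$, whose Fourier transform is Schwartz, being convolved in the $\lambda$ variable; derivatives in $\lambda,\mu$ land on these Schwartz factors and preserve the same type of bound. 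I would make this precise by writing $\mathcal{J}(\lambda,\mu)=c\int \widehat\chi(\lambda-\nu)\,\widehat{\mathrm{sgn}}(\nu-\mu')\,\widehat\chi(\mu'-\mu)\,\mathrm{d}\nu\,\mathrm{d}\mu'$ (schematically) and estimating by the standard lemma that a Schwartz function convolved with a symbol obeying $|\partial^\alpha m(\xi)|\lesssim \langle\xi\rangle^{-1}$ again obeys such a bound with an extra rapidly decaying factor in the displacement.

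The main obstacle is the careful treatment of the principal-value singularity at $\mu=0$ in the second part: one must check that after multiplying by the two smooth cutoffs the resulting kernel is genuinely a nice function (no distributional part survives) and that the bound $|\partial^\alpha_{\lambda,\mu}\mathcal{J}(\lambda,\mu)|\lesssim \langle\lambda-\mu\rangle^{-A}\langle\mu\rangle^{-1}$ holds \emph{uniformly}, including for $\mu$ near $0$ where naively $1/\mu$ blows up. The point is that $\chi(t)\,\mathrm{sgn}(t-t')\,\chi(t')$, as a function on $\mathbb{R}^2$, is bounded with compact support, so its partial Fourier transforms are controlled; the decay in $\mu$ then genuinely comes only from the large-$\mu$ regime and is capped at $\langle\mu\rangle^{-1}$ near the origin, which is exactly what \eqref{trunckernel} asserts. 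Everything else — the derivative bounds, the $\langle\lambda-\mu\rangle^{-A}$ gain — follows from Schwartz decay of $\widehat\chi$ and is routine.
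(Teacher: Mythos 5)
Your proposal is correct and follows the standard route: the algebraic identity is verified directly by splitting the $\mathrm{sgn}$ integrals and using $\int_{-\infty}^t-\int_{-\infty}^0=\int_0^t$ (valid for all signs of $t$, so your caveat about $t<0$ is actually unnecessary), and the kernel bound is obtained by conjugating out the free flow, recognizing the multiplier as $\widehat{\chi}*(\widehat{\mathrm{sgn}}\cdot(\widehat{\chi}*\,\cdot\,))$, and combining the $|\nu|^{-1}$ symbol decay with the Schwartz decay of $\widehat{\chi}$. The paper itself only cites \cite{DNY}, Lemma 3.1 for this, but the cited computation is of exactly this type, so your approach is the same as the paper's.
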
 For the proof of Lemma \ref{duhamelest}, see the calculations in \cite{DNY}, Lemma 3.1.
\begin{prop}\label{sttime} Let $\varphi$ be any Schwartz function, recall that $\varphi_\tau(t)=\varphi(\tau^{-1}t)$ for any $0<\tau\ll1$. Then for any $u=u_k(t)$ and $\mathfrak{h}=\mathfrak{h}_{kk'}(t,t')$ we have
\begin{equation}\label{sttime1}\|\varphi_{\tau}\cdot u\|_{X^{s,b}}\lesssim {\tau}^{b_1-b}\|u\|_{X^{s,b_1}},\quad \|\varphi_{\tau}(t)\cdot \mathfrak{h}\|_{Z^{b,b}}\lesssim {\tau}^{b_1-b}\|\mathfrak{h}\|_{Z^{b_1,b}}\,,
\end{equation} provided that $u_k(0)=\mathfrak{h}_{kk'}(0,t')=0$.
\end{prop}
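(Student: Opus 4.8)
The plan is to strip off the Schrödinger twist, reducing both inequalities to a single one–variable (Hilbert–space valued) statement, and to prove that statement by interpolating between an $L^2$ bound that gains $\tau^{b_1}$ from the hypothesis $u(0)=0$ and a top–order bound that is merely uniform in $\tau$.

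\emph{Reduction.} Set $v_k(t)=e^{it|k|^2}u_k(t)$, so that by \eqref{twistfourier} one has $\widetilde u_k(\lambda)=\widehat{v_k}(\lambda)$; hence, viewing $v(t)=(v_k(t))_k$ as a function of $t\in\mathbb R$ with values in the Hilbert space $\mathcal H:=\langle k\rangle^s\ell^2_k$, Plancherel gives $\|u\|_{X^{s,b}}=\|v\|_{H^b_t(\mathcal H)}$. The hypothesis $u_k(0)=0$ becomes $v(0)=0$, and since $\varphi_\tau(t)$ is a scalar function of $t$ it commutes with $e^{-it\Delta}$, so the twisted version of $\varphi_\tau\cdot u$ is exactly $\varphi_\tau\cdot v$. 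The matrix case is identical after untwisting in both $t$ and $t'$ via \eqref{twistfourier2}: packaging the $(k',t')$–variables (with the weight $\langle\lambda'\rangle^{-b}$) together with the outer $\ell^2_k$ into one Hilbert space $\mathcal K$, one gets $\|\mathfrak h\|_{Z^{\widetilde b,b}}=\|\mathfrak g\|_{H^{\widetilde b}_t(\mathcal K)}$ for the correspondingly twisted $\mathfrak g$, with $\varphi_\tau\cdot\mathfrak h\leftrightarrow\varphi_\tau\cdot\mathfrak g$ and $\mathfrak g(0)=0$ in $\mathcal K$ iff $\mathfrak h_{kk'}(0,t')\equiv0$. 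Thus both estimates follow once we show: for any Hilbert space $\mathcal H$ and any $\tfrac12<b\le b_1<1$, if $v\in H^{b_1}_t(\mathbb R;\mathcal H)$ with $v(0)=0$ then $\|\varphi_\tau v\|_{H^b_t(\mathcal H)}\lesssim\tau^{b_1-b}\|v\|_{H^{b_1}_t(\mathcal H)}$ for $0<\tau\le1$.

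\emph{Proof of the one–variable fact.} (i) Because $b_1>\tfrac12$, the embedding $\dot H^{b_1}_t\hookrightarrow\dot C^{\,b_1-1/2}_t$ and $v(0)=0$ give the pointwise bound $\|v(t)\|_{\mathcal H}\lesssim|t|^{b_1-1/2}\|v\|_{\dot H^{b_1}}$ for all $t$; multiplying by $|\varphi_\tau(t)|^2$ and using the rapid decay of $\varphi$ yields $\|\varphi_\tau v\|_{L^2(\mathcal H)}\lesssim\tau^{b_1}\|v\|_{H^{b_1}}$. (ii) One has the uniform (but gainless) bound $\|\varphi_\tau v\|_{\dot H^{b_1}(\mathcal H)}\lesssim\|v\|_{H^{b_1}(\mathcal H)}$; this can be obtained either from the Kato–Ponce inequality $\|\varphi_\tau v\|_{\dot H^{b_1}}\lesssim\|\varphi_\tau\|_{\dot H^{b_1}}\|v\|_{L^\infty}+\|\varphi_\tau\|_{L^\infty}\|v\|_{\dot H^{b_1}}$ together with $\|\varphi_\tau\|_{\dot H^{b_1}}=\tau^{b_1+1/2}\|\varphi\|_{\dot H^{b_1}}\le\|\varphi\|_{\dot H^{b_1}}$ and $\|v\|_{L^\infty}\lesssim\|v\|_{H^{b_1}}$, or directly by splitting the Gagliardo double integral $\iint\|\varphi_\tau v(t)-\varphi_\tau v(s)\|_{\mathcal H}^2\,|t-s|^{-1-2b_1}\,dt\,ds$ into near– and far–diagonal parts and using $|\varphi_\tau'|\lesssim\tau^{-1}$ and the bound from (i) on $\operatorname{supp}\varphi_\tau\subset\{|t|\lesssim\tau\}$ (this is where $b_1<1$ is used). (iii) Interpolation: for $g=\varphi_\tau v$, Hölder applied to $\|g\|_{\dot H^b(\mathcal H)}^2=\int|\lambda|^{2b}\|\widehat g(\lambda)\|_{\mathcal H}^2\,d\lambda$ with exponents $\tfrac{b_1}{b_1-b}$ and $\tfrac{b_1}{b}$ gives $\|g\|_{\dot H^b}\lesssim\|g\|_{L^2}^{1-b/b_1}\|g\|_{\dot H^{b_1}}^{b/b_1}\lesssim(\tau^{b_1}\|v\|_{H^{b_1}})^{1-b/b_1}\|v\|_{H^{b_1}}^{b/b_1}=\tau^{b_1-b}\|v\|_{H^{b_1}}$, while $\|g\|_{L^2}\lesssim\tau^{b_1}\|v\|_{H^{b_1}}\le\tau^{b_1-b}\|v\|_{H^{b_1}}$; adding these proves the claim.

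\emph{Conclusion and main obstacle.} Applying the one–variable fact with $\mathcal H=\langle k\rangle^s\ell^2_k$ (recalling from \eqref{defparam}–\eqref{hierarchy} that $\tfrac12<b<b_1<1$) gives the first estimate, and with $\mathcal H=\mathcal K$ the second. The one genuinely delicate point is step (ii): the bound there must be \emph{uniform} in $\tau$, whereas the naive estimate loses a factor $\tau^{-b_1}$ (multiplication by $\varphi_\tau$ is not uniformly bounded on $H^{b_1}$); the gain in (i)—which is the only place the hypothesis $v(0)=0$ is used—is exactly what compensates for this after interpolation, and without it the Proposition is false. Everything else, in particular the passage from the scalar statement to the $\ell^2_k$– and matrix–valued norms, is routine because $\varphi_\tau$ acts only in the $t$ variable.
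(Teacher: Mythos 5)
Your proposal takes a genuinely different route from the paper. Where the paper (after the same conjugation-and-freezing reduction) splits $\widehat v$ at the frequency cutoff $|\xi|\sim\tau^{-1}$ and treats the high-frequency part by Schur's test, and the low-frequency part via the cancellation $\int\widehat v=0$, you interpolate between an $L^2$ bound for $\varphi_\tau v$ that gains the full $\tau^{b_1}$ from the pointwise Hölder estimate $\|v(t)\|\lesssim|t|^{b_1-1/2}\|v\|_{\dot H^{b_1}}$ (this is where $v(0)=0$ is used, analogous to the paper's $g_2$ step), and a uniform bound at the top regularity $\dot H^{b_1}$. The interpolation step (iii) is correctly carried out, and the reduction to the Hilbert-valued one-variable statement is the same as in the paper. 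This is a legitimate alternative argument.

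However, there is an error in step (ii). The scaling identity you invoke for the Kato--Ponce route is wrong: with $\varphi_\tau(t)=\varphi(\tau^{-1}t)$ one has $\widehat{\varphi_\tau}(\xi)=\tau\widehat\varphi(\tau\xi)$, hence
\[
\|\varphi_\tau\|_{\dot H^{b_1}}^2=\tau^2\int|\xi|^{2b_1}|\widehat\varphi(\tau\xi)|^2\,\mathrm d\xi=\tau^{1-2b_1}\|\varphi\|_{\dot H^{b_1}}^2,
\]
so $\|\varphi_\tau\|_{\dot H^{b_1}}=\tau^{1/2-b_1}\|\varphi\|_{\dot H^{b_1}}$, which (since $b_1>1/2$) blows up as $\tau\to0$ rather than decaying like the $\tau^{b_1+1/2}$ you wrote. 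The Kato--Ponce inequality therefore does not give a bound that is uniform in $\tau$; as written it loses $\tau^{1/2-b_1}$, and this would destroy the conclusion upon interpolation. The \emph{direct} route through the Gagliardo double integral that you sketch as an alternative can indeed be made to work: on the near-diagonal region both variables lie within $O(\tau)$ of the origin (modulo Schwartz tails), so the pointwise bound from (i) supplies the compensating $\tau^{2b_1-1}$; and on the far-diagonal region one similarly uses the pointwise bound $\|v(s)\|\lesssim|s|^{b_1-1/2}\|v\|_{\dot H^{b_1}}$ for all $s$, not only on $\operatorname{supp}\varphi_\tau$, to get $\int_{|s|\gtrsim\tau}|s|^{2b_1-1}|s|^{-1-2b_1}\,\mathrm ds\lesssim\tau^{-1}$, which cancels the factor $\tau$ from the $t$-integral. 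You should either spell out this direct argument or delete the Kato--Ponce remark, since as stated it asserts the opposite of what actually happens.
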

\begin{proof} Using the definition of the $Z^{\widetilde{b},b}$ norms and fixing the $(k',\lambda')$ variables, we can reduce the second inequality in (\ref{sttime1}) to the first, and by fixing $k$ and conjugating by the linear Schr\"{o}dinger flow, we can reduce the first to
\begin{equation*}\|\langle\xi\rangle^{b}(\widehat{\varphi_{\tau}}*\widehat{v})(\xi)\|_{L^2}\lesssim {\tau}^{b_1-b} \, \|\langle \eta\rangle^{b_1}\widehat{v}(\eta)\|_{L^{2}}\end{equation*} for $v$ satisfying $v(0)=0$. Let $\widehat{v}=g_1+g_2$ where
\[g_1(\xi)=\mathbf{1}_{|\xi|\geq {\tau}^{-1}}(\xi)\widehat{v}(\xi),\quad g_2(\xi)=\mathbf{1}_{|\xi|< {\tau}^{-1}}(\xi)\widehat{v}(\xi).\]We will prove that
\begin{equation}\label{st21}\|\langle\xi\rangle^{b}(\widehat{\varphi_{\tau}}*g_j)(\xi)\|_{L^2}\lesssim {\tau}^{b_1-b} \, \|\langle \eta\rangle^{b_1}\widehat{v}(\eta)\|_{L^{2}}
\end{equation} for $j\in\{1,2\}$. To prove (\ref{st21}) for $j=1$, we can reduce it to the $L^{2}\to L^2$ bound for the operator
\[g(\eta)\mapsto\int_{\mathbb{R}}R(\xi,\eta)g(\eta)\,\mathrm{d}\eta,\quad R(\xi,\eta)=\mathbf{1}_{|\eta|\geq {\tau}^{-1}}\cdot \tau\widehat{\varphi}(\tau(\xi-\eta))\frac{\langle\xi\rangle^{b}}{\langle \eta\rangle^{b_1}}.\] Since 
\[\mathbf{1}_{|\eta|\geq {\tau}^{-1}}\cdot\frac{\langle\xi\rangle^{b}}{\langle \eta\rangle^{b_1}}\lesssim {\tau}^{b_1-b}\frac{\langle T\xi\rangle^{b}}{\langle \tau\eta\rangle^{b_1}}\lesssim {\tau}^{b_1-b}\langle \tau(\xi-\eta)\rangle^{b},\] it follows from Schur's estimate that this $L^{2}\to L^2$ bound is at most\[\tau^{b_1-b}\|\tau\widehat{\varphi}(\tau\zeta)\langle \tau\zeta\rangle^{b}\|_{L_\zeta^1}\lesssim {\tau}^{b_1-b},\] which proves (\ref{st21}) for $j=1$.

To prove (\ref{st21}) for $j=2$, note that since $v(0)=0$ we have $\int_{\mathbb{R}}\widehat{v}(\eta)\,\mathrm{d}\eta=0$, so
\begin{align*}|(\widehat{\varphi_{\tau}}*g_2)(\xi)|&=\bigg|-\tau\widehat{\varphi}(\tau\xi)\int_{|\eta|\geq \tau^{-1}}\widehat{v}(\eta)\,\mathrm{d}\eta-\int_{|\eta|<\tau^{-1}}\tau\widehat{v}(\eta)\big[\widehat{\varphi}(\tau\xi)-\widehat{\varphi}(\tau(\xi-\eta))\big]\,\mathrm{d}\eta\bigg|\\
&\lesssim \tau\langle \tau\xi\rangle^{-4}\int_{\mathbb{R}}\min(1,|\tau\eta|)|\widehat{v}(\eta)|\,\mathrm{d}\eta,
\end{align*} and by H\"{o}lder's inequality we have
\[\int_{\mathbb{R}}\min(1,|\tau\eta|)|\widehat{v}(\eta)|\,\mathrm{d}\eta\lesssim \|\langle \eta\rangle^{b_1}\widehat{v}(\eta)\|_{L^{2}}\cdot\|\min(1,|\tau\eta|)\langle\eta\rangle^{-b_1}\|_{L^2}\lesssim {\tau}^{b_1- \frac{1}{2}}\|\langle \eta\rangle^{b_1}\widehat{v}(\eta)\|_{L^{2}}.\] Using also the elementary bound
\[\|\tau\langle \tau\xi\rangle^{-4}\langle\xi\rangle^{b}\|_{L^2}\lesssim {\tau}^{\frac{1}{2}-b},\] we deduce (\ref{st21}) for $j=2$, and hence (\ref{sttime1}).
\end{proof} 
\section{Structure of the solution: random averaging operators} \label{structuresol} We now fix a short time $0<\tau\ll 1$, and focus on the local theory for (\ref{gauged}), with initial data distributed according to the Gaussian measure $\mathrm{d}\rho_{N}$, on $J:=[-\tau,\tau]$. By definition, this is equivalent to considering (\ref{gauged}) with random initial data $u_{\mathrm{in}}=v_{\mathrm{in}}=f(\omega)$, which we will assume from now on, until the end of Section \ref{multiest}. Most functions that appear in the proof will be random (i.e. depend on $\omega$), whether or not we explicitly write $\omega$ in their expressions.
\subsection{The decomposition}\label{decompsol} We start by writing down the ansatz of the solution to (\ref{gauged}). Recall that the truncated mass $m_N$ defined in (\ref{truncmass}) and the corresponding $m_N^*$ are random variables given by
\begin{equation}\label{truncmass2}m_N=\sum_{\langle k\rangle\leq N}\frac{|g_k|^2}{\langle k\rangle^2},\quad m_N^*=\sum_{\langle k\rangle\leq N}\frac{|g_k|^2-1}{\langle k\rangle^2}.
\end{equation} Note that they are Borel functions of $|g_k|^2$ for $\langle k\rangle\leq N$. Let $\nu_N:=m_N^*-m_{\frac{N}{2}}^*$. By standard large deviation estimates we have\begin{equation}\label{massdiff}\mathbb{P}(|\nu_N|\geq AN^{-1})\leq Ce^{-C^{-1}A}
\end{equation}for any $A>0$, where $C$ is an absolute constant. In particular, by removing a set of measure $\leq C_\theta e^{-{\tau}^{-\theta}}$ (which will be done before proving any estimates) we may assume the following bounds, which are used below without any further mentioning:
\begin{equation}\label{simplebd}|g_k|\lesssim {\tau}^{-\theta}\langle k\rangle^\theta,\quad |m_N^*|\lesssim  {\tau}^{-\theta},\quad |\nu_N|\lesssim  {\tau}^{-\theta} N^{-1+\theta}.
\end{equation}

Our goal here is to obtain a quantitative estimate for the difference $y_N:=v_N-v_{\frac{N}{2}}$. By (\ref{gauged}), this $y_N$ satisfies the equation
\begin{equation}\label{nlsdiff}
\left\{
\begin{split}(i\partial_t+\Delta)y_N&=\Pi_N\mathcal{Q}_N(y_N+v_{\frac{N}{2}})-\Pi_{\frac{N}{2}}\mathcal{Q}_{\frac{N}{2}}(v_{\frac{N}{2}}),\\
y_N(0)&=\Delta_Nf(\omega).
\end{split}
\right.
\end{equation}
By (\ref{formulaqn}) we can rewrite the above equation as
\begin{equation}\label{nlsdiff2}
\left\{
\begin{aligned}
(i\partial_t+\Delta)y_N&=\sum_{l=0}^rc_{rl} \, (m_N^*)^{r-l}\bigg\{\Pi_N\big[\mathcal{N}_{2l+1}(y_N+v_{\frac{N}{2}})-\mathcal{N}_{2l+1}(v_{\frac{N}{2}})\big]+\Delta_N\mathcal{N}_{2l+1}(v_{\frac{N}{2}})\bigg\}\\
&+\sum_{j=0}^rc_{rl}\, \big[(m_{\frac{N}{2}}^*+\nu_{N})^{r-l}-(m_{\frac{N}{2}}^*)^{r-l}\big]\cdot\Pi_\frac{N}{2}\mathcal{N}_{2l+1}(v_{\frac{N}{2}}),\\
y_N(0)&=\Delta_Nf(\omega),
\end{aligned}
\right.
\end{equation} where $c_{rl}$ are constants that will not be important in the proof.

Define the set
\begin{equation}\label{setNL}\mathcal{K}:=\{(N,L)\in(2^{\mathbb{Z}})^2:2^{-1}\leq L< N^{1-\delta}\}.
\end{equation} For each $(N,L)\in\mathcal{K}$, we define the function $\psi_{N,L}$ as the solution to the (linear) equation
\begin{equation}\label{defpsi}
\left\{
\begin{aligned}
(i\partial_t+\Delta)\psi_{N,L}&=\sum_{l=0}^r(l+1) \, c_{rl}\, (m_N^*)^{r-l} \, \Pi_N\mathcal{N}_{2l+1}(\psi_{N,L},v_{L},\cdots,v_{L}),\\
\psi_{N,L}(0)&=\Delta_Nf(\omega).
\end{aligned}
\right.
\end{equation}
It is important to place $\psi_{N, L}$ in the first position of $\mathcal{N}_{2l+1}$ in (\ref{defpsi}), see Remark \ref{gamma2}. By linearity we have,
\begin{equation}\label{linearity}(\psi_{N,L})_k=\sum_{k^*}\, H_{kk^*}^{N,L}\, \frac{g_{k^*}(\omega)}{\langle k^*\rangle},
\end{equation} where for $\frac{N}{2}<\langle k^*\rangle\leq N$ and $\langle k\rangle\leq N$, $H_{kk^*}^{N,L}=\varphi_k$ is the $k$-th mode of the solution $\varphi$ to the equation
\begin{equation}\label{defpsi2}
\left\{
\begin{aligned}
(i\partial_t+\Delta)\varphi&=\sum_{l=0}^r(l+1)c_{rl}(m_N^*)^{r-l}\, \Pi_N\mathcal{N}_{2l+1}(\varphi,v_{L},\cdots,v_{L}),\\
\varphi(0)&=e^{ik^*\cdot x},
\end{aligned}
\right.
\end{equation} and for other $(k,k^*)\in(\mathbb{Z}^2)^2$ define $H_{kk^*}^{N,L}=0$. By definition these $H_{kk^*}^{N,L}$, as well as the $h_{kk^*}^{N,L}$ defined below, are $\mathcal{B}_{\leq N}$ measurable and $\mathcal{B}_{\leq L}^+$ measurable in the sense of Definition \ref{borel}.

\smallskip
For any $N$, let $L_0$ be the largest $L$ satisfying $(N,L)\in\mathcal{K}$. We further define
\begin{equation}\label{matrices}\zeta_{N,L}: =\psi_{N,L} -\psi_{N,\frac{L}{2}},\qquad h^{N,L}: =H^{N,L}-H^{N,\frac{L}{2}}; \qquad z_N: =y_N-\psi_{N,L_0}.
\end{equation} Note that $\psi_{N,\frac{1}{2}}=e^{it\Delta}(\Delta_Nf(\omega))$, and that $H_{kk^*}^{N,\frac{1}{2}}$ is $e^{-i|k|^2t}\mathbf{1}_{k=k^*}$ restricted to the frequency band $\frac{N}{2}<\langle k\rangle\leq N$. Moreover $z_N$ is $\mathcal{B}_{\leq N}$ measurable, $z_N(0)=0$ and satisfies the equation
\begin{align}
(i\partial_t+\Delta)z_N&=\sum_{l=0}^rc_{rl}(m_N^*)^{r-l}\cdot\Pi_N\big[\mathcal{N}_{2l+1}(z_N+\psi_{N,L_0}+v_{\frac{N}{2}})-\mathcal{N}_{2l+1}(v_{\frac{N}{2}})+\Delta_N\mathcal{N}_{2l+1}(v_{\frac{N}{2}})\big]\nonumber\\
&-\sum_{l=0}^rc_{rl}(m_N^*)^{r-l}\cdot\Pi_N\big[(l+1)\mathcal{N}_{2l+1}(\psi_{N,L_0},v_{L_0},\cdots,v_{L_0})\big]\nonumber\\
\label{equationz}&+\sum_{l=0}^rc_{rl}\big[(m_{\frac{N}{2}}^*+\nu_N)^{r-l}-(m_{\frac{N}{2}}^*)^{r-l}\big]\cdot\Pi_{\frac{N}{2}}\mathcal{N}_{2l+1}(v_{\frac{N}{2}}).
\end{align}
\begin{rem} With the above construction, if we let $v=\lim_{N\to\infty}v_N$ be the gauged version of the solution $u$ to (\ref{nls}), we then have
\begin{equation}\label{ansatz1}
v = e^{it\Delta}f(\omega)+\sum_{(N,L)\in\mathcal{K}}\zeta_{N,L}+z,\quad \text{where} \quad z=\sum_Nz_N.
\end{equation} This is the ansatz (\ref{fullansatz2}) in Section \ref{secansatz}, where $\zeta_{N,L}$ can be viewed as a random averaging operator $\mathcal{P}_{NL}$, whose kernel is essentially given by $h^{N,L}$, applied to the Gaussian free field $e^{it\Delta}f(\omega)$.
There are however two differences: (1) our $\mathcal{P}_{NL}$ is not exactly the one in (\ref{randomop}), but an infinite iteration of the latter, because (\ref{randomop}) has no smoothing effect; and (2) our $\mathcal{P}_{NL}$ is not exactly a Borel function of $(g_k)_{\langle k\rangle\leq L}$ as it also depends on $m_N^*$, but as it turns out this does not affect any probabilistic estimates, see Lemma \ref{largedev}.
\end{rem}
\subsection{The a priori bounds}\label{apriori} We now state the local well-posedness result for (\ref{gauged}). Its proof will occupy the rest of this section and Sections \ref{prep2} and \ref{multiest}. 
\begin{prop}\label{localmain} Recall the relevant constants defined in (\ref{defparam}), and that $\tau\ll 1$, $J=[-\tau,\tau]$. Then, ${\tau}^{-1}$-certainly, i.e. with probability bigger than or equal to $1-C_{\theta}e^{-{\tau}^{-\theta}},$ the following estimates hold for all $(N,L)\in\mathcal{K}$:
\begin{equation}\label{aprioriest}
\|h^{N,L}\|_{Y^{b}(J)}\leq L^{-\delta_0},\quad\|h^{N,L}\|_{Z^{b}(J)}\leq N^{\frac{1}{2}+\delta^\frac{5}{4}}L^{-\frac{1}{2}},\quad
\|z_N\|_{X^{b}(J)}\leq N^{-1+\gamma}.
\end{equation}
\end{prop}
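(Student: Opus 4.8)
The plan is to run a bootstrap/induction on the dyadic frequency scale $N$. For a fixed $N$, one assumes that the three bounds in \eqref{aprioriest} already hold for all scales $N' < N$ (hence $v_{N/2}$, and more precisely all the objects $\psi_{N',L}$, $h^{N',L}$, $z_{N'}$ with $N' \le N/2$, are under control, and in particular $v_{N/2} = e^{it\Delta}f(\omega) + \sum \zeta_{N',L} + \sum z_{N'}$ is a well-understood random field with the claimed structure). One then proves the three bounds at scale $N$. The three estimates are themselves intertwined and should be proved together by a further (finite) induction on $L$: for the $Y^b$ and $Z^b$ bounds on $h^{N,L}$ one inducts on $L$ from $L = 1/2$ (where $H^{N,1/2}$ is the explicit diagonal kernel $e^{-i|k|^2 t}\mathbf 1_{k=k^*}$, trivially satisfying both bounds) upward, each step controlling the increment $h^{N,L} = h^{N,L/2} + (\text{new piece})$; once all $h^{N,L}$ for $L < N^{1-\delta}$ are controlled, the bound on $z_N$ follows from a contraction-mapping argument applied to \eqref{equationz}.

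The core analytic input is a collection of multilinear estimates (the "Proposition \ref{multi0}" alluded to in the introduction), combining (i) $X^{s,b}$-type bilinear/multilinear bounds with the time-truncated Duhamel operator $\mathcal I$ of \eqref{duhameloper}, for which Lemma \ref{duhamelest} gives the kernel bound \eqref{trunckernel}; (ii) large-deviation (Wiener chaos / Khintchine) estimates for the multilinear Gaussian expressions built from the $g_k$, which is where independence is crucial — here one uses that $h^{N,L}$ is $\mathcal B_{\le N}$- and $\mathcal B_{\le L}^+$-measurable (Definition \ref{borel}), hence the "new" high-frequency Gaussian factor $g_{k^*}$ with $N/2 < \langle k^*\rangle \le N$ entering $\zeta_{N,L}$ is independent of the coefficient matrix; and (iii) sharp lattice-point counting bounds for sets of the form $S_m = \{(k_1,\dots,k_n): \iota_1 k_1 + \cdots + \iota_n k_n = k,\ \iota_1|k_1|^2 + \cdots + \iota_n|k_n|^2 \in [m,m+1]\}$, reflecting the probabilistic-scaling heuristic of Section \ref{probsc}. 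The two operator norms $\|\cdot\|_{Y^b}$, $\|\cdot\|_{Z^b}$ (operator norm and Hilbert–Schmidt norm on the kernel) are estimated by a $T^*T$ argument in the spirit of Bourgain \cite{Bourgain}: the $Y^b$ (operator-norm) bound $L^{-\delta_0}$ comes from the high-low structure and the extra smallness the truncation $L < N^{1-\delta}$ buys, while the $Z^b$ (Hilbert–Schmidt) bound $N^{1/2+\delta^{5/4}}L^{-1/2}$ is the "square-root gain" of \eqref{probscale} made rigorous. Throughout, the exponent hierarchy \eqref{hierarchy} is tuned so that the $\delta_0$-loss in the operator norm beats the $\delta^{5/4}$-loss in the Hilbert–Schmidt norm, and both close the induction with room to spare; the short-time factor $\tau^{b_1 - b}$ from Proposition \ref{sttime} supplies the smallness needed both for the contraction and for absorbing the implicit constants, at the cost of the excluded set of probability $\le C_\theta e^{-\tau^{-\theta}}$.

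The main obstacle — and the conceptual heart of the paper — is controlling the genuinely problematic high-low interactions, i.e. the terms where a high-frequency factor ($\sim N$) interacts with a product of low-frequency factors coming from $v_{L}$ (or, inside $z_N$, from $v_{N/2}$). These are exactly the terms that prevent a naive Picard iteration from closing (they live only in $X^{1/2-,1/2+}$), and the resolution is precisely the random-averaging-operator ansatz: rather than estimating $|P_{\ll N}v|^{2r}$ by regularity alone, one must feed the \emph{structural} decomposition \eqref{ansatz1} of $v_{L}$ back into the nonlinearity, expand, and sort the resulting terms according to which factors are linear Gaussians, which are $\zeta_{N',L'}$'s, and which are remainders $z_{N'}$ — and then invoke the inductive bounds on $h^{N',L'}$ (the operator and Hilbert–Schmidt norms) to show that each $\zeta_{N',L'}$-factor behaves, for the purposes of the estimate, essentially like a linear Schrödinger flow. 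Keeping the bookkeeping of this expansion manageable (so that it does \emph{not} blow up into an intractable tree, as the naive iteration \eqref{paracon2} would) is what the operator formalism and the norms \eqref{matrixnorm1}–\eqref{matrixnorm2} are designed to do; verifying that these norms are stable under the relevant operations — composition, insertion into $\mathcal I\mathcal N_{2l+1}$, and the gauge/mass factors $(m_N^*)^{r-l}$, handled via the "one more Gaussian" lemma referenced as Lemma \ref{largedev} — is the technical crux of Sections \ref{prep2} and \ref{multiest}.
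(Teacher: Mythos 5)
Your high-level strategy matches the paper's: a joint inductive bootstrap in scale, extensions to all of time, reduction to a family of multilinear estimates (Proposition \ref{multi0}) that combine hypercontractivity/large-deviation bounds with lattice counting and a $\mathcal T^*\mathcal T$ argument, a contraction map for $z_N$, and exploiting $\mathcal B_{\le L}^+$-measurability so that the new Gaussian block $\Delta_N f$ is independent of the low-frequency data — all of this is exactly what Sections \ref{structuresol}--\ref{multiest} do (the induction is organized in the paper as a single scale parameter $M$ via $\mathtt{Loc}(M)\Rightarrow\mathtt{Loc}(2M)$, which amounts to the same dependency graph as your outer-$N$/inner-$L$ ordering).

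The one substantive gap is that the induction, carried with \emph{only} the three bounds in \eqref{aprioriest} as hypothesis, would not close. The paper proves the stronger Proposition \ref{localmain2}, which adds to the hypothesis (i) the $\kappa$-weighted kernel-concentration bound \eqref{induct3}, i.e.\ $\|(1+|k-k^*|/L)^\kappa h_{kk^*}^{N,L,\dagger}\|_{Z^b}\le N$, and (ii) the operator/Hilbert--Schmidt bounds \eqref{induct4}, \eqref{induct5} on the random averaging kernels $\mathfrak h^{N,L}$. Item (i) is needed both to make the type-(C) input assumptions \eqref{input2+} of Proposition \ref{multi0} self-propagating, and to reduce the $Z^b$ estimate to the near-diagonal regime $|k-k^*|\ll N$; item (ii) is what lets you solve the linear fixed-point equation \eqref{matrixprod} for $h^{N,M,\dagger}$ and recover \eqref{induct1}--\eqref{induct3} at scale $M$. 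Without tracking these extra quantities through the induction you cannot even set up the step from $L/2$ to $L$. (A minor point: $h^{N,L}:=H^{N,L}-H^{N,L/2}$ \emph{is} already the one-scale increment, so the phrase ``$h^{N,L}=h^{N,L/2}+\text{new piece}$'' conflates $h$ with $H$; the inductive step is on $H^{N,L}$, with $h^{N,L}$ being the object you bound.)
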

\subsubsection{The extensions}\label{extension}
In proving Proposition \ref{localmain} we will restrict $z_N$ and $h^{N,L}$ to $J$ and construct extensions of these restrictions that are defined for all time. This has to be done carefully so as to maintain the correct independence properties. We define these extensions inductively, as follows.

First, let $z_1^\dagger(t):=z_1(t)\chi_{\tau}(t)$ and $ \psi_{N,\frac{1}{2}}^\dagger(t): =\chi(t)e^{it\Delta}(\Delta_Nf(\omega))$, and define $H^{N,\frac{1}{2},\dagger}$ accordingly. Suppose $M\geq 1$ is a dyadic number and that we have defined $z_N^\dagger$ for all $N\leq M$ and $h^{N,L,\dagger}$ for all $(N,L)\in\mathcal{K}$ and $L<M$, then for $L\leq M$ we may define
\begin{equation*}v_L^\dagger= \sum_{L'\leq L}y_{L'}^\dagger,\quad \text{ where } \quad y_L^\dagger=z_{L}^\dagger\, +\, \chi(t)e^{it\Delta}(\Delta_Lf(\omega))+\sum_{(L,R)\in\mathcal{K}}\zeta_{L,R}^{\dagger},
\end{equation*} which is acceptable since for $(L,R)\in\mathcal{K}$ we must have $R<L^{1-\delta}\leq M$, so $h^{L,R,\dagger}$ and $H^{L,R,\dagger}$ are well-defined, hence $\zeta_{L,R}^\dagger$ and $\psi_{L,R}^\dagger$ can be defined by (\ref{linearity}) and (\ref{matrices}).

\smallskip

Next, for $(N,L)\in\mathcal{K}$ and $L=M$, we can define $H^{N,M,\dagger}$ such that for $\frac{N}{2}<\langle k^*\rangle\leq N$ and $\langle k\rangle\leq N$, $H_{kk^*}^{N,M,\dagger}=\varphi_k^\dagger$ is the $k$-th mode of the solution $\varphi^\dagger$ to the equation
\begin{equation}\label{daggered}\varphi^{\dagger}(t)=\chi(t)e^{it\Delta}(e^{ik^*\cdot x})-i\chi_{\tau}(t)\sum_{l=0}^r(l+1)\,c_{rl}\,(m_N^*)^{r-l}\cdot\mathcal{I}\Pi_N\mathcal{N}_{2l+1}\big(\varphi^\dagger,v_{M}^\dagger,\cdots,v_{M}^\dagger\big),
\end{equation}provided this solution exists and is unique; otherwise simply define $H^{N,M,\dagger}=H^{N,M}\cdot\chi_\tau(t)$. This defines $H^{N,M,\dagger}$ and hence also $h^{N,M,\dagger}$, $\psi_{N,M}^\dagger$ and $\zeta_{N,M}^\dagger$.

Finally we will define $z_{2M}$. As $\psi_{2M,L_0}^\dagger$ is already defined, where $L_0\leq M$ is the largest $L$ such that $(2M,L)\in\mathcal{K}$, we can define $z_{2M}^\dagger$ to be the unique fixed point of the mapping
\begin{align}z\mapsto &-i\chi_{\tau}(t)\sum_{l=0}^rc_{rl}\, (m_{2M}^*)^{r-l}\cdot\mathcal{I}\Pi_{2M}\bigg\{\mathcal{N}_{2l+1}\big(z+\psi_{2M,L_0}^\dagger+v_{M}^\dagger\big)-\mathcal{N}_{2l+1}\big(v_{M}^\dagger\big)\bigg\}\nonumber\\
&+i\chi_{\tau}(t)\sum_{l=0}^r(l+1)c_{rl}\, (m_N^*)^{r-l}\cdot\mathcal{I}\Pi_{2M}\mathcal{N}_{2l+1}\big(\psi_{2M,L_0}^\dagger,v_{L_0}^{\dagger},\cdots,v_{L_0}^{\dagger}\big)
\nonumber\\&-i\chi_{\tau}(t)\sum_{l=0}^rc_{rl}\, \big[(m_M^*+\nu_{2M})^{r-l}-(m_M^*)^{r-l}\big]\cdot\mathcal{I}\Pi_{M}\mathcal{N}_{2l+1}\big(v_M^{\dagger}\big)\nonumber\\
&-i\chi_{\tau}(t)\sum_{l=0}^rc_{rl}\, (m_{2M}^*)^{r-l}\cdot\mathcal{I}\Delta_{2M}\mathcal{N}_{2j+1}\big(v_M^\dagger\big)\label{fixedpoint}
\end{align}
 on the set $\mathcal{Z}=\{z:\|z\|_{X^b}\leq (2M)^{-1+\gamma}\}$, provided that this mapping is a contraction mapping from $\mathcal{Z}$ to itself. If it is not a contraction mapping, then simply define $z_{2M}^\dagger=z_{2M}\cdot\chi_{\tau}(t)$. This completes the inductive construction. We may then easily verify that:
\begin{itemize}
\item The $z_N^\dagger$ and $h^{N,L,\dagger}$ we constructed indeed coincide with $z_N$ and $h^{N,L}$ on $J$;

\item The $z_N^\dagger$ is supported in $\langle k\rangle\leq N$, and $h_{kk^*}^{N,L,\dagger}$ is supported in $\langle k\rangle\leq N$ and $\frac{N}{2}<\langle k^*\rangle\leq N$;

\item The random variable $h^{N,L,\dagger}$ is $\mathcal{B}_{\leq L}^+$ measurable, and $z_N^\dagger$ and $h^{N,L,\dagger}$ are $\mathcal{B}_{\leq N}$ measurable;
\item All the above are smooth and compactly supported in time $t\in[-2,2]$.
\end{itemize}
We will prove Proposition \ref{localmain} by induction in $M$, but in the process we will need some auxiliary estimates. More precisely, we will prove the following result, which contains Proposition \ref{localmain}:
\begin{prop}\label{localmain2} Recall the relevant constants defined in (\ref{defparam}), and that $\tau\ll 1$. Consider the following statement which we call  $ \mathtt{Loc}(M) $ for $M\geq 1$: for any $(N,L)\in\mathcal{K}$ with $L<M$, we have
\begin{align}
\label{induct1}
&\|h^{N,L,\dagger}\|_{Y^b}\leq L^{-\delta_0};\\
\label{induct2}
&\|h^{N,L,\dagger}\|_{Z^b}\leq N^{\frac{1}{2}+\gamma_0}L^{-\frac{1}{2}};\\
\label{induct3}&\bigg\|\bigg(1+\frac{|k-k^*|}{L}\bigg)^{\kappa}h_{kk^*}^{N,L,\dagger}\bigg\|_{Z^{b}}\leq N.
\end{align} Define the operators\footnote{In fact we will prove stronger bounds where the low frequency inputs in (\ref{devop1}) are replaced by $v_{L_1}^\dagger,\cdots v_{L_{2r}}^{\dagger}$ with $\max(L_j)=L$, and similarly for (\ref{defop2}). But for simplicity we will just write (\ref{devop1}) and (\ref{defop2}).} (where $0\leq l\leq r$)
\begin{align}\label{devop1}\mathcal{P}^+(w)&:=\chi_{\tau}(t)\cdot\mathcal{I}\Pi_N\big[\mathcal{N}_{2l+1}(w,v_L^\dagger,\cdots,v_L^\dagger)-\mathcal{N}_{2l+1}(w,v_\frac{L}{2}^\dagger,\cdots,v_{\frac{L}{2}}^\dagger)\big],\\
\label{defop2}\mathcal{P}^-(w)&:=\chi_{\tau}(t)\cdot\mathcal{I}\Pi_N\big[\mathcal{N}_{2l+1}(v_L^\dagger,w,v_L^\dagger,\cdots,v_L^\dagger)-\mathcal{N}_{2l+1}(v_\frac{L}{2}^\dagger,w,v_{\frac{L}{2}}^\dagger,\cdots,v_{\frac{L}{2}}^\dagger)\big],
\end{align} then for any $(N,L)\in\mathcal{K}$ as defined in \eqref{setNL} with $L<M$ we have
\begin{equation}\label{induct4}\|\mathcal{P}^{\pm}\|_{X^{b}\to X^{b}}\leq \tau^{\theta}L^{-\delta_0^{\frac{1}{2}}}.
\end{equation} Let the kernel of $\mathcal{P}^+$ be $\mathfrak{h}_{kk'}^{N,L}(t,t')$, then for any $(N,L)\in\mathcal{K}$ with $L<M$ we have
\begin{equation}\label{induct5}\|\mathbf{1}_{|k|,|k'|\geq \frac{N}{4}}\cdot\mathfrak{h}_{kk'}^{N,L}(t,t')\|_{Z^{b,b}}\leq  \tau^{\theta}N^{\frac{1}{2}+\gamma_0-\delta^3}L^{-\frac{1}{2}}.
\end{equation} Finally for any $N\leq M$ we have
\begin{equation}\label{induct6}\|z_N^\dagger\|_{X^b}\leq N^{-1+\gamma}.
\end{equation}

Now suppose the statement $\mathtt{Loc}(M)$ is true for $\omega\in\Xi$, where $\Xi$ is a set, then the statement $\mathtt{Loc}(2M)$ is true for $\omega\in \Xi'$ where $\Xi'$ is another set such that $\mathbb{P}(\Xi\backslash\Xi')\leq C_{\theta}e^{-(\tau^{-1}M)^\theta}$. In particular, apart from a set of $\omega$ with probability $\leq C_{\theta}e^{-\tau^{-\theta}}$, the statement $\mathtt{Loc}(M)$ is true for all $M$.
\end{prop}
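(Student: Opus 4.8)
The plan is to prove Proposition~\ref{localmain2} --- which contains Proposition~\ref{localmain} --- by dyadic induction on $M$, the whole content being the implication $\mathtt{Loc}(M)\Rightarrow\mathtt{Loc}(2M)$. The base case $\mathtt{Loc}(1)$ is elementary: the only relevant scale is $L=\tfrac12$, at which $h^{N,1/2}$ vanishes (it is the difference of the free solution operator with itself), and $z_1^\dagger$, which solves a one-mode ODE and vanishes at $t=0$, is controlled by Proposition~\ref{sttime} together with the crude bounds (\ref{simplebd}). For the inductive step, the first point is that passing from ``$L<M$'' to ``$L<2M$'' and from ``$N\le M$'' to ``$N\le 2M$'' requires only establishing (\ref{induct1})--(\ref{induct5}) for pairs $(N,M)\in\mathcal{K}$ and (\ref{induct6}) for $N=2M$; all other cases are already part of $\mathtt{Loc}(M)$. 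The second, crucial, point is that the right-hand sides of the fixed-point equations (\ref{daggered}) and (\ref{fixedpoint}) at scales $\le 2M$ involve only the objects $v_L^\dagger$ with $L\le M$, which through $v_L^\dagger=\chi e^{it\Delta}\Pi_Lf+\sum_{(L',R)\in\mathcal{K},\,L'\le L}\zeta_{L',R}^\dagger+\sum_{N'\le L}z_{N'}^\dagger$ are completely quantitatively controlled by $\mathtt{Loc}(M)$; moreover $v_M^\dagger$ is $\mathcal{B}_{\le M}$-measurable (Definition~\ref{borel}) and hence independent of the ``high-frequency'' Gaussians $\{g_{k^*}\}_{M/2<\langle k^*\rangle\le N}$, which is exactly what makes the large deviation estimates of Lemma~\ref{largedev} applicable.

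I would carry out the inductive step in three stages. \textbf{Stage 1.} Establish the operator bound (\ref{induct4}) and the Hilbert--Schmidt bound (\ref{induct5}) at the new scale $L=M$. These concern only $\mathcal{P}^{\pm}$, which is built out of $v_M^\dagger$ and $v_{M/2}^\dagger$, so one expands the multilinear difference $\mathcal{N}_{2l+1}(\,\cdot\,,v_M^\dagger,\dots)-\mathcal{N}_{2l+1}(\,\cdot\,,v_{M/2}^\dagger,\dots)$ so that at least one slot carries $y_M^\dagger=v_M^\dagger-v_{M/2}^\dagger$ (effectively localized at frequency $\sim M$) and applies the multilinear estimates of Proposition~\ref{multi0}, Proposition~\ref{sttime}, and the large deviation estimates conditioned on $\mathcal{B}_{\le M}$; the gain $L^{-\delta_0^{1/2}}$ in (\ref{induct4}) is produced by the interaction of this frequency-$M$ factor with the high-frequency input, i.e.\ by the counting bound for the resonance set $S_m$ of Section~\ref{probsc}, and the factor $\tau^\theta$ by Proposition~\ref{sttime}. \textbf{Stage 2.} For $(N,M)\in\mathcal{K}$, solve (\ref{daggered}) by contraction --- $\varphi^\dagger$ occupying the first, linear slot of $\mathcal{N}_{2l+1}$ makes the nonlinear term $O(\tau^\theta)$-small after Proposition~\ref{multi0} and Proposition~\ref{sttime} --- so the fallback clause is never triggered and, summing (\ref{induct1}) for $L<M$, one gets $\|H^{N,M,\dagger}\|_{Y^b},\|H^{N,M/2,\dagger}\|_{Y^b}\lesssim 1$ with the corresponding matrix-level operator $\mathcal{A}$ having small $Y^b\to Y^b$ norm; subtracting the defining equations for scales $M$ and $M/2$ yields an identity $(\mathrm{Id}-\mathcal{A})\,h^{N,M,\dagger}=\mathcal{P}^{+}\circ H^{N,M/2,\dagger}$ (up to the harmless scalar $(m_N^*)^{r-l}\lesssim\tau^{-\theta}$ from (\ref{simplebd})), and inverting by Neumann series and feeding in Stage 1 gives (\ref{induct1}) and, since $H^{N,M/2,\dagger}$ is essentially supported at frequency $\sim N$ so that (\ref{induct5}) applies, also (\ref{induct2}); the weighted bound (\ref{induct3}) then follows by iterating (\ref{operatorbd3}).

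\textbf{Stage 3.} With $\psi_{2M,L_0}^\dagger$ (now available from Stages 1--2) and $v_M^\dagger$ in hand, solve (\ref{fixedpoint}) by contraction on $\{\|z\|_{X^b}\le(2M)^{-1+\gamma}\}$: by Proposition~\ref{multi0}, Proposition~\ref{sttime}, and the large deviation estimates, the dominant contributions are the high-high interactions, which by the heuristics of Section~\ref{probsc} live in a space of regularity $\approx 1-$ and are therefore bounded by $(\tau^{-1}M)^{C\theta}(2M)^{-1+\delta\gamma_0}\le(2M)^{-1+\gamma}$; the combination $\mathcal{N}_{2l+1}(z+\psi_{2M,L_0}^\dagger+v_M^\dagger)-\mathcal{N}_{2l+1}(v_M^\dagger)-(l+1)\mathcal{N}_{2l+1}(\psi_{2M,L_0}^\dagger,v_{L_0}^\dagger,\dots)$ is arranged so that its worst (random-averaging-type) piece is cancelled, and the terms with $(m_M^*+\nu_{2M})^{r-l}-(m_M^*)^{r-l}$ gain a factor $|\nu_{2M}|\lesssim\tau^{-\theta}(2M)^{-1+\theta}$ from (\ref{simplebd}). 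This gives (\ref{induct6}) at $N=2M$. Each stage discards, after the large deviation estimates, a set of probability $\le C_\theta e^{-(\tau^{-1}N)^\theta}$ per relevant frequency $N\ge M$; summing the geometric series over dyadic $N>M^{1/(1-\delta)}$ in Stages 1--2 and the single term $N=2M$ in Stage 3 gives $\mathbb{P}(\Xi\setminus\Xi')\le C_\theta e^{-(\tau^{-1}M)^\theta}$. Iterating from the base case and summing $\sum_{M\text{ dyadic}}C_\theta e^{-(\tau^{-1}M)^\theta}\lesssim C_\theta e^{-\tau^{-\theta}}$ then yields the final assertion.

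The hardest part is Proposition~\ref{multi0} itself --- the multilinear, operator-norm and Hilbert--Schmidt estimates encoding the bounds (\ref{normintro}) --- whose proof rests on a $T^{*}T$/Schur-type argument combined with sharp counting bounds for the resonance sets $S_m$ and on the independence of the high-frequency Gaussians from the $\mathcal{B}_{\le M}$-measurable low-frequency data; as stressed in the introduction these bounds are genuinely false for an arbitrary $X\in X^{\frac{1}{2}-,\frac{1}{2}+}$ and really encode the randomness structure of the solution. Within the present argument the difficulty is organizational rather than analytic: one must establish (\ref{induct4})--(\ref{induct5}), which depend only on $\mathtt{Loc}(M)$, before (\ref{induct1})--(\ref{induct3}), which use them through the Neumann inversion, and one must check at every step of the iterated $\dagger$-construction that the measurability structure of Definition~\ref{borel} is preserved --- without which the large deviation estimates cannot be applied --- after which summability of the probability losses over the unbounded range of $N$ is automatic.
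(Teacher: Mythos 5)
Your proposal is correct and follows essentially the same route as the paper's own proof: first establish (\ref{induct4})--(\ref{induct5}) at the new scale $L=M$ via the decomposition $v_M^\dagger=\sum_{L'\le M}y_{L'}^\dagger$ and Proposition~\ref{multi0}; then subtract the daggered equations at scales $M$ and $M/2$ to obtain the fixed-point identity (\ref{matrixprod}) and close (\ref{induct1})--(\ref{induct3}) by Neumann inversion; finally verify that the map (\ref{fixedpoint}) is a contraction to get (\ref{induct6}) at $N=2M$, summing the exceptional sets dyadically over $N\gtrsim M$. The only differences are cosmetic: the paper's Stage~3 carries out an explicit six-case decomposition (its items (a)--(f)) which you gesture at more loosely, and in passing from (\ref{matrixprod}) to (\ref{induct2}) the paper first disposes of the off-diagonal region $|k-k^*|>2^{-10}N$ via (\ref{induct3}) before invoking (\ref{induct5}), a reduction you compress into the heuristic that $H^{N,M/2,\dagger}$ is ``essentially supported at frequency $\sim N$.''
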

\subsection{The proof of Proposition \ref{localmain2}: reduction to multilinear estimates}\label{reductmulti} The heart of the proof of Proposition \ref{localmain2} is a collection of (probabilistic) multilinear estimates for $\mathcal{N}_{2l+1}$. We will state them in Proposition \ref{multi0} below and show that they imply Proposition \ref{localmain2}. We leave the proof of Proposition \ref{multi0} to Section \ref{multiest}.

\begin{prop}\label{multi0} Recall the relevant constants defined in (\ref{defparam}), and that $\tau\ll 1$. Let the multilinear form $\mathcal{N}_{n}$ be as in (\ref{multiform}), where $1\leq n\leq 2r+1$. We will also consider $\mathcal{N}_{n+1}$, in which we assume $\iota_1=+$. For each $1\leq j\leq n$, the input function $v^{(j)}$ satisfies one of the followings:

(i) Type (G), where we define $L_j=1$, and \begin{equation}\label{input1}(\widetilde{v^{(j)}})_{k_j}(\lambda_j)=\mathbf{1}_{N_j/2< \langle k_j\rangle\leq N_j}\frac{g_{k_j}(\omega)}{\langle k_j\rangle}\widehat{\chi}(\lambda_j).
\end{equation}

(ii) Type (C), where
\begin{equation}\label{input2}(\widetilde{v^{(j)}})_{k_j}(\lambda_j)=\sum_{N_j/2<\langle k_j^*\rangle\leq N_j}h_{k_jk_j^*}^{(j)}(\lambda_j,\omega)\frac{g_{k_j^*}(\omega)}{\langle k_j^*\rangle},\end{equation} with $h_{k_jk_j^*}^{(j)}(\lambda_j,\omega)$ supported in the set $\big\{\langle k_j\rangle\leq N_j,\frac{N_j}{2}<\langle k_j^*\rangle\leq N_j\big\}$, $\mathcal{B}_{\leq N_j}$ mesurable and $\mathcal{B}_{\leq L_j}^+$ measurable for some $L_j\leq N_j^{1-\delta}$, and satisfying the bounds
\begin{equation}\label{input2+}
\begin{aligned}\|\langle \lambda_j\rangle^{b}h_{k_jk_j^*}^{(j)}(\lambda_j)\|_{\ell_{k_j^*}^2\to\ell_{k_j}^2L_{\lambda_j}^2}\lesssim L_j^{-\delta_0},\quad \| \langle \lambda_j\rangle^{b}h_{k_jk_j^*}^{(j)}(\lambda_j)\|_{\ell_{k_j,k_j^*}^2L_{\lambda_j}^2}&\lesssim N_j^{\frac{1}{2}+\gamma_0}L_j^{-\frac{1}{2}},\\\bigg\|\langle \lambda_j\rangle^{b}\bigg(1+\frac{|k_j-k_j^*|}{L_j}\bigg)^\kappa h_{k_jk_j^*}^{(j)}(\lambda_j)\bigg\|_{\ell_{k_j,k_j^*}^2L_{\lambda_j}^2}&\lesssim N_j.
\end{aligned}\end{equation}

(iii) Type (D), where $(\widetilde{v^{(j)}})_{k_j}(\lambda_j)$ is supported in $\{|k_j|\lesssim N_j\}$, and satisfies
\begin{equation}\label{input3}\|\langle\lambda_j\rangle^b(\widetilde{v^{(j)}})_{k_j}(\lambda_j)\|_{\ell_{k_j}^2L_{\lambda_j}^2}\lesssim N_j^{-(1-\gamma)}.
\end{equation} In each case, we will assume that derivatives in $\lambda_j$ of these functions satisfy the same bounds. This can always be guaranteed, since in practice everything will be compactly supported in time.

\smallskip

Assume for $n_1\leq n$ that $v^{(j)}$ are of type (D) for $n_1+1\leq j\leq n$, and of type (G) or (C) for $1\leq j\leq n_1$. Let $\mathcal{G}$ and $\mathcal{C}$ be the sets of $j$ such that $v^{(j)}$ are of type (G) and (C) respectively, similarly denote by $\mathcal{D}:=\{n_1+1,\cdots,n\}$. Let $N^{(j)}=\max^{(j)}(N_1,\cdots,N_n)$ as before, and let $1\leq a\leq n$ be such that $N^{(1)}\sim N_a$. Given $N_*\geq 1$, the followings hold $\tau^{-1}N_*$-certainly. We emphasize that the exceptional set of $\omega$ removed does \emph{not} depend on the choice of the functions $v_j(j\geq n_1+1)$.

\smallskip

(1) If $a\geq n_1+1$ (say $a=n$) and $N_*\gtrsim N^{(2)}$, then we have (recall $b_1=b+\delta^4$)
\begin{equation}\label{mainmult1}\|\mathcal{I}\mathcal{N}_{n}(v^{(1)},\cdots,v^{(n)})\|_{X^{b_1}}\lesssim \tau^{-\theta}(N_*)^{C\kappa^{-1}} (N^{(1)})^{-1+\gamma}(N^{(2)})^{-\delta_0^{\frac{1}{3}}}.
\end{equation} Here the exceptional set does not depend on $N^{(1)}$.
\smallskip

(2) If $a\leq n_1$ and $N_*\gtrsim N^{(1)}$, then we have \begin{equation}\label{mainmult2}\|\mathcal{I}\mathcal{N}_{n}(v^{(1)},\cdots,v^{(n)})\|_{X^{b_1}}\lesssim \tau^{-\theta}(N_*)^{C\kappa^{-1}} (N^{(1)}N^{(2)})^{-\frac{1}{2}(1-\gamma_0)}.
\end{equation} If moreover $\iota_a=-$, then we have the stronger bound
\begin{equation}\label{mainmult3}\|\mathcal{I}\mathcal{N}_{n}(v^{(1)},\cdots,v^{(n)})\|_{X^{b_1}}\lesssim \tau^{-\theta}(N_*)^{C\kappa^{-1}} (N^{(1)})^{-(1-\gamma_0)}.
\end{equation} If moreover $\iota_a=+$ and $N^{(2)}\lesssim (N^{(1)})^{1-\delta}$, then we have stronger bound for the projected term
\begin{equation}\label{mainmult4}\|\mathcal{I}\Pi_{N^{(1)}}^\perp\mathcal{N}_{n}(v^{(1)},\cdots,v^{(n)})\|_{X^{b_1}}\lesssim \tau^{-\theta}(N_*)^{C\kappa^{-1}} (N^{(1)})^{-(1-\frac{4\gamma}{5})}.
\end{equation}
\smallskip
(3) Now consider the operator
\begin{equation}\mathcal{Q}^+(w):=\mathcal{I}\Pi_{N_0}\mathcal{N}_{n+1}(w,v^{(1)},\cdots,v^{(n)}),
\end{equation} and let its kernel be $\mathfrak{h}_{kk'}(t,t')$. If $N^{(1)}\lesssim N_0^{1-\delta}$ and $N_*\gtrsim N_0$, then we have
\begin{equation}\label{mainmult5}\|\mathbf{1}_{|k|,|k'|\geq \frac{N_0}{4}}\cdot\mathfrak{h}_{kk'}(t,t')\|_{Z^{b_1,b}}\lesssim \tau^{-\theta}(N_*)^{C\kappa^{-1}}N_0^{\frac{1}{2}}(N^{(1)})^{-\frac{1}{2}+\gamma_0}.
\end{equation}
\end{prop}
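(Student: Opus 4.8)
The plan is to prove Proposition \ref{multi0} by a case analysis on the frequency configuration, combining three ingredients, all of which are assembled in Section \ref{prep2}: (a) the large deviation estimates for multilinear Gaussian sums, which convert the randomness of the type (G) and type (C) inputs into a ``square root gain'' over the trivial bound; (b) elementary counting estimates for lattice points $(k_1,\dots,k_n)$ subject to the linear constraint $\iota_1k_1+\cdots+\iota_nk_n=k$ together with the quadratic modulation constraint coming from the Schr\"odinger resonance function; and (c) the Duhamel kernel bound of Lemma \ref{duhamelest}, whose $\langle\mu\rangle^{-1}$ decay is what allows one to land in the slightly higher space $X^{b_1}$. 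First I would pass to the twisted spacetime Fourier side, Littlewood--Paley decompose each input $v^{(j)}$ in both the spatial frequency $k_j$ (scale $N_j$) and the modulation $\langle\lambda_j\rangle$, and reduce each of \eqref{mainmult1}--\eqref{mainmult5}, by duality against a generic test function in $\ell^2 L^2$, to a bound on a dyadically localized multilinear sum
\[
\sum_{\iota_1 k_1 + \cdots + \iota_n k_n = k} a_{k k_1 \cdots k_n}\,\prod_{j=1}^n (\widetilde{v^{(j)}})_{k_j}(\lambda_j),
\]
where the presence of the resonance $\Sigma=\iota_1|k_1|^2+\cdots+\iota_n|k_n|^2-|k|^2$ forces the largest of the modulations (including the output modulation carried through $\mathcal{I}$) to be $\gtrsim|\Sigma|$. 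The type (D) inputs are then disposed of by Cauchy--Schwarz in their frequencies using \eqref{input3}; the content is in the random inputs.

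\textbf{The probabilistic reduction.} For the type (G) inputs one expands the product of Gaussians into Wiener chaos. Because $\mathcal{N}_n$ is a \emph{simple} multilinear form, every contraction among $\{k,k_1,\dots,k_n\}$ that is not over-paired kills the corresponding coefficient, so the fully and partially contracted pieces either vanish or collapse to lower-order simple forms handled by induction, and the dominant contribution is the top chaos. To this one applies the large deviation estimate of Section \ref{prep2} (Lemma \ref{largedev}): $\tau^{-1}N_*$-certainly, the random multilinear sum is bounded by $(N_*)^{C\kappa^{-1}}$ (or a harmless $\tau^{-\theta}$, coming through the a priori bounds \eqref{simplebd}) times the $\ell^2$-norm of its coefficient array. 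The union bound that produces the exceptional set is taken \emph{once}, over all dyadic scales $\le N_*$ — this is exactly why we can afford the loss $(N_*)^{C\kappa^{-1}}$, why $\theta$ is allowed to be as small as needed, and why the exceptional set can be taken independent of $N^{(1)}$ and of the (frozen) type (D) inputs, as the statement demands. For a type (C) input one argues the same way with $h^{(j)}_{k_j k_j^*}$ treated as a kernel: if the internal Gaussian $g_{k_j^*}$ survives into the chaos one uses the Hilbert--Schmidt bound $\|\langle\lambda_j\rangle^b h^{(j)}\|_{Z^b}\lesssim N_j^{1/2+\gamma_0}L_j^{-1/2}$ of \eqref{input2+}, whereas if it is contracted against another input one uses the operator-norm bound $\|\langle\lambda_j\rangle^b h^{(j)}\|_{Y^b}\lesssim L_j^{-\delta_0}$, the weighted bound in \eqref{input2+} controlling the spreading $|k_j-k_j^*|\lesssim L_j$ in the frequency sums.

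\textbf{The deterministic counting.} After this reduction each of \eqref{mainmult1}--\eqref{mainmult5} becomes a multilinear inequality of the shape ``$\ell^2$ of coefficients $\times$ $\#\{$solutions$\}^{1/2}$ (or $^1$)'', where the solution set is a set of lattice points constrained by $\iota_1k_1+\cdots+\iota_nk_n=k$ and by $\Sigma$ lying in a dyadic window, localized to the relevant frequency cubes. The counting lemmas of Section \ref{prep2} then give all the needed bounds; the gain $(N^{(2)})^{-\delta_0^{1/3}}$ in \eqref{mainmult1}, and the $(N^{(1)}N^{(2)})^{-1/2(1-\gamma_0)}$, $(N^{(1)})^{-(1-\gamma_0)}$, $(N^{(1)})^{-(1-\frac{4\gamma}{5})}$ gains in \eqref{mainmult2}--\eqref{mainmult4}, come from pairing the square root gain of the large deviation estimate against the counting deficit, exactly in the spirit of the heuristic of Section \ref{probsc}, together — where an operator rather than a function is involved — with a $T^*T$/orthogonality argument in the style of Bourgain. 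This is also why \eqref{mainmult5}, which asks for a Hilbert--Schmidt \emph{kernel} bound $N_0^{1/2}(N^{(1)})^{-1/2+\gamma_0}$ of $\mathcal{Q}^+$ rather than an operator bound, costs an extra $N_0^{1/2}$: there the new variable $w$ is ``free'', so one counts $k'\sim N_0$ in $\ell^2$ instead of testing. The improved projected bound \eqref{mainmult4} uses that when $\iota_a=+$ and all other inputs are $\lesssim(N^{(1)})^{1-\delta}$ the output frequency is pinned to $|k|\sim N^{(1)}$ unless there is a near-cancellation, and $\Pi^\perp_{N^{(1)}}$ excludes precisely that case, which forces a larger modulation and hence a better modulation gain. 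Summing the dyadic pieces is routine since each bound carries a genuine negative power of the relevant frequency.

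\textbf{The main obstacle.} The hard part is the high-low regime: the case $a\le n_1$, where the top frequency sits on a random (type (G) or (C)) input but almost all of the remaining inputs are very low — precisely the interactions that defeated the naive ansatz in Section \ref{method}. Here the crude counting is lossy, and one genuinely has to exploit the operator-norm versus Hilbert--Schmidt dichotomy for the type (C) inputs together with the $T^*T$ argument in order to recover the required power; making this close \emph{uniformly in $r$}, with the $X^{b_1}$ regularity needed downstream and with the exceptional set independent of $N^{(1)}$, is the delicate point, and it is exactly the structure that Propositions \ref{localmain} and \ref{localmain2} are designed to carry through the induction on $M$ (the time-localization of Proposition \ref{sttime} converting the $\tau^{-\theta}$ losses here into the genuine smallness $\tau^\theta L^{-\delta_0^{1/2}}$ required there).
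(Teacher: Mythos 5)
Your overall strategy --- reduce via duality to dyadically localized lattice-point sums, convert the type (G)/(C) randomness into a square-root gain through multilinear large deviation estimates, trade that against a counting estimate with a Schr\"odinger resonance constraint, and upgrade via a $\mathcal{T}^*\mathcal{T}$ argument --- is indeed the skeleton of the paper's proof in Section~\ref{multiest} (through Propositions~\ref{general} and~\ref{counting1} and Corollary~\ref{corcounting}). But two substantive steps are described in ways that would not survive being pushed to detail.

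First, your handling of the over-paired terms does not match what actually happens. You are right that simplicity of $\mathcal{N}_n$ kills any non-over-paired contraction, so only over-pairings contribute. But they do not ``collapse to lower-order simple forms handled by induction.'' Once several $k_j$'s are forced to a common value $l_i$, the surviving sum contains a product $\prod_{j\in A_i}v^{(j)}_{l_i}$ of at least three inputs at the \emph{same} frequency, and the paper peels this off as a separate factor $\sum_{l_i}M^{(i)}_{l_i}$ that is bounded directly by $\ell^\infty/\ell^2$ trade-offs plus a further large deviation estimate applied to the type~(C) factors (Claim~\ref{pairclaim}); there is no reduction to a lower-degree form of the same type, and no induction on $n$ appears. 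A student deriving these bounds by your route would be missing that step entirely.

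Second, the improvement~\eqref{mainmult4} is not a modulation gain. The point of the exact projection $\Pi^\perp_{N^{(1)}}$ is to impose the $\Gamma$-condition~\eqref{gammacon} with $\Gamma=(N^{(1)})^2-1$, i.e.\ $|k|^2\geq\Gamma\geq|k_a|^2$ or its reverse. Combined with the resonance identity and the strengthened divisor estimate of Lemma~\ref{lem:counting}\,(1), this bounds the number of admissible $k_a$ within a modulation window of width $M$ by $O(M)$ rather than by a power of $N^{(1)}$, which is the content of~\eqref{bdset5}; the $M$ is then cashed against the $\langle\Sigma-\alpha\rangle^{-a_0}$ weight when one forms~$\mathcal{E}_3$ in Corollary~\ref{corcounting}. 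Nothing forces the output modulation to be large, and both large and small modulations contribute; the mechanism is a refined lattice-point count, not a resonance-is-large argument.

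Two smaller misattributions. The no-pairing estimate Lemma~\ref{lemma:5.1}, rather than Lemma~\ref{largedev}, is the probabilistic input for the first-moment bound; your rule of thumb ``Hilbert--Schmidt norm if the internal Gaussian survives, operator norm if it is contracted'' is roughly inverted relative to the paper's use, where the $Y^b$ operator bound feeds Lemma~\ref{lemma:5.1} for the unpaired indices and the $Z^b$ Hilbert--Schmidt bound is spent once per paired block in~\eqref{contbd}. And the requirement that the exceptional set be independent of $N^{(1)}$ is needed in case~(1), where the top frequency is a type~(D) input and only $N_*\gtrsim N^{(2)}$ is assumed (this is why the Section~\ref{case3} reduction restricts to a cube $|k-f|\lesssim N^{(2)}$); in case~(2) one has $N_*\gtrsim N^{(1)}$, so there is no constraint to meet. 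You ascribe the independence difficulty to the high-low regime, which is the wrong case.
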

\begin{rem}\label{gamma2} The improvement (\ref{mainmult4}) is due to the exact projection $\Pi_{N^{(1)}}^\perp$. In fact this implies that in the expression (\ref{multiform}) there exists some $1\leq a\leq n$ and some $\Gamma$, namely $\Gamma=(N^{(1)})^2-1$, such that \begin{equation}\label{gammacon}|k|^2\geq \Gamma\geq |k_a|^2\quad\textrm{or}\quad |k|^2\leq \Gamma\leq |k_a|^2,\quad\textrm{and }N^{(1)}\sim N_a.\end{equation} We call (\ref{gammacon}) the \emph{$\Gamma$-condition}. If we put some other projections in $\mathcal{N}_n$ that also guarantee (\ref{gammacon}), for example $\Pi_M\mathcal{N}_{n}(\cdots,\Pi_M^\perp v^{(a)},\cdots)$ where $N^{(1)}\sim N_a$, then the same improvement (\ref{mainmult4}) will remain true.

In the proof below we will see that the $\Gamma$ condition provides the needed improvements in the case $N^{(1)}\sim N_a$ and $\iota_a=+$. This is the reason why we place $\psi_{N, L}$ in the first position of $\mathcal{N}_{2l+1}$ in (\ref{defpsi}). On the other hand, the term where $\psi_{N, L}$ is placed in the second position can be handled using the improvement (\ref{mainmult3}).
\end{rem}

\begin{proof}[Proof of Proposition \ref{localmain2} assuming Proposition \ref{multi0}] To prove the statement $\mathtt{Loc}(2M)$ we start with (\ref{induct4}) and (\ref{induct5}), and may assume $L=M$. The proof for $\mathcal{P}^-$ in (\ref{induct4}) is similar, so let us consider $\mathcal{P}^+$. Since $v_M^\dagger=\sum_{L'\leq M}y_{L'}^\dagger$, by definition we can write $\mathcal{P}^+$ as a superposition of forms 
\[w\mapsto\chi_{\tau}(t)\cdot\mathcal{I}\Pi_N\mathcal{N}_{2l+1}(w,y_{N_2}^\dagger,\cdots,y_{N_{2l+1}}^\dagger),\] where $\max(N_2,\cdots,N_{2l+1})=M$. As we have the decomposition
\begin{equation} y_{N_j}^\dagger\,=\,\chi(t)e^{it\Delta}(\Delta_{N_j}f(\omega))\,+\, \sum _{L_j:(N_j,L_j)\in\mathcal{K}}\zeta_{N_j,L_j}^\dagger\,+\, z_{N_j}^\dagger
\end{equation}by $\mathtt{Loc}(M)$ we know that each $y_{N_j}^{\dagger}$ can be decomposed into terms of type (G), type (C) (corresponding to some $L_j\lesssim N_j^{1-\delta}$), and type (D). The bound (\ref{induct4}) is then a consequence of (\ref{sttime1}) and (\ref{mainmult1}), after removing a set of $\omega$ with measure $\leq C_{\theta}e^{-(\tau^{-1}M)^\theta}$ that is independent of $N$. Note that by (\ref{matrixnorm0}), the $L=M$ case of (\ref{induct4}) is equivalent to 
\begin{equation}\label{inductmed}\|\mathfrak{h}^{N,M}\|_{Y^{b,b}}\leq \tau^{\theta}M^{-\delta_0^{\frac{1}{2}}}.
\end{equation}Similarly (\ref{induct5}) follows from (\ref{sttime1}) and (\ref{mainmult5}), because we have
\[\tau^{b_1-b}\tau^{-\theta}N^{C\kappa^{-1}}N^{\frac{1}{2}}M^{-\frac{1}{2}+\gamma_0}\ll \tau^{\theta} N^{\frac{1}{2}+\gamma_0-2\delta^3}M^{-\frac{1}{2}}
\] using the fact that $M\lesssim N^{1-\delta}$. The set of $\omega$ removed here will depend on $N$, but it will have measure $\leq C_{\theta}e^{-(\tau^{-1}N)^\theta}$, so summing in $N\geq M$ we still get a set of measure $\leq C_{\theta}e^{-(\tau^{-1}M)^\theta}$.
\smallskip

Next we prove \eqref{induct1}--\eqref{induct3}, again assuming $L=M$.  By (\ref{induct4}) and $\mathtt{Loc}(M)$
we already know that the right hand side of (\ref{daggered}) gives a contraction mapping in $X^b$,
so (\ref{daggered}) does have a unique solution. Subtracting the equations (\ref{daggered}) with $M$ and with $\frac{M}{2}$ instead of $M$, we deduce that
\begin{multline}\label{matrixprod}h_{kk^*}^{N,M,\dagger}(t) = -i\sum_{l=0}^r(l+1)\,c_{rl}\, (m_N^*)^{r-l}\bigg\{\sum_{L\leq M}\sum_{k'}\int\mathrm{d}t'\cdot \mathfrak{h}_{kk'}^{N,L}(t,t')h_{k'k^*}^{N,M,\dagger}(t')\\+\sum_{L<M}\sum_{k'}\int\mathrm{d}t'\cdot \mathfrak{h}_{kk'}^{N,M}(t,t')h_{k'k^*}^{N,L,\dagger}(t')+\int\mathrm{d}t'\cdot \mathfrak{h}_{kk^*}^{N,M}(t,t')H_{k^*k^*}^{N,\frac{1}{2},\dagger}(t')\bigg\},
\end{multline} where $\mathfrak{h}_{kk'}^{N,L}(t,t')$ is the kernel corresponding to $\mathcal{P}^+$ in the $(k,k',t,t')$ variables. Recall that we are already in a set where (\ref{simplebd}) is true, which allows us to control $m_N^*$. Now by the definition of $Y^b$ and $Y^{b,b}$ norms, the statement $\mathtt{Loc}(M)$ and (\ref{inductmed}), we conclude that
\begin{align*}\|h^{N,M,\dagger}\|_{Y^b}&\lesssim\sum_{L\leq M}\|\mathfrak{h}^{N,L}\|_{Y^{b,b}}\cdot\|h^{N,M,\dagger}\|_{Y^b}+\|\mathfrak{h}^{N,M}\|_{Y^{b,b}}\bigg(\sum_{L<M}\|h^{N,L,\dagger}\|_{Y^b}+1\bigg)\nonumber\\
&\lesssim \|h^{N,M,\dagger}\|_{Y^b}\cdot \sum_{L\leq M} \tau^{\theta}L^{-\delta_0^{\frac{1}{2}}}+\sum_{L<M}\tau^{\theta}M^{-\delta_0^{\frac{1}{2}}}L^{-\delta_0}\lesssim \tau^\theta \|h^{N,M,\dagger}\|_{Y^b}+\tau^\theta M^{-\delta_0^{\frac{1}{2}}}\nonumber,
\end{align*}
 which implies (\ref{induct1}) as desired.  In the same way we can prove (\ref{induct3}) by using (\ref{operatorbd3}), noting that $\widetilde{\mathfrak{h}^{N,L}}$ is supported in $|k-k'|\lesssim L$. 

As for (\ref{induct2}), recall that for
\[\widetilde{H}_{kk^*}(\lambda)=\sum_{k'}\int\mathrm{d}\lambda'\cdot \mathfrak{h}_{kk'}(\lambda,\lambda')h_{k'k^*}(\lambda')
\]we have, by definition of the relevant norms, that
\[\|H\|_{l_{k,k^*}^2L_{\lambda}^2}\leq\min(\|\mathfrak{h}\|_{l_{k,k'}^2L_{\lambda,\lambda'}^2}\|h\|_{\ell_{k^*}^2\to \ell_{k'}^2L_{\lambda'}^2},\|\mathfrak{h}\|_{l_{k,k'}^2\to L_{\lambda,\lambda'}^2}\|h\|_{\ell_{k',k^*}^2L_{\lambda'}^2}).
\] Now in (\ref{matrixprod}) we may assume $|k-k^*|\leq 2^{-10}N$ and $|k'-k^*|\leq 2^{-10}N$ (otherwise the bound follows trivially from (\ref{induct3}) which we just proved), so in particular $|k|,|k'|\geq \frac{N}{4}$ as $|k^*|\geq\frac{N}{2}$. Using the statement $\mathtt{Loc}(M)$ and (\ref{induct5}) we get
\begin{align*}\|h^{N,M,\dagger}\|_{Z^b}&\lesssim\sum_{L\leq M}\|\mathfrak{h}^{N,L}\|_{Y^{b,b}}\cdot\|h^{N,M,\dagger}\|_{Z^b}+\|\mathbf{1}_{|k|,|k'|\geq \frac{N}{4}}\cdot\mathfrak{h}_{kk'}^{N,M}(t,t')\|_{Z^{b,b}}\bigg(\sum_{L<M}\|h^{N,L,\dagger}\|_{Y^b}+1\bigg)\\
&\lesssim \|h^{N,M,\dagger}\|_{Z^b}\cdot \sum_{L\leq M} \tau^{\theta}L^{-\delta_0^{\frac{1}{2}}}+\sum_{L<M}\tau^{\theta}N^{\frac{1}{2}+\gamma_0-\delta^3}M^{-\frac{1}{2}}L^{-\delta_0}\\&\lesssim \tau^\theta \|h^{N,M,\dagger}\|_{Y^b}+\tau^\theta N^{\frac{1}{2}+\gamma_0-\delta^3}M^{-\frac{1}{2}},
\end{align*}
which proves (\ref{induct2}).

\smallskip

Finally we prove (\ref{induct6}) with $L=2M$, by showing that the mapping defined in (\ref{fixedpoint}) is indeed a contraction mapping from the given set $\mathcal{Z}=\{z:\|z\|_{X^b}\leq (2M)^{-1+\gamma}\}$ to itself. Actually we will only prove that this mapping sends $\mathcal{Z}$ to $\mathcal{Z}$, as the difference estimate is done in the same way. 

We will separate the right hand side of (\ref{fixedpoint}) into six groups, each of which has the form
\[\chi_{\tau}(t)\cdot(m_{2M}^*)^{r-l}\mathcal{I}\Pi_{2M}\mathcal{N}_{2l+1}(v^{(1)},\cdots,v^{(2l+1)}),\] where
\begin{enumerate}[label=(\alph*)]
\item At least two of the $v^{(j)}$ are equal to $z+\psi_{2M,L_0}^{\dagger}$, and others are either $z+\psi_{2M,L_0}^{\dagger}$ or $v_M^\dagger$;
\item We have $v^{(2)}=\psi_{2M,L_0}^{\dagger}$, and all others equal $v_M^\dagger$;
\item One of $v^{(1)}$ or $v^{(2)}$ equals $z$, and all others equal $v_M^\dagger$;
\item We have $v^{(1)}=\psi_{2M,L_0}^{\dagger}$, another $v^{(j)}$ equals $v_{M}^\dagger-v_{L_0}^\dagger$, and all others equal either $v_M^\dagger$ or $v_{L_0}^\dagger$;
\item The factor $(m_{2M}^*)^{r-l}$ is replaced by $(m_{M}^*+\nu_{2M})^{r-l}-(m_M^*)^{r-l}$ and all $v^{(j)}$ equal $v_M^\dagger$;
\item Same as (a), but with $\Delta_{2M}$ instead of $\Pi_{2M}$, and all $v^{(j)}$ equal $v_M^\dagger$.
\end{enumerate}

By (\ref{sttime1}), it suffices to prove that each of these terms in (a) through (f), but without the $\chi_{\tau}(t)$ factor, is bounded in $X^{b_1}$ by $\tau^{-\theta}(2M)^{-1+\gamma}$.  Let one such term be denoted by $\mathcal{M}$, and notice that we can decompose
\[v_M^\dagger=\sum_{L\leq M}y_L^\dagger,\quad v_{L_0}^\dagger=\sum_{L\leq L_0}y_L^\dagger,\quad v_M-v_{L_0}=\sum_{L_0<L\leq M}y_L^\dagger,\]\[\psi_{2M,L_0}^\dagger=\chi(t)e^{it\Delta}(\Delta_{2M}f(\omega))+\sum_{L\leq L_0}\zeta_{2M,L}.\]Moreover by what we have proved so far, we know that $y_L^\dagger$ for $L\leq M$ can be decomposed into terms of types (G), (C) and (D), and that $\chi(t)e^{it\Delta}(\Delta_{2M}f(\omega))$ is of type (G), $\zeta_{2M,L}$ is of type (C), and $z$ is of type (D). By such decomposition we can reduce $\mathcal{M}$ to the terms studied in Proposition \ref{multi0}, with various choices of $N_j$ and $L_j$. We now proceed case by case.

Case (a): Here we have at least two inputs $v^{(j)}$ with $N_j=2M$, so by either (\ref{mainmult1}) or (\ref{mainmult2}) we can bound
\[\|\mathcal{M}\|_{X^{b_1}}\lesssim \tau^{-\theta}(2M)^{-1+\gamma_0+C\kappa^{-1}}\] by removing a set of measure $\leq C_{\theta}e^{-(\tau^{-1}M)^\theta}$, which suffices.

Case (b): Here we have $v^{(2)}=N^{(1)}=2M$, while $\iota_2=-$. By (\ref{mainmult3}) we have the same bound as above.

Case (c): This term, \emph{with the $\chi_{\tau}(t)$ factor}, can be written as
\[\sum_{L\leq M}\mathcal{P}_L^{\pm}(z),\] where $\mathcal{P}_L^{\pm}$ are defined as in (\ref{devop1}) and (\ref{defop2}) (such expressions can be defined even if $(2M,L)\not\in\mathcal{K}$, and we use subscript $L$ to indicate $L$ dependence). If $L\leq L_0$ then by (\ref{induct4}) we can bound
\[\|\chi_{\tau}(t)\cdot \mathcal{M}\|_{X^b}\leq(2M)^{-1+\gamma}\sum_{L\leq L_0}\tau^\theta L^{-\delta_0^{\frac{1}{2}}}\lesssim \tau^\theta(2M)^{-1+\gamma},\] which suffices. Note that here no further set of $\omega$ needs to be removed. If  $L>L_0$ then this term can be bounded in the same way as in case (d) below, by removing a set of measure $\leq C_{\theta}e^{-(\tau^{-1}M)^\theta}$.

Case (d): Here we have, due to the factor $v_M^\dagger-v_{L_0}^\dagger$, that $N^{(1)}=2M$ and $N^{(2)}\gtrsim M^{1-\delta}$; so by either (\ref{mainmult1}) or (\ref{mainmult2}) we can bound \[\|\mathcal{M}\|_{X^{b_1}}\lesssim \tau^{-\theta}M^{-1+\frac{\delta}{2}+C\gamma_0}\] by removing a set of measure $\leq C_{\theta}e^{-(\tau^{-1}M)^\theta}$, which suffices.

Case (e): The bound for this term follows from the bound for $\nu_{2M}$ and the trivial bound (say (\ref{mainmult1}) or (\ref{mainmult2})) for the $\mathcal{N}_{2l+1}$ term.

Case (f): We may assume $N^{(1)}=N_a\sim M$. If either $v^{(a)}$ is of type (D) or $N^{(2)}\gtrsim M^{1-\delta}$ or $\iota_a=-$, we can reduce to one of the previous cases (namely (c) or (d) or (b)) and close as before; if $v^{(a)}$ is of type (G) or (C), $\iota_a=+$ and $N^{(2)}\ll M^{1-\delta}$, then the $\Delta_{2M}$ projection allows us to apply the improvement (\ref{mainmult4}), which leads to \[\|\mathcal{M}\|_{X^{b_1}}\lesssim \tau^{-\theta}M^{-1+\frac{4}{5}\gamma+C\kappa^{-1}}\] by removing a set of measure $\leq C_{\theta}e^{-(\tau^{-1}M)^\theta}$, which suffices. This completes the proof.
\end{proof}
\section{Large deviation and counting estimates}\label{prep2} Proposition \ref{multi0} will be proved in Section \ref{multiest}. In this section we make some preparations for the proof, namely we introduce two large deviation estimates and some counting estimates for integer lattice points.
\subsection{Large deviation estimates} We first prove the following large deviation estimate for multilinear Gaussians, which as far as we know is new.
\begin{lem}\label{largedev} Let $E\subset\mathbb{Z}^2$ be a finite subset, and let $\mathcal{B}$ be the $\sigma$-algebra generated by $\{g_k:k\in E\}$. Let $\mathcal{C}$ be a $\sigma$-algebra independent with $\mathcal{B}$, and let $\mathcal{C}^+$ be the smallest $\sigma$-algebra containing both $\mathcal{C}$ and the $\sigma$-algebra generated by $\{|g_k|^2:k\in E\}$. Consider the expression
\begin{equation}\label{indp}F(\omega)=\sum_{(k_1,\cdots,k_n)\in E^n}a_{k_1\cdots k_n}(\omega)\prod_{j=1}^ng_{k_j}(\omega)^{\iota_j},
\end{equation} where $n\leq 2r+1$, $\iota_j\in\{\pm\}$ and the coefficients $a_{k_1\cdots k_n}(\omega)$ are $\mathcal{C}^+$ measurable. Let $A\geq \#E$, then $A$-certainly we have 
\begin{equation}\label{firstbound} |F(\omega)|\leq A^\theta M(\omega)^{\frac{1}{2}}, \end{equation} where
\begin{equation}\label{ortho}M(\omega)=\sum_{(X,Y)}\sum_{(k_{m}):m\not\in X\cup Y}\bigg(\sum_{\mathrm{pairing}\,(k_{i_s},k_{j_s}):1\leq s\leq p}|a_{k_1\cdots k_n}(\omega)|\bigg)^2.
\end{equation} In the summation (\ref{ortho}) we require that all $k_j\in E$, and that $X:=\{i_1,\cdots,i_p\}$ and $Y:=\{j_1,\cdots,j_p\}$ are two disjoint subsets of $\{1,2,\cdots,n\}$. Recall also the definition of pairing in Definition \ref{dfpairing}.
\end{lem}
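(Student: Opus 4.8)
\textbf{Proof plan for Lemma \ref{largedev}.}

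The plan is to prove \eqref{firstbound} by a high-moment (hypercontractivity / Wiener chaos) argument, conditioning on the $\sigma$-algebra $\mathcal{C}^+$ so that the coefficients $a_{k_1\cdots k_n}(\omega)$ become deterministic constants, at the cost of working with the conditioned Gaussians $g_k$ rather than with genuine Gaussians. The first step is to observe that conditionally on $\mathcal{C}^+$ — hence on all the moduli $|g_k|^2$, $k\in E$ — the remaining randomness in $g_k = |g_k| e^{i\vartheta_k}$ is carried by the phases $\vartheta_k$, which are independent and uniform on $[0,2\pi)$ (using that the law of each $g_k$ is rotationally symmetric, so $\vartheta_k$ is independent of $|g_k|$, and that the $g_k$ are independent of each other and of $\mathcal{C}$). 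Thus conditionally we are estimating a random trigonometric polynomial $F = \sum_{(k_1,\dots,k_n)} b_{k_1\cdots k_n} \prod_j e^{\iota_j \mathrm{i}\vartheta_{k_j}}$ where $b_{k_1\cdots k_n} = a_{k_1\cdots k_n}\prod_j |g_{k_j}|$ are now constants.

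The second step is the moment computation. For an even integer $q$ I would expand $\mathbb{E}_{\vartheta}[|F|^q] = \mathbb{E}_\vartheta[F^{q/2}\overline{F}^{q/2}]$, which is a sum over $q/2$ "plus" tuples and $q/2$ "minus" tuples of the product of the corresponding $b$'s times $\mathbb{E}_\vartheta$ of a product of phases $e^{\pm\mathrm{i}\vartheta_k}$. The phase expectation is nonzero (and then equal to $1$) only when, for every frequency $k\in E$, the signed multiplicity with which $\vartheta_k$ appears is zero; i.e. the indices must match up in a sign-cancelling way across the $q$ tuples. Counting these matchings is exactly the combinatorics that produces the quantity $M(\omega)$: pairings \emph{within} a single copy of $F$ (or $\overline F$) are encoded by the disjoint sets $X,Y$ and the internal pairings $(k_{i_s},k_{j_s})$ in \eqref{ortho}, while the remaining free indices $k_m$, $m\notin X\cup Y$, must be matched \emph{across} the $q$ copies. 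Carrying out this bookkeeping and bounding the cross-copy matchings by a combinatorial factor $C^q q^{Cn}$ (times a product over copies of the inner sums, which is precisely $M^{1/2}$ per copy after Cauchy–Schwarz in the free indices), one gets $\mathbb{E}_\vartheta[|F|^q] \le (C q)^{Cn} M(\omega)^{q/2}$, where crucially $M(\omega)$ is $\mathcal{C}^+$-measurable since it only involves the $|g_k|$ and the $a$'s. Then Chebyshev with $q \sim (\log A)^{1+}$ (or $q\sim A^\theta$ as appropriate), together with a union bound that is harmless because the whole estimate is conditional and the number of relevant "shapes" is bounded by $C^n$, gives: conditionally on $\mathcal{C}^+$, with probability $\ge 1 - C_\theta e^{-A^\theta}$ one has $|F(\omega)| \le A^\theta M(\omega)^{1/2}$. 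Integrating out $\mathcal{C}^+$ preserves the probability bound, which is the claim.

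A couple of technical points must be handled with care. First, the $|g_k|$ themselves are random and can be large; but we have already passed to a set where $|g_k|\lesssim \tau^{-\theta}\langle k\rangle^\theta$ (cf.\ \eqref{simplebd}), or more to the point the conclusion is phrased in terms of $M(\omega)$ which already contains the $|a_{k_1\cdots k_n}(\omega)|$ and we only need the phase randomness, so the $|g_k|$ being in $\mathcal{C}^+$ is exactly what lets them be treated as constants — no extra truncation of $|g_k|$ is needed for this lemma per se, since $M$ absorbs them. Second, the over-pairing structure: when we form $M(\omega)$ the inner sum is over choices of internal pairings with the $k_{i_s}=k_{j_s}$ constraint, and one must make sure the combinatorial count does not double-count configurations where three or more indices coincide; this is why \eqref{ortho} is written as a sum over the \emph{choice} of pairing rather than a sum over configurations, and the bound $|F|\le A^\theta M^{1/2}$ is stated with a loss $A^\theta$ precisely to absorb $(Cq)^{Cn}$ and the number of shapes.

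The main obstacle, and the step I expect to require the most care, is the bookkeeping in the moment expansion that identifies the cross-copy matching count with $M(\omega)^{q/2}$: one needs to set up the matching graph on the $q$ tuples of $n$ indices, separate intra-copy from inter-copy edges, apply Cauchy–Schwarz in the right order so that each copy contributes one factor of $M^{1/2}$ while the number of matching patterns contributes only $(Cq)^{Cn}$, and verify that the free-index sums that get "pushed into" $M$ are exactly the $\sum_{(k_m):m\notin X\cup Y}$ appearing in \eqref{ortho}. Once this combinatorial identity is in place, the large-deviation conclusion is a routine Chebyshev/moment-method step.
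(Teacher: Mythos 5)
Your overall strategy — polar decomposition $g_k = \rho_k \eta_k$, conditioning on $\mathcal{C}^+$ so that the coefficients become constants while the phases $\eta_k = e^{i\vartheta_k}$ remain independent and uniform, and then running a moment/Chebyshev argument — is the same skeleton as the paper's proof, and it is the right skeleton. There are, however, two concrete points where your plan departs from what actually works.

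First, your claimed moment bound $\mathbb{E}_\vartheta[|F|^q] \le (Cq)^{Cn} M^{q/2}$ is too strong: the exponent on $q$ must scale with $q$ itself. What hypercontractivity (applied to a degree-$\le n$ Gaussian chaos) gives is $\mathbb{E}|G|^{2d} \le (2d-1)^{nd}\,(\mathbb{E}|G|^2)^d$, i.e.\ the combinatorial factor is $(Cq)^{Cnq}$, not $(Cq)^{Cn}$. This is not a cosmetic discrepancy: your stated bound would yield a genuinely Gaussian tail (and in particular would need almost no loss in the conclusion), whereas the true tail for an $n$-linear form is only $\exp(-c\,B^{1/n})$, and the $A^\theta$ loss in the lemma is exactly what absorbs the $(Cq)^{Cnq}$ factor after optimizing $q \sim A^{\theta'}$ (your parenthetical suggestion $q\sim(\log A)^{1+}$ does not reach the required $e^{-A^\theta}$-type probability and should be discarded). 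So the "routine Chebyshev step" survives, but only with the weaker, correct moment bound. What the paper does, and what you should imitate, is to \emph{not} carry out the matching combinatorics by hand: after conditioning on the $b$-coefficients, introduce a fresh family of i.i.d.\ normalized complex Gaussians $h_k$, define $G$ by replacing $\eta_{k_j}^{\iota_j}$ with $h_{k_j}^{\iota_j}$ and $b$ with $|b|$, and observe that since $\bigl|\mathbb{E}\prod \eta_{k^{(\alpha)}}^{x_\alpha}\overline{\eta_{k^{(\alpha)}}}^{y_\alpha}\bigr| = \prod\mathbf{1}_{x_\alpha=y_\alpha} \le \prod x_\alpha!\,\mathbf{1}_{x_\alpha=y_\alpha} = \mathrm{Re}\,\mathbb{E}\prod h_{k^{(\alpha)}}^{x_\alpha}\overline{h_{k^{(\alpha)}}}^{y_\alpha}$, one gets $\mathbb{E}|F|^{2d}\le\mathbb{E}|G|^{2d}$ term by term. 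Then one applies the \emph{standard} hypercontractivity estimate to the honest multilinear Gaussian $G$ as a black box, and all that remains is the single explicit computation $\mathbb{E}|G|^2 \lesssim M_1$. This avoids the cross-copy matching bookkeeping that you correctly identified as the main obstacle, and which I think you would have found does not factor into $M^{q/2}$ with a merely $(Cq)^{Cn}$ loss.

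Second, your remark that "no extra truncation of $|g_k|$ is needed for this lemma per se, since $M$ absorbs them" is incorrect. After the polar decomposition, the moment bound you obtain is in terms of $M_1$, the quantity formed from the conditioned coefficients $b_{k_1\cdots k_n} = a_{k_1\cdots k_n}\prod_j\rho_{k_j}$. But the lemma's $M(\omega)$ in \eqref{ortho} is built from the $a$'s alone; the moduli $\rho_{k_j} = |g_{k_j}|$ are not contained in it. To pass from $M_1$ to $M$ you need precisely the $A$-certain bound $|g_k|\le A^\theta$ for all $k\in E$ (available because $A\ge\#E$ allows a union bound over Gaussian tails); this step is where the hypothesis $A\ge\#E$ enters and where part of the $A^\theta$ loss is spent. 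It is not the truncation \eqref{simplebd} but it is still a truncation, and it is not absorbed: it is used.

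Finally, your point about over-pairings is well-taken and matches the care the paper takes in computing $\mathbb{E}|G|^2$ by grouping repeated indices, centering $|h|^{2\beta}$ by $\beta!$, and then applying Cauchy--Schwarz in the remaining free indices to realize exactly the quantity in \eqref{ortho}. With the two corrections above, your plan would align with the paper's proof.
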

\begin{proof} Write in polar coordinates $g_k(\omega)=\rho_k(\omega)\eta_k(\omega)$ where $\rho_k=|g_k|$ and $\eta_k=\rho_k^{-1}g_k$, then all the $\rho_k$ and $\eta_k$ are independent, and each $\eta_k$ is uniformly distributed on the unit circle of $\mathbb{C}$. We may write
\begin{equation}\label{newexp1}F(\omega)=\sum_{(k_1,\cdots,k_n)\in E^n}b_{k_1\cdots k_n}(\omega)\prod_{j=1}^n\eta_{k_j}(\omega)^{\iota_j},\quad b_{k_1\cdots k_n}(\omega):=a_{k_1\cdots k_n}(\omega)\prod_{j=1}^n\rho_{k_j}(\omega).
\end{equation} Since $a_{k_1\cdots k_n}(\omega)$ are $\mathcal{C}^+$ measurable, we know that the collection $\{b_{k_1\cdots k_n}\}$ is independent with the collection $\{\eta_k:k\in E\}$. The goal is to prove that
\begin{equation}\label{largedevnew}\mathbb{P}(|F(\omega)|\geq BM_1(\omega)^{\frac{1}{2}})\leq Ce^{-B^{1/n}},
\end{equation} where $C$ is an absolute constant, and $M_1(\omega)$ is the same as $M(\omega)$ but with the coefficients $a$  replaced by the coefficients $b$. In fact, as $A\geq \#E$ we have $A$-certainly that $|b_{k_1\cdots k_n}(\omega)|\leq A^\theta|a_{k_1\cdots k_n}(\omega)|$, so (\ref{largedevnew}) implies the desired bound.

We now prove (\ref{largedevnew}). By independence, we may condition on the $\sigma$-algebra generated by $\{b_{k_1\cdots k_n}\}$ and prove (\ref{largedevnew}) for the conditional probability, then take another expectation; therefore we may assume that $b_{k_1\cdots k_n}$ are constants (so $M_1(\omega)=M_1$ is a constant). Now let $\{h_k:k\in E\}$ be another set of i.i.d. normalized complex Gaussian random variables and define
\begin{equation}\label{newexp2}G=\sum_{(k_1,\cdots,k_n)\in E^n}|b_{k_1\cdots k_n}|\prod_{j=1}^nh_{k_j}^{\iota_j},
\end{equation} we want to compare $F$ and $G$ and show $\mathbb{E}|F|^{2d}\leq \mathbb{E}|G|^{2d}$ for any positive integer $d$. In fact,
\begin{align}\label{exp1}\mathbb{E}(|F|^{2d})&=\sum_{(k_j^i,\ell_j^i:1\leq i\leq d,1\leq j\leq n)}\prod_{i=1}^db_{k_1^i\cdots k_n^i}\overline{b_{\ell_1^i\cdots\ell_n^i}}\mathbb{E}\bigg(\prod_{i=1}^d\prod_{j=1}^n\eta_{k_j^i}^{\iota_j}\overline{\eta_{\ell_j^i}^{\iota_j}}\bigg),\\
\label{exp2}\mathbb{E}(|G|^{2d})&=\sum_{(k_j^i,\ell_j^i:1\leq i\leq d,1\leq j\leq n)}\prod_{i=1}^d|b_{k_1^i\cdots k_n^i}||b_{\ell_1^i\cdots\ell_n^i}|\mathbb{E}\bigg(\prod_{i=1}^d\prod_{j=1}^nh_{k_j^i}^{\iota_j}\overline{h_{\ell_j^i}^{\iota_j}}\bigg).
\end{align}
The point is that we always have
\[\bigg|\mathbb{E}\bigg(\prod_{i=1}^d\prod_{j=1}^n\eta_{k_j^i}^{\iota_j}\overline{\eta_{\ell_j^i}^{\iota_j}}\bigg)\bigg|\leq \mathrm{Re}\,\mathbb{E}\bigg(\prod_{i=1}^d\prod_{j=1}^nh_{k_j^i}^{\iota_j}\overline{h_{\ell_j^i}^{\iota_j}}\bigg).\] In fact, by collecting all different factors we can write the expectations as
\[\mathbb{E}\bigg(\prod_{\alpha}(\eta_{k^{(\alpha)}})^{x_\alpha}(\overline{\eta_{k^{(\alpha)}}})^{y_\alpha}\bigg)\quad\mathrm{and}\quad \mathbb{E}\bigg(\prod_{\alpha}(h_{k^{(\alpha)}})^{x_\alpha}(\overline{h_{k^{(\alpha)}}})^{y_\alpha}\bigg),\] where the $k^{(\alpha)}$ are pairwise distinct. If $x_\alpha\neq y_\alpha$ for some $\alpha$, both expectations will be $0$; if $x_\alpha=y_\alpha$ for each $\alpha$, then the first expectation will be $1$ and the second expectation will be $\prod_{\alpha}x_\alpha!\geq 1$.

Now, since $G$ is an exact multilinear Gaussian expression, by the standard hypercontractivity estimate, see \cite{OT}, we have
\[\mathbb{E}|F|^{2d}\leq \mathbb{E}|G|^{2d}\leq(2d-1)^{nd}(\mathbb{E}|G|^2)^d,\] so for any $D>0$ by using Chebyshev's inequality and optimizing in $d$ we have
\[\mathbb{P}(|F(\omega)|\geq D)\leq \min_d\bigg\{(2d-1)^{nd}\big(\frac{\mathbb{E}|G|^2}{D^2}\big)^d\bigg\}\leq C\exp\bigg\{\frac{-1}{2e}\big(\frac{\mathbb{E}|G|^2}{D^2}\big)^{\frac{1}{n}}\bigg\}\] with some constant $C$ depending only on $n$. It then suffices to prove $\mathbb{E}|G|^2\lesssim M_1$ with constants depending only on $n$.

By dividing the sum (\ref{newexp2}) into finitely many terms and rearranging the subscripts, we may assume
\begin{equation*}k_{1}=\cdots=k_{j_1},\,\, k_{j_1+1}=\cdots=k_{j_2},\cdots,k_{j_{r-1}+1}=\cdots=k_{j_r},\,\,1\leq j_1<\cdots <j_r=n,
\end{equation*} and the $k_{j_s}$ are different for $1\leq s\leq r$. Such a monomial that appears in (\ref{newexp2}) has the form
\[\prod_{s=1}^rh_{k_{j_s}}^{\beta_s}(\overline{h_{k_{j_s}}})^{\gamma_s},\quad \beta_s+\gamma_s=j_s-j_{s-1}\,\,(j_0=0),\] where the factors for different $s$ are independent. We may also assume $\beta_s=\gamma_s$ for $1\leq s\leq q$ and $\beta_s\neq \gamma_s$ for $q+1\leq s\leq r$, and that $\iota_j$ has the same sign as $(-1)^j$ for $1\leq j\leq j_q$. Then we can further rewrite this monomial as a linear combination of
\[\prod_{s=1}^p\beta_s!\prod_{s=p+1}^q(|h_{k_{j_s}}|^{2\beta_s}-\beta_s!)\prod_{s=q+1}^rh_{k_{j_s}}^{\beta_s}(\overline{h_{k_{j_s}}})^{\gamma_s}\] for $1\leq p\leq q$. Therefore, $G$ is a finite linear combination of expressions of  the form
\[\sum_{k_{j_1},\cdots,k_{j_r}}|b_{k_{j_1},\cdots,k_{j_1},\cdots,k_{j_r},\cdots k_{j_r}}|\prod_{s=1}^p\beta_s!\prod_{s=p+1}^q(|h_{k_{j_s}}|^{2\beta_s}-\gamma_s!)\prod_{s=q+1}^rh_{k_{j_s}}^{\beta_s}(\overline{h_{k_{j_s}}})^{\gamma_s}.\] Due to independence and the fact that $\mathbb{E}(|h|^{2\beta}-\beta!)=\mathbb{E}(h^\beta(\overline{h})^\gamma)=0$ for a normalized Gaussian $h$ and $\beta\neq \gamma$, we conclude that
\begin{equation*}\mathbb{E}|G|^2\lesssim\sum_{k_{j_{p+1}},\cdots k_{j_r}}\bigg(\sum_{k_{j_1},\cdots,k_{j_p}}|b_{k_{j_1},\cdots,k_{j_1},\cdots,k_{j_r},\cdots k_{j_r}}|\bigg)^2,
\end{equation*} which is bounded by $M_1$ choosing $X=\{1,3,\cdots,j_p-1\}$ and $Y=\{2,4,\cdots,j_p\}$, since by our assumptions $(k_{2i-1},k_{2i})$ is a pairing for $2i\leq j_p$. This completes the proof.
\end{proof}
For the purpose of Section \ref{multiest} we will also need the following lemma, which is a more general large deviation-type estimate restricted to the no-pairing case:
\begin{lem}\label{lemma:4.2} Let $\delta$ be as in Section \ref{notations}, $n\leq 2r+1$ and consider the following expression
\begin{equation}\label{multgauss}M(\omega)=\sum_{(k_1,\cdots,k_n)}\sum_{(k_1^*,\cdots,k_n^*)}\int \mathrm{d}\lambda_1\cdots\mathrm{d}\lambda_n\cdot a_{k_1\cdots k_n}(\lambda_1,\cdots,\lambda_n)\prod_{j=1}^ng_{k_j^*}(\omega)^{\iota_j}h^{(j)}_{k_jk_j^*}(\lambda_j,\omega)^{\pm},
\end{equation} where $a_{k_1\cdots k_n}(\lambda_1,\cdots,\lambda_n)$ is a given (deterministic) function of $(k_1,\cdots,k_n,\lambda_1,\cdots,\lambda_n)$. Moreover in the summation we assume that there are no pairings among $\{k_1^*,\cdots,k_n^*\}$, that $\langle k_j\rangle \leq N_j$ and $\frac{N_j}{2}<\langle k_j^*\rangle\leq N_j$, and that $h_{k_jk_j^*}^{(j)}(\lambda_j,\omega)$, as a random variable, is $\mathcal{B}_{\leq N_j^{1-\delta}}^+$ measurable. Let $N_*\geq\max( N_1,\cdots,N_n)$, then $N_*$-certainly (the exceptional set removed will depend on the coefficients $a$), we have
\begin{equation}\label{multgauss1}|M(\omega)|\lesssim(N_*)^{\theta}\prod_{j=1}^n\|h_{k_jk_j^*}^{(j)}(\lambda_j,\omega)\|_{\ell_{k_j^*}^2\to\ell_{k_j}^2L_{\lambda_j}^2}\cdot\|a_{k_1\cdots k_n}(\lambda_1,\cdots\lambda_n)\|_{\mathcal{L}},
\end{equation} where $\mathcal{L}$ is an auxiliary norm defined by (where $\partial_\lambda=(\partial_{\lambda_1},\cdots,\partial_{\lambda_n})$)
\begin{equation}\label{auxnorm}\|a_{k_1\cdots k_n}(\lambda_1,\cdots\lambda_n)\|_{\mathcal{L}}^2:=\sum_{k_1,\cdots k_n}\int\mathrm{d}\lambda_1\cdots\mathrm{d}\lambda_n\cdot\big(\max_{1\leq j\leq n}\langle\lambda_j\rangle\big)^{\delta^6}(|a|^2+|\partial_{\lambda}a|^2).
\end{equation}
\end{lem}
\begin{proof} Consider the big box $\{|\lambda_j|\leq (N_*)^{\delta^{-7}}\}$ and divide it into small boxes of size $(N_*)^{-\delta^{-1}}$. By exploiting the weight $(\max_{1\leq j\leq n}\langle\lambda_j\rangle)^{\delta^6}$ in (\ref{auxnorm}) and using Poincar\'{e}'s inequality, we can find a function $b$ which is supported in the big box and is constant on each small box, such that
\begin{equation}\sup_{k_j,k_j^*}\|a-b\|_{L_{\lambda_1,\cdots,\lambda_n}^2}\lesssim (N_*)^{-\delta^{-1}}\|a\|_{\mathcal{L}}.\end{equation} Exploiting this $(N_*)^{-\delta^{-1}}$ gain, summing over $|k_j|,|k_j^*|\lesssim N_*$ and using the simple bound 
\begin{equation*}\sup_{k_j,k_j^*}\bigg|\int \mathrm{d}\lambda_1\cdots\mathrm{d}\lambda_n\cdot a_{k_1\cdots k_n}(\lambda_1,\cdots,\lambda_n)\prod_{j=1}^n h_{k_jk_j^*}^{(j)}(\lambda_j)^\pm\bigg|\lesssim \sup_{k_j,k_j^*}\|a\|_{L_{\lambda_1,\cdots,\lambda_n}^2}\prod_{j=1}^n\sup_{k_j,k_j^*}\|h_{k_jk_j^*}^{(j)}(\lambda_j)\|_{L_{\lambda_j}^2}\end{equation*} suffices to bound the contribution with $a$ replaced by $a-b$; thus we may now replace $a$ by $b$ (or equivalently, assume $a$ is supported in the big box and is constant on each small box) and will prove  (\ref{multgauss1})  $N_*$-certainly, with the $\mathcal{L}$ norm replaced by the $l^2L^2$ norm, by induction.

By symmetry we may assume $N_1\geq\cdots\geq N_n$. Choose the smallest $q$ such that $N_q>2^{10}N_{q+1}$, then $N_1\sim N_q$ with constant depending only on $n$. Unless $N_1\leq C$, in which case (\ref{multgauss1}) is trivial, we can conclude that
\[h_{k_jk_j^*}^{(j)}(\lambda_j,\omega)^\pm,\,1\leq j\leq n,\,\text{are $\mathcal{B}_{\leq N_1^{1-\delta}}^+$ measurable; $N_1^{1-\delta}\leq 2^{-10}N_q$,}\]
\[g_{k_j^*}(\omega)^{\pm},\,q+1\leq j\leq n,\,\text{are $\mathcal{B}_{\leq N_{q+1}}$ measurable; $N_{q+1}\leq 2^{-10}N_q$.}\] Note that in this case, there is no pairing among $\{k_1^*,\cdots,k_n^*\}$ if and only if there is no pairing among $\{k_1^*,\cdots,k_q^*\}$ and no pairing among $\{k_{q+1}^*,\cdots,k_n^*\}$. We can then write $M(\omega)$ as 
\begin{equation}M(\omega)=\sum_{k_1^*,\cdots,k_q^*}b_{k_1^*\cdots k_q^*}(\omega)\cdot\prod_{j=1}^q g_{k_j^*}(\omega)^{\iota_j},
\end{equation} where
\begin{multline}\label{defofb}b_{k_1^*\cdots k_q^*}(\omega)=\sum_{k_1,\cdots,k_q}\int\mathrm{d}\lambda_1\cdots\mathrm{d}\lambda_q\prod_{j=1}^qh_{k_jk_j^*}^{(j)}(\lambda_j,\omega)^\pm\\\times\sum_{(k_{q+1},\cdots k_n)}\sum_{(k_{q+1}^*,\cdots,k_n^*)}\int \mathrm{d}\lambda_{q+1}\cdots\mathrm{d}\lambda_n\cdot a_{k_1\cdots k_n}(\lambda_1,\cdots,\lambda_n)\prod_{j=q+1}^ng_{k_j^*}(\omega)^{\iota_j}h_{k_jk_j^*}^{(j)}(\lambda_j,\omega)^\pm
\end{multline} are $\mathcal{B}_{\leq 2^{-10}N_q}^+$ measurable. We then apply Lemma \ref{largedev} and conclude that, after removing a set of $\omega$ with probability $\leq C_{\theta}e^{-(N_*)^{\theta}}$, we have
\begin{equation}\label{gaussprob}|M(\omega)|^2\lesssim (N_*)^\theta\sum_{k_1^*,\cdots k_q^*}|b_{k_1^*\cdots k_q^*}(\omega)|^2.
\end{equation} Now by (\ref{defofb}) we have
\begin{multline}\label{l2tol2}\sum_{k_1^*,\cdots k_q^*}|b_{k_1^*\cdots k_q^*}(\omega)|^2\lesssim\prod_{j=1}^q\|h_{k_jk_j^*}^{(j)}(\lambda_j,\omega)\|_{\ell^2\to\ell^2L^2}^2\sum_{k_1,\cdots,k_q}\int\mathrm{d}\lambda_1\cdots\mathrm{d}\lambda_q\\\times\bigg|\sum_{(k_{q+1},\cdots k_n)}\sum_{(k_{q+1}^*,\cdots,k_n^*)}\int \mathrm{d}\lambda_{q+1}\cdots\mathrm{d}\lambda_n\cdot a_{k_1\cdots k_n}(\lambda_1,\cdots,\lambda_n)\prod_{j=q+1}^ng_{k_j^*}(\omega)^{\iota_j}h_{k_jk_j^*}^{(j)}(\lambda_j,\omega)^\pm\bigg|^2;
\end{multline} by induction hypothesis, we get that
\begin{multline}\label{fixedpt}\bigg|\sum_{(k_{q+1},\cdots k_n)}\sum_{(k_{q+1}^*,\cdots,k_n^*)}\int\mathrm{d}\lambda_{q+1}\cdots\mathrm{d}\lambda_n a_{k_1\cdots k_n}(\lambda_1,\cdots,\lambda_n)\prod_{j=q+1}^ng_{k_j^*}(\omega)^{\iota_j}h_{k_jk_j^*}^{(j)}(\lambda_j,\omega)^\pm\bigg|^2\\\lesssim (N_*)^{\theta}\prod_{j=q+1}^n\|h_{k_jk_j^*}^{(j)}(\lambda_j,\omega)\|_{\ell_{k_j^*}^2\to\ell_{k_j}^2L_{\lambda_j}^2}^2\sum_{k_{q+1},\cdots k_n}\int \mathrm{d}\lambda_{q+1}\cdots\mathrm{d}\lambda_n\cdot|a_{k_1\cdots k_n}(\lambda_1,\cdots,\lambda_n)|^2,
\end{multline} up to a set of $\omega$ with probability $\leq C_{\theta}e^{-(N_*)^{\theta}}$, for any \emph{fixed} $(k_j,\lambda_j)$ for $1\leq j\leq q$. By our assumption on the coefficients $a$, the function \[(k_{q+1},\cdots, k_n,\lambda_{q+1},\cdots,\lambda_n)\mapsto a_{k_1\cdots k_n}(\lambda_1,\cdots,\lambda_n)\] which depends on the parameters $(k_j,\lambda_j)$ for $1\leq j\leq q$, has only $(N_*)^{C\delta^{-7}}$ different possibilities, so by removing a set of $\omega$ with probability $\leq C_{\theta}e^{-(N_*)^{\theta}}$, we may assume (\ref{fixedpt}) holds for \emph{all} $(k_j,\lambda_j)$, $1\leq j\leq q$. Thus we can sum (\ref{fixedpt}) over $k_j$ and integrate over $\lambda_j$, and combine with (\ref{gaussprob}) and (\ref{l2tol2}) to get that
\begin{equation}|M(\omega)|^2\lesssim(N_*)^{\theta}\prod_{j=1}^n\|h_{k_jk_j^*}^{(j)}(\lambda_j,\omega)\|_{\ell_{k_j^*}^2\to\ell_{k_j}^2L_{\lambda_j}^2}^2\sum_{k_1,\cdots,k_n}\int\mathrm{d}\lambda_1\cdots\mathrm{d}\lambda_n\cdot|a_{k_1\cdots k_n}(\lambda_1,\cdots,\lambda_n)|^2.
\end{equation} This completes the proof.
\end{proof}
\begin{rem}\label{mesh} In the proof of Lemma \ref{lemma:4.2} above, the first step involves approximating the function $a$ by another function $b$ which is supported in a big box of size $(N_*)^{\delta^{-7}}$ and is constant in each small box of size $(N_*)^{-\delta^{-1}}$. This reduces the infinitely many choices for the $\lambda_j$ variables to essentially at most $(N_*)^{2\delta^{-7}}$ choices, which allows to bound the probability of the tail event in question by $C_\theta e^{-(N_*)^\theta}$. This trick, which we refer to as the \emph{meshing argument}, will be used frequently below (especially in Section \ref{multiest}) without  further explanations.
\end{rem}
\subsection{Counting estimates for lattice points} We start with a simple lemma, and then state the main integer lattice point counting bounds that will be used in the proof below.
\begin{lem}\label{lem:counting}(1) Let $\mathcal{R}=\mathbb{Z}$ or $\mathbb{Z}[i]$. Then, given $0\neq m\in\mathcal{R}$, and $a_0,b_0\in\mathbb{C}$, the number of choices for $(a,b)\in\mathcal{R}^2$ that satisfy
\begin{equation}m=ab,\,\,|a-a_0|\leq M,\,\,|b-b_0|\leq N
\end{equation}is $O(M^\theta N^\theta)$ with constant depending only on $\theta>0$.

(2) Given dyadic numbers $N_1\gtrsim N_2\gtrsim N_3$, consider the set
\begin{multline}\label{count}
S=\big\{(x,y,z)\in(\mathbb{Z}^2)^3:\iota_1 x+\iota_2y+\iota_3 z=d,\,\,\iota_1|x|^2+\iota_2 |y|^2+\iota_3 |z|^2=\alpha,\\|x-a|\lesssim N_1,|y-b|\lesssim N_2,|z-c|\lesssim N_3\big\}.
\end{multline} Assume also there is no pairing in $S$. Then, uniformly in $(a,b,c,d)\in(\mathbb{Z}^2)^4$ and $\alpha\in\mathbb{Z}$, we have $\#S\lesssim N_2^{1+\theta}N_3$. Moreover, if $\iota_1=\iota_2$, then we have the stronger bound $\#S\lesssim N_2^\theta N_3^2$.
\end{lem}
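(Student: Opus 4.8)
Part (1) is a divisor bound. Over $\mathcal{R}=\mathbb{Z}$ (resp.\ $\mathbb{Z}[i]$), each factorization $m=ab$ corresponds to choosing $a$ to be a divisor of $m$ in $\mathcal{R}$; the number of divisors of $m$ is $O_\theta(|m|^\theta)$ by the standard divisor bound (using unique factorization in $\mathbb{Z}[i]$ for the Gaussian integer case). The constraints $|a-a_0|\le M$, $|b-b_0|\le N$ then force $|m|=|ab|\lesssim (|a_0|+M)(|b_0|+N)$, but this is not directly what we want since $a_0,b_0$ are unbounded. The fix: among the $O_\theta(|m|^\theta)$ divisors, we only count those with $|a-a_0|\le M$; since $a$ determines $b=m/a$, and $b$ is then pinned, the real content is that the divisors of a \emph{fixed} $m$ lying in a disc of radius $M$ number $O_\theta(M^\theta)$ — which follows because any such $m$ has $|m|\le (|a_0|+M)\cdot(\text{something})$... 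Actually the cleanest route: if $|a-a_0|\le M$ and $|b-b_0|\le N$ with $ab=m$ fixed and $m\neq 0$, write $a=a_0+s$, $|s|\le M$; then $(a_0+s)b=m$, so $b=m/(a_0+s)$, and $b$ being determined by $s$, the map $s\mapsto a$ is injective. So it suffices to bound the number of $a\in\mathcal{R}$ with $a\mid m$ and $|a-a_0|\le M$. I would split into $|a_0|\le 2M$ (then $|a|\le 3M$, and divisors of bounded size number $O_\theta(M^\theta)$) versus $|a_0|>2M$ (then $|a|\sim |a_0|$, so $|b|=|m|/|a|\sim |m|/|a_0|$; combined with $|b-b_0|\le N$ this pins $b$ up to $O(N^2)$ lattice points, but since $b\mid m$ as well and $b$ lies in a disc of radius $N$... this needs the same argument recursively). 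I expect the honest statement is the divisor-bound one and the $a_0,b_0$ shifts are handled by the injectivity observation plus the elementary fact that divisors of $m$ in any disc of radius $R$ number $O_\theta(R^\theta + |m|^\theta)$ — and in our application $|m|$ will be polynomially bounded, so this is harmless; I would phrase part (1) to extract exactly what is needed.

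Part (2): for the bound $\#S\lesssim N_2^{1+\theta}N_3$, first use the linear equation $\iota_1 x+\iota_2 y+\iota_3 z=d$ to eliminate $x$, so $S$ injects into pairs $(y,z)$ with $|y-b|\lesssim N_2$, $|z-c|\lesssim N_3$ (giving the trivial bound $N_2^2N_3^2$), subject to the quadratic constraint obtained by substituting. Choosing $z$ freely (an $O(N_3^2)$ choice) reduces matters to counting $y$ in a disc of radius $O(N_2)$ satisfying a single equation of the form $\iota_1|d-\iota_2 y-\iota_3 z|^2 + \iota_2|y|^2 = \alpha - \iota_3|z|^2$. When $\iota_1=-\iota_2$ the $|y|^2$ terms cancel and this is \emph{linear} in $y$, cutting a line, so $O(N_2)$ choices — but then the total is $O(N_2 N_3^2)$, and since $N_2\gtrsim N_3$ we need instead to fix $y$ first ($O(N_2^2)$) leaving a linear equation in $z$, hence $O(N_3)$ choices, total $O(N_2^2 N_3)$... that is worse than claimed. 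The right move is: the quadratic form in $(y,z)$ after eliminating $x$ is, generically, an honest (possibly indefinite) integral quadratic form in two $\mathbb{Z}^2$-variables; the level set $\{Q(y,z)=\beta\}$ intersected with a box of dimensions $N_2\times N_3$ has $O(N_2^{1+\theta} N_3)$ points by a Gauss-circle / divisor-type argument — specifically one fixes the larger variable $y$ ($O(N_2^2)$ ways is too lossy), so instead one must \emph{complete the square}: after substituting $x=d-\iota_2\iota_1 y-\iota_3\iota_1 z$ (absorbing signs), the quadratic relation becomes, up to constants, a relation $y\cdot z$-bilinear plus squares; grouping it as $(\text{linear in }y)\cdot(\text{linear in }y) = (\text{expression in }z)$ — i.e.\ treating it as $x$ depending on $y,z$ and using $|x|^2$ expanded — one gets a product structure $P_1(y)P_2(y)=R(z)$ to which part (1) applies for each fixed $z$. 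This is exactly the mechanism: for fixed $z$, the equation in $y$ becomes a factorization $m(z)=a(y)b(y)$ in $\mathbb{Z}[i]$ (write $|x|^2-|y|^2=(x-y)\cdot\overline{(x+y)}$ viewing $\mathbb{Z}^2\cong\mathbb{Z}[i]$ and $x$ linear in $y$), so part (1) gives $O_\theta(N_2^\theta)$ choices of $y$, and summing over $O(N_3^2)$... still $N_3^2$, not $N_3$.

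The resolution of this bookkeeping — and I expect it to be \textbf{the main obstacle} — is to \emph{not} fix $z$ fully but to use \emph{both} constraints symmetrically: the pair $(x,y)$ (or $(x,z)$) is linked by one linear and one quadratic equation with $z$ as parameter, and the Gaussian-integer identity $\iota|x|^2+\iota'|y|^2 = \text{Re}\big((x+iy)(\overline{x'}+i\overline{y'})\big)$-type factorization turns the quadratic-in-$(y,z)$ level set into a hyperbola $\{uv = \text{const}\}$ in suitable $\mathbb{Z}[i]$-coordinates $u,v$ that are unimodular linear combinations of $y,z$; then $u$ ranges over divisors of a fixed Gaussian integer within a box of size $\sim N_2$ (the larger of the box dimensions for that coordinate), giving $O_\theta(N_2^\theta\cdot N_3)$ once one checks the box for $v$ has the smaller dimension $\sim N_3$. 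For the improved bound when $\iota_1=\iota_2$: now the two squared terms $|x|^2+|y|^2$ have the \emph{same} sign after elimination, so the relevant form is \emph{positive} (definite) in those variables, hence the level set is an ellipse and the box intersection is governed purely by the circle method with $O(N_3^\theta\cdot N_3) = O(N_3^{1+\theta})$... times the remaining free variable of size $N_2$, but the definiteness lets one do better, yielding $O(N_2^\theta N_3^2)$. I would carry out: (a) reduce via the linear equation; (b) choose which variable to keep free based on the signs $\iota_j$; (c) invoke part (1) through the $\mathbb{Z}[i]$-factorization of the resulting hyperbola, resp.\ a direct lattice count for the ellipse in the $\iota_1=\iota_2$ case; (d) verify the no-pairing hypothesis guarantees the factorizing integer $m$ is nonzero so part (1) applies. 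The delicate points are tracking which box dimension attaches to which factor, and confirming nonvanishing of $m$ from the no-pairing assumption.
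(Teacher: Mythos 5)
There are genuine gaps in both parts.  \textbf{Part (1).}  Your fallback to the standard divisor bound $O_\theta(|m|^\theta)$, excused by the hope that ``in our application $|m|$ will be polynomially bounded,'' does not prove the stated estimate, which is uniform in $m$, $a_0$, $b_0$; and that uniformity is genuinely used in part~(2), where the product value $(|d|^2\pm\alpha)/2$ is \emph{not} a priori controlled by $N_1,N_2,N_3$.  You are missing the key arithmetic input: one may assume $M_1:=|a_0|\gg M^4$ (else $|m|\lesssim M^8$ and the ordinary divisor bound finishes), and then there can be at most \emph{two} divisors of $m$ in the ball $|a-a_0|\le M$.  The reason is a unique-factorization/lcm argument: three distinct divisors $a,b,c$ in that ball would force $abc/(\gcd(a,b)\gcd(b,c)\gcd(c,a))\mid m$, hence $|m|\gtrsim M_1^3M^{-3}$, contradicting $|m|\lesssim M_1^2$.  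Nothing in your sketch plays this role, and the recursion you gesture at (``this needs the same argument recursively'') does not close.

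\textbf{Part (2).}  You correctly identify elimination of $x$ and that part~(1) must enter, and you diagnose the bookkeeping obstruction accurately (``still $N_3^2$, not $N_3$'').  But your proposed cure — passing to $\mathbb{Z}[i]$-linear coordinates $u,v$ so that the constraint becomes a Gaussian-integer factorization $uv=\mathrm{const}$ — does not exist when $\iota_1=-\iota_2$: after eliminating $x$ the quadratic terms cancel, leaving a single \emph{real} bilinear relation, namely $(d-z)\cdot(z-y)=(|d|^2-\alpha)/2$ (or its $\iota_3=-$ analogue).  That is $\mathrm{Re}(u\bar v)=C$, not $uv=C$, so there is no $\mathbb{Z}[i]$ hyperbola to factor.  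The actual mechanism is to split into real coordinates: first fix $(y_2,z_2)$ (giving $O(N_2N_3)$ choices), which pins $(d_1-z_1)(z_1-y_1)$ to a constant; then part~(1) with $\mathcal R=\mathbb{Z}$ gives $O_\theta(N_2^\theta)$ choices for $(y_1,z_1)$, for a total of $O(N_2^{1+\theta}N_3)$.  The degenerate case where both coordinate-products vanish is exactly where the no-pairing hypothesis is invoked: it excludes $z=d$ and $z=y$, leaving only the mixed patterns $z_1=d_1,\ z_2=y_2$ (or the symmetric one), which give $O(N_2N_3)$.  Your account of the $\iota_1=\iota_2$ case is closer — the right step is indeed to fix $z$ ($O(N_3^2)$ choices) so that $x+y$ is determined and $|x-y|^2$ is a fixed integer, then apply part~(1) with $\mathcal R=\mathbb{Z}[i]$ to $w=x-y$ in a disc of radius $O(N_2)$, yielding $O_\theta(N_2^\theta)$ choices — but your version passes through an ``ellipse/circle method'' count with mismatched powers of $N_2$ and $N_3$ and leaves ``definiteness lets one do better'' unexplained.
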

\begin{proof} (1) This strengthened divisor estimate is essentially proved in \cite{DNY}, Lemma 3.4. We know that $\mathcal{R}$ has unique factorization and satisfies the standard divisor estimate, namely the number of divisors of $0\neq m\in\mathcal{R}$ is $O(|m|^{\theta})$. Now suppose $\max(|a_0|,M)\geq\max(|b_0|,N)$, then $|m|\lesssim\max(|a_0|,M)^2$. We may assume $M_1\sim|a_0|\gg M^4$, and hence $|m|\lesssim M_1^2$.

We then claim that the number of divisors $a$ of $m$ that satisfies $|a-a_0|\leq M$ is at most two. In fact, suppose $a,b,c$ are different divisors of $m$ that belong to the ball $|x-a_0|\leq M$, then by unique factorization we have $\mathrm{lcm}(a,b,c)|m$, hence
\[\frac{abc}{\gcd(a,b)\gcd(b,c)\gcd(c,a)}\] divides $m$. As $|a|\sim M_1$ etc., and $|\gcd(a,b)|\leq |a-b|\lesssim M$ etc., we conclude that
\[M_1^2\gtrsim|m|\geq\bigg|\frac{abc}{\gcd(a,b)\gcd(b,c)\gcd(c,a)}\bigg|\gtrsim M_1^3M^{-3},\] contradicting the assumption $M_1\gg M^4$.

(2) Let $x=(x_1,x_2)$, etc. If $\iota_1=\iota_2$, then with fixed $z$ (which has $O(N_3^2)$ choices), $x+y$ will be constant. Let $x-y=w$, then
\[(w_1+iw_2)(w_1-iw_2)=|w|^2=2(|x|^2+|y|^2)-|x+y|^2\] is constant. As $w$ belongs to a ball of radius $O(N_2)$ in $\mathbb{R}^2$, by (1) we know that the number of choices for $w$ is $O(N_2^\theta)$, hence $\#S=O(N_2^\theta N_3^2)$. Below we will assume that $\iota_1=+$ and $\iota_2=-$.

(a) Suppose $\iota_3=+$, then we have that
\begin{equation*}(d_1-z_1)(z_1-y_1)+(d_2-z_2)(z_2-y_2)=(d-z)\cdot(z-y)=\frac{|d|^2-\alpha}{2}
\end{equation*} is constant. If $(d_1-z_1)(z_1-y_1)\neq 0$ (or similarly if $(d_2-z_2)(z_2-y_2)\neq 0$), then with fixed $(y_2,z_2)$ (which has $O(N_2N_3)$ choices), $(d_1-z_1)(z_1-y_1)$ will be constant. As $d_1-z_1$ belongs to an interval of size $O(N_3)$ in $\mathbb{R}$, and $z_1-y_1$ belongs to an interval of size $O(N_2)$ in $\mathbb{R}$, by (1) we know that the number of choices for $(y_1,z_1)$ is $O(N_2^\theta)$, so $\#S\lesssim N_2^\theta N_2N_3$.

If $(d_1-z_1)(z_1-y_1)=0$ and $(d_2-z_2)(z_2-y_2)=0$, as there is no pairing, we may assume that $d_1=z_1$ and $z_2=y_2$ (or $z_1=y_1$ and $d_2=z_2$, which is treated similarly), so $z_1=d_1$ and $x_2=d_2$ are fixed, $z_2$ has $O(N_3)$ choices and $x_1=y_1$ has $O(N_2)$ choices, which implies $\#S\lesssim N_2N_3$.

(b) Suppose $\iota_3=-$, then similarly we have that
\begin{equation*}(d_1+z_1)(d_1+y_1)+(d_2+z_2)(d_2+y_2)=(d+z)\cdot(d+y)=\frac{|d|^2+\alpha}{2}
\end{equation*} is constant. If $(d_1+z_1)(d_1+y_1)\neq 0$ (or similarly if $(d_2+z_2)(d_2+y_2)\neq 0$), then with fixed $(y_2,z_2)$ (which has $O(N_2N_3)$ choices), $(d_1+z_1)(d_1+y_1)$ will be constant. As $d_1+z_1$ belongs to an interval of size $O(N_3)$ in $\mathbb{R}$, and $d_1+y_1$ belongs to an interval of size $O(N_2)$ in $\mathbb{R}$, by (1) we know that the number of choices for $(y_1,z_1)$ is $O(N_2^\theta)$, so $\#S\lesssim N_2^\theta N_2N_3$.

If $(d_1+z_1)(d_1+y_1)=0$ and $(d_2+z_2)(d_2+y_2)=0$, as there is no pairing, we may assume that $d_1+z_1=0$ and $d_2+y_2=0$ (or $d_2+z_2$ and $d_1+y_1=0$, which is treated similarly), so $z_1=-d_1$ and $y_2=-d_2$ are fixed, $z_2$ has $O(N_3)$ choices and $y_1$ has $O(N_2)$ choices, which implies $\#S\lesssim N_2N_3$.
\end{proof}
\begin{prop}\label{counting1} Recall the relevant constants defined in (\ref{defparam}). The following bounds are uniform in all parameters. Given $d,d',k^0\in\mathbb{Z}^2$ and $k_j^0\in\mathbb{Z}^2(1\leq j\leq n)$, $\alpha,\Gamma\in\mathbb{R}$, $\iota,\iota_j\in\{\pm\}(1\leq j\leq n)$, and $2p\leq n$, as well as $M$, $N_j (0\leq j\leq n)$ and $R_i(1\leq i\leq p)$, such that for $1\leq i\leq p$ we have $N_{2i-1}\sim N_{2i}$, $\iota_{2i-1}=-\iota_{2i}$ and $R_{i}\lesssim N_{2i-1}^{1-\delta}$. Let $N^{(j)}=\max^{(j)}(N_1,\cdots,N_n)$, $N_{PR}=\max(N_1,\cdots,N_{2p})$ and let $N_*\gtrsim\max(N_0,N^{(1)})$. Also fix a subset $A$ of $\{1,\cdots,n\}$ that contains $\{1,\cdots,2p\}$, and recall the definition of the \emph{$\Gamma$-condition} (\ref{gammacon}). Consider the sets \begin{multline}\label{defset}\
S_1=\bigg\{(k,k_1,\cdots,k_n)\in(\mathbb{Z}^2)^{n+1}:\sum_{j=1}^n\iota_jk_j=k+d,\quad\sum_{j=1}^n\iota_j|k_j|^2=|k|^2+\alpha,\\|k_j-k_j^0|\lesssim N_j\,(1\leq j\leq n),\quad|k_{2i-1}-k_{2i}|\lesssim R_i(N_*)^{C\kappa^{-1}}\,(1\leq i\leq p)\bigg\},
\end{multline}
\begin{multline}\label{defset2}
S_2=\bigg\{(k,k',k_1,\cdots,k_n)\in(\mathbb{Z}^2)^{n+2}:\iota k'+\sum_{j=1}^n\iota_jk_j=k+d,\quad\iota|k'|^2+\sum_{j=1}^n\iota_j|k_j|^2=|k|^2+\alpha,\\|k|,|k'|\lesssim N_0,\,\, |k_j-k_j^0|\lesssim N_j\,(1\leq j\leq n),\,\,|k_{2i-1}-k_{2i}|\lesssim R_i(N_*)^{C\kappa^{-1}}\,(1\leq i\leq p)\bigg\},
\end{multline}
\begin{equation}\label{extraeqn}S^{+}=\bigg\{(k,k_1,\cdots,k_{n})\in(\mathbb{Z}^2)^{n+1}{\rm{\ and\ }}(k,k',k_1,\cdots,k_{n})\in(\mathbb{Z}^2)^{n+2}:\sum_{j\in A}\iota_jk_j=d'\bigg\},
\end{equation}
\begin{multline}\label{defset4}
S_3=\bigg\{(k,k_1,\cdots,k_n)\in(\mathbb{Z}^2)^{n+1}:\sum_{j=1}^n\iota_jk_j=k+d,\quad\bigg||k|^2-\sum_{j=1}^n\iota_j|k_j|^2-\alpha\bigg|\lesssim M,\quad|k|\lesssim N_0,\\|k_j|\lesssim N_j \, (1\leq j\leq n),\quad|k_{2i-1}-k_{2i}|\lesssim R_i(N_*)^{C\kappa^{-1}}\,(1\leq i\leq p),\quad {\rm{and\ }}(\ref{gammacon}){\rm{\ holds}}\bigg\}.
\end{multline}

Assume that there is no pairing among the variables $k$, $k'$ and $k_j$ in the sets above. Let $S_j^+=S_j\cap S^+$. Then, for $S_1$ we have
\begin{equation}\label{bdset1}
(\#S_1)\cdot\prod_{i=1}^p\frac{N_{2i-1}^{1+2\gamma_0}}{R_i}\lesssim (N_{PR})^{2\gamma_0}(N_*)^{C\kappa^{-1}}(N^{(1)}N^{(2)})^{-1}\prod_{j=1}^nN_j^2.
\end{equation} If $N^{(1)}\sim N_a$ and $\iota_a=-$, then we have
\begin{equation}\label{bdset2}
(\#S_1)\cdot\prod_{i=1}^p\frac{N_{2i-1}^{1+2\gamma_0}}{R_i}\lesssim (N_{PR})^{2\gamma_0}(N_*)^{C\kappa^{-1}}(N^{(1)})^{-2}\prod_{j=1}^nN_j^2.
\end{equation} For $S_2$ and $S_3$ we have \begin{equation}\label{bdset3}
(\#S_2)\cdot\prod_{i=1}^p\frac{N_{2i-1}^{1+2\gamma_0}}{R_i}\lesssim (N_{PR})^{2\gamma_0}(N_*)^{C\kappa^{-1}}N_0(N^{(1)})^{-1}\prod_{j=1}^nN_j^2,
\end{equation}\begin{equation}\label{bdset5}
(\#S_3)\cdot\prod_{i=1}^p\frac{N_{2i-1}^{1+2\gamma_0}}{R_i}\lesssim (N_{PR})^{2\gamma_0}(N_*)^{C\kappa^{-1}}M\frac{\max((N^{(2)})^2,|\alpha|)}{(N^{(2)})^2}(N^{(1)})^{-2}\prod_{j=1}^nN_j^2.
\end{equation}

Finally, suppose we replace any of these $S_j$ by the set $S_j^+$. Then (\ref{bdset1})--(\ref{bdset3}) hold with the right hand side multiplied by an extra factor
\begin{equation}\label{exfactor}\big[\min\big(N^{(2)},\max_{j\in A,j\geq 2p+1}N_j\big)\big]^{-1}.\end{equation} If $N^{(1)}\sim N_a$ and $a\in A$, then the stronger bound (\ref{bdset2}) holds for $S_1^+$ with right hand side multiplied by an extra factor (\ref{exfactor}), regardless of whether $\iota_a=-$ or not. If $N^{(1)}\sim N_a$ and $2p+1\leq a\in A$, then (\ref{bdset3}) holds for $S_2^+$ with right hand side multiplied by an extra factor $(N^{(1)})^{-1}$. As for $S_3^+$, either it satisfies (\ref{bdset5}) with the same extra factor (\ref{exfactor}), or it satisfies
\begin{multline}\label{bdset6}(\#S_3^+)\cdot\prod_{i=1}^p\frac{N_{2i-1}^{1+2\gamma_0}}{R_i}\lesssim (N_*)^{C\kappa^{-1}}M(N_{PR})^{2\gamma_0}\min\bigg(\frac{\max((N^{(2)})^2,|\alpha|)}{(N^{(1)}N^{(2)})^2}\\\times\big(\max^{(2)}\{N_j:2p+1\leq j\in A\}\big)^{-1},(N^{(1)})^{-1}(N^{(2)})^{-2}\bigg)\prod_{j=1}^nN_j^2.
\end{multline}
\end{prop}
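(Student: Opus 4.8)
The plan is to deduce all of (\ref{bdset1})--(\ref{bdset6}) from one counting mechanism after first removing the near-pairings $(k_{2i-1},k_{2i})$. For each $i$ I would set $w_i:=\iota_{2i-1}(k_{2i-1}-k_{2i})$ and $s_i:=k_{2i-1}+k_{2i}$; since $\iota_{2i}=-\iota_{2i-1}$ and $N_{2i-1}\sim N_{2i}$, the pair contributes exactly $w_i$ to $\sum_j\iota_jk_j$, exactly $w_i$ to $\sum_{j\in A}\iota_jk_j$ (recall $\{1,\dots,2p\}\subseteq A$) and exactly $w_i\cdot s_i$ to $\sum_j\iota_j|k_j|^2$. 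Freezing all the $w_i$ (cost $\prod_i(R_i(N_*)^{C\kappa^{-1}})^2$) absorbs them into $d,d',\alpha,\Gamma$ and kills the quadratic effect of the pairs; freezing $s_2,\dots,s_p$ freely (cost $\prod_{i\geq2}N_{2i-1}^2$) leaves a $p=0$ system in $k,k'$, the non-pair frequencies and $s_1$. Balancing these losses against the weight $\prod_iN_{2i-1}^{1+2\gamma_0}/R_i$ via $R_i\lesssim N_{2i-1}^{1-\delta}$ reduces every claimed bound to its $p=0$ case, up to the innocuous factors $(N_{PR})^{2\gamma_0}$ and $(N_*)^{C\kappa^{-1}}$; henceforth $p=0$ and the weight is trivial.

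For the base counts (\ref{bdset1}), (\ref{bdset2}) and (\ref{bdset3}) I would eliminate one frequency through the vector equation ($k$ for $S_1$; $k'$ for $S_2$), freeze all remaining frequencies except the two of largest scale, $k_a$ ($N^{(1)}\sim N_a$) and $k_b$ ($N^{(2)}\sim N_b$), and substitute the vector constraint into $|k|^2$. What happens next depends on the signs $\iota_a,\iota_b$: if $\iota_a=-$ the energy equation directly confines $k_a$ to a circle, giving the stronger bound (\ref{bdset2}) (hence (\ref{bdset1})); if $\iota_a=\iota_b=+$ the energy equation becomes bilinear-plus-linear in $(k_a,k_b)$, and freezing one Cartesian coordinate of each turns it into a product $PQ=\mathrm{const}$ handled by Lemma \ref{lem:counting}(1); if $\iota_a=+,\iota_b=-$ one uses $|k|^2-|k_a|^2=(k-k_a)\cdot(k+k_a)$, which for fixed $k_b$ confines $k_a$ to a line, and a dyadic decomposition in $|k-k_a|$ and in $\gcd(k-k_a)$ recovers the $(N^{(1)}N^{(2)})^{-1}$ gain. $S_2$ is treated the same way, with $k,k'$ counted as additional frequencies of scale $\lesssim N_0$; the degenerate sub-cases where $P$ or $Q$ vanishes are disposed of directly from the no-pairing hypothesis, exactly as in the proof of Lemma \ref{lem:counting}(2).

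For $S_3$ the energy equation holds only up to an $O(M)$ window; the $\Gamma$-condition (\ref{gammacon}) forces $|k|^2$ and $|k_a|^2$ onto opposite sides of $\Gamma$, which turns this window into a thin slab (if $\iota_a=+$) or annulus (if $\iota_a=-$) constraint on $k_a$ containing $\lesssim M\cdot\max((N^{(2)})^2,|\alpha|)/(N^{(2)})^2$ lattice points, giving (\ref{bdset5}). For the starred sets the extra equation $\sum_{j\in A}\iota_jk_j=d'$ (with the $w_i$ already folded in) constrains the non-pair indices of $A$ and lets me eliminate one further such frequency; choosing it optimally improves the count by a factor $\min(N^{(2)},\max_{j\in A,\,j\geq2p+1}N_j)$, which is the extra factor (\ref{exfactor}) in (\ref{bdset1})--(\ref{bdset3}) and (\ref{bdset2}). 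For $S_3^+$ one runs this elimination together with the $\Gamma$-window analysis and keeps whichever of the two possible book-keepings is stronger, yielding (\ref{bdset6}).

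The hard part will be the case analysis in the base count and its refinements: because every estimate must be uniform in the shifts $d,d',k_j^0$ and the offsets $\alpha,\Gamma$ there is no generic-position freedom, so in each configuration one has to choose precisely which variable to eliminate and which two Cartesian coordinates to freeze so that the leftover energy equation factors \emph{and} the sign $\iota_a$ or the extra linear equation acts on a retained large frequency. Keeping this coherent across all combinations -- ordinary versus conjugate largest frequency, $S_1$ versus $S_2$ versus $S_3$, starred versus unstarred, and the various vanishing-factor degeneracies -- while still extracting the sharp powers of $N^{(1)},N^{(2)},N_0,M,|\alpha|$ is where essentially all of the work lies.
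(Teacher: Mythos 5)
Your decomposition into $w_i=\iota_{2i-1}(k_{2i-1}-k_{2i})$ and $s_i=k_{2i-1}+k_{2i}$, and the identity $\iota_{2i-1}|k_{2i-1}|^2+\iota_{2i}|k_{2i}|^2 = w_i\cdot s_i$, is the right algebra. But the reduction you build on it has a genuine gap: when the top two scales $N^{(1)}\sim N^{(2)}$ come from a \emph{pair} (say $a=1$, $b=2$), freezing $w_1$ before doing the ``base count'' destroys the divisor gain you need. After $w_1$ is fixed, the quadratic constraint is linear in $s_1$ (coefficient $w_1$), so given the other variables $s_1$ ranges over a line, which in the worst case has $\sim N_1$ lattice points in a ball of radius $N_1$. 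Charging $R_1^2$ for $w_1$ and $N_1$ for $s_1$ gives $R_1^2 N_1$ for the pair, whereas (\ref{bdset1}) with $N^{(1)}\sim N^{(2)}\sim N_1$ requires $\sim N_1 R_1$. The actual bound $N_1^{1+\theta} R_1$ is recovered only by averaging over $w_1$: the line $\{w_1\cdot s_1=c\}$ has $O(N_1\gcd(w_1)/|w_1|+1)$ points, and $\sum_{0<|w_1|\lesssim R_1} N_1\gcd(w_1)/|w_1|\sim N_1 R_1\log R_1$. You note the $\gcd$ trick yourself, but only for the $(k,k_a)$ pair in the residual $p=0$ system; once $w_1$ is frozen it is unavailable for $(w_1,s_1)$, and $s_1$ is not a quadratic frequency so Lemma~\ref{lem:counting}(2) does not apply to it. The same loss shows up in the mixed case $a\geq 2p+1$, $b\leq 2p$: option ``use the linear-in-$s_1$ equation'' gives $R_1^2 N_1 N_a^2\prod_{j\geq 3,j\neq a}N_j^2$, versus the correct $N_1^2 N_a R_1 \prod_{j\geq 3,j\neq a}N_j^2$, which differ by $R_1 N_a/N_1\geq R_1$.

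The paper avoids this by never decoupling the pair at the outset. It identifies $a,b$ first and applies Lemma~\ref{lem:counting}(2) directly to a triple that includes the relevant pair member(s), exploiting the geometric fact that one of the three vectors is confined to a disc of radius $O(R_i(N_*)^{C\kappa^{-1}})$ (either $k$, when $a,b\leq 2p$, because $k=w_1+\text{const}$; or $k_{2i-1}$ once $k_{2i}$ is fixed, when exactly one of $a,b$ is a pair index). This is where the divisor averaging enters automatically, through the proof of the lemma. If you insist on the $(w_1,s_1)$ coordinates, you would need to \emph{not} freeze $w_1$ and instead run the same dyadic decomposition in $|w_1|$ and $\gcd(w_1)$ that you propose for $(k,k_a)$ — at which point you are replicating Lemma~\ref{lem:counting}(2) in a different guise rather than performing a clean reduction to $p=0$. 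Your reduction is clean and correct only in the regime $a,b\geq 2p+1$, where $s_1$ can be frozen at cost $N_1^2$ and the $R_i\lesssim N_{2i-1}^{1-\delta}$ slack absorbs the bookkeeping, and the remaining claim-by-claim case analysis (especially the $\Gamma$-window arguments and (\ref{bdset6})) is only sketched; but the pair-dominant configurations are the ones that would break your scheme, and they cannot be waved away.
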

\begin{proof} Let $a,b,c$ be such that $N^{(1)}\sim N_a$, $N^{(2)}\sim N_b$, and $\max(\{N_j:2p+1\leq j\in A\})\sim N_c$. In the proof below any factor that is $\lesssim (N_*)^{C\kappa^{-1}}$ will be negligible, so we will pretend they are $1$. For simplicity, let us first also ignore all $N_{2i-1}^{2\gamma_0}$ factors; at the end of the proof we will explain how to put them back.

(1) We start with (\ref{bdset1}). If $a,b\geq 2p+1$, we may fix all $k_j(j\not\in\{a,b\})$, and then apply Lemma \ref{lem:counting} (2) to count the triple $(k,k_{a},k_{b})$. This gives
\begin{equation}(\#S_1)\prod_{i=1}^p\frac{N_{2i-1}}{R_i}\lesssim\bigg(\prod_{i=1}^pN_{2i-1}^3R_i\prod_{2p+1\leq j\not\in\{a,b\}}N_j^2\bigg)N_{a}N_{b},\end{equation} which proves (\ref{bdset1}) as $R_{j}\lesssim N_{2j-1}^{1-\delta}$. If $a\geq 2p+1$ and $b\leq 2p$ (say $b=1$), we may fix all $k_j\,(j\not\in\{1,a\})$, and then apply Lemma \ref{lem:counting} (2) to count the triple $(k,k_{1},k_{a})$, noticing that $k_1$ belongs to a disc of radius $O(R_1(N_*)^{C\kappa^{-1}})$ once $k_2$ is fixed. This gives
\begin{equation}
(\#S_1)\prod_{i=1}^p\frac{N_{2i-1}}{R_i}\lesssim\bigg(N_1^2\prod_{i=2}^pN_{2i-1}^3R_i\prod_{2p+1\leq j\neq a}N_j^2\bigg)N_{a}R_1\frac{N_1}{R_1},
\end{equation} which proves (\ref{bdset1}). Finally, if $a\leq 2p$ (say $a=1$) then we may assume $b=2$. We may fix all $k_j(j\geq 3)$, and then apply Lemma \ref{lem:counting} (2) to count the triple $(k,k_{1},k_{2})$, noticing that $k$ belongs to a disc of radius $O(R_1(N_*)^{C\kappa^{-1}})$ once all $k_j(j\geq 3)$ are fixed. This gives
\begin{equation}(\#S_1)\prod_{i=1}^p\frac{N_{2i-1}}{R_i}\lesssim \bigg(\prod_{i=2}^pN_{2i-1}^3R_i\prod_{j\geq 2p+1}N_j^2\bigg)N_1R_1\frac{N_1}{R_1},
\end{equation} which proves (\ref{bdset1}).

As for (\ref{bdset2}) and (\ref{bdset3}) we only need to consider $a$. If $a\geq 2p+1$ we may fix all $k_j(j\neq a)$, and then apply Lemma \ref{lem:counting} (2) to count the pair $(k,k_{a})$ for (\ref{bdset2}) (using the fact $\iota_{a}=-$) and the triple $(k,k',k_{a})$ for (\ref{bdset3}), and get
\begin{equation}(\#S_1)\prod_{i=1}^p\frac{N_{2i-1}}{R_i}\lesssim \bigg(\prod_{i=1}^pN_{2i-1}^3R_i\prod_{2p+1\leq j\neq a}N_j^2\bigg) ,
\end{equation}
\begin{equation}(\#S_2)\prod_{i=1}^p\frac{N_{2i-1}}{R_i}\lesssim\bigg(\prod_{i=1}^pN_{2i-1}^3R_i\prod_{2p+1\leq j\neq a}N_j^2\bigg) N_{0}N_{a},\end{equation} which proves (\ref{bdset2}) and (\ref{bdset3}). If $a\leq 2p$ (say $a=1$) we may assume $b=2$, in particular $N^{(1)}\sim N^{(2)}$ and (\ref{bdset2}) follows from (\ref{bdset1}); for (\ref{bdset3}) we may fix all $k_j(j\geq 2)$ and then apply Lemma \ref{lem:counting} (2) to count the triple $(k,k',k_{1})$, noticing that $k_1$ belongs to a disc of radius $O(R_1(N_*)^{C\kappa^{-1}})$ once $k_2$ is fixed, and get
\begin{equation}(\#S_2)\prod_{i=1}^p\frac{N_{2i-1}}{R_i}\lesssim \bigg(N_1^2\prod_{i=2}^pN_{2i-1}^3R_i\prod_{j\geq 2p+1}N_j^2\bigg)N_0R_1\frac{N_1}{R_1},
\end{equation} which proves (\ref{bdset3}).

(2) Next we prove the improvements to (\ref{bdset1})--(\ref{bdset3}) for $S_j^{+}$. We start with (\ref{bdset1}). If $a,b\not\in A$, we may fix all $k_j (c\neq j\in A)$ and apply (\ref{bdset1}) to the rest variables and get
\begin{equation}(\#S_1^+)\prod_{i=1}^p\frac{N_{2i-1}}{R_i}\lesssim (N_aN_b)^{-1}\prod_{j\not\in A}N_j^2\prod_{i=1}^p N_{2i-1}^3R_i\prod_{j\in A,c\neq j\geq 2p+1}N_j^2,
\end{equation}
which gains a factor $N_c^{-2}$ upon (\ref{bdset1}). If $a\not\in A$ and $2p+1\leq b\in A$, we may fix all $k_j (b\neq j\in A)$ and apply (\ref{bdset1}) to the rest variables and get
\begin{equation}(\#S_1^+)\prod_{i=1}^p\frac{N_{2i-1}}{R_i}\lesssim N_a^{-1}\prod_{j\not\in A}N_j^2\prod_{i=1}^p N_{2i-1}^3R_i\prod_{j\in A,b\neq j\geq 2p+1}N_j^2,
\end{equation}  which gains a factor $N_b^{-1}$ upon (\ref{bdset1}). If $a\not\in A$ and $b\leq 2p$ (say $b=1$), we may fix all $k_j(2\leq j\neq a)$, noticing that $k_c$ belongs to a ball of radius $\min(N_c,R_1(N_*)^{C\kappa^{-1}})$ once all $k_j(3\leq j\not\in \{a,c\})$ are fixed, and then apply Lemma \ref{lem:counting} (2) to count the pair $(k,k_a)$ and get 
\begin{equation}(\#S_1^+)\prod_{i=1}^p\frac{N_{2i-1}}{R_i}\lesssim\bigg(N_1^2\min(N_c^2,R_1^2)\prod_{i=2}^pN_{2i-1}^3R_i\prod_{2p+1\leq j\not\in\{a, c\}}N_j^2\bigg)N_a\frac{N_1}{R_1},
\end{equation}which gains a factor $N_c^{-1}$ upon (\ref{bdset1}).

Now if $2p+1\leq a\in A$ and either $b\not\in A$ or $2p+1\leq b\in A$, we may fix all $k_j(j\not\in\{a,b\})$ and apply Lemma \ref{lem:counting} (2) to count the pair $(k,k_b)$ (if $b\not\in A$) or $(k_a,k_b)$ (if $2p+1\leq b\in A$), and get
\begin{equation}(\#S_1^+)\prod_{i=1}^p\frac{N_{2i-1}}{R_i}\lesssim \bigg(\prod_{i=1}^pN_{2i-1}^3R_i\prod_{2p+1\leq j\not\in\{a,b\}}N_j^2\bigg)N_b,
\end{equation} which gains a factor $N_b^{-1}$ upon the stronger bound (\ref{bdset2}). If $2p+1\leq a\in A$ and $b\leq 2p$ (say $b=1$), we may fix all $k_j(j\not\in\{a,1,2\})$ and apply Lemma \ref{lem:counting} (2) to count the triple $(k_a,k_1,k_2)$, noticing that $k_a$ belongs to a disc of radius $O(R_1(N_*)^{C\kappa^{-1}})$ once all $k_j(j\not\in\{a,1,2\})$ are fixed, and get
\begin{equation}(\#S_1^+)\prod_{i=1}^p\frac{N_{2i-1}}{R_i}\lesssim \bigg(\prod_{i=2}^pN_{2i-1}^3R_i\prod_{2p+1\leq j\neq a}N_j^2\bigg)N_1R_1\frac{N_1}{R_1},
\end{equation} which gains a factor $N_b^{-2}$ upon the stronger bound (\ref{bdset2}). Finally, if $a\leq 2p$ (say $a=1$) then we may assume $b=2$. We may fix all $k_j(j\geq 3)$, noticing that $k_c$ belongs to a disc of radius $\min(N_c,R_1(N_*)^{C\kappa^{-1}})$ once all $k_j(3\leq j\neq c)$ are fixed, and then apply Lemma \ref{lem:counting} (2) to count the pair $(k_{1},k_{2})$. This gives
\begin{equation}(\#S_1^+)\prod_{i=1}^p\frac{N_{2i-1}}{R_i}\lesssim \bigg(\min(N_c,R_1)^2\prod_{i=2}^pN_{2i-1}^3R_i\prod_{2p+1\leq j\neq c}N_j^2\bigg)N_1\frac{N_1}{R_1},
\end{equation} which gains a factor $N_c^{-1}$ upon the stronger bound (\ref{bdset2}).

As for (\ref{bdset2}) and (\ref{bdset3}) we only need to consider $a$. If $a\not\in A$ we may fix all $k_j (c\neq j\in A)$ and apply (\ref{bdset2}) (if $\iota_a=-$) or (\ref{bdset3}) to the rest variables and get
\begin{equation}(\#S_1^+)\prod_{i=1}^p\frac{N_{2i-1}}{R_i}\lesssim N_a^{-2}\prod_{j\not\in A}N_j^2\prod_{i=1}^p N_{2i-1}^3R_i\prod_{j\in A,c\neq j\geq 2p+1}N_j^2,
\end{equation}
\begin{equation}(\#S_2^+)\prod_{i=1}^p\frac{N_{2i-1}}{R_i}\lesssim N_0N_a^{-1}\prod_{j\not\in A}N_j^2\prod_{i=1}^p N_{2i-1}^3R_i\prod_{j\in A,c\neq j\geq 2p+1}N_j^2,
\end{equation}
which gains a factor $N_c^{-2}$ upon (\ref{bdset2}) or (\ref{bdset3}). If $a\in A$ then (\ref{bdset2}) follows from the above proof for (\ref{bdset1}); for (\ref{bdset3}), if $2p+1\leq a\in A$ we may fix all $k_j (a\neq j\in A)$ and apply (\ref{bdset3}) to the rest variables and get
\begin{equation}(\#S_2^+)\prod_{i=1}^p\frac{N_{2i-1}}{R_i}\lesssim N_0\prod_{j\not\in A}N_j^2\prod_{i=1}^p N_{2i-1}^3R_i\prod_{j\in A,a\neq j\geq 2p+1}N_j^2,
\end{equation} which gains a factor $N_a^{-1}$ upon (\ref{bdset3}); if $a\leq 2p$ (say $a=1$) we may fix all $k_j(2\leq j\in A)$, noticing that $k_c$ belongs to a ball of radius $\min(N_c,R_1(N_*)^{C\kappa^{-1}})$ once all $k_j(j\in A\backslash\{1,2,c\})$ are fixed, and apply (\ref{bdset3}) to the rest variables and get\begin{equation}(\#S_2^+)\prod_{i=1}^p\frac{N_{2i-1}}{R_i}\lesssim\min(N_c,R_1)^2N_1^2\frac{N_1}{R_1}N_0\prod_{j\not\in A}N_j^2\prod_{i=2}^p N_{2i-1}^3R_i\prod_{j\in A,c\neq j\geq 2p+1}N_j^2,
\end{equation} which gains a factor $N_c^{-1}$ upon (\ref{bdset3}).

(3) Now we consider (\ref{bdset5}) and its improvement. We may assume $\iota_a=+$ and $N^{(1)}\gg N^{(2)}$ (so $a\geq 2p+1$), since otherwise (\ref{bdset5}) follows from (\ref{bdset2}) or (\ref{bdset1}) and similarly for the improvement. Now let $M_0=\max(|\alpha|,(N^{(2)})^2)$, if $M\gg M_0$ then we have
\[\big||k|^2-|k_{a}|^2\big|\leq|\alpha|+\sum_{j\neq a}|k_j|^2+M\lesssim M;\] combining with (\ref{gammacon}) and Lemma \ref{lem:counting} (1) we conclude that the number of choices for $|k_a|^2$, and thus $k_a$, is $O(M)$. We may fix $k_a$ and then count $k_j(j\neq a)$ to get
\begin{equation}(\#S_3)\prod_{i=1}^p\frac{N_{2i-1}}{R_i}\lesssim M\prod_{i=1}^p N_{2i-1}^3R_i\prod_{2p+1\leq j\neq a}N_j^2,
\end{equation} which proves (\ref{bdset5}). As for $S_3^+$, if $a\in A$ then the improvement of (\ref{bdset5}) follows from the improvement of (\ref{bdset2}); if $a\not\in A$ we may fix $k_a$ and count $k_j(j\not\in\{a,c\})$ to get \begin{equation}(\#S_3^+)\prod_{i=1}^p\frac{N_{2i-1}}{R_i}\lesssim M\prod_{i=1}^p N_{2i-1}^3R_i\prod_{2p+1\leq j\not\in \{a,c\}}N_j^2,
\end{equation} which gains a factor $N_c^{-2}$ upon (\ref{bdset5}).

Assume now $M\lesssim M_0$, then just like above we have $\big||k|^2-|k_a|^2\big|\lesssim M_0$, so $k$ has at most $O(M_0)$ choices, similarly $k_a$ has at most $O(M_0)$ choices. If $b\geq 2p+1$, we may assume $\iota_b=+$ (otherwise switch the roles of $k$ and $k_a$), then fix $k$ and $k_j(j\not\in\{a, b\})$ and apply Lemma \ref{lem:counting} (2) to count the pair $(k_a,k_b)$ to get
\begin{equation}(\#S_3)\prod_{i=1}^p\frac{N_{2i-1}}{R_i}\lesssim M_0\bigg(\prod_{i=1}^p N_{2i-1}^3R_i\prod_{2p+1\leq j\not\in\{a,b\}}N_j^2\bigg)M,
\end{equation} which proves (\ref{bdset5}); if $b\leq 2p$ (say $b=1$), we may fix $k$ and $k_j(j\not\in\{1,2,a\})$ and apply Lemma \ref{lem:counting} (2) to count the triple $(k_1,k_2,k_a)$, noticing that $k_a$ belongs to a disc of radius $O(R_1(N_*)^{C\kappa^{-1}})$ once $k$ and $k_j(j\not\in\{1,2,a\})$ are fixed, and get
\begin{equation}(\#S_3)\prod_{i=1}^p\frac{N_{2i-1}}{R_i}\lesssim \bigg(M_0\prod_{i=2}^p N_{2i-1}^3R_i\prod_{2p+1\leq j\neq a}N_j^2\bigg)N_1R_1M\frac{N_1}{R_1},
\end{equation} which proves (\ref{bdset5}).

It remains to prove the improvement of (\ref{bdset5}) for $S_3^+$. We may assume $a\not\in A$, since otherwise it follows from the improvement of (\ref{bdset2}). Now if $b\not\in A$ we may fix all $k_j (c\neq j\in A)$ and apply (\ref{bdset5}) to the rest variables and get
\begin{equation}(\#S_3^+)\prod_{i=1}^p\frac{N_{2i-1}}{R_i}\lesssim MM_0(N_aN_b)^{-2}\prod_{j\not\in A}N_j^2\prod_{i=1}^pN_{2i-1}^3R_i\prod_{2p+1\leq j\neq c,\,j\in A}N_j^2,
\end{equation} which gains a factor $N_c^{-2}$ upon (\ref{bdset5}). If $b\leq 2p$, say $b=1$, we may fix $k$ and $k_j(3\leq j\neq a)$, noticing that $k_c$ belongs to a disc of radius $\min(N_c,R_1(N_*)^{C\kappa^{-1}})$ once all $k_j(3\leq j\not\in\{ a,c\})$ are fixed, and then apply Lemma \ref{lem:counting} (2) to count the pair $(k_1,k_2)$ and get
\begin{equation}(\#S_3^+)\prod_{i=1}^p\frac{N_{2i-1}}{R_i}\lesssim MM_0\bigg(\min(N_c,R_1)^2\prod_{i=2}^pN_{2i-1}^3R_i\prod_{2p+1\leq j\not\in\{ a,c\}}N_j^2\bigg)N_1\frac{N_1}{R_1},
\end{equation} which gains a factor $N_c^{-1}$ upon (\ref{bdset5}). Finally, assume $2p+1\leq b\in A$, then we will prove (\ref{bdset6}). Let $\max^{(2)}\{N_j:2p+1\leq j\in A\}\sim N_d$, we may fix $k$ and $k_j(j\not\in \{a,b,d\})$, then apply Lemma \ref{lem:counting} (2) to count the pair $(k_b,k_d)$ and get
\begin{equation}\label{lastone}(\#S_3^+)\prod_{i=1}^p\frac{N_{2i-1}}{R_i}\lesssim MM_0\bigg(\prod_{i=1}^pN_{2i-1}^3R_i\prod_{2p+1\leq j\not\in\{ a,b,d\}}N_j^2\bigg)N_d;
\end{equation}alternatively we may choose to fix $k_j(j\not\in\{a,b\})$ then apply Lemma \ref{lem:counting} (2) to count the pair $(k,k_a)$ and get \begin{equation}\label{lasttwo}(\#S_3^+)\prod_{i=1}^p\frac{N_{2i-1}}{R_i}\lesssim M\bigg(\prod_{i=1}^pN_{2i-1}^3R_i\prod_{2p+1\leq j\not\in\{ a,b\}}N_j^2\bigg)N_a,
\end{equation} and combining (\ref{lastone}) and (\ref{lasttwo}) yields (\ref{bdset6}).

In the last part we will explain how to put back the powers $N_{2i-1}^{2\gamma_0}$. In fact, in each estimate above we have the product $\prod_{i\geq 2}N_{2i-1}^3R_i$. As $R_i\lesssim N_{2i-1}^{1-\delta}$ and $N_{2i-1}\sim N_{2i}$ we have
\[N_{2i-1}^3R_i\lesssim N_{2i-1}^2N_{2i}^2 \cdot N_{2i-1}^{-2\gamma_0},\] which allows us to incorporate the extra factor $N_{2i-1}^{2\gamma_0}$ for $i\geq 2$. Thus we lose at most a factor $N_{1}^{2\gamma_0}$, which is acceptable as $N_1\lesssim N_{PR}$.
\end{proof}
\begin{cor}\label{corcounting} Recall $a_0>1$ defined in (\ref{defparam}), and let all the parameters ($d$, $N_j$, $\iota_j$ etc.) be as in Proposition \ref{counting1}. From the sets $S_j(1\leq j\leq 3)$ in Proposition \ref{counting1} we may construct the quantities $\mathcal{E}_j$ as follows: each $\mathcal{E}_j$ is a sum over a set ${S}_j^{\mathrm{lin}}$. This ${S}_j^{\mathrm{lin}}$ is formed from $S_j$ by removing from its defining properties the one that involves the quadratic algebraic sum $\Sigma$ (this $\Sigma$ is $\iota_1|k_1|^2+\cdots +\iota_n|k_n|^2-|k|^2$ for $S_1$ and $S_3$, and $\iota_1|k_1|^2+\cdots +\iota_n|k_n|^2+\iota|k'|^2-|k|^2$ for $S_2$), and the summand is simply $\langle \Sigma-\alpha \rangle^{-a_0}$. Similarly define $\mathcal{E}_j^+$ by replacing $S_j$ with $S_j^+$.

Then, the inequalities (\ref{bdset1})--(\ref{bdset6}), as well as their improvements, hold with $\# S_j$ replaced by $\mathcal{E}_j$ $(\#S_j^+$ replaced by $\mathcal{E}_j^{+}$), and with the factor $M$ on the right hand sides of (\ref{bdset5}) and (\ref{bdset6}) removed.
\end{cor}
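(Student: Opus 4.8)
The plan is to deduce each of the bounds for $\mathcal{E}_j$ (and $\mathcal{E}_j^+$) from the corresponding cardinality bound in Proposition \ref{counting1} by a dyadic decomposition in the size of the algebraic sum $\Sigma$, exploiting the uniformity of those bounds in $\alpha$ and $\Gamma$. For a dyadic number $P\geq 1$ write $S_j^{\mathrm{lin}}(P)$ for the subset of $S_j^{\mathrm{lin}}$ on which $\langle\Sigma-\alpha\rangle\sim P$; since in the definition of $S_j^{\mathrm{lin}}$ all the $k_i$ are confined to balls of radii $\lesssim N_i$ (and $k$, $k'$ are likewise constrained), $S_j^{\mathrm{lin}}$ is a finite set, so that
\[
\mathcal{E}_j=\sum_{(k,k_1,\cdots)\in S_j^{\mathrm{lin}}}\langle\Sigma-\alpha\rangle^{-a_0}\lesssim\sum_{P\ \mathrm{dyadic},\,P\geq 1}P^{-a_0}\cdot\#S_j^{\mathrm{lin}}(P),
\]
and the same for $\mathcal{E}_j^+$ with $S_j^{\mathrm{lin}}$ replaced by $(S_j^+)^{\mathrm{lin}}:=S_j^{\mathrm{lin}}\cap S^+$. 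Thus it suffices to estimate $\#S_j^{\mathrm{lin}}(P)$ for each $P$.

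The key observation is that $S_j^{\mathrm{lin}}(P)$ is contained in a set already handled by Proposition \ref{counting1}, with the quadratic constraint reinstated in thickened form. For $j=1$ or $j=2$ the set $S_j$ carries the exact identity $\Sigma=\alpha$; since $\Sigma$ is an integer, the condition $|\Sigma-\alpha|\lesssim P$ forces $\Sigma$ to take one of $O(P)$ integer values $\alpha'$, and for each such $\alpha'$ the slice $S_j^{\mathrm{lin}}\cap\{\Sigma=\alpha'\}$ is exactly $S_j$ with $\alpha$ replaced by $\alpha'$ (only the quadratic constraint depends on $\alpha$), with the summand constant and equal to $\langle\alpha'-\alpha\rangle^{-a_0}$ there; the same holds for $S_j^+$ upon intersecting with the $\alpha$-independent set $S^+$. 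Because (\ref{bdset1}), (\ref{bdset3}) and their $S_j^+$ improvements are uniform in $\alpha\in\mathbb{R}$ (and in $\Gamma$), each of these $O(P)$ sets obeys a single bound $B_j$, the right-hand side of the relevant inequality; hence $\#S_j^{\mathrm{lin}}(P)\lesssim P\cdot B_j$. For $j=3$, $S_3^{\mathrm{lin}}$ retains the $\Gamma$-condition, the localization $|k|\lesssim N_0$, and every structural hypothesis of Proposition \ref{counting1} ($N_{2i-1}\sim N_{2i}$, $\iota_{2i-1}=-\iota_{2i}$, $R_i\lesssim N_{2i-1}^{1-\delta}$, the no-pairing assumption) — none of which involves $\Sigma$ — so $S_3^{\mathrm{lin}}(P)$ (indeed $S_3^{\mathrm{lin}}\cap\{|\Sigma-\alpha|\lesssim P\}$) is exactly a set of the form $S_3$ with the parameter $M$ taken to be $P$ and with $\alpha$ possibly negated, the latter not affecting the bound since $\alpha$ enters (\ref{bdset5}), (\ref{bdset6}) only through $|\alpha|$. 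Therefore $\#S_3^{\mathrm{lin}}(P)\lesssim P\cdot B_3$, where $B_3$ is the right-hand side of (\ref{bdset5}), resp. (\ref{bdset6}), with the factor $M$ deleted; likewise for $S_3^+$.

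Finally I would sum the geometric series. In every case the previous step gives $\#S_j^{\mathrm{lin}}(P)\lesssim P\cdot B_j$ with $B_j$ independent of $P$, so
\[
\mathcal{E}_j\lesssim B_j\sum_{P\ \mathrm{dyadic},\,P\geq 1}P^{1-a_0},
\]
and since $a_0=2b-10\delta^6=1+2\delta^4-10\delta^6>1$ by (\ref{defparam}), the dyadic sum converges to an absolute constant. This yields $\mathcal{E}_j\lesssim B_j$, which is precisely the assertion — (\ref{bdset1})--(\ref{bdset6}) and their improvements with $\#S_j$ (resp. $\#S_j^+$) replaced by $\mathcal{E}_j$ (resp. $\mathcal{E}_j^+$) and the factor $M$ removed in (\ref{bdset5}), (\ref{bdset6}). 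The work here is bookkeeping rather than genuine difficulty: the one point requiring care is to verify that reinstating the quadratic constraint in the thickened form $|\Sigma-\alpha|\lesssim P$ really produces a set inside the family covered by Proposition \ref{counting1} with the same bound — i.e. that every hypothesis of that proposition other than the quadratic identity is untouched by shifting, negating, or thickening $\alpha$, so that the stated uniformity in $\alpha$ and $\Gamma$ applies — after which the summation over $P$, powered by $a_0>1$, closes the argument.
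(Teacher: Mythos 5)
Your proof is correct and takes essentially the same route as the paper: dyadically decompose $\langle\Sigma-\alpha\rangle$, apply the uniformity-in-$\alpha$ of Proposition \ref{counting1} on each slice (for $S_1,S_2$ each dyadic annulus contributes $O(P)$ exact slices; for $S_3$ the annulus is directly an $S_3$ with $M=P$ after the harmless sign flip of $\alpha$), and sum the geometric series using $a_0>1$. The only cosmetic difference is that the paper phrases the $S_1,S_2$ case as a direct sum over integer values of $\Sigma$ rather than first grouping into dyadic annuli, but the content is identical.
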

\begin{proof} This is straightforward, by applying Proposition \ref{counting1} for each value of $\Sigma$ and summing up using $a_0>1$ for (\ref{bdset1})--(\ref{bdset3}), and by dyadically decomposing $\langle\Sigma-\alpha\rangle$ and applying Proposition \ref{counting1} for each dyadic piece for (\ref{bdset5})--(\ref{bdset6}).
\end{proof}

\section{Proof of the multilinear estimates}\label{multiest} In this section we will prove Proposition \ref{multi0} thus completing the local theory. We start with an estimate for general multilinear forms without pairing. 

Given $d\in\mathbb{Z}^2$ and $\alpha\in\mathbb{R}$, consider the following expressions: 
\begin{equation}\label{mainexp1}
\mathcal{X}:=\sum_{\substack{(k,k_1,\cdots,k_n)\\\iota_1 k_1+\cdots+\iota_n k_n=k+d}}\int\mathrm{d}\lambda\mathrm{d}\lambda_1\cdots\mathrm{d}\lambda_n\cdot\eta\bigg(\lambda,\lambda-|k|^2-\sum_{j=1}^n\iota_j(\lambda_j-|k_j|^2)-\alpha\bigg)\overline{v_{k}(\lambda)}\prod_{j=1}^n[v_{k_j}^{(j)}(\lambda_j)]^{\iota_j},
\end{equation} 
\begin{multline}\label{mainexp2}
\mathcal{Y}:=\sum_{\substack{(k,k',k_1,\cdots,k_n)\\\iota_1 k_1+\cdots+\iota_n k_n+\iota k'=k+d}}\int \mathrm{d}\lambda\mathrm{d}\lambda'\mathrm{d}\lambda_1\cdots\mathrm{d}\lambda_n\\\times\eta\bigg(\lambda,\lambda-|k|^2-\iota(\lambda'-|k'|^2)-\sum_{j=1}^n\iota_j(\lambda_j-|k_j|^2)-\alpha\bigg)y_{kk'}(\lambda,\lambda')\prod_{j=1}^n[v_{k_j}^{(j)}(\lambda_j)]^{\iota_j},
\end{multline} where $d\in\mathbb{Z}^2$ and $\alpha\in\mathbb{R}$ are fixed, $\eta$ is a function that satisfies
\begin{equation}\label{kernelest2}|\eta(\lambda,\mu)|+|\partial_{\lambda,\mu}\eta(\lambda,\mu)|\lesssim\langle\mu\rangle^{-10}.
\end{equation} In the summation we always assume that there is \emph{no pairing}\footnote{This requirement appears in the form of coefficients which are indicator functions of sets of form $\{k_j\neq k_l\}$. Such coefficients may lead to slightly different multilinear Gaussian expressions in the estimates below, but there will be at most $(N_*)^C$ possibilities where $N_*$ is a parameter to be defined below, and will not affect any estimates since our exceptional sets will always have measure at most $C_\theta e^{-(N_*)^\theta}$.}  among the variables $k$, $k'$ and $k_j$.

We assume that the input functions $v^{(j)}$ are as in Proposition \ref{multi0}, where $v^{(j)}$ are of type (G) or (C) for $1\leq j\leq n_1$, and of type (D) for $n_1+1\leq j\leq n$. Since we are working exclusively in the $\lambda_j$ spaces, we will abuse notations here and write $(v_{k_j}^{(j)})(\lambda_j)$ instead of $(\widetilde{v^{(j)}})_{k_j}(\lambda_j)$.  

Let the parameters $N_j$, $L_j$, $N^{(j)}$ etc., and the sets $\mathcal{G}$ and $\mathcal{C}$ be as in that proposition. We further assume that the functions $v_k(\lambda)$ and $y_{kk'}(\lambda,\lambda')$ satisfy \begin{equation}\label{input0}\|\langle\lambda\rangle^bv_{k}(\lambda)\|_{\ell_{k}^2L_{\lambda}^2}\lesssim 1,\quad  \|\langle\lambda\rangle^b\langle \lambda'\rangle^by_{kk'}(\lambda,\lambda')\|_{\ell_{k,k'}^2L_{\lambda,\lambda'}^2}\lesssim 1,
\end{equation} and that $v_{k}(\lambda)$ is supported in $\{|k|\lesssim N_0\}$ and $y_{kk'}(\lambda,\lambda')$ is supported in $\{|k|,|k'|\lesssim N_0\}$.
\begin{prop}\label{general} Recall the relevant constants defined in (\ref{defparam}), and that $\tau\ll 1$. Under all the above assumptions, there exist $p$ and $q$, and $N_{2p+l}\gtrsim R_{2p+l}\gtrsim L_{2p+l}(1\leq l\leq q)$ such that $2p+q\leq n_1$, that for $1\leq i\leq p$ we must have $N_{2i-1}\sim N_{2i}$ and $\iota_{2i-1}=-\iota_{2i}$, and that $2i-1$ and $2i$ do not both belong to $\mathcal{G}$. Define $R_i=\max(L_{2i-1},L_{2i})$ and let $N_*$ be fixed, then the following estimates hold $\tau^{-1}N_*$-certainly. Here, as in Proposition \ref{multi0}, the exceptional set of $\omega$ removed does \emph{not} depend on the choice of the functions $v^{(j)}(j\geq n_1+1)$ or $v$ or $w$.
\smallskip

(1) Assume $N_*\gtrsim\max(N_0,N^{(1)})$. Then we have
\begin{equation}\label{estimate1}|\mathcal{X}|^2\lesssim \tau^{-\theta}(N_*)^{C\kappa^{-1}}\mathcal{E}_1\prod_{i=1}^p\frac{N_{2i-1}^{1+2\gamma_0}}{R_i}\prod_{j=1}^nN_j^{-2}\prod_{j\geq n_1+1}N_j^{2\gamma}\prod_{2p+1\leq j\leq n_1}L_j^{-2\delta_0},
\end{equation} and similarly
\begin{equation}\label{estimate2}|\mathcal{Y}|^2\lesssim \tau^{-\theta}(N_*)^{C\kappa^{-1}}\mathcal{E}_2\prod_{i=1}^p\frac{N_{2i-1}^{1+2\gamma_0}}{R_i}\prod_{j=1}^nN_j^{-2}\prod_{j\geq n_1+1}N_j^{2\gamma}\prod_{2p+1\leq j\leq n_1}L_j^{-2\delta_0},
\end{equation} where $\mathcal{E}_1$ and $\mathcal{E}_2$ are the quantities defined in Corollary \ref{corcounting}, with $a_0=2b-10\delta^6$, and for \emph{some} choice of the parameters in that corollary that do not appear in the assumptions of the current proposition. Moreover, if in the sum defining $\mathcal{X}$ we also assume the $\Gamma$-condition (\ref{gammacon}), then (\ref{estimate1}) holds with $\mathcal{E}_1$ replaced by $\mathcal{E}_3$ (see Corollary \ref{corcounting} for the relevant definitions).
\smallskip

(2) Assume $N_*\gtrsim\max(N_0,N^{(1)})$. Then we have
\begin{equation}\label{estimate3}|\mathcal{X}|^4\lesssim \tau^{-\theta}(N_*)^{C\kappa^{-1}} \mathcal{E}_1\mathcal{E}_{1}^{+}\bigg(\prod_{i=1}^p\frac{N_{2i-1}^{1+2\gamma_0}}{R_i}\bigg)^2\prod_{j=1}^nN_j^{-4}\prod_{j\geq n_1+1}N_j^{4\gamma}\prod_{2p+1\leq j\leq n_1}L_j^{40n^2},
\end{equation} and similarly
\begin{equation}\label{estimate4}|\mathcal{Y}|^4\lesssim  \tau^{-\theta}(N_*)^{C\kappa^{-1}}\mathcal{E}_2\mathcal{E}_{2}^{+}\bigg(\prod_{i=1}^p\frac{N_{2i-1}^{1+2\gamma_0}}{R_i}\bigg)^2\prod_{j=1}^nN_j^{-4}\prod_{j\geq n_1+1}N_j^{4\gamma}\prod_{2p+1\leq j\leq n_1}L_j^{40n^2},
\end{equation} where $\mathcal{E}_j$ and $\mathcal{E}_j^{+}$ are the quantities defined in Corollary \ref{corcounting}, again for some choice of the parameters in that corollary that do not appear in the assumptions of the current proposition. In the set $S^{+}$ in (\ref{extraeqn}) the set $A$ will contain $\{1,2,\cdots,2p\}\cup \{n_1+1,\cdots,n\}$. Moreover, if in the sum defining $\mathcal{X}$ we also assume the $\Gamma$-condition (\ref{gammacon}), then (\ref{estimate3}) holds with $\mathcal{E}_1\mathcal{E}_{1}^{+}$ replaced by $\mathcal{E}_3\mathcal{E}_{3}^{+}$.

(3) Assume in addition that $N^{(1)}\sim N_n$ and $N_*\gtrsim N^{(2)}$. Then (\ref{estimate3}) is true, with $N_n$ replaced by $N^{(2)}$ in both quantities $\mathcal{E}_1$ and $\mathcal{E}_1^{+}$. Moreover we have
\begin{equation}\label{estimate5}|\mathcal{X}|^4\lesssim \tau^{-\theta}(N_*)^{C\kappa^{-1}}(N^{(1)})^{-4(1-\gamma)}(N^{(2)})^{C\gamma}\widetilde{\mathcal{E}_1}\widetilde{\mathcal{E}_{1}^{+}}\bigg(\prod_{i=1}^p\frac{N_{2i-1}^{1+2\gamma_0}}{R_i}\bigg)^2\prod_{j=1}^{n-1}N_j^{-4}\prod_{j=2p+1}^{2p+q}N_j^{2}R_j^{-2},
\end{equation}where $\widetilde{\mathcal{E}_1}$ is the quantity defined in Corollary \ref{corcounting}, for some choice of the parameters in that corollary that do not appear in the assumptions of the current proposition, but with $N_n$ replaced by $N^{(2)}$. Similarly $\widetilde{\mathcal{E}_{1}^{+}}$ is the quantity $\mathcal{E}_1^+$ defined in Corollary \ref{corcounting} with $A=\{1,\cdots,n\}$, but with $N_n$ replaced by $N^{(2)}$ and $N_{2p+l}$ replaced by $R_{2p+l}(N_*)^{C\kappa^{-1}}$ for $1\leq l\leq q$. Moreover, the exceptional set of $\omega$ removed is independent of $N^{(1)}$.
\end{prop}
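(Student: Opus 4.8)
The plan is to reduce all of \eqref{estimate1}--\eqref{estimate5} to the large deviation bounds of Lemmas \ref{largedev} and \ref{lemma:5.1} together with the counting estimates of Corollary \ref{corcounting}. First I would substitute the kernel representation \eqref{input2} for every type (C) input $v^{(j)}$, so that $\mathcal{X}$ (resp. $\mathcal{Y}$) in \eqref{mainexp1} (resp. \eqref{mainexp2}) becomes a finite sum of expressions of the shape handled by Lemma \ref{lemma:5.1}: a multilinear form in the Gaussians $\{g_{k_j}:j\in\mathcal{G}\}\cup\{g_{k_j^*}:j\in\mathcal{C}\}$, whose coefficients are built from $\eta$, the twisted test functions $v_k(\lambda)$ or $y_{kk'}(\lambda,\lambda')$, the type (D) inputs, and the kernels $h^{(j)}$. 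Since the original sum is pairing-free in the variables $k,k',k_j$, any pairing among the Gaussian indices must involve at least one starred variable $k_j^*$ with $j\in\mathcal{C}$; collecting these into $p$ disjoint pairs $(2i-1,2i)$, the support and polynomial decay encoded in \eqref{input2+} force $N_{2i-1}\sim N_{2i}$, $\iota_{2i-1}=-\iota_{2i}$, and --- up to the tails $(1+|k_j-k_j^*|/L_j)^{-\kappa}$, which cost only $(N_*)^{C\kappa^{-1}}$ --- the gap bound $|k_{2i-1}-k_{2i}|\lesssim R_i(N_*)^{C\kappa^{-1}}$ with $R_i=\max(L_{2i-1},L_{2i})$. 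A dyadic decomposition in these gaps, in the remaining kernel gaps $|k_j-k_j^*|$, and in the modulation $\langle\Sigma-\alpha\rangle$ is finite up to $(N_*)^\theta$ losses.

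On each dyadic piece I would apply Lemma \ref{lemma:5.1} to the pairing-free Gaussian sum. This pulls out the operator norms $\|h^{(j)}\|_{\ell^2\to\ell^2L^2}\lesssim L_j^{-\delta_0}$ from \eqref{input2+}, times the $\mathcal{L}$-norm of the residual coefficient. The $\lambda$-integrations in that $\mathcal{L}$-norm are performed using the decay \eqref{kernelest2} of $\eta$ and the $\langle\lambda\rangle^{-b},\langle\lambda_j\rangle^{-b}$ weights coming from \eqref{input0}, \eqref{input3}, \eqref{input2+}; a Schur/Cauchy--Schwarz argument then bounds the result by a frequency sum with summand $\langle\Sigma-\alpha\rangle^{-a_0}$, $a_0=2b-10\delta^6$, over lattice points obeying the momentum relation $\sum\iota_jk_j=k+d$, the support constraints $|k_j-k_j^0|\lesssim N_j$, and the gap constraints $|k_{2i-1}-k_{2i}|\lesssim R_i(N_*)^{C\kappa^{-1}}$ --- that is, exactly $\mathcal{E}_1$ (resp. $\mathcal{E}_2$, or $\mathcal{E}_3$ under the $\Gamma$-condition \eqref{gammacon}). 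The remaining factors $\prod_jN_j^{-2}$ (the type (G) normalization $\langle k_j\rangle^{-1}$, also accounting for the $\langle k_j^*\rangle^{-1}$ in \eqref{input2}), $\prod_{j>n_1}N_j^{2\gamma}$ (upgrading $N_j^{-2}$ to the actual type (D) bound \eqref{input3}), $\prod_{2p<j\le n_1}L_j^{-2\delta_0}$ (the unpaired type (C) inputs via the OP bound), and $\prod_iN_{2i-1}^{1+2\gamma_0}/R_i$ (the paired type (C) inputs, handled via the Hilbert--Schmidt bound in \eqref{input2+} restricted to the strip $|k_j-k_j^*|\lesssim R_i(N_*)^{C\kappa^{-1}}$) fall out of this computation, yielding \eqref{estimate1}--\eqref{estimate2}.

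For the fourth-power bounds I would write $|\mathcal{X}|^4=|\mathcal{X}\overline{\mathcal{X}}|^2$ and regard $\mathcal{X}\overline{\mathcal{X}}$ as a multilinear Gaussian expression in the doubled collection of frequencies; the matching of the two copies along a diagonal block of indices forces the extra linear constraint $\sum_{j\in A}\iota_jk_j=d'$ appearing in \eqref{extraeqn}, and it is this constraint that produces the companion counting factor $\mathcal{E}_1^+$ (resp. $\mathcal{E}_2^+$, $\mathcal{E}_3^+$), while the operator and Hilbert--Schmidt norms and the frequency factors enter squared; the positive power $L_j^{40n^2}$ comfortably absorbs the combinatorial losses from the doubling. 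Part (3) is the special case in which the top frequency $N^{(1)}\sim N_n$ sits on the type (D) input $v^{(n)}$: there I would not expand $v^{(n)}$ into Gaussians but instead use its frequency-localized smallness $\|v^{(n)}\|\lesssim N_n^{-(1-\gamma)}$ directly, which explains the explicit prefactor $(N^{(1)})^{-4(1-\gamma)}$, and then eliminate $k_n$ from the lattice count via the momentum relation, so that the counting quantities become the previous ones with $N_n$ downgraded to $N^{(2)}$; this gives \eqref{estimate5} and the improved form of \eqref{estimate3}.

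I expect the main obstacle to be twofold and bookkeeping-heavy rather than conceptual. The first part is securing that the exceptional set of $\omega$ removed is independent of $v$, $w$, and of the type (D) inputs $v^{(j)}$ with $j>n_1$: this is done exactly as in the proof of Lemma \ref{lemma:5.1}, by letting $v$ and $w$ enter only through an $X^b$-duality pairing so that the Gaussian randomness lives on the ``kernel side'' of an operator norm, and by discretizing the modulation and gap variables so that only $(N_*)^C$ coefficient patterns are relevant, over which a union bound is affordable since every exceptional set has probability $\le C_\theta e^{-(N_*)^\theta}$. The second part is the precise matching of the output of the $\lambda$-integration and of the pairing analysis to the exact counting quantities $\mathcal{E}_j$, $\mathcal{E}_j^+$ of Corollary \ref{corcounting} --- in particular keeping track of the $\Gamma$-condition refinement and of which of the bounds \eqref{bdset1}--\eqref{bdset2} is being invoked --- so that the factors $\prod_iN_{2i-1}^{1+2\gamma_0}/R_i$ appearing in \eqref{estimate1}--\eqref{estimate5} cancel cleanly against those on the left-hand sides of the counting bounds when the two are combined in the proof of Proposition \ref{multi0}. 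The large-deviation and lattice-point inputs themselves are already in hand.
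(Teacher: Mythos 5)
Your plan for part (1) essentially matches the paper's argument in Section 6.1.1: substitute the kernel representation, collect the pairings among the $k_j^*$ into the $P^{(i)}$ contraction (their formula (6.5)), apply Lemma~\ref{lemma:5.1} to the pairing-free residual sum, and finish with Cauchy--Schwarz to arrive at $\mathcal{E}_1$. There are, however, two genuine gaps in your parts (2) and (3).

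For the fourth-power bounds, the paper does not estimate $\mathcal{X}\overline{\mathcal{X}}$ directly as a doubled multilinear Gaussian. Instead it performs a bona fide $\mathcal{T}^*\mathcal{T}$ argument at the operator level: the type (D) input $v^{(n)}$ and the dual test function $v$ are first removed by the $\ell^2L^2$ duality $|\mathcal{X}|\lesssim N_n^{-(1-\gamma)}\|\mathcal{T}\|_{\ell^2L^2\to\ell^2L^2}$, one then passes to $\|\mathcal{T}\|^2\leq\|\mathcal{T}^*\mathcal{T}\|_{\mathrm{HS}}$, computes the kernel $\mathcal{O}_{k_nk_n'}$ of $\mathcal{T}^*\mathcal{T}$, and only at this point applies Lemma~\ref{largedev} to the resulting doubled Gaussian (after fixing $k_j-k_j^*=e_j$, hence the $L_j^{40n^2}$ loss). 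The Hilbert--Schmidt integral $\sum_{k_n,k_n'}\int|\mathcal{O}_{k_nk_n'}|^2$ is what naturally produces the product structure $\mathcal{E}_1\mathcal{E}_1^{+}$, because the two copies of the internal sums are coupled only through the single variable $k$ and the constraint in \eqref{extraeqn}. Writing $|\mathcal{X}|^4=|\mathcal{X}\overline{\mathcal{X}}|^2$ and treating $\mathcal{X}\overline{\mathcal{X}}$ as one doubled Gaussian keeps the test functions $v$, $\overline{v}$, $v^{(n)}$, $\overline{v^{(n)}}$ (and all the type~(D) inputs) inside the coefficient $a_{k_1\cdots k_{2n}}$ of Lemma~\ref{largedev}; you would then have to disentangle $M(\omega)$ into $\mathcal{E}_1\mathcal{E}_1^{+}$ times $\ell^2$ norms by further Cauchy--Schwarz, and this does \emph{not} happen cleanly because $M$ is a sum over pairings in which the two copies of the type~(D) variables are tied together, not a product of separated sums. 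Your proposal acknowledges duality at the very end (``the Gaussian randomness lives on the kernel side'') but the main step you describe is the direct doubling, and these are not the same argument.

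The more serious gap is in part (3). You explain the factor $(N^{(1)})^{-4(1-\gamma)}$ and the downgrade $N_n\to N^{(2)}$ in the counting, but you give no mechanism for making the exceptional set \emph{independent of $N^{(1)}$}, which is the entire point of part (3) and the reason it exists separately from part (2). The paper achieves this by exploiting that the kernel $\mathcal{T}_{kk_n}$ is supported in $\{|k-\iota_nk_n+d|\lesssim N^{(2)}\}$, decomposing $\mathcal{T}$ by orthogonality into restrictions to balls $\{|k-f|\leq N^{(2)}\}$ uniformly in $f\in\mathbb{Z}^2$, and then (Claim~\ref{claim0}) showing that the map $g\mapsto -2f\cdot g+\beta$ restricted to the relevant domain has only $\lesssim(N_*)^{C\delta^{-7}}$ possibilities as $(f,\beta)$ varies, so a union bound over the relevant $f$'s costs only $(N_*)^{O(1)}$. ``Eliminate $k_n$ via the momentum relation'' shrinks the counting quantity but does not remove the dependence of the Gaussian sum on the location of the high mode, and without the orthogonality reduction and Claim~\ref{claim0} the exceptional set would still scan through $N^{(1)}$; this would break the induction in Proposition~\ref{localmain2}, where the bound \eqref{induct4} needs to hold for a set removed once (of measure $\leq C_\theta e^{-(\tau^{-1}M)^\theta}$) and valid for all $N$ with $(N,L)\in\mathcal{K}$, $L<M$. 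You should also note that in part (3) the paper has to contract a second family of pairings (the sets $X_l$, $X_l'$, producing the $Q^{(l)}$ factors and the $N_j^2R_j^{-2}$ terms in \eqref{estimate5}); none of this bookkeeping appears in your sketch.
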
 
\subsection{Proof of Proposition \ref{general}} We will prove Proposition \ref{general} in this section. We will only prove the bounds for $\mathcal{X}$ without $\Gamma$-condition; with obvious modifications the proof also works for $\mathcal{Y}$ and for the version with $\Gamma$-condition. For simplicity we will omit the $\omega$ dependence, and may ignore any factors that are $\lesssim \tau^{-\theta}(N_*)^{C\kappa^{-1}}$.

Our proof will roughly follow an algorithm, indicated by the following steps. (1) Distinguish between the inputs $j\in\mathcal{D}$, where $v^{(j)}$ are bounded in $\ell^2L^2$, with $j\in\mathcal{G}\cup\mathcal{C}$. (2) Identify the pairings among $k_j(j\in\mathcal{G})$ and $k_j^*(j\in\mathcal{C})$ and reduce the sum of products of the $h^{(j)}$ functions over the paired variables to some functions $P^{(i)}$, see (\ref{contraction}), that are also bounded in $\ell^2L^2$. (3) Estimate the sum in unpaired variables using Lemma \ref{lemma:4.2} (in Section \ref{case2} we will skip step (2) and estimate the whole sum including paired and unpaired variables using Lemma \ref{largedev}). (4) Apply Cauchy-Schwartz to handle all the factors in $\ell^2L^2$, and {then reduce to the $\mathcal{E}_j$ type quantities in Corollary \ref{corcounting}. (5) When necessary, apply a $\mathcal{T}^*\mathcal{T}$ argument and repeat the previous steps for the resulting kernel.

As the proof will be notation heavy, the reader may do a first reading making the following simplifications without missing the core parts of the proof: (1) omit integration in any $\lambda_j$ and pretend $\lambda_j=0$ (so $v^{(j)}$ is a function of $k_j$ only and $h^{(j)}$ is a function of $k_j$ and $k_j^*$ only); (2) when identifying pairings, restrict to only simple pairings where (say) $k_i^*=k_j^*$ and does not equal any other $k_l^*$. These will make formulas like (\ref{contbd}) simpler and the proofs more transparent.

Throughout the proof we will fix the sets $U=\{1,2,\cdots,n\}$ and $V=\{1,2,\cdots,n-1\}$. We will (in this section only) introduce a shorthand notation for vectors: for a finite set $X$, define $k_{[X]}$ to be the vector $(k_j:j\in X)$; similarly define $\lambda_{[X]}$, $k_{[X]}^*$, etc., and define $\mathrm{d}\lambda_{[X]}=\prod_{j\in X}\mathrm{d}\lambda_j$.
\subsubsection{A simple bound}\label{case1} We first prove (\ref{estimate1}). By definition we expand
\begin{multline}\label{reduce1}
\mathcal{X}=\sum_{(k,k_{[U]}):\,\iota_1 k_1+\cdots+\iota_n k_n=k+d}\,\sum_{k_{[U]}^*}\int\mathrm{d}\lambda\mathrm{d}\lambda_{[U]}\\\times\eta\bigg(\lambda,\lambda-|k|^2-\sum_{j=1}^n\iota_j(\lambda_j-|k_j|^2)-\alpha\bigg)\overline{v_{k}(\lambda)}\prod_{j=1}^{n_1}\, \frac{g_{k_j^*}^{\iota_j}}{\langle k_j^*\rangle} \, h_{k_jk_j^*}^{(j)}(\lambda_j)^\pm\prod_{j=n_1+1}^n[v_{k_j}^{(j)}(\lambda_j)]^{\iota_j};
\end{multline} recall that $k_{[U]}$ means $(k_1,\cdots,k_n)$, etc. The sum in $k_{[U]}^*$ is restricted to $\frac{N_j}{2}<\langle k_j^*\rangle\leq N_j$, and $h_{k_jk_j^*}^{(j)}(\lambda_j,\omega)$ is defined as in (\ref{input2}) for $j\in\mathcal{C}$, and is defined to be $\mathbf{1}_{k_j=k_j^*}\widehat{\chi}(\lambda_j)$ for $j\in\mathcal{G}$.

Consider now the sum in $k_{[U]}^*$. By identifying all pairings among them (recall the definition of pairings in Definition \ref{dfpairing}), we may assume there are $p$ sets $Y_i(1\leq i\leq p, 2p\leq n_1)$ and a set $Z$ that partitions $\{1,\cdots,n_1\}$, such that:  (i)  each $Y_i$ contains a pairing, (ii) the $k_j^*$ takes a single value for $j$ in each $Y_i$, (iii) this value is different for different $Y_i$ and is different from $k_j^*$ for $j\in Z$, and (iv) there is no pairing in $\{k_j^*:j\in Z\}$.  Then we manipulate this sum and rewrite it as a combination of two types of sums, namely (1) where we only require\footnote{That is, we relax the requirement (iii) above, keeping only requirements (ii) and (iv).} that $k_j^*$ takes a single value for $j$ in each $Y_i$ and that there is no pairing in $\{k_j^*:j\in Z\}$, and (2) where there are more pairings in addition to case (1), namely when the value for $Y_i$ equals the value for some other $Y_{i'}$ or some $k_j^*$ for $j\in Z$. Since there are strictly more pairings in case (2) than in the sum we started with, we may repeat this process and eventually reduce to sums of type (1) only. The purpose of this manipulation is to ensure that the sum in $k_j^*\,(j\in Y_i)$ gives \emph{exactly}
\begin{equation}\label{contraction}P_{k_{[Y_i]}}^{(i)}(\lambda_{[Y_i]})=\sum_{k^*}\prod_{j\in Y_i}h_{k_jk^*}^{(j)}(\lambda_j)^{\pm}\langle k^*\rangle^{-q_3}(g_{k^*})^{q_1}(\overline{g_{k^*}})^{q_2},
\end{equation} where $q_1+q_2=q_3=|Y_i|$.

Note that $N_j$ for $j\in Y_i$ are all comparable. Without loss of generality we may assume $\{2i-1,2i\}\subset Y_i$ and $\iota_{2i-1}=-\iota_{2i}$. As $(k_{2i-1},k_{2i})$ is \emph{not} a pairing, $2i-1$ and $2i$ cannot both belong to $\mathcal{G}$. Now we may assume $|k_{2i-1}-k^*|\lesssim L_{2i-1}(N_*)^{C\kappa^{-1}}$ and similarly for $k_{2i}$, since otherwise we gain a power $(N_*)^{-200n^2}$ due to the last bound in (\ref{input2+}), which cancels any summation in any $(k_j,k_j^*)$ and the estimate will then follow immediately. 

Let $R_i=\max(L_{2i-1},L_{2i})$, say $R_i=L_{2i-1}$, then we have $|k_{2i-1}-k_{2i}|\lesssim R_i(N_*)^{C\kappa^{-1}}$ for $1\leq i\leq p$. For (\ref{contraction}) using the first two bounds in (\ref{input2+}),  we have that
\begin{align}
&\quad\,\,\|P_{k_{[Y_i]}}^{(i)}(\lambda_{[Y_i]})\prod_{j\in Y_i}\langle \lambda_j\rangle^b\|_{\ell_{k_{[Y_i]}}^2L_{\lambda_{[Y_i]}}^2}^2\nonumber\\&\lesssim\prod_{j\in Y_i}N_j^{-2}\cdot\|\langle \lambda_{2i}\rangle^bh_{k_{2i}k^*}^{(2i)}(\lambda_{2i})\|_{\ell_{k^*}^2\to\ell_{k_{2i}}^2L_{\lambda_{2i}}^2}^2\sum_{k^*}\prod_{2i\neq j\in Y_i}\|\langle \lambda_j\rangle^bh_{k_jk^*}^{(j)}(\lambda_j)\|_{\ell_{k_j}^2L_{\lambda_j}^2}^2\nonumber\\&\lesssim \|\langle \lambda_{2i-1}\rangle^bh_{k_{2i-1}k^*}^{(2i-1)}(\lambda_{2i-1})\|_{\ell_{k_{2i-1}k^*}^2L_{\lambda_{2i-1}}^2}^2\prod_{j\in Y_i}N_j^{-2}\prod_{\substack{j\in Y_i\\j\neq 2i-1,2i}}L_{j}^{-2\delta_0}\nonumber\\\label{contbd}&\lesssim N_{2i-1}^{1+2\gamma_0}\, R_i^{-1}\, \prod_{j\in Y_i}N_j^{-2} \prod_{\substack{j\in Y_i\\j\neq 2i-1,2i}}L_{j}^{-2\delta_0}.
\end{align}
 Now we have reduced the expression for $\mathcal{X}$ to
\begin{multline}\label{reduce2}
\mathcal{X}=\sum_{\substack{(k,k_{[U]}):\\\iota_1 k_1+\cdots+\iota_n k_n=k+d}}\sum_{k_{[Z]}^*}\int\mathrm{d}\lambda\mathrm{d}\lambda_{[U]}\cdot\eta\bigg(\lambda,\lambda-|k|^2-\sum_{j=1}^n\iota_j(\lambda_j-|k_j|^2)-\alpha\bigg)\\\times\overline{v_{k}(\lambda)}\prod_{i=1}^pP_{k_{[Y_i]}}^{(i)}(\lambda_{[Y_i]})\prod_{j\in Z}\frac{g_{k_j^*}^{\iota_j}}{\langle k_j^*\rangle}h_{k_jk_j^*}^{(j)}(\lambda_j)^\pm\prod_{j=n_1+1}^n[v_{k_j}^{(j)}(\lambda_j)]^{\iota_j}.
\end{multline} Compared to (\ref{reduce1}) it is important that there is no pairing in $k_{[Z]}^*$. For simplicity of notations we will write
\begin{equation}\label{simpleexp}\mathcal{X}=\sum_{(1)}\int\mathfrak{F}\cdot\mathfrak{G},\end{equation} where the symbol $\sum_{(1)}\int$ represents the sum in $k$ and $k_{[U\backslash Z]}$ and integration in $\lambda$ and $\lambda_{[U\backslash Z]}$, and the factor $\mathfrak{F}$ is
\begin{equation}\mathfrak{F} :=\overline{v_{k}(\lambda)}\prod_{i=1}^pP_{k_{[Y_i]}}^{(i)}(\lambda_{[Y_i]})\prod_{j=n_1+1}^n[v_{k_j}^{(j)}(\lambda_j)]^{\iota_j},\end{equation} and the multilinear Gaussian $\mathfrak{G}$ given by
\begin{equation}\mathfrak{G} :=\sum_{k_{[Z]}}\sum_{k_{[Z]}^*}\int\mathrm{d}\lambda_{[Z]}\cdot\prod_{j\in Z}\frac{g_{k_j^*}^{\iota_j}}{\langle k_j^*\rangle}\langle\lambda_j\rangle^bh_{k_jk_j^*}^{(j)}(\lambda_j)^\pm\cdot\mathfrak{A},
\end{equation} with coefficient $\mathfrak{A}$ of form
\begin{equation}\label{functiona}\mathfrak{A}:=\mathbf{1}_{\sum_{j\in Z}\iota_jk_j=d_0}\cdot\eta\bigg(\lambda,\alpha_0-\sum_{j\in Z}\iota_j(\lambda_j-|k_j|^2)\bigg)\prod_{j\in Z}\langle \lambda_j\rangle^{-b},
\end{equation} where
\begin{equation}\label{defnewd}d_0 :=k+d-\sum_{j\not\in Z}\iota_jk_j\in\mathbb{Z}^2,\quad \alpha_0:=\lambda-|k|^2-\alpha-\sum_{j\not\in Z}\iota_j(\lambda_j-|k_j|^2)\in\mathbb{R}.\end{equation} The goal now is to estimate $\mathfrak{G}$. For fixed values of $(k,\lambda,k_{[U\backslash Z]},\lambda_{[U\backslash Z]})$, we may apply Lemma \ref{lemma:4.2}; in order to make this uniform, we will apply the meshing argument in Remark \ref{mesh}. This allows us to reduce to at most $(N_*)^{C\delta^{-7}}$ choices, so in the end, after removing a set of probability $\leq C_{\theta}e^{-(\tau^{-1}N_*)^\theta}$ we can apply Lemma \ref{lemma:4.2} for \emph{all} choices of $(k,\lambda,k_{[U\backslash Z]},\lambda_{[U\backslash Z]})$, and use the first bound in (\ref{input2+}) to get that
\begin{equation}\label{boundforg}|\mathfrak{G}|^2\lesssim\prod_{j\in Z}N_j^{-2}\prod_{j\in Z}L_j^{-2\delta_0}\cdot\|\mathfrak{A}\|_{\mathcal{L}}^2\\\lesssim\prod_{j\in Z}N_j^{-2}\prod_{j\in Z}L_j^{-2\delta_0}\sum_{k_{[Z]}:\sum_{j\in Z}\iota_jk_j=d_0}\bigg\langle\alpha_0+\sum_{j\in Z}\iota_j|k_j|^2\bigg\rangle^{-a_0}.
\end{equation} Finally applying Cauchy-Schwartz in the variables $(k,\lambda,k_{[U\backslash Z]},\lambda_{[U\backslash Z]})$, we deduce that
\[|\mathcal{X}|^2\lesssim\bigg(\sum_{(1)}\int\langle\lambda\rangle^{2b}\prod_{j\in U\backslash Z}\langle\lambda_j\rangle^{2b}\cdot|\mathfrak{F}|^2\bigg)\bigg(\sum_{(1)}\int\langle\lambda\rangle^{-2b}\prod_{j\in U\backslash Z}\langle\lambda_j\rangle^{-2b}\cdot|\mathfrak{G}|^2 \bigg),\] where the first parenthesis (together with some factors from the second parenthesis) gives the product of all factors in (\ref{estimate1}) except $\mathcal{E}_1$, by using (\ref{input3}), (\ref{input0}) and (\ref{contbd}); the second parenthesis, after applying (\ref{boundforg}), integrating in $(\lambda,\lambda_{[U\backslash Z]})$ and pugging in (\ref{defnewd}), reduces to
\[\sum_{\substack{(k,k_{[U]}):\\\iota_1k_1+\cdots +\iota_nk_n=k+d}}\langle \Sigma-\alpha\rangle^{-a_0}\lesssim\mathcal{E}_1,\] where $\Sigma=\iota_1|k_1|^2+\cdots+\iota_n|k_n|^2-|k|^2$ as in Corollary \ref{corcounting}. This proves (\ref{estimate1}).
\subsubsection{A general $\mathcal{T}^*\mathcal{T}$ argument}\label{case2} Now we prove (\ref{estimate3}), starting from (\ref{reduce2}). Note that due to (\ref{input3}) and (\ref{input0}), the bound for $\mathcal{X}$ would follow from the $\ell_{k_n}^2L_{\lambda_n}^2\to \ell_{k}^2L_{\lambda}^2$ bound of the linear operator $\mathcal{T}$ with kernel
\begin{multline}\label{kernel0}\mathcal{T}_{kk_n}(\lambda,\lambda_n)=\sum_{k_{[V]}:\iota_1 k_1+\cdots+\iota_n k_n=k+d}\,\sum_{k_{[Z]}^*}\int\mathrm{d}\lambda_{[V]}\cdot\eta\bigg(\lambda,\lambda-|k|^2-\sum_{j=1}^n\iota_j(\lambda_j-|k_j|^2)-\alpha\bigg)\\\times\bigg(\prod_{i=1}^pP_{k_{[Y_i]}}^{(i)}(\lambda_{[Y_i]})\prod_{j\in Z}\frac{g_{k_j^*}^{\iota_j}}{\langle k_j^*\rangle}h_{k_jk_j^*}^{(j)}(\lambda_j)^\pm\prod_{j=n_1+1}^{n-1}[v_{k_j}^{(j)}(\lambda_j)]^{\iota_j}\bigg)\langle\lambda\rangle^{-b}\langle\lambda_n\rangle^{-b}.
\end{multline} We then calculate the kernel of $\mathcal{O}=\mathcal{T}^*\mathcal{T}$, which (similar to (\ref{simpleexp})) can be written as
\begin{equation}\label{simpleexp2}\mathcal{O}_{k_nk_n'}(\lambda_n,\lambda_n')=\langle\lambda_n\rangle^{-b}\langle\lambda_n'\rangle^{-b}\sum_{(2)}\int\mathfrak{F}\cdot\mathfrak{G},
\end{equation}where the symbol $\sum_{(2)}\int$ represents the sum in $(k_{[V\backslash Z]},k_{[V\backslash Z]}')$ and integration in $(\lambda_{[V\backslash Z]},\lambda_{[V\backslash Z]}')$, the factor $\mathfrak{F}$ is independent of $(k_n,k_n',\lambda_n,\lambda_n')$, and is now defined as
\begin{equation}\mathfrak{F}:=\prod_{i=1}^p\overline{P_{k_{[Y_i]}}^{(i)}(\lambda_{[Y_i]})}P_{k_{[Y_i]}'}^{(i)}(\lambda_{[Y_i]}')\prod_{j=n_1+1}^{n-1}\overline{[v_{k_j}^{(j)}(\lambda_j)]^{\iota_j}}[v_{k_j'}^{(j)}(\lambda_j')]^{\iota_j},
\end{equation} and the multilinear Gaussian $\mathfrak{G}$ is now given by
\begin{equation}\label{newdefg}\mathfrak{G}:=\sum_{(k_{[Z]}^*,k_{[Z]}'^*)}\prod_{j\in Z}g_{k_{j}^*}^{-\iota_j}g_{k_j'^*}^{\iota_j}\sum_{\substack{(k,k_{[Z]},k_{[Z]}'):\\\sum_{j\in Z}\iota_jk_j=k+d_0\\\sum_{j\in Z}\iota_jk_j'=k+d_0'}}\mathfrak{C},
\end{equation} with coefficient $\mathfrak{C}$ of form
\begin{multline}\mathfrak{C} :=\int\mathrm{d}\lambda\mathrm{d}\lambda_{[Z]}\mathrm{d}\lambda_{[Z]}'\cdot\langle\lambda\rangle^{-2b}\eta\bigg(\lambda,\lambda-|k|^2-\sum_{j\in Z}\iota_j(\lambda_j-|k_j|^2)-\alpha_0\bigg)\\\times\eta\bigg(\lambda,\lambda-|k|^2-\sum_{j\in Z}\iota_j(\lambda_j'-|k_j'|^2)-\alpha_0'\bigg)
\prod_{j\in Z}\overline{\frac{1}{\langle k_j^*\rangle}h_{k_jk_j^*}^{(j)}(\lambda_j)^\pm}\frac{1}{\langle k_j'^*\rangle}h_{k_j'k_j'^*}^{(j)}(\lambda_j')^\pm,
\end{multline}
where we now have \begin{equation}\label{defnewd2}d_0:=d-\sum_{j\not\in Z}\iota_jk_j,\,\,d_0':=d-\sum_{j\not\in Z}\iota_jk_j';\quad \alpha_0:=\alpha+\sum_{j\not\in Z}\iota_j(\lambda_j-|k_j|^2),\,\,\alpha_0':=\alpha+\sum_{j\not\in Z}\iota_j(\lambda_j'-|k_j'|^2).\end{equation}
As in Section \ref{case1} we may assume $|k_{2i-1}-k_{2i}|\lesssim R_i(N_*)^{C\kappa^{-1}}$ for $1\leq i\leq p$ and similarly for $k_{2i-1}'$ and $k_{2i}'$. The goal now is to estimate $\mathfrak{G}$ in \eqref{newdefg}. Let $L_+=\max\{L_j:j\in Z\}$, in view of the power $(L_+)^{40n^2}$ on the right hand side of (\ref{estimate3}), we may assume $N_j\gg (L_+)^2$ for each $j\in Z$, otherwise we simply sum over $(k_j,k_j^*)$ and $(k_j',k_j'^*)$ and get rid of these variables. By the meshing argument in Remark \ref{mesh}, we may reduce to $\lesssim (N_*)^{C\delta^{-7}}$ choices for $(k_{[U\backslash Z]},k_{[U\backslash Z]}',\lambda_{[U\backslash Z]},\lambda_{[U\backslash Z]}')$; for each single choice, as $\mathfrak{C}$ is $\mathcal{B}_{\leq L_+}^+$ measurable and there is no pairing in $k_{[Z]}^*$ or $k_{[Z]}'^*$, we may apply Lemma \ref{largedev} and get
\begin{equation}\label{individual}|\mathfrak{G}|^2\lesssim\sum_{(k_{[Z\backslash W]}^*,k_{[Z\backslash W']}'^*)}\bigg(\sum_{k_{a_l}^*=k_{b_l}'^*(1\leq l\leq s)}\sum_{\substack{(k,k_{[Z]},k_{[Z]}'):\\\sum_{j\in Z}\iota_jk_j=k+d_0\\\sum_{j\in Z}\iota_jk_j'=k+d_0'}}|\mathfrak{C}|\bigg)^2,
\end{equation} where $W=\{a_1,\cdots,a_s\}$ and $W'=\{b_1,\cdots, b_s\}$ are subsets of $Z$, and we have $N_{a_l}\sim N_{b_l}$ and $\iota_{a_l}=\iota_{b_l}$ for $1\leq l\leq s$. As before we may assume $|k_j-k_j^*|\lesssim L_+(N_*)^{C\kappa^{-1}}$ and similarly for $k_j'-k_j'^*$, and due to the $(L_+)^{40n^2}$ factor we may then fix the values of $k_j-k_j^*=e_j$ and $k_j'-k_j'^*=e_j'$. Therefore $k_{b_l}'-k_{a_l}=e_{b_l}'-e_{a_l}:=f_l$ is also fixed. 

Now the outer sum in (\ref{individual}) can be viewed as a sum over $k_{[Z\backslash W]}$ and $k_{[Z\backslash W']}'$, and the inner sums can be viewed as a sum over $(k,k_{a_l},k_{b_l}':1\leq l\leq s)$ \emph{that satisfies} $k_{b_l}'-k_{a_l}=f_l$. When all these $k$-variables are fixed, we have
\[\sup_{k_j,k_j^*}\|\langle\lambda_j\rangle^bh_{k_jk_j^*}^{(j)}(\lambda_j)\|_{L_{\lambda_j}^2}\lesssim 1,\quad \sup_{k_j',k_j'^*}\|\langle\lambda_j'\rangle^bh_{k_j'k_j'^*}^{(j)}(\lambda_j')\|_{L_{\lambda_j'}^2}\lesssim 1,\] due to the first bound in (\ref{input2+}). Using the algebra property of the norm $\|\langle\lambda\rangle^bh(\lambda)\|_{L^2}$ under convolution, we have
\begin{multline}\label{frakb} |\mathfrak{C}|\lesssim\prod_{j\in Z}N_j^{-2}\int\mathrm{d}\lambda\cdot\langle\lambda\rangle^{-2b}\bigg\langle\lambda-|k|^2+\sum_{j\in Z}\iota_j|k_j|^2-\alpha_0\bigg\rangle^{-b}\bigg\langle\lambda-|k|^2+\sum_{j\in Z}\iota_j|k_j'|^2-\alpha_0'\bigg\rangle^{-b}\\\lesssim \prod_{j\in Z}N_j^{-2}\cdot\bigg\langle|k|^2-\sum_{j\in Z}\iota_j|k_j|^2+\alpha_0\bigg\rangle^{-b}\bigg\langle|k|^2-\sum_{j\in Z}\iota_j|k_j'|^2+\alpha_0'\bigg\rangle^{-b}.\end{multline}
With \eqref{frakb} we can now bound $\mathfrak{G}$ by
\begin{multline}\label{boundsg}|\mathfrak{G}|^2\lesssim(L_+)^{40n^2}\prod_{j\in Z}N_j^{-4}\sum_{(k_{[Z\backslash W]},k_{[Z\backslash W']}')}\bigg(\sum_{(k,k_{a_l},k_{b_l}':1\leq l\leq s)}\bigg\langle|k|^2-\sum_{j\in Z\backslash W}\iota_j|k_j|^2-\sum_{l=1}^s\iota_{a_l}|k_{a_l}|^2+\alpha_0\bigg\rangle^{-b}\\\times\bigg\langle|k|^2-\sum_{j\in Z\backslash W'}\iota_j|k_j'|^2-\sum_{l=1}^s\iota_{b_l}|k_{b_l}'|^2+\alpha_0'\bigg\rangle^{-b}\bigg)^2,
\end{multline} and multiplying out the square we get
\begin{equation}\label{2tolastsum}
|\mathfrak{G}|^2\lesssim(L_+)^{40n^2}\prod_{j\in Z}N_j^{-4}\sum_{(k_{[Z\backslash W]},k_{[Z\backslash W']}')}\sum_{\substack{(k,k_{a_l},k_{b_l}':1\leq l\leq s)\\(\accentset{\circ}{k},\accentset{\circ}{k_{a_l}},\accentset{\circ}{k_{b_l}'}:1\leq l\leq s)}}\langle \Upsilon\rangle^{-b}\cdot\langle\accentset{\circ}{\Upsilon}\rangle^{-b}\cdot\langle\Upsilon'\rangle^{-b}\cdot\langle\accentset{\circ}{\Upsilon'}\rangle^{-b},
\end{equation} where
\[\Upsilon=|k|^2-\sum_{j\in Z\backslash W}\iota_j|k_j|^2-\sum_{l=1}^s\iota_{a_l}|k_{a_l}|^2+\alpha_0,\quad\accentset{\circ}{\Upsilon}=|\accentset{\circ}{k}|^2-\sum_{j\in Z\backslash W}\iota_j|k_j|^2-\sum_{l=1}^s\iota_{a_l}|\accentset{\circ}{k_{a_l}}|^2+\alpha_0,\]
\[\Upsilon'=|k|^2-\sum_{j\in Z\backslash W'}\iota_j|k_j'|^2-\sum_{l=1}^s\iota_{b_l}|k_{b_l}'|^2+\alpha_0',\quad \accentset{\circ}{\Upsilon'}=|\accentset{\circ}{k}|^2-\sum_{j\in Z\backslash W'}\iota_j|k_j'|^2-\sum_{l=1}^s\iota_{b_l}|\accentset{\circ}{k_{b_l}'}|^2+\alpha_0',\]with $\iota_{a_l}=\iota_{b_l}$, $\alpha_0$ and $\alpha'_0$ are as in \eqref{defnewd2}, and the variables in the summation (\ref{2tolastsum}) verify the following linear equations:
\begin{equation}\label{newequations}
\begin{aligned}\sum_{j\in Z\backslash W}\iota_jk_j+\sum_{l=1}^s\iota_{a_l}k_{a_l}-k&=\sum_{j\in Z\backslash W}\iota_jk_j+\sum_{l=1}^s\iota_{a_l}\accentset{\circ}{k_{a_l}}-\accentset{\circ}{k}=d_0,\\
\sum_{j\in Z\backslash W'}\iota_jk_j'+\sum_{l=1}^s\iota_{b_l}k_{b_l}'-k&=\sum_{j\in Z\backslash W'}\iota_jk_j'+\sum_{l=1}^s\iota_{b_l}\accentset{\circ}{k_{b_l}'}-\accentset{\circ}{k}=d_0',
\end{aligned}
\end{equation} with $d_0$ and  $d'_0$ as in \eqref{defnewd2}, as well as $k_{b_l}'-k_{a_l}=\accentset{\circ}{k_{b_l}'}-\accentset{\circ}{k_{a_l}}=f_l$.

By Cauchy-Schwartz, we may replace the summand on the right hand side of (\ref{2tolastsum}) by $\langle\Upsilon\rangle^{-2b}\cdot\langle\accentset{\circ}{\Upsilon'}\rangle^{-2b}$ (or by $\langle\accentset{\circ}{\Upsilon}\rangle^{-2b}\cdot\langle\Upsilon'\rangle^{-2b}$, which is treated similarly by symmetry). Now going back to (\ref{simpleexp2}) and applying Cauchy-Schwartz in the variables $(k_{[V\backslash Z]},k_{[V\backslash Z]}',\lambda_{[V\backslash Z]},\lambda_{[V\backslash Z]}')$, we get
\begin{multline*}|\mathcal{X}|^4\lesssim N_n^{-4(1-\gamma)}\sum_{k_n,k_n'}\int\mathrm{d}\lambda_n\mathrm{d}\lambda_n'|\mathcal{O}_{k_nk_n'}(\lambda_n,\lambda_n')|^2\lesssim N_n^{-4(1-\gamma)}\bigg(\sum_{(2)}\int\prod_{j\in V\backslash Z}\langle\lambda_j\rangle^{2b}\langle\lambda_j'\rangle^{2b}\cdot|\mathfrak{F}|^2\bigg)\\\times\bigg(\sum_{k_n,k_n'}\int\mathrm{d}\lambda_n\mathrm{d}\lambda_n'\cdot\langle\lambda_n\rangle^{-2b}\langle\lambda_n'\rangle^{-2b}\sum_{(2)}\int\prod_{j\in V\backslash Z}\langle\lambda_j\rangle^{-2b}\langle\lambda_j'\rangle^{-2b}\cdot|\mathfrak{G}|^2\bigg).
\end{multline*} The first parenthesis (together with some factors from the second parenthesis) give the product of all factors in (\ref{estimate3}) except $\mathcal{E}_1\mathcal{E}_1^{+}$, by using (\ref{input3}) and (\ref{contbd}). The second parenthesis, after applying (\ref{2tolastsum}) with the summand $\langle\Upsilon\rangle^{-b}\cdot\langle\accentset{\circ}{\Upsilon}\rangle^{-b}\cdot\langle\Upsilon'\rangle^{-b}\cdot\langle\accentset{\circ}{\Upsilon'}\rangle^{-b}$ replaced by $\langle\Upsilon\rangle^{-2b}\langle\accentset{\circ}{\Upsilon'}\rangle^{-2b}$, integrating in $\lambda_{[U\backslash Z]}$ and $\lambda_{[U\backslash Z]}'$, and plugging in (\ref{defnewd2}), reduces to
\begin{equation}\label{reducetosum}\sum_{(k_{[U\backslash W]},k_{[U\backslash W']}')}\sum_{\substack{(k,k_{a_l},k_{b_l}':1\leq l\leq s)\\(\accentset{\circ}{k},\accentset{\circ}{k_{a_l}},\accentset{\circ}{k_{b_l}'}:1\leq l\leq s)}}\langle\Sigma-\alpha\rangle^{-2b}\langle\accentset{\circ}{\Sigma'}-\alpha\rangle^{-2b},\end{equation}
where $\Sigma$ and $\accentset{\circ}{\Sigma'}$ are respectively
\begin{equation}\Sigma=\sum_{j\not\in W}\iota_j|k_j|^2+\sum_{l=1}^s\iota_{a_l}|k_{a_l}|^2-|k|^2,\quad\accentset{\circ}{\Sigma'}=\sum_{j\not\in W'}\iota_j|k_j'|^2+\sum_{l=1}^s\iota_{b_l}|\accentset{\circ}{k_{b_l}'}|^2-|\accentset{\circ}{k}|^2,
\end{equation} and the variables in the summation satisfy
\begin{equation}\label{newequations2}
\begin{aligned}\sum_{j\not\in W}\iota_jk_j+\sum_{l=1}^s\iota_{a_l}k_{a_l}-k&=\sum_{j\not\in W}\iota_jk_j+\sum_{l=1}^s\iota_{a_l}\accentset{\circ}{k_{a_l}}-\accentset{\circ}{k}=d,\\
\sum_{j\not\in W'}\iota_jk_j'+\sum_{l=1}^s\iota_{b_l}k_{b_l}'-k&=\sum_{j\not\in W'}\iota_jk_j'+\sum_{l=1}^s\iota_{b_l}\accentset{\circ}{k_{b_l}'}-\accentset{\circ}{k}=d,
\end{aligned}
\end{equation} as well as $k_{b_l}'-k_{a_l}=\accentset{\circ}{k_{b_l}'}-\accentset{\circ}{k_{a_l}}=f_l$. Now, when $k_{[U\backslash W]}$ and $(k,k_{a_l}:1\leq l\leq s)$ are fixed, the sum of $\langle\accentset{\circ}{\Sigma'}-\alpha\rangle^{-2b}$ over $k_{[U\backslash W']}'$ and $(\accentset{\circ}{k},\accentset{\circ}{k_{b_l}'}:1\leq l\leq s)$ can be bounded by $\mathcal{E}_1^{+}$ with $A=U\backslash W'$ in (\ref{extraeqn}) due to the equations (\ref{newequations2}); on the other hand the sum of $\langle\Sigma-\alpha\rangle^{-2b}$ over $k_{[U\backslash W]}$ and $(k,k_{a_l}:1\leq l\leq s)$ can be bounded by $\mathcal{E}_1$. This bounds the sum (\ref{reducetosum}) by $\mathcal{E}_1\mathcal{E}_1^+$ and proves (\ref{estimate3}).
\subsubsection{A special $\mathcal{T}^*\mathcal{T}$ argument}\label{case3} Assume now $N_n=N^{(1)}$ and $N_*\gtrsim N^{(2)}$. Again we only need to study the operator $\mathcal{T}$ given by the kernel (\ref{kernel0}); note that $\mathcal{T}_{kk_n}(\lambda,\lambda_n)$ is supported in the set $\{(k,k_n):|k-\iota_nk_n+d|\lesssim N^{(2)}\}$, by the standard orthogonality argument it suffices to prove the same operator bound for $\widetilde{\mathcal{T}}$ which is $\mathcal{T}$ restricted to the set $\{k:|k-f|\leq N^{(2)}\}$, \emph{uniformly} in $f\in\mathbb{Z}^2$. Below we will fix an $f$ and denote $\widetilde{\mathcal{T}}$ still by $\mathcal{T}$, so that in any summations below we may assume $|k-f|\lesssim N^{(2)}$ and $|k_n-\iota_n(f+d)|\lesssim N^{(2)}$ (same for $k_n'$). At this point the parameter $N^{(1)}$ or $N_n$ no longer explicitly appears in the estimate, so the set of $\omega$ we remove will be independent of it. Also we can prove (\ref{estimate3}), with $N_n$ replaced by $N^{(2)}$ in both $\mathcal{E}_1$ and $\mathcal{E}_1^{\mathrm{ex}}$, by essentially repeating the proof in Section \ref{case2} above (and making the bound uniform in $f$ in exactly the same way as below); it remains to prove (\ref{estimate5}).

We start with (\ref{simpleexp2}) and now look for further pairings in $(k_{[Z]}^*,k_{[Z]}'^*)$ in the expression $\mathfrak{G}$ given by (\ref{newdefg}). By repeating the same reduction step in Section \ref{case1}, we can find two partitions $(X_1,\cdots,X_q,W)$ and $(X_1',\cdots,X_q',W')$\footnote{This $W$ and $W'$ are different from the $W$ and $W'$ of Section \ref{case2}.} of the set $Z$, where $2p+q\leq n_1$, such that $N_j$ are all comparable for $j$ in each $X_l\cup X_l'$, and further reduce (\ref{simpleexp2}) to a sum
\begin{equation}\label{simpleexp3}\mathcal{O}_{k_nk_n'}(\lambda_n,\lambda_n')=\langle\lambda_n\rangle^{-b}\langle\lambda_n'\rangle^{-b}\sum_{(3)}\int\mathfrak{F}^+\cdot\mathfrak{G}^+,
\end{equation} where the symbol $\sum_{(3)}\int$ represents the sum in $k_{[V\backslash W]}$ and $k_{[V\backslash W']}'$ and integration in $\lambda_{[V\backslash W]}$ and $\lambda_{[V\backslash W']}'$, the factor $\mathfrak{F}^+$ is independent of $(k_n,k_n',\lambda_n,\lambda_n')$,
\begin{equation}\mathfrak{F}^+=\prod_{i=1}^p\overline{P_{k_{[Y_i]}}^{(i)}(\lambda_{[Y_i]})}P_{(k_{[Y_i]}')}^{(i)}(\lambda_{[Y_i]}')\prod_{j=n_1+1}^{n-1}\overline{[v_{k_j}^{(j)}(\lambda_j)]^{\iota_j}}[v_{k_j'}^{(j)}(\lambda_j')]^{\iota_j}\prod_{l=1}^qQ_{k_{[X_l]},k_{[X_l']}'}^{(l)}(\lambda_{[X_l]},\lambda_{[X_l']}'),
\end{equation}\begin{equation}\label{defnewq}Q_{k_{[X_l]},k_{[X_l']}'}^{(l)}(\lambda_{[X_l]},\lambda_{[X_l']}'):=\sum_{k^*}\prod_{j\in X_l}h_{k_jk^*}^{(j)}(\lambda_j)^{\pm}\prod_{j\in X_l'}h_{k_j'k^*}^{(j)}(\lambda_j')^{\pm}\langle k^*\rangle^{-q_3}(g_{k^*})^{q_1}(\overline{g_{k^*}})^{q_2},
\end{equation}where $q_1+q_2=q_3=|X_l|+|X_l'|$, and the $P$ factors are defined in (\ref{contraction}). We may also fix $a_l\in X_l$ and $b_l\in X_l'$ such that $\iota_{a_l}=\iota_{b_l}$; without loss of generality assume $b_l=2p+l$ for $1\leq l\leq q$. We can bound (\ref{defnewq}) just like we bound (\ref{contraction}) in (\ref{contbd}), except that now it is possible to have $ X_l\cup X_l'\subset \mathcal{G}$. 
Let $R_{2p+l}=\max\{L_j:j\in X_l\cup X_l'\}\gtrsim L_{2p+l}$, then the same argument as in (\ref{contbd}) gives
\begin{equation}\label{boundnewq}
\|Q_{k_{[X_l]},k_{[X_l']}'}^{(l)}(\lambda_{[X_l]},\lambda_{[X_l']}')\prod_{j\in X_l}\langle\lambda_j\rangle^b\prod_{j\in X_l'}\langle\lambda_j'\rangle^b\|_{\ell_{k_{[X_l]},k_{[X_l']}'}^2L_{\lambda_{[X_l]},\lambda_{[X_l']}'}^2}^2\lesssim\prod_{j\in X_l}N_j^{-2}\prod_{j\in X_l'}N_j^{-2}\cdot N_{2p+l}^{2+2\gamma_0}R_{2p+l}^{-2}.
\end{equation} Finally, the multilinear Gaussian $\mathfrak{G}^+$ is given by
\begin{equation}\mathfrak{G}^+=\sum_{(k_{[W]},k_{[W']}')}\sum_{(k_{[W]}^*,k_{[W']}'^*)}\int\mathrm{d}\lambda_{[W]}\mathrm{d}\lambda_{[W']}'\cdot\prod_{j\in W}\overline{\frac{g_{k_j^*}^{\iota_j}}{\langle k_j^*\rangle}\langle\lambda_j\rangle^{b}h_{k_jk_j^*}^{(j)}(\lambda_j)^\pm}\prod_{j\in W'}\frac{g_{k_j'^*}^{\iota_j}}{\langle k_j'^*\rangle}\langle\lambda_j'\rangle^{b}h_{k_j'k_j'^*}^{(j)}(\lambda_j')^\pm\cdot\mathfrak{A^+},
\end{equation} where there is no pairing among $(k_{[W]}^*,k_{[W']}'^*)$, and coefficient $\mathfrak{A}^+$ of form \begin{multline}\mathfrak{A}^+=\sum_{\substack{k:\iota_1k_1+\cdots+\iota_nk_n=k+d\\\iota_1k_1'+\cdots+\iota_nk_n'=k+d}}\int\mathrm{d}\lambda\cdot\eta\bigg(\lambda,\lambda-|k|^2-\sum_{j=1}^n\iota_j(\lambda_j-|k_j|^2)-\alpha\bigg)\\\times\langle\lambda\rangle^{-2b}\eta\bigg(\lambda,\lambda-|k|^2-\sum_{j=1}^n\iota_j(\lambda_j'-|k_j'|^2)-\alpha\bigg)\prod_{j\in W}\langle\lambda_j\rangle^{-b}\prod_{j\in W'}\langle\lambda_j'\rangle^{-b},
\end{multline} where the sum is over a single variable $k$. As before we may also assume $|k_j-k^*|\lesssim R_{2p+l}(N_*)^{C\kappa^{-1}}$ for $j\in X_l$ in (\ref{defnewq}) and similarly for $k_j'$ and $j\in X_l'$, so in particular $|k_{a_l}-k_{2p+l}'|\lesssim R_{2p+l}(N_*)^{C\kappa^{-1}}$.

The goal now is to estimate $\mathfrak{G}^+$. As before, we need to reduce to $(N_*)^{C\delta^{-7}}$ choices for $(k_{[U\backslash W]},\lambda_{[U\backslash W]})$ and $(k_{[U\backslash W]}',\lambda_{[U\backslash W]}')$, and $f$. By the meshing argument (Remark \ref{mesh}), we may assume $|\lambda|\lesssim (N_*)^{\delta^{-7}}$ and $|\lambda_j|\lesssim (N_*)^{\delta^{-7}}$ for $j\not\in W$ (similarly for $\lambda_j'$) and get rid of these parameters; in the same way we may also fix $k_{[V\backslash W]}$ and $k_{[V\backslash W']}'$, as well as $f+d-\iota_nk_n$ and $f+d-\iota_nk_n'$. Letting $k=f+g$, we can rewrite
\begin{multline}\mathfrak{A}^+=\sum_{|g|\leq N^{(2)}}\mathbf{1}_{\substack{\sum_{j\in W}\iota_jk_j=g+d_0\\\sum_{j\in W'}\iota_jk_j'=g+d_0'}}\int\mathrm{d}\lambda\cdot\eta\bigg(\lambda,\lambda-2f\cdot g-|g|^2-\sum_{j\in V}\iota_j(\lambda_j-|k_j|^2)+\beta(f)+\gamma\bigg)\\\times\langle\lambda\rangle^{-2b}\eta\bigg(\lambda,\lambda-2f\cdot g-|g|^2-\sum_{j\in V}\iota_j(\lambda_j'-|k_j'|^2)+\beta'(f)+\gamma'\bigg)\prod_{j\in W}\langle\lambda_j\rangle^{-b}\prod_{j\in W'}\langle\lambda_j'\rangle^{-b},
\end{multline} where $\gamma,\gamma'\in[0,1)$ are fixed, $d_0,d_0'\in \mathbb{Z}^2$, and $\beta(f),\beta'(f)$ are fixed integer-valued functions of $f$. We may assume $|d_0|,|d_0'|\lesssim N^{(2)}$ since otherwise $\mathfrak{A}^+\equiv 0$, then we may fix them and see that $f$ enters the whole expression only through the function $-2f\cdot g+\beta(f)$; moreover we may restrict $g$ to the set where $|-2f\cdot g+\beta(f)|\leq(N_*)^{\delta^{-7}}$ since otherwise either $\lambda$ or one $\lambda_j$ must be large and we close as before. The reduction to finitely many cases can then be done by invoking the following claim, which will be proved at the end of this section:
\begin{claim}\label{claim0} Let the function $F_{f,\beta}(g):=-2f\cdot g+\beta$, with the particular domain $\mathrm{Dom}(F_{f,\beta})=\{|g|\leq N^{(2)}:|-2f\cdot g+\beta|\leq (N_*)^{\delta^{-7}}\}$. Then when $f\in\mathbb{Z}^2$ and $\beta\in\mathbb{Z}$ varies, the function $F_{f,\beta}$ (together with its domain) has finitely many, and in fact $\lesssim (N_*)^{C\delta^{-7}}$ possibilities.
\end{claim}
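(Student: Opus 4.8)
The plan is to count not the pairs $(f,\beta)$ (of which there are infinitely many) but the distinct pairs \emph{(domain, function on that domain)} that they produce, exploiting that $F_{f,\beta}$ is an \emph{affine} function of $g$ and is therefore determined by its values at a bounded number of points. Write $N:=N^{(2)}$ and $R:=(N_*)^{\delta^{-7}}$. Then $D:=\mathrm{Dom}(F_{f,\beta})$ lies in the box $B:=\{g\in\mathbb{Z}^2:|g|\le N\}$, which contains $\#B\lesssim N^2\lesssim (N_*)^2$ lattice points since $N\lesssim N_*$; and for every $g\in D$ the value $F_{f,\beta}(g)$ is an integer in $[-R,R]$, a set of $\lesssim R$ admissible values. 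Note that $D$ is recovered from $F_{f,\beta}$ alone, since $D=\{g\in\mathbb{Z}^2:|g|\le N,\ |F_{f,\beta}(g)|\le R\}$.

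First I would dispose of the case $D=\emptyset$ (one possibility) and then split according to the affine dimension of $D$. If $D$ contains three affinely independent points, fix a canonical such triple $g_1,g_2,g_3\in D$ (e.g.\ the lexicographically least). Since an affine function on $\mathbb{Z}^2$ is determined by its values at three affinely independent points, the triple together with the values $v_i:=F_{f,\beta}(g_i)$ determines $F_{f,\beta}$ on all of $\mathbb{Z}^2$, hence $D$, hence the pair $(D,F_{f,\beta}|_D)$; conversely that pair recovers the canonical triple and the $v_i$, so the assignment $(\text{pair})\mapsto(g_1,v_1,g_2,v_2,g_3,v_3)$ is injective. Since each $g_i\in B$ and each $v_i\in[-R,R]\cap\mathbb{Z}$, the number of such pairs is $\lesssim(\#B)^3R^3\lesssim(N_*)^6R^3$.

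If instead $D$ is contained in a line, I would split once more. When $\#D=1$ the pair is determined by the single point of $D$ (in $B$) and the single value (in $[-R,R]\cap\mathbb{Z}$), giving $\lesssim(\#B)R$ possibilities. When $\#D\ge2$, take the two lexicographically least points $g_1,g_2\in D$ and let $\ell$ be the line through them, which necessarily contains $D$; then $F_{f,\beta}|_\ell$ is affine in the line parameter, hence determined by $v_1=F_{f,\beta}(g_1)$ and $v_2=F_{f,\beta}(g_2)$, and $D=\{g\in\ell:|g|\le N,\ |F_{f,\beta}(g)|\le R\}$ is then determined by $\ell$ and $F_{f,\beta}|_\ell$. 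So again $(g_1,v_1,g_2,v_2)$ determines the pair injectively, and there are $\lesssim(\#B)^2R^2$ such pairs. Summing the four contributions gives a total of $\lesssim(N_*)^6R^3=(N_*)^{6+3\delta^{-7}}$, which is $\le(N_*)^{C\delta^{-7}}$ since $\delta\ll1$ makes $\delta^{-7}\ge 6$.

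There is no genuine obstacle here: the content is simply that an affine function is cheap to specify. The only thing requiring a little care is making the choices of witness points in the low-dimensional cases canonical, so that the reconstruction map is truly injective, and checking in each case that $D$ itself — not merely the function — is recoverable from the chosen finite data; this holds because $D$ is cut out of the fixed box $B$ by the single size constraint $|F_{f,\beta}(\cdot)|\le R$.
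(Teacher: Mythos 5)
Your proof is correct and follows essentially the same route as the paper: both split according to whether $D$ spans the plane, lies in a line, or has at most one point, and in each case exploit that an affine function with values constrained to $[-R,R]$ on a spanning subset of the box has only $\lesssim (N_*)^{C\delta^{-7}}$ possibilities. The only cosmetic difference is that the paper bounds the parameters directly ($|f|$, $|\beta|$ in the planar case, and the line coefficients $a,b$ in the collinear case), whereas you sidestep this by injectively encoding each pair $(D,F_{f,\beta}|_D)$ by canonical witness points and their values; the two finishes give the same count.
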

From now on we may fix the value of $f$. By removing a set of probability $\leq C_\theta e^{-(\tau^{-1}N_*)^\theta}$, we can apply Lemma \ref{lemma:4.2} and conclude that (recall that $a_0=2b-10\delta^6$)
\begin{multline}|\mathfrak{G}^+|^2\lesssim\prod_{j\in W}N_j^{-2}\prod_{j\in W'}N_j^{-2}\sum_{(k_{[W]},k_{[W']}')}\int\mathrm{d}\lambda_{[W]}\mathrm{d}\lambda_{[W']}'\prod_{j\in W}\langle\lambda_j\rangle^{-a_0}\prod_{j\in W'}\langle\lambda_j'\rangle^{-a_0}\bigg[\sum_{\substack{k:\iota_1k_1+\cdots+\iota_nk_n=k+d\\\iota_1k_1'+\cdots+\iota_nk_n'=k+d}}\int \mathrm{d}\lambda\\\times\langle\lambda\rangle^{-2b}\eta\bigg(\lambda,\lambda-|k|^2-\sum_{j=1}^n\iota_j(\lambda_j-|k_j|^2)-\alpha\bigg)\eta\bigg(\lambda,\lambda-|k|^2-\sum_{j=1}^n\iota_j(\lambda_j'-|k_j'|^2)-\alpha\bigg)\bigg]^2.
\end{multline} The integral over $\lambda$ gives a factor
\[\bigg\langle\Sigma-\sum_{j=1}^n\iota_j\lambda_j-\alpha\bigg\rangle^{-b}\bigg\langle\Sigma'-\sum_{j=1}^n\iota_j\lambda_j'-\alpha\bigg\rangle^{-b},\] where $\Sigma$ and $\Sigma'$ are defined as
\[\Sigma=\sum_{j=1}^n\iota_j|k_j|^2-|k|^2,\quad \Sigma'=\sum_{j=1}^n\iota_j|k_j'|^2-|k|^2.\]Since there is only one value of $k$ in the summation, we can reduce
\begin{align}
|\mathfrak{G}^+|^2&\lesssim\sum_k\sum_{(k_{[W]},k_{[W']}')}\int\mathrm{d}\lambda_{[W]}\mathrm{d}\lambda_{[W']}'\prod_{j\in W}\langle\lambda_j\rangle^{-a_0}\prod_{j\in W'}\langle\lambda_j'\rangle^{-a_0}\nonumber\\&\times\prod_{j\in W}N_j^{-2}\prod_{j\in W'}N_j^{-2}\bigg\langle\Sigma-\sum_{j=1}^n\iota_j\lambda_j-\alpha\bigg\rangle^{-2b}\bigg\langle\Sigma'-\sum_{j=1}^n\iota_j\lambda_j'-\alpha\bigg\rangle^{-2b}\nonumber\\&\lesssim\prod_{j\in W}N_j^{-2}\prod_{j\in W'}N_j^{-2}\sum_k\sum_{(k_{[W]},k_{[W']}')}\bigg\langle\Sigma-\sum_{j\not\in W}\iota_j\lambda_j-\alpha\bigg\rangle^{-a_0}\bigg\langle\Sigma'-\sum_{j\not\in W'}\iota_j\lambda_j'-\alpha\bigg\rangle^{-a_0},\label{boundfinalg}
\end{align}
where in the summation over $k$ and $(k_{[W]},k_{[W']}')$ we assume that $\iota_1k_1+\cdots+\iota_nk_n=\iota_1k_1'+\cdots+\iota_nk_n'=k+d$. Returning to (\ref{simpleexp3}), by applying Cauchy-Schwartz in the variables $(k_{[V\backslash W]},\lambda_{[V\backslash W]})$ and $(k_{[V\backslash W']}',\lambda_{[V\backslash W']}')$ we conclude as before that
\begin{multline*}|\mathcal{X}|^4\lesssim (N^{(1)})^{-4(1-\gamma)}\bigg(\sum_{(3)}\int\prod_{j\in V\backslash W}\langle\lambda_j\rangle^{2b}\prod_{j\in V\backslash W'}\langle\lambda_j'\rangle^{2b}\cdot|\mathfrak{F}|^2\bigg)\\\times\bigg(\sum_{k_n,k_n'}\int\mathrm{d}\lambda_n\mathrm{d}\lambda_n'\cdot\langle\lambda_n\rangle^{-2b}\langle\lambda_n'\rangle^{-2b}\sum_{(3)}\int\prod_{j\in V\backslash W}\langle\lambda_j\rangle^{-2b}\prod_{j\in V\backslash W'}\langle\lambda_j'\rangle^{-2b}\cdot|\mathfrak{G}|^2\bigg).
\end{multline*} The first parenthesis (together with some factors from the second parenthesis) gives the product of all factors in (\ref{estimate5}) except $\widetilde{\mathcal{E}_1}\widetilde{\mathcal{E}_1^{+}}$, by using (\ref{input3}), (\ref{contbd}) and (\ref{boundnewq}). The second parenthesis, after applying (\ref{boundfinalg}) and integrating in $\lambda_{[U\backslash W]}$ and $\lambda_{[U\backslash W']}'$, reduces to
\begin{equation}\label{thirdsum}\sum_k \, \sum_{k_{[U]}:\iota_1k_1+\cdots+\iota_nk_n=k+d} \, \, \sum_{k_{[U]}':\iota_1k_1'+\cdots+\iota_nk_n'=k+d}\langle\Sigma-\alpha\rangle^{-a_0}\langle\Sigma'-\alpha\rangle^{-a_0}.\end{equation} Now when $(k,k_{[U]})$ are fixed, the sum of $\langle\Sigma'-\alpha\rangle^{-a_0}$ over $k_{[U]}'$ can be bounded by $\widetilde{\mathcal{E}_1^{+}}$ with $A=\{1,\cdots,n\}$ in (\ref{extraeqn}), due to the linear equation $\iota_1k_1'+\cdots+\iota_nk_n'=k+d$, the fact that $k_{2p+l}'$ belongs to a disc of radius $O(R_{2p+l}(N_*)^{C\kappa^{-1}})$ once $k_{a_l}$ is fixed, and the fact that $|\iota_nk_n'-f-d|\lesssim N^{(2)}$. Moreover the sum of $\langle\Sigma-\alpha\rangle^{-a_0}$  over $(k,k_{[U]})$ can be bounded by $\widetilde{\mathcal{E}_1}$, due to the fact that $|\iota_nk_n-f-d|\lesssim N^{(2)}$. This then bounds (\ref{thirdsum}) by $\widetilde{\mathcal{E}_1}\widetilde{\mathcal{E}_1^{+}}$ and proves (\ref{estimate5}).
\begin{proof}[Proof of Claim \ref{claim0}] Let $D=\mathrm{Dom}(F_{f,\beta})$. If $D$ contains three points $g_1,g_2,g_3$ that are not collinear, then we have $|f\cdot(g_1-g_2)|\lesssim (N_*)^{\delta^{-7}}$, $|f\cdot(g_1-g_3)|\lesssim (N_*)^{\delta^{-7}}$, and that $g_1-g_2$ and $g_1-g_3$ are linearly independent. This implies that $|f|\lesssim (N_*)^{2\delta^{-7}}$ and hence $|\beta|\lesssim (N_*)^{3\delta^{-7}}$ so the result is trivial. Now let us assume $D$ is contained in a line $\ell$; we may assume $\ell$ contains at least two points in the set $\{g:|g|\leq N^{(2)}\}$, otherwise $D$ is at most a singleton and the result is also trivial. Then the integer points in $\ell$ can be written as $p+q\sigma$, where $(p,q)\in(\mathbb{Z}^2)^2$ has at most $(N^{(2)})^{10}$ choices (so we may fix them), and hence $D=\{p+q\sigma:|p+q\sigma|\leq N^{(2)},|a\sigma+b|\leq (N_*)^{\delta^{-7}}\}$ where $a$ and $b$ are integers. Again as $|D|\geq 2$ we know that $|a|\lesssim (N_*)^{2\delta^{-7}}$ and $|b|\lesssim (N_*)^{3\delta^{-7}}$, so $F_{f,\beta}$ indeed has $\lesssim (N_*)^{C\delta^{-7}}$ possibilities, as claimed.
\end{proof}
\begin{rem}\label{extrarem}For later use we will also consider the following variant of $\mathcal{X}$ (same for $\mathcal{Y}$):
\begin{multline}\label{mainexp1+}
\mathcal{X}^+:=\sum_{\substack{(k,k_1,\cdots,k_n)\\\iota_1 k_1+\cdots+\iota_n k_n=k+d}}\int\mathrm{d}\lambda\mathrm{d}\lambda_1\cdots\mathrm{d}\lambda_n\mathrm{d}\mu_1\cdots\mathrm{d}\mu_s\\\times\eta\bigg(\lambda,\lambda-|k|^2-\sum_{j=1}^n\iota_j(\lambda_j-|k_j|^2)-\sum_{j=1}^s\mu_j-\alpha\bigg)\overline{v_{k}(\lambda)}\prod_{j=1}^n[v_{k_j}^{(j)}(\lambda_j)]^{\iota_j}\prod_{j=1}^sw_j(\mu_j),
\end{multline} where each $w_j$ satisfies
\[\|\langle \mu_j\rangle^bw_j(\mu_j)\|_{L_{\mu_j}^2}\lesssim 1.\] Then $\mathcal{X}^{+}$ will satisfy exactly the same estimates as $\mathcal{X}$ (same for $\mathcal{Y}$). In fact we can introduce a ``virtual'' variable $l_j$ which takes a single value and view $w_j(\mu_j)$ as a function of $l_j$ and $\mu_j$ which has type (D), and repeat all the above proof with these new variables.
\end{rem}
\subsection{Proof of Proposition \ref{multi0}}\label{conclude} Armed with Corollary \ref{corcounting} and Proposition \ref{general},  we can now prove Proposition \ref{multi0}. Recall that we will abuse notation and write $(v_{k_j}^{(j)})(\lambda_j)$ instead of $(\widetilde{v^{(j)}})_{k_j}(\lambda_j)$. We will proceed in three steps; note that as before, in the proof below we will ignore any factor $\lesssim \tau^{-\theta}(N_*)^{C\kappa^{-1}}$.

\emph{Step 1: reduction to estimating $\mathcal{X}$ and $\mathcal{Y}$}. First notice that, when the set of pairings among the variables involved in $\mathcal{N}_n$ is fixed, the coefficient in $\mathcal{N}_n$ will be a constant (see Remark \ref{proper}). By Lemma \ref{duhamelest}, we may replace $\mathcal{I}$ by $\mathcal{J}$ in all estimates. Now by definition of the relevant norms, the kernel bound (\ref{trunckernel}) and duality, we can reduce the desired estimates to the estimates of quantities of form $\mathcal{X}$ (for parts (1) and (2)) and $\mathcal{Y}$ (for part (3)) defined in (\ref{mainexp1}) and (\ref{mainexp2}), in fact with $d=\alpha=0$, except that the functions $v$ and $y$ introduced by duality only satisfy weaker bounds
\begin{equation}\label{weaker}\|\langle\lambda\rangle^{1-b_1}v_{k}(\lambda)\|_{\ell_{k}^2L_{\lambda}^2}\lesssim 1,\quad  \|\langle\lambda\rangle^{1-b_1}\langle \lambda'\rangle^by_{kk'}(\lambda,\lambda')\|_{\ell_{k,k'}^2L_{\lambda,\lambda'}^2}\lesssim 1,\end{equation} instead of (\ref{input0}), and that there may be pairings in $\mathcal{X}$ and $\mathcal{Y}$ (but they will always be over-paired).
Now if $|\lambda|\leq (N_*)^{C_0}$ where $C_0$ is a large constant depending only on $n$, then since $b_1-b\sim 2b-1\sim\kappa^{-1}\sim\delta^4$, we can replace the power $\langle\lambda\rangle^{1-b_1}$ by $\langle\lambda\rangle^b$ in (\ref{weaker}) to match (\ref{input0}), at a price of losing a factor $(N_*)^{C\kappa^{-1}}$ which is acceptable. Now we will assume $|\lambda|\geq (N_*)^{C_0}$; below we will only consider part (1) of Proposition \ref{multi0}, since we have $N_*\gtrsim N^{(1)}$ in part (2) and $N_*\gtrsim N_0$ in part (3), and the proof will be similar and much easier.

Here the point is to use the weight $\langle\lambda\rangle^{1-b_1}$ in (\ref{weaker}) to gain a power $\geq (N_*)^{-\frac{C_0}{3}}$, after which we still can assume $\|v_k(\lambda)\|_{\ell_k^2L_\lambda^2}\lesssim 1$. In view of this gain and the assumption $N_*\gtrsim N^{(2)}$, we may fix the values of $k_j$ and/or $k_j^*$ for each $1\leq j\leq n-1$. Moreover when $k_j$ and $k_j^*$ fixed the resulting function in $\lambda_j$ (we will call them $w_j(\lambda_j)$) satisfies $\|\langle \lambda_j\rangle^bw_j(\lambda_j)\|_{L_{\lambda_j}^2}\lesssim 1$, which implies the corresponding $L_{\lambda_j}^1$ bound, so we may fix $\lambda_j(1\leq j\leq n-1)$ also. Finally as 
\[\int\langle\lambda_n\rangle^{2b}\|v_{k_n}^{(n)}(\lambda_n)\|_{\ell_{k_n}^2}^2\,\mathrm{d}\lambda_n=\|\langle\lambda_n\rangle^{b}v_{k_n}^{(n)}(\lambda_n)\|_{\ell_{k_n}^2L_{\lambda_n}^2}^2\lesssim (N^{(1)})^{-2(1-\gamma)},\] we may also fix the value of $\lambda_n$, and reduce to
\[\mathcal{X}=\sum_{k}\int v_{k}(\lambda)\mathrm{d}\lambda\cdot \eta(\lambda,\lambda-F(k))G_{k-d'},\] where $\|G\|_{\ell^2}\sim (N^{(1)})^{-1+\gamma}$ and $F(k)$ is a function of $k$ which, as well as $d'$, depends on the choice of the other fixed variables. By first integrating in $\lambda$ using Cauchy-Schwartz and (\ref{kernelest2}), then summing in $k$ using Cauchy-Schwartz again, we deduce that
\[|\mathcal{X}|\lesssim \|v_k(\lambda)\|_{\ell_k^2L_\lambda^2}\cdot\|G\|_{\ell^2}\lesssim (N^{(1)})^{-1+\gamma},\] which suffices in view of the gain $(N_*)^{-C_0/3}$.

\emph{Step 2: the no-pairing case}. We have now reduced Proposition \ref{multi0} to the estimates for the quantities $\mathcal{X}$ and $\mathcal{Y}$. If we assume there is no pairing, then we can apply Proposition \ref{general}, and then Corollary \ref{corcounting}. Recall the new parameters such as $p$, $q$ and $R_j$ defined in Proposition \ref{general}; denote $L_+=\max(L_{2p+1},\cdots L_{n_1})$ and $N_+=\max(N_{n_1+1},\cdots N_n)$. Also when we talk about an estimate in Proposition \ref{counting1} we are actually talking about its counterpart in Corollary \ref{corcounting}.

In part (1), by combining (\ref{estimate3}), (\ref{bdset1}) and (\ref{bdset1}) with the improvement factor (\ref{exfactor}), with $N_n$ replaced by $N^{(2)}$ in both places, we obtain that
\[|\mathcal{X}|\lesssim (N^{(1)})^{-1+\gamma}(N^{(2)})^{C\gamma}(L_+)^{40n^3}(N^{(2)})^{-\frac{1}{4}};\] on the other hand by combining (\ref{estimate5}), (\ref{bdset1}) and (\ref{bdset1}) with the improvement factor (\ref{exfactor}), with the changes adapted to $\widetilde{\mathcal{E}_1}$ and $\widetilde{\mathcal{E}_1^{\mathrm{ex}}}$ indicated in Proposition \ref{general}, we obtain that
\[|\mathcal{X}|\lesssim (N^{(1)})^{-1+\gamma}(N^{(2)})^{C\gamma}(L_+)^{-\frac{1}{4}},\] noticing that $R_{2p+l}\gtrsim L_{2p+l}$ for $1\leq l\leq q$ and $N_j\gtrsim L_j$ for $2p+q+1\leq j\leq n_1$. Interpolating the above two bounds then gives (\ref{mainmult1}).

In parts (2) and (3), we have $N^{(1)}\sim N_a$ and $a\in\mathcal{G}\cup\mathcal{C}$, in particular the extra factor (\ref{exfactor}) is bounded by $(N_+)^{-1}$; note that in case (3) we may have $a\in\mathcal{D}$ but in this case the extra factor will be replaced by $(N^{(1)})^{-1}$. By combining (\ref{estimate1}) and (\ref{bdset1}) we obtain (noticing that $N_{PR}\lesssim N^{(2)}$)
\[|\mathcal{X}|\lesssim (N^{(1)}N^{(2)})^{-\frac{1}{2}}(N^{(2)})^{\gamma_0}(N_+)^{C\gamma}(L_+)^{-\delta_0},\] and by combining (\ref{estimate3}) and (\ref{bdset1}) together with the improvement factor (\ref{exfactor}) we obtain that
\[|\mathcal{X}|\lesssim (N^{(1)}N^{(2)})^{-\frac{1}{2}}(N^{(2)})^{\gamma_0}(N_+)^{C\gamma}(L_+)^{40n^3}(N_+)^{-\frac{1}{4}},\] and interpolating the above two bounds gives (\ref{mainmult2}); in the same way (\ref{mainmult3}) follows from (\ref{estimate1}), (\ref{estimate3}), (\ref{bdset2}) and (\ref{bdset2}) with the improvement factor (\ref{exfactor}), and (\ref{mainmult5}) follows from (\ref{estimate2}), (\ref{estimate4}), (\ref{bdset3}) and (\ref{bdset3}) with the suitable improvement factor.

Finally consider (\ref{mainmult4}); here we will define $N'=\max^{(2)}(N_{n_1+1},\cdots,N_n)$. Note that $\alpha=0$, so by combining (\ref{estimate1}) and (\ref{bdset5}) we get \begin{equation}\label{finalbd1}|\mathcal{X}|\lesssim (N^{(1)})^{-1+\gamma_0}(N_+)^{\gamma}(N')^{C\gamma}(L_+)^{-\delta_0};\end{equation} on the other hand, by combining (\ref{estimate3}) and either (\ref{bdset5}) with the improvement factor (\ref{exfactor}) or (\ref{bdset6}), we get that either
\begin{equation}\label{finalbd2}|\mathcal{X}|\lesssim (N^{(1)})^{-1+\gamma_0}(N_+)^{\gamma}(N')^{C\gamma}(L_+)^{40n^3}(N_+)^{-\frac{1}{4}},\end{equation} or
\begin{equation}\label{finalbd3}|\mathcal{X}|\lesssim (N^{(1)})^{-1+\gamma_0}(N_+)^{\gamma}(N')^{C\gamma}(L_+)^{40n^3}\min\big((N')^{-\frac{1}{4}},(N^{(1)})^{\frac{1}{4}}(N_+)^{-\frac{1}{2}}\big).\end{equation} Clearly interpolating (\ref{finalbd1}) and (\ref{finalbd2}) gives (\ref{mainmult4}); suppose instead we have (\ref{finalbd1}) and (\ref{finalbd3}). Now if $N_+\geq (N^{(1)})^{\frac{2}{3}}$ and $L_+\geq (N_+)^{\frac{1}{(40n)^4}}$ then (\ref{finalbd1}) implies (\ref{mainmult4}); if $N_+\geq (N^{(1)})^{\frac{2}{3}}$ and $L_+\leq (N_+)^{\frac{1}{(40n)^4}}$ then (\ref{finalbd3}) implies $|\mathcal{X}|\lesssim (N^{(1)})^{-1.01}$ which implies (\ref{mainmult4}); if $N_+\leq (N^{(1)})^{\frac{2}{3}}$ then interpolating (\ref{finalbd1}) and (\ref{finalbd3}) implies
\[|\mathcal{X}|\lesssim (N^{(1)})^{-1+\gamma_0}(N_+)^{\gamma}\lesssim (N^{(1)})^{-1+\gamma_0+\frac{2\gamma}{3}}\] which implies (\ref{mainmult4}). Note also that for general $\alpha$, due to the factor $\frac{\max((N^{(2)})^2,|\alpha|)}{(N^{(2)})^2}$ on the right hand side of (\ref{bdset5}), the above argument gives the bound
\begin{equation}\label{finalbd4}|\mathcal{X}|\lesssim (N^{(1)})^{-1+\frac{4\gamma}{5}}\max\bigg(1,\frac{|\alpha|^{\frac{1}{2}}}{N^{(2)}}\bigg).
\end{equation}

\emph{Step 3: the over-pairings}. We will only consider $\mathcal{X}$, $\mathcal{Y}$ is similar and easier since there cannot be any pairing between $\{k,k'\}$ and any $k_j$ due to $N^{(1)}\lesssim N_0^{1-\delta}$ and the restrictions $|k|,|k'|\geq \frac{N_0}{4}$ in (\ref{mainmult5}). Now due to simplicity, any pairing in $\mathcal{X}$ must be an over-pairing; by collecting all these pairings we can find a partition $(A_1,\cdots,A_p,B)$ of $\{1,\cdots,n\}$ such that $|A_i|\geq 3$ and $k_j$ takes a single value for $j$ in each $A_i$, that this value is different for different $1\leq i\leq p$, and there is no over-pairing among $\{k_j:j\in B\}$. Then we can check that either there is no pairing among $\{k,k_j:j\in B\}$, or there is a unique over-pairing $k=k_{j_1}=k_{j_2}$ with $j_1,j_2\in B$ and $(\iota_{j_1},\iota_{j_2})\neq (-,-)$. In the latter case denote $\{j_1,j_2\}=A_0$ and replace $B$ by $B\backslash A_0$, so that there is no pairing among $\{k,k_j:j\in B\}$. Below we will focus on the first case, and leave to the end the necessary changes caused by $A_0$.

Now $\mathcal{X}$ is reduced to
\begin{equation}\label{newx}\mathcal{X}=\sum_{l_1,\cdots,l_p}\int\prod_{i=1}^p\prod_{j\in A_i}(v_{l_i}^{(j)}(\lambda_j))^{\pm}\mathrm{d}\lambda_j\cdot\mathcal{X}',\end{equation}where $l_i$ is the common value of $k_j$ for $j\in A_i$ (so that $|l_i|\lesssim N^{(2)}$), and $\mathcal{X}'$ is an expression of the same form as $\mathcal{X}$, but only involves the variables $(k,k_j)$ and $(\lambda,\lambda_j)$ for $j\in B$, with $d$ being a fixed linear combination of $l_i$, and $\alpha$ being a fixed linear combination of $|l_i|^2$. This gives 
\[|\mathcal{X}|\lesssim\sum_{l_1,\cdots,l_p}\prod_{i=1}^pM_{l_i}^{(i)}\cdot\sup_{l_1,\cdots,l_p}\bigg|\int\prod_{i=1}^p\frac{1}{M_{l_i}^{(i)}}\prod_{j\in A_i}(v_{l_i}^{(j)}(\lambda_j))^{\pm}\mathrm{d}\lambda_j\cdot\mathcal{X}'\bigg|,\quad M_{l_i}^{(i)}:=\prod_{j\in A_i}\|\langle\lambda_j\rangle^{b_2}v_{l_i}^{(j)}(\lambda_j)\|_{L_{\lambda_j}^2},\] where recall that $b_2=b-\delta^6$. When each $l_i$ is fixed, by Remark \ref{extrarem}, the expression
\[\int\prod_{i=1}^p\frac{1}{M_{l_i}^{(i)}}\prod_{j\in A_i}(v_{l_i}^{(j)}(\lambda_j))^{\pm}\mathrm{d}\lambda_j\cdot\mathcal{X}'\] can be estimated in the same way as $\mathcal{X}'$ (replacing $b$ by $b_2$ will not change the proof), which is done in \emph{Step 2} above. We then only need to bound
\[\sum_{l_1,\cdots,l_p}\prod_{i=1}^pM_{l_i}^{(i)}=\prod_{i=1}^p\sum_{l_i}M_{l_i}^{(i)},\] which we establish in the following claim.
\begin{claim}\label{pairclaim} Let $K_i=\max(N_j:j\in A_i)$ and $K_i'=\max^{(2)}(N_j:j\in A_i)$, then $\tau^{-1}N_*$-certainly we have that
\begin{equation}\label{pairbd}\sum_{l_i}M_{l_i}^{(i)}\lesssim \left\{
\begin{split}& K_i^{-1+\gamma}(K_i')^{-\frac{1}{3}}, & K_i&\sim N_j,j\in\mathcal{D};\\
& K_i^{-1+\theta},&K_i&\sim N_j,j\in\mathcal{G}\cup\mathcal{C}.
\end{split}
\right.
\end{equation}
\end{claim}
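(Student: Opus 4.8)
The plan is to estimate $M_{l_i}^{(i)}=\prod_{j\in A_i}\|\langle\lambda_j\rangle^{b_2}v_{l_i}^{(j)}(\lambda_j)\|_{L_{\lambda_j}^2}$ and its sum over $l_i$ by distinguishing the three input types and by exploiting the hypothesis $|A_i|\geq 3$. The key point is that, since $|A_i|\geq 3$, at least two of the factors $v_{l_i}^{(j)}$ with $j\in A_i$ can each be summed over the common frequency $l_i$ in $\ell_{l_i}^2$ using Cauchy--Schwarz, while the remaining factor(s) — of which there is at least one — can be estimated in $\ell_{l_i}^\infty$. First I would write, by Cauchy--Schwarz in $l_i$, $\sum_{l_i}M_{l_i}^{(i)}\lesssim \big(\sum_{l_i}\|\langle\lambda\rangle^{b_2}v_{l_i}^{(j_1)}(\lambda)\|_{L_\lambda^2}^2\big)^{1/2}\big(\sum_{l_i}\|\langle\lambda\rangle^{b_2}v_{l_i}^{(j_2)}(\lambda)\|_{L_\lambda^2}^2\big)^{1/2}\prod_{j\in A_i\setminus\{j_1,j_2\}}\sup_{l_i}\|\langle\lambda\rangle^{b_2}v_{l_i}^{(j)}(\lambda)\|_{L_\lambda^2}$, where $j_1,j_2\in A_i$ are chosen to be the two with the largest $N_j$, so that $N_{j_1}\sim K_i$ and $N_{j_2}\sim K_i'$ (or, if $K_i\sim K_i'$, this is automatic).

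For the two ``$\ell^2$-summed'' factors, I would bound each $\big(\sum_{l_i}\|\langle\lambda\rangle^{b_2}v_{l_i}^{(j)}(\lambda)\|_{L_\lambda^2}^2\big)^{1/2}$ using the input bounds of the relevant type: for type (D), (\ref{input3}) directly gives $\lesssim N_j^{-(1-\gamma)}$; for type (G), (\ref{input1}) gives $\|\langle\lambda\rangle^{b_2}v_{l_i}^{(j)}(\lambda)\|_{L_\lambda^2}\lesssim \langle l_i\rangle^{-1}\mathbf{1}_{N_j/2<\langle l_i\rangle\leq N_j}|g_{l_i}|$, so summing the square over $l_i$ and invoking the simple bound $|g_k|\lesssim \tau^{-\theta}\langle k\rangle^\theta$ from (\ref{simplebd}) yields $\lesssim \tau^{-\theta}N_j^\theta$; for type (C), the second bound in (\ref{input2+}) gives $\|\langle\lambda\rangle^{b_2}h_{l_i k^*}^{(j)}(\lambda)\|_{\ell_{l_i,k^*}^2L_\lambda^2}\lesssim N_j^{1/2+\gamma_0}L_j^{-1/2}$, and summing over $k^*$ with $\langle k^*\rangle\sim N_j$ (about $N_j^2$ choices) together with (\ref{simplebd}) for $|g_{k^*}|$ gives $\big(\sum_{l_i}\|\langle\lambda\rangle^{b_2}v_{l_i}^{(j)}\|_{L_\lambda^2}^2\big)^{1/2}\lesssim \tau^{-\theta}N_j^{1/2+\gamma_0}L_j^{-1/2}\cdot N_j\cdot N_j^{-1}\lesssim \tau^{-\theta}N_j^{1/2+\theta}$ — that is, essentially $N_j^{1/2}$ up to harmless factors. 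For the remaining ``$\ell^\infty$'' factors one uses the first bounds in the respective input hypotheses: type (D) again gives a factor $\lesssim N_j^{-(1-\gamma)}$, type (G) gives $\lesssim \tau^{-\theta}N_j^\theta N_j^{-1}\lesssim N_j^{-1+\theta}$ after using $|g_{l_i}|\lesssim\tau^{-\theta}N_j^\theta$, and type (C) gives $\sup_{l_i}\|\langle\lambda\rangle^{b_2}h_{l_i k^*}^{(j)}\|_{\ell_{k^*}^2\to \ell_{l_i}^2L_\lambda^2}$ applied to the single Gaussian coefficient, which is $\lesssim L_j^{-\delta_0}\cdot \tau^{-\theta}N_j^{-1+\theta}$, hence again $\lesssim N_j^{-1+\theta}$. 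Multiplying the contributions: in the case $K_i\sim N_j$ with $j\in\mathcal{D}$, the two largest are a type-(D) pair contributing $K_i^{-(1-\gamma)}(K_i')^{-(1-\gamma)}$ (or one type-(D) and one type-(G)/(C) contributing $K_i^{-(1-\gamma)}(K_i')^{1/2+\theta}$), and in all configurations the extra factor(s) of size $N_j^{-1+\theta}\leq 1$ only help, producing a bound $\lesssim K_i^{-1+\gamma}(K_i')^{-1/3}$ after keeping track that $K_i'\leq K_i$; in the case $K_i\sim N_j$ with $j\in\mathcal{G}\cup\mathcal{C}$, one collects a factor $K_i^{1/2+\theta}$ from one summed factor, a factor $N_{j'}^{?}$ from the other summed factor which is at most $K_i^{1/2+\theta}$, and then the product of the remaining $\ell^\infty$ factors (at least one of them of size $\leq K_i^{-1+\theta}$) forces the total down to $\lesssim K_i^{-1+\theta}$ — here the book-keeping should be arranged so that exactly one copy of the ``large'' frequency $K_i$ is paid at cost $K_i^{-1+\theta}$ rather than $K_i^{1/2}$, which is possible precisely because $|A_i|\geq 3$.

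The main obstacle will be the careful combinatorial accounting in the type-(G)/(C) case: naively each of the two ``$\ell^2$-summed'' factors contributes $\sim N_j^{1/2}$, so the product of the two largest is $\sim (K_i K_i')^{1/2}$, which alone is far too large. One must use the third (and any further) factor — which exists because $|A_i|\geq 3$ — to absorb this: each such factor is bounded in $\ell^\infty$ by $\lesssim N_j^{-1+\theta}$, and since in $A_i$ all the $N_j$ are comparable to $K_i$ (being the common-value frequency $l_i$ with $|l_i|\lesssim N_j$ for every $j\in A_i$, forcing $N_j\gtrsim |l_i|$ but the freedom in choosing which two we sum in $\ell^2$ means we should instead sum the two \emph{smallest} in $\ell^\infty$ and the... ) — this is the delicate point and requires choosing which two indices to Cauchy--Schwarz so that the net power of $K_i$ is $-1+\theta$. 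Concretely, I would sum over $l_i$ in $\ell^2$ using the two factors of \emph{smallest} $N_j$ among type-(G)/(C) if that is advantageous, or more robustly: bound $\sum_{l_i}M_{l_i}^{(i)}\leq \sup_{l_i}\big(\prod_{j\in A_i}\langle l_i\rangle^{?}\big)\cdot(\#\{l_i\})\cdots$ — but the cleanest route is: pick $j_1$ with $N_{j_1}=K_i$, bound its factor in $\ell_{l_i}^\infty$ at cost $K_i^{-1+\theta}$; pick $j_2$ with $N_{j_2}=K_i'$, bound its factor in $\ell_{l_i}^\infty$ at cost $(K_i')^{-1+\theta}$ (or $(K_i')^{-(1-\gamma)}$ if type (D)); and then use the remaining $\geq 1$ factor to carry out the $\ell_{l_i}^2$-summation over $l_i$, which over the $\lesssim K_i^2$ relevant values of $l_i$ costs at most $K_i \cdot (\text{size of that factor})$, and that factor being of type (D)/(G)/(C) is $\lesssim K_i^{-(1-\gamma)}$ resp. $\lesssim \tau^{-\theta}K_i^{\theta}$ resp. $\lesssim \tau^{-\theta}K_i^{\theta}$ in $\ell_{l_i}^2$ after using (\ref{simplebd}). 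This yields in the (G)/(C) case a total $\lesssim \tau^{-\theta}K_i^{-1+\theta}\cdot (K_i')^{-1+\theta}\cdot K_i\cdot K_i^{\theta}\cdot\ldots$ — wait, that over-counts a $K_i$; the correct arrangement, which I will verify in the writeup, is to $\ell^2$-sum using a factor whose $N_j$ is comparable to $K_i$ \emph{and} bound the $K_i$-factor's $\ell^\infty$ norm, netting $K_i^{-1+\theta}$ overall since $\|v_{l_i}^{(j)}\|_{L_\lambda^2}\lesssim \tau^{-\theta}\langle l_i\rangle^{-1+\theta}$ pointwise in $l_i$ for type (G) by (\ref{input1}) and (\ref{simplebd}), and similarly for type (C) via the first bound in (\ref{input2+}) combined with (\ref{simplebd}). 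Thus the genuinely hard part is only to organize this so that exactly one power $K_i^{-1+\theta}$ survives (not $K_i^{+1/2}$ or $K_i^{-1/2}$), and that $K_i'$ or $(K_i')^{1/3}$ comes out in the type-(D) leading case; the rest is routine. Finally, since each estimate used a large-deviation/simple bound holding $\tau^{-1}N_*$-certainly (namely (\ref{simplebd})), and there are only polynomially-in-$N_*$ many configurations of $(l_i)$ and of the partition, a union bound over an exceptional set of measure $\leq C_\theta e^{-(\tau^{-1}N_*)^\theta}$ suffices, giving (\ref{pairbd}) $\tau^{-1}N_*$-certainly as claimed.
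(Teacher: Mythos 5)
Your scaffolding is right — Cauchy--Schwarz over the common value $l_i$, putting two factors in $\ell^2_{l_i}$ and the rest in $\ell^\infty_{l_i}$, and exploiting $|A_i|\geq 3$ — but there is a genuine gap in the type-(C) estimates, and as written the proof does not close.

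For $j\in\mathcal{C}$ you claim the $\ell^2_{l_i}$ norm of $R^{(j)}_{l_i}:=\|\langle\lambda\rangle^{b_2}v^{(j)}_{l_i}(\lambda)\|_{L^2_\lambda}$ is $\lesssim\tau^{-\theta}N_j^{1/2+\theta}$, and you say the $\ell^\infty_{l_i}$ norm is $\lesssim N_j^{-1+\theta}$ by feeding the ``single Gaussian coefficient'' into the operator-norm bound from \eqref{input2+}. Neither is what you need, and the second one is actually false. Deterministically you only get $\|R^{(j)}\|_{\ell^2_{l_i}}\lesssim\|h^{(j)}\|_{\ell^2_{k^*}\to\ell^2_{l_i}L^2_\lambda}\cdot\|(g_{k^*}/\langle k^*\rangle)\|_{\ell^2_{k^*}}\lesssim\tau^{-\theta}L_j^{-\delta_0}N_j^\theta$: no negative power of $N_j$ at all, since $\sum_{\langle k^*\rangle\sim N_j}\langle k^*\rangle^{-2}\sim 1$. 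This is fatal: in the $\mathcal{G}\cup\mathcal{C}$ case, with your bounds, two $\ell^2$-factors of type (C) give $\gtrsim N_j$, and your $\ell^\infty$-factor does not have a $N_j^{-1}$ to cancel it. The paper's proof of Claim \ref{pairclaim} explicitly applies Lemma \ref{largedev} (the one-Gaussian-per-slot version, not just \eqref{simplebd}) to the linear Gaussian sum $v^{(j)}_{l_i}=\sum_{k^*}h^{(j)}_{l_ik^*}g_{k^*}/\langle k^*\rangle$, which produces the square-root cancellation and yields $\|R^{(j)}\|_{\ell^\infty_{l_i}}\lesssim\|R^{(j)}\|_{\ell^2_{l_i}}\lesssim N_j^{-1}\|h^{(j)}\|_{\ell^2_{k,k^*}L^2_\lambda}\lesssim N_j^{-1/2+\gamma_0}L_j^{-1/2}$ — a whole factor $N_j$ better than the deterministic bound. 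For the $\ell^\infty$ bound it separately needs the near-diagonal support encoded in the third inequality of \eqref{input2+}, namely $\|R^{(j)}\|_{\ell^\infty_{l_i}}\lesssim N_j^{-1}L_j^2\sup_{k,k^*}\|\langle\lambda\rangle^b h^{(j)}_{kk^*}\|_{L^2_\lambda}\lesssim N_j^{-1}L_j^2$; taking the minimum with the Lemma \ref{largedev} bound is what gives $\lesssim N_j^{-0.55}$. Without this probabilistic input the (G)$\cup$(C) branch of \eqref{pairbd} cannot be reached.

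A secondary omission: in the $\mathcal{G}\cup\mathcal{C}$ case the paper first reduces to $|l_i|\sim K_i$ via the third bound in \eqref{input2+} (otherwise the decay in $|k-k^*|$ gives the estimate for free), which forces all $N_j$ in $A_i$ to be comparable; without this reduction the frequency bookkeeping in your final Hölder step is ill-posed, and your writeup indeed leaves that step unresolved (``the correct arrangement, which I will verify in the writeup''). So the ideas you are circling are right, but the proof genuinely needs Lemma \ref{largedev} applied to $v^{(j)}$ of type (C), plus the near-diagonal reduction; the purely deterministic bounds you wrote down are too weak by a power of $N_j$.
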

\begin{proof}[Proof of Claim \ref{pairclaim}] Let $R_{l_i}^{(j)}=\|\langle\lambda_j\rangle^{b_2}v_{l_i}^{(j)}(\lambda_j)\|_{L_{\lambda_j}^2}$, then we have $\|R^{(j)}\|_{\ell_{l_i}^\infty}\lesssim \|R^{(j)}\|_{\ell_{l_i}^2}\lesssim N_j^{-1+\gamma}$ if $j\in\mathcal{D}$, $\|R^{(j)}\|_{\ell_{l_i}^\infty}\lesssim N_j^{-1+\theta}$ and $\|R^{(j)}\|_{\ell_{l_i}^2}\lesssim N_j^{\theta}$ if $j\in\mathcal{G}$. If $j\in\mathcal{C}$ we will apply Lemma \ref{largedev}, and again reduce to finitely many $\lambda_j$ by restricting the size of $\lambda_j$ and dividing into small intervals, and using the differentiability in $\lambda_j$ of $h_{k_jk_j^*}^{(j)}(\lambda_j)$, which is assumed in the statement of Proposition \ref{multi0}. In the same way as before, by removing a set of probability $\leq C_\theta e^{-(\tau^{-1}N_*)^\theta}$ and omitting any $\tau^{-\theta}(N_*)^{C\kappa^{-1}}$ factors, we conclude that
\[\|R^{(j)}\|_{\ell_{l_i}^\infty}\lesssim \|R^{(j)}\|_{\ell_{l_i}^2}\lesssim N_j^{-1}\|\langle\lambda_j\rangle^bh_{k_jk_j^*}^{(j)}(\lambda_j)\|_{\ell_{k_j,k_j^*}^2L_{\lambda_j}^2}\lesssim N_j^{-\frac{1}{2}+\gamma_0}L_{j}^{-\frac{1}{2}},\] as well as
\[\|R^{(j)}\|_{\ell_{l_i}^\infty}\lesssim N_j^{-1}L_j^2\sup_{k_j,k_j^*}\|\langle\lambda_j\rangle^bh_{k_jk_j^*}^{(j)}(\lambda_j)\|_{L_{\lambda_j}^2}\lesssim N_j^{-1}L_j^2,\] which also implies $\|R^{(j)}\|_{\ell_{l_i}^\infty}\lesssim N_j^{-0.55}$. Now let $K_i\sim N_j$ and $K_i'\sim N_s$, then if $j\in\mathcal{D}$, (\ref{pairbd}) follows from applying H\"{o}lder and measuring $R^{(j)}$ and another factor other than $R^{(j)}$ or $R^{(s)}$ in $\ell_{l_i}^2$, and all other factors in $\ell_{l_i}^\infty$. If $j\in\mathcal{G}\cup\mathcal{C}$, then we may assume $|l_i|\sim K_i$ (otherwise (\ref{pairbd}) follows trivially from the third inequality in (\ref{input2+})), so the $N_j$ for $j\in A_i$ must all be comparable. We may then assume $j\in\mathcal{G}\cup\mathcal{C}$ for each $j\in A_i$, and (\ref{pairbd}) follows from applying H\"{o}lder and measuring two factors in $\ell_{l_i}^2$ and the rest in $\ell_{l_i}^\infty$, such that at least one $R^{(j)}$ with $j\in\mathcal{G}$ is measured in $\ell_{l_i}^\infty$ if there is any. This completes the proof.
\end{proof}

The general case of Proposition \ref{multi0} then follows from the $\mathcal{X}'$ estimate, namely the no-pairing case in \emph{Step 2}, combined with Claim \ref{pairclaim}. More precisely, suppose $N^{(1)}=N_a$ with $a\in A_i$ for some $i$, then if $a\in\mathcal{D}$ the bound (\ref{pairbd}) gives the power $(N^{(1)})^{-1+\gamma}$, while the power $(K_i')^{-\frac{1}{3}}$ in (\ref{pairbd}), as well as the no-pairing case of the bounds (\ref{mainmult1}) and (\ref{mainmult2}), give the power $(N^{(2)})^{-\frac{1}{4}}$. If $a\in\mathcal{G}\cup\mathcal{C}$, then the power $(N^{(1)})^{-1+\theta}$ from (\ref{pairbd}) is already enough. If $N^{(1)}=N_a$ with $a\in B$ then we simply apply the no-pairing case and use (\ref{pairbd}) to gain decay in $N^{(2)}$ when $N^{(2)}=N_j$ and $j\not\in B$. The only nontrivial case is (\ref{mainmult4}), where there is no need to gain decay in $N^{(2)}$, but we have an extra factor $\lesssim\max(1,|\alpha|^{\frac{1}{2}})$ from (\ref{finalbd4}), where $\alpha$ is a linear combination of $|l_i|^2$. By Claim \ref{pairclaim} we have
\[\langle\alpha\rangle^{\frac{1}{2}}\lesssim\max_{1\leq i\leq p}\min_{j\in A_i}N_j,\quad \prod_{i=1}^p\sum_{l_i}M_{l_i}^{(i)}\lesssim (N_*)^\theta\langle\alpha\rangle^{-\frac{1}{2}},\] which cancels this extra factor and proves (\ref{mainmult4}).

Finally we consider the case with $A_0$, say $k=k_{j_1}=k_{j_2}$ and $N_{j_1}\geq N_{j_2}$. Here we can check that $\mathcal{X}$ still has the form (\ref{newx}), except that in $\mathcal{X}'$ the input $v_k(\lambda)$ is replaced by (essentially){\footnote{To deal with the $\eta$ factor in the expression of $\mathcal{X}$, see (\ref{mainexp1}), we only need to assume $\eta(\lambda,\mu)=\eta_1(\lambda)\eta_2(\mu)$ has factorized form. In general it is easy to write $\eta$ as a linear combination of functions of such form with summable coefficients, by invoking the explicit formula for $\eta$, which can be deduced from the calculations in the proof of Lemma \ref{duhamelest}.}}
\[\widetilde{v}_k(\lambda)=\int_{\pm\lambda_0\pm\lambda_1\pm\lambda_2=\lambda} v_{k}(\lambda_0)^{\pm}\cdot v_{k}^{(j_1)}(\lambda_1)^{\pm}\cdot v_{k}^{(j_2)}(\lambda_2)^{\pm}\,\mathrm{d}\lambda_1\mathrm{d}\lambda_2,\] which satisfies, due to the same proof as in Claim \ref{pairclaim},
\begin{equation}\label{pairbd2}
\begin{aligned}\|\langle\lambda\rangle^{b_2}\widetilde{v}_k(\lambda)\|_{\ell_k^2L_{\lambda}^2}&\lesssim \|\langle\lambda_0\rangle^{b_2}v_k(\lambda_0)\|_{\ell_k^2L_{\lambda_0}^2}\|\langle\lambda_1\rangle^{b_2}v_k^{(j_1)}(\lambda_1)\|_{\ell_k^\infty L_{\lambda_1}^2}\|\langle\lambda_2\rangle^{b_2}v_{k}^{(j_2)}(\lambda_2)\|_{\ell_k^\infty L_{\lambda_2}^2}\\
&\lesssim\left\{
\begin{split}& N_{j_1}^{-1+\gamma}N_{j_2}^{-\frac{1}{3}},&j_1&\in\mathcal{D},\\
& N_{j_1}^{-1+\theta},&j_1&\in\mathcal{G}\cup\mathcal{C}.
\end{split}
\right.
\end{aligned}\end{equation} The rest of proof now goes exactly as above using the additional bound (\ref{pairbd2}), which has exactly the same gain as in Claim \ref{pairclaim}, and the set $A_0$ is treated together with the other sets $A_i$. This completes the proof of Proposition \ref{multi0}.
\subsection{Stability and convergence}\label{stability} 
Recall that $v_N$ is the solution to (\ref{gauged}). Proposition \ref{localmain} already implies the convergence of $v_N$ on the short time interval $[-\tau,\tau]$. For the purpose of proving global well-posedness, we need some additional results, namely a commutator estimate, a stability estimate and two convergence results laid out in Proposition \ref{prop:stability} and \ref{conv}  below. For the notations involved in the proof, see Sections \ref{decompsol} and \ref{apriori}. 
\begin{prop}\label{prop:stability} Recall the relevant constants defined in (\ref{defparam}), and that $\tau\ll 1$, $J=[-\tau, \tau]$. The following two statements hold $\tau^{-1}$-certainly.

(1) (Commutator estimate) For any $N\leq N'$, we have
\begin{equation}\label{commutator}\|v_N-\Pi_Nv_{N'}\|_{X^{b}(J)}\leq N^{-1+\gamma};
\end{equation}

(2) (Stability) Let $w=\Pi_Nw$ be a solution to (\ref{gauged}) on $J$, but with data $w(t_0)$ assigned at some $t_0\in J$ such that $\|w(t_0)-v_N(t_0)\|_{L^2}\leq AN^{-1+\gamma}(\log N)^\alpha$, where $\alpha\geq 0$ is an integer, then we have
\begin{equation}\label{perturbation2}
\|w-v_N\|_{X^{b}(J)}\leq BN^{-1+\gamma}(\log N)^{\alpha+1},
\end{equation} where $B$ depends only on $A$ and $\alpha$.
\end{prop}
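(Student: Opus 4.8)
\textbf{Proof proposal for Proposition \ref{prop:stability}.}

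The plan is to treat both statements as instances of the same machinery that yields Proposition \ref{localmain}, now applied to a difference of two solutions rather than to a single dyadic increment $y_N = v_N - v_{N/2}$. For part (1), I would first note that $\Pi_N v_{N'}$ solves a \emph{perturbed} version of (\ref{gauged}): writing $q_N := v_N - \Pi_N v_{N'}$, the equation for $q_N$ has the form $(i\partial_t+\Delta)q_N = \Pi_N[\mathcal{Q}_N(v_N) - \mathcal{Q}_N(\Pi_N v_{N'})] + \Pi_N[\mathcal{Q}_N(\Pi_N v_{N'}) - \Pi_N\mathcal{Q}_{N'}(v_{N'})]$, with zero data at $t=0$ (since $\Pi_N v_{N'}(0) = \Pi_N f(\omega) = v_N(0)$). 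The first bracket is multilinear in $q_N$ and in $v_N, \Pi_N v_{N'}$, each of which decomposes by $\mathtt{Loc}(\infty)$ (already established $\tau^{-1}$-certainly via Proposition \ref{localmain2}) into type (G), (C), (D) pieces of frequency $\le N$; the second bracket consists of terms that either have a frequency factor in the band $> N$ truncated off, or come from the difference $W_N - W_{N'}$ of Wick constants (controlled by $\sigma_N - \sigma_{N'}$ and the analogue of (\ref{massdiff})), and in all cases carries at least one high-frequency ($\gtrsim N$) input. One then runs the contraction-mapping / $T^*T$ argument of Section \ref{reductmulti} exactly as in the proof of (\ref{induct6}): the multilinear estimates of Proposition \ref{multi0} (cases (\ref{mainmult1})--(\ref{mainmult4})) give a gain of $N^{-1+\gamma}$ from the high-frequency factor and a small contraction constant $\tau^\theta L^{-\delta_0^{1/2}}$ from any low-frequency random-averaging piece, so $\|q_N\|_{X^b(J)} \le N^{-1+\gamma}$ follows.

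For part (2), I would set $p := w - v_N$; since both $w$ and $v_N$ are $\Pi_N$-supported solutions of (\ref{gauged}), $p = \Pi_N p$ solves $(i\partial_t + \Delta) p = \Pi_N[\mathcal{Q}_N(w) - \mathcal{Q}_N(v_N)]$ with data $p(t_0)$ of size $\le A N^{-1+\gamma}(\log N)^\alpha$ in $L^2$ at an \emph{interior} time $t_0 \in J$. The right-hand side is multilinear in $p$ and in the inputs $w = v_N + p$ and $v_N$; expanding $\mathcal{Q}_N(w) - \mathcal{Q}_N(v_N)$ produces terms each containing at least one factor of $p$, while the remaining factors are $v_N$ (or $w = v_N + p$), which decompose as above into (G)/(C)/(D) pieces. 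Writing Duhamel's formula based at $t_0$, $p(t) = e^{i(t-t_0)\Delta}p(t_0) - i\int_{t_0}^t e^{i(t-t')\Delta}\Pi_N[\cdots]\,dt'$, the linear term is bounded in $X^b(J)$ by $\|p(t_0)\|_{L^2} \le A N^{-1+\gamma}(\log N)^\alpha$ using Proposition \ref{sttime} (after extending by the $\chi_\tau$ cutoff), and the Duhamel term, by Proposition \ref{multi0} applied to each multilinear piece with the distinguished factor $p$, is bounded by $\tau^\theta$ (or a small power of $\tau$ / a negative power of the relevant frequency scale) times $\|p\|_{X^b(J)}$ plus a term of size $N^{-1+\gamma}$ coming from the pieces where \emph{no} $p$ factor sits in the "output" slot but the overall frequency forces smallness. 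A contraction/bootstrap in $\|p\|_{X^b(J)}$ then closes, with the constant $B$ depending on $A$ and $\alpha$ through the single extra logarithmic factor picked up when absorbing the linear data term; the gain of one power of $\log N$ is the mechanism by which iterating this estimate across a dyadic-in-time or dyadic-in-$N$ ladder (in the global argument of Section \ref{global}) remains summable.

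The step I expect to be the genuine obstacle is \emph{not} the multilinear analysis — that is entirely parallel to Sections \ref{reductmulti}--\ref{multiest} — but rather the bookkeeping needed to ensure the exceptional probability sets are handled uniformly. In both parts one must run the argument on a set where $\mathtt{Loc}(\infty)$ holds and where (\ref{massdiff})-type bounds and the large-deviation estimates (Lemmas \ref{largedev}, \ref{lemma:5.1}) hold simultaneously for all dyadic $N$; since Proposition \ref{multi0} removes, for each configuration of frequency parameters, a set of measure $\le C_\theta e^{-(\tau^{-1}N_*)^\theta}$ \emph{independent of the top frequency}, one sums these over all the (countably many) relevant parameter choices to land on a single $\tau^{-1}$-certain set. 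A secondary subtlety is that in part (2) the data is posed at an interior point $t_0$ rather than at $0$, so one cannot directly reuse the $t=0$ extension procedure of Section \ref{extension}; instead I would either translate time so $t_0 = 0$ (noting $J$ is symmetric and $\tau$-dependence is preserved) or, more cleanly, prove the estimate with the Duhamel operator based at $t_0$ by observing that Lemma \ref{duhamelest} and Proposition \ref{sttime} are insensitive to the base point up to the universal $\chi$, $\chi_\tau$ cutoffs. Once these organizational points are in place, both (\ref{commutator}) and (\ref{perturbation2}) follow from the same fixed-point scheme that produced (\ref{induct6}).
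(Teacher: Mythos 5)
Your plan for part (1) is workable in principle but differs from the paper's route. The paper does not set up a fresh contraction for $q_N = v_N - \Pi_N v_{N'}$; instead it telescopes $\Pi_N v_{N'}^\dagger - v_N^\dagger = \sum_{N<M\le N'}\Pi_N y_M^\dagger$ and splits each summand as $\Pi_N\psi_{M,L_0(M)}^\dagger + \Pi_N z_M^\dagger$. The $z_M$ piece is disposed of directly by (\ref{induct6}), and $\Pi_N\psi_{M,L_0}$ satisfies a \emph{linear} equation whose inhomogeneous term involves $\Pi_N^\perp\psi$; that mismatch sits squarely in the $\Gamma$-condition case, so a single application of (\ref{induct4}) (for the $\Pi_N\psi$ part) and the variant of (\ref{mainmult4}) from Remark \ref{gamma2} (for the $\Pi_N^\perp\psi$ forcing) closes the estimate. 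This reuses the already-built random-averaging structure, whereas your $q_N$ contraction would require you to systematically re-verify how all pieces like $\Pi_N\psi_{M,L_0}$ for $M>N$ enter the type (G)/(C)/(D) hierarchy as inputs, and to invoke the $\Gamma$-improvement for every term where the unique high input is forced below $N$ by the outer projection. Both routes are viable, but yours has more bookkeeping.

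In part (2) there is a genuine gap: you treat $\mathcal{Q}_N$ as a fixed multilinear operator so that $\mathcal{Q}_N(w)-\mathcal{Q}_N(v_N)$ expands into terms each carrying at least one factor of $p$. This is false. By (\ref{formulaqn}) the operator $\mathcal{Q}_N$ carries the Wick constant $m_N^*$ of the solution it is applied to, and $w$ and $v_N$ have different masses; the equation for $z=w-v_N$ therefore has an \emph{extra forcing term} of the form $\sum_l c_{rl}\big[(m_N^*+\sigma)^{r-l}-(m_N^*)^{r-l}\big]\Pi_N\mathcal{N}_{2l+1}(v_N)$ with $\sigma=\mathcal{A}|w|^2-\mathcal{A}|v_N|^2$ conserved in time and \emph{not} small in terms of $p$ alone — it contains no $p$ factor at all. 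This term is precisely where the additional $\log N$ in (\ref{perturbation2}) originates: one has $|\sigma|\lesssim(\|v_N(t_0)\|_{L^2}+\|w(t_0)\|_{L^2})\|p(t_0)\|_{L^2}$, and $\|v_N\|_{L^2}^2\lesssim\tau^{-\theta}\log N$, yielding the upgrade $(\log N)^\alpha\to(\log N)^{\alpha+1}$ (this is also why $B$ ends up depending on $\alpha$). Your attribution of the extra log to "absorbing the linear data term" or to iteration over a dyadic ladder is incorrect: the free evolution $e^{i(t-t_0)\Delta}p(t_0)$ contributes only $\|p(t_0)\|_{L^2}$ with no additional log, and the contraction within a fixed window $J$ itself produces no log. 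Without introducing $\sigma$ and this mass-defect forcing, the stated estimate (\ref{perturbation2}) is not derivable from your scheme.
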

\begin{proof} (1) It suffices to prove $\|\Pi_Nv_{N'}^\dagger-v_N^\dagger\|_{X^{b}}\leq N^{-1+\gamma}$. We write
\[\Pi_Nv_{N'}^\dagger-v_N^\dagger=\sum_{N<M\leq N'}\Pi_Ny_M^\dagger=\sum_{N<M\leq N'}(\Pi_N\psi_{M,L_0(M)}^\dagger+\Pi_Nz_M^\dagger),\] where $L_0(M)$ is the largest $L$ satisfying $(M,L)\in\mathcal{K}$. The bound for $\Pi_Nz_M^\dagger$ follows from Proposition \ref{localmain2}, so it suffices to bound $\Pi_N\psi_{M,L_0}^\dagger$, where $L_0=L_0(M)$. Let $\psi=\psi_{M,L_0}^\dagger$, then we have
\begin{equation}\label{expand0}\psi(t)=\chi(t) e^{it\Delta}(\Delta_Mf(\omega))-i\chi_{\tau}(t)\sum_{l=0}^r(l+1)c_{rl}(m_M^*)^{r-l}\cdot\mathcal{I}\Pi_M\mathcal{N}_{2l+1}\big(\psi,v_{L_0}^\dagger,\cdots,v_{L_0}^\dagger\big).
\end{equation} Since $N\leq \frac{M}{2}$, $\Pi_N\psi$ solves the equation
\begin{multline}\label{expand1}\Pi_N\psi(t)=-i\chi_{\tau}(t)\sum_{l=0}^r(l+1)c_{rl}(m_M^*)^{r-l}\cdot\mathcal{I}\Pi_N\mathcal{N}_{2l+1}\big(\Pi_N\psi,v_{L_0}^\dagger,\cdots,v_{L_0}^\dagger\big)\\-i\chi_{\tau}(t)\sum_{l=0}^r(l+1)c_{rl}(m_M^*)^{r-l}\cdot\mathcal{I}\Pi_N\mathcal{N}_{2l+1}\big(\Pi_N^\perp\psi,v_{L_0}^\dagger,\cdots,v_{L_0}^\dagger\big).
\end{multline} Now $\tau^{-1}$-certainly we may assume (\ref{induct4}) and the variant of (\ref{mainmult4}) described in Remark \ref{gamma2}. Note that (\ref{induct4}) allows us to control the first line of (\ref{expand1}); the second line of (\ref{expand1}) is controlled by using (\ref{sttime1}) and the variant of (\ref{mainmult4}). In the end we get that
\[\|\Pi_N\psi\|_{X^{b}}\lesssim {\tau}^\theta\|\Pi_N\psi\|_{X^{b}}+{\tau}^\theta M^{-1+\frac{4\gamma}{5}},\] which proves (\ref{commutator}).

(2) If $N\leq O_{A,\alpha}(1)$ there is nothing to prove, so we may assume $N$ is large depending on $(A,\alpha)$. Let $\sigma=\mathcal{A}|w|^2-\mathcal{A}|v_N|^2$ (this is conserved), then we have
\[|\sigma|\lesssim(\|v_N(t_0)\|_{L^2}+\|w(t_0)\|_{L^2})\|w(t_0)-v_N(t_0)\|_{L^2}\lesssim\tau^{-\theta} AN^{-1+\gamma}(\log N)^{\alpha+1}.\] Note the log loss due to the fact that  $\|v_N\|_{L^2}^2\lesssim\tau^{-\theta}\log N$. Recall that $v_N$ and $w$ satisfy the equations
\begin{equation}\label{equationsnew}
\left\{
\begin{aligned}(i\partial_t+\Delta)v_N&=\sum_{l=0}^rc_{rl}(m_N^*)^{r-l}\Pi_N\mathcal{N}_{2l+1}(v_N,\cdots,v_N),\\
(i\partial_t+\Delta)w&=\sum_{l=0}^rc_{rl}(m_N^*+\sigma)^{r-l}\Pi_N\mathcal{N}_{2l+1}(w,\cdots,w)
\end{aligned}
\right.
\end{equation} on $J$, so $z=w-v_N$ satisfies the equation
\begin{multline}\label{equationsnew2}
(i\partial_t+\Delta)z=\sum_{l=0}^rc_{rl}[(m_N^*+\sigma)^{r-l}-(m_N^*)^{r-l}]\Pi_N\mathcal{N}_{2l+1}(v_N,\cdots,v_N)\\+\sum_{l=0}^rc_{rl}(m_N^*+\sigma)^{r-l}\Pi_N[\mathcal{N}_{2l+1}(z+v_N,\cdots,z+v_N)-\mathcal{N}_{2l+1}(v_N,\cdots,v_N)]
\end{multline} on $J$, and $z_0=z(t_0)$ satisfies $\|z_0\|_{L^2}\leq AN^{-1+\gamma}(\log N)^\alpha$. In order to bound $\|z\|_{X^{b}(J)}$, it will suffice to prove that given $z_0$ and $\sigma$, the mapping
\begin{multline}\label{equationsnew3}
z^\dagger\mapsto \chi(t-t_0)e^{i(t-t_0)\Delta}z_0-i\chi_{2\tau}(t-t_0)\sum_{l=0}^rc_{rl}[(m_N^*+\sigma)^{r-l}-(m_N^*)^{r-l}]\mathcal{I}_{t_0}\Pi_N\mathcal{N}_{2l+1}(v_N^\dagger,\cdots,v_N^\dagger)\\-i\chi_{2\tau}(t-t_0)\sum_{l=0}^rc_{rl}(m_N^*+\sigma)^{r-l}\mathcal{I}_{t_0}\Pi_N[\mathcal{N}_{2l+1}(z^\dagger+v_N^\dagger,\cdots,z^\dagger+v_N^\dagger)-\mathcal{N}_{2l+1}(v_N^\dagger,\cdots,v_N^\dagger)]
\end{multline} is a contraction mapping from the set $\{z^\dagger:\|z^\dagger\|_{X^{b}}\leq AN^{-1+\gamma}(\log N)^{\alpha+1}\}$ to itself, where 
\[\mathcal{I}_{t_0}F(t)=\mathcal{I}F(t)-\chi(t)e^{i(t-t_0)\Delta}\mathcal{I}F(t_0);\quad \mathcal{I}_{t_0}F(t)=\chi(t)\int_{t_0}^t\chi(t')e^{i(t-t')\Delta}F(t')\,\mathrm{d}t'.\]To this end we will decompose $v_N^\dagger=\sum_{N'\leq N}y_{N'}^\dagger$ and ($\tau^{-1}$-certainly) apply the estimates (\ref{induct4}) and (\ref{mainmult1}), in the same way as in the proof of (\ref{induct6}). More precisely, we may use (\ref{induct4}) to control the terms in (\ref{equationsnew3}) that contain only one factor $z^\dagger$ (where we use the ${\tau}^\theta$ gain to ensure smallness), and use (\ref{mainmult1}) to control the terms in (\ref{equationsnew3}) that contain at least two factors $z^\dagger$ (where we use the gain of powers of $N$ to ensure smallness, noticing that $N$ is large enough compared to $A$). Note that in applying these estimates we need to replace $\mathcal{I}$ by $\mathcal{I}_{t_0}$ and $\chi_{\tau}(t)$ by $\chi_{2\tau}(t-t_0)$. This can be done because in Section \ref{reductmulti} all estimates for $\chi_{\tau}(t)\cdot\mathcal{I}[\cdots]$ are deduced from (\ref{sttime1}) and the corresponding estimates for $\mathcal{I}[\cdots]$; here by definition we have $\|\mathcal{I}_{t_0}F\|_{X^{\widetilde{b}}}\lesssim\|\mathcal{I}F\|_{X^{\widetilde{b}}}$ for $\widetilde{b}\in\{b,b_1\}$ which allows us to replace $\mathcal{I}$ by $\mathcal{I}_{t_0}$, and that $\mathcal{I}_{t_0}F(t_0)=0$ so (\ref{sttime1}) is still applicable with $\mathcal{I}$ replaced by $\mathcal{I}_{t_0}$ and $\chi_{\tau}(t)$ replaced by $\chi_{2\tau}(t-t_0)$. The rest of the proof will be the same.
\end{proof}
\begin{prop}[Convergence]\label{conv} Recall the relevant constants defined in (\ref{defparam}), the $\varepsilon$ fixed as in Remark \ref{fixep}, and that $\tau\ll 1$ and $J=[-\tau,\tau]$. Then the followings hold ${\tau}^{-1}$-certainly.

(1) For any $N\leq N'$ we have
\begin{equation}\label{converge1}\|v_N-v_{N'}\|_{X^{-\theta,b_2}(J)}\leq {\tau}^{-\theta}N^{-\frac{\theta}{2}},
\end{equation}
\begin{equation}\label{converge2}\|(v_N-e^{it\Delta}v_N(0))-(v_{N'}-e^{it\Delta}v_{N'}(0))\|_{X^{\frac{1}{2}-\gamma_0-\theta,b_2}(J)}\leq N^{-\frac{\theta}{2}}.
\end{equation}
Note that the $X^{s,b}(J)$ bounds also imply the corresponding $C_t^0H_x^s(J)$ bounds.

(2) Let $\mathcal{N}_n(v)$ be a polynomial, also viewed as a multilinear form $\mathcal{N}_n(v^{(1)},\cdots,v^{(n)})$, as in (\ref{multiform}), but is only assumed to be input-simple (instead of simple). Then for any $N\leq N'$, the distance in $C_t^0H_x^{-\varepsilon}(J)$ between any two of the following expressions
\begin{equation}\label{converge}\mathcal{N}_n(v),\,\Pi_N\mathcal{N}_n(v),\,\Pi_{N'}\mathcal{N}_n(v):v\in\{v_N,v_{N'},\Pi_Nv_{N'}\}
\end{equation} is bounded by ${\tau}^{-\theta}N^{-\gamma}$. The same conclusion holds if $\mathcal{N}_n$ is replaced by $W_N^n$ or $W_{N'}^n$, or if $v$ is perturbed by any $w_N$ satisfying $\|w_N\|_{X^{b}(J)}\leq AN^{-1+\gamma}(\log N)^\alpha$. In the latter case the bound will be $O_{A,\tau,\alpha}(1)N^{-\gamma}$.
\end{prop}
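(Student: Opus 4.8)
\textbf{Plan for the proof of Proposition \ref{conv}.}

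The strategy is to bootstrap from Proposition \ref{localmain} (equivalently Proposition \ref{localmain2}), which already gives us, $\tau^{-1}$-certainly, the full decomposition $v_N = e^{it\Delta}(\Pi_N f(\omega)) + \sum_{(M,L)\in\mathcal{K}, M\leq N}\zeta_{M,L}^\dagger + \sum_{M\leq N}z_M^\dagger$ together with all the bounds \eqref{induct1}--\eqref{induct6}. For part (1), the key point is that the difference $v_N - v_{N'}$ is exactly $-\sum_{N < M \leq N'} y_M^\dagger$ on $J$, and each $y_M^\dagger$ splits into a Gaussian piece $\chi(t)e^{it\Delta}(\Delta_M f(\omega))$, a sum of $\zeta_{M,L}$ pieces, and a $z_M^\dagger$ piece. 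The $z_M^\dagger$ piece is bounded by $M^{-1+\gamma}$ in $X^b$, hence trivially by $M^{-\theta/2}$ in any weaker space, and summing over $M > N$ gives $\lesssim N^{-1+\gamma}$. For the Gaussian piece $\chi(t) e^{it\Delta}(\Delta_M f(\omega))$, one computes directly from \eqref{random} and \eqref{simplebd}: the $X^{-\theta, b_2}$ norm of $\chi(t)e^{it\Delta}(\Delta_M f(\omega))$ is $\lesssim \|\langle k\rangle^{-\theta}\langle k\rangle^{-1}|g_k|\mathbf{1}_{M/2<\langle k\rangle\leq M}\|_{\ell^2_k}\lesssim \tau^{-\theta} M^{-\theta/2}$ (using $|g_k|\lesssim \tau^{-\theta}\langle k\rangle^\theta$), which sums in $M$; and the $X^{\frac{1}{2}-\gamma_0-\theta,b_2}$ norm of the \emph{same} Gaussian piece (but note \eqref{converge2} subtracts off the linear evolution, so the Gaussian piece does not contribute to \eqref{converge2} at all). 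The $\zeta_{M,L}$ pieces are controlled via the kernel bounds: from \eqref{linearity}, $\|\zeta_{M,L}\|_{X^{s,b_2}} \lesssim \|\langle k\rangle^s \langle \lambda\rangle^{b_2}\langle k^*\rangle^{-1}\widetilde{h}^{M,L}_{kk^*}(\lambda) g_{k^*}\|$, which one estimates by Cauchy--Schwarz in $k^*$ using \eqref{induct2} (the $Z^b$ bound $N^{1/2+\gamma_0}L^{-1/2}$) and $|g_{k^*}|\lesssim \tau^{-\theta}\langle k^*\rangle^\theta$: this yields $\lesssim \tau^{-\theta} M^{s + \gamma_0 - 1/2}(\sum_L L^{-1})^{1/2}$, which for $s = -\theta$ gives the decay needed in \eqref{converge1}, and for $s = \frac{1}{2}-\gamma_0-\theta$ gives $\lesssim \tau^{-\theta}M^{-\theta/2}$ as needed in \eqref{converge2}. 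Combined with the $z_M^\dagger$ contribution this proves both \eqref{converge1} and \eqref{converge2}.

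\textbf{Part (2): the nonlinearity.} Here the plan is to exploit the fact that $X^{s,b_2}(J)$ embeds into $C^0_t H^s_x(J)$ for $b_2 > 1/2$, so part (1) already gives $v_N \to v_{N'}$ in $C^0_t H^{-\theta}_x$ with rate $N^{-\theta/2}$, and also (the $w$-perturbed version of) Proposition \ref{prop:stability} and \ref{localmain} give us uniform control $\|v_N\|_{X^b(J)}\lesssim 1$ after subtracting the rough Gaussian part $e^{it\Delta}\Pi_N f$, plus the commutator estimate \eqref{commutator}. The mechanism is: write $\mathcal{N}_n(v) - \mathcal{N}_n(v') = $ sum of multilinear terms each containing one factor $v - v'$ and the rest $v$ or $v'$. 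The difference factor is small in $H^{0-}$ (or in the appropriate negative Sobolev / $X^{s,b}$ norm), the other factors are $v_N$-type quantities which, being in the support of the Gibbs measure, decompose into Gaussian + smoother pieces. One then runs a \emph{scaled-down version of the multilinear machinery of Proposition \ref{multi0}} — in fact the relevant estimates are precisely those proved in Section \ref{multiest}, applied with input types (G), (C), (D): the factor $v-v'$ plays the role of a type-(D) input but with a gain (its regularity is $-\theta$ and its size is $N^{-\theta/2}$, which one converts into an $N^{-\gamma}$ gain by interpolating with the $X^b$ bound, since $b$ is positive-order and $\gamma = \delta^{3/4}$ is comparable to $\theta^{\text{const}}$). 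The difference of the projections $\Pi_N \mathcal{N}_n - \Pi_{N'}\mathcal{N}_n$ is handled by noting $\Pi_{N'} - \Pi_N = \sum_{N < M \leq N'}\Delta_M$, and each $\Delta_M \mathcal{N}_n(v)$ forces a frequency $\geq M/2$ somewhere, producing the $M^{-\gamma}$ decay via the same counting/large-deviation estimates (Lemma \ref{lemma:5.1}, Corollary \ref{corcounting}). The replacement of $\mathcal{N}_n$ by $W_N^n$ or $W_{N'}^n$ is harmless because, by Proposition \ref{simpleterms} and \eqref{formulaqn}, the Wick powers differ from the simple forms only by lower-degree terms with coefficients that are powers of $m_N^*$ or $m_{N'}^*$, and $|m_N^* - m_{N'}^*| = |\sum_{N<M\leq N'}\nu_M|\lesssim \tau^{-\theta}N^{-1+\theta}$ by \eqref{simplebd}; one also needs that $W_N^n$ vs $W_{N'}^n$ applied to the \emph{same} $v$ differ only through these mass coefficients. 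The $w_N$-perturbation is absorbed by the stability estimate \eqref{perturbation2} and the observation that $w_N$, being $O(N^{-1+\gamma}(\log N)^\alpha)$ in $X^b$, is itself a type-(D)-with-gain input.

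\textbf{Main obstacle.} The delicate point is that $\mathcal{N}_n$ in part (2) is only assumed \emph{input-simple}, not \emph{simple} — so pairings involving the output index $k$ are allowed, which means expressions like $\mathcal{A}(\mathcal{N}_n(v)\,\overline{v})$ need not vanish and one cannot directly quote the estimates of Section \ref{multiest}, which were set up for simple forms (no pairing at all, among $\{k, k_1, \dots, k_n\}$). The resolution is to isolate the output-pairing terms: if $k = k_{j_0}$ with $\iota_{j_0} = -$ for some $j_0$, then that term is $\sum_{k}(\text{something})_k \overline{(v^{(j_0)})_k}$ with the "something" being a lower-degree multilinear form in the remaining inputs — a structure already analyzed (see the $A_0$ discussion at the end of the proof of Proposition \ref{multi0} and Remark \ref{extrarem}), where one peels off the paired factor, bounds it in $\ell^\infty_k L^2_\lambda$ using Claim \ref{pairclaim}-type reasoning, and applies the simple-form estimates to the rest. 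A second, more bookkeeping-heavy obstacle is keeping the exceptional sets uniform: every application of Lemma \ref{lemma:5.1}, Lemma \ref{largedev}, and Proposition \ref{multi0} removes a set of measure $\leq C_\theta e^{-(\tau^{-1}N_*)^\theta}$, and one must choose $N_*$ appropriately (here $N_* \sim N$ or the largest relevant frequency) and sum the exceptional probabilities over the dyadic range $N < M \leq N'$ — this works because the bound $\sum_{M} C_\theta e^{-(\tau^{-1}M)^\theta}$ is still $\leq C_\theta e^{-\tau^{-\theta}}$, giving the claimed $\tau^{-1}$-certainty. I expect the bulk of the writing effort to be in carefully matching the terms of $\mathcal{N}_n(v)-\mathcal{N}_n(v')$, $\Delta_M\mathcal{N}_n$, and $(W_N^n - W_{N'}^n)(v)$ to the precise hypotheses (i)--(iii) of Proposition \ref{multi0}, rather than in any genuinely new estimate.
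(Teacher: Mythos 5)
Your skeleton for part (1) — telescope to $N' = 2N$, split $y_M^\dagger$ into Gaussian $+$ $\zeta_{M,L}$ $+$ $z_M^\dagger$, kill the Gaussian piece by subtracting the linear flow in \eqref{converge2}, quote \eqref{induct6} for $z_M^\dagger$ — matches the paper exactly. But the step you use to control the paracontrolled piece $\zeta_{M,L}$ is wrong and cannot be repaired within Cauchy--Schwarz. Writing $(\zeta_{M,L})_k(\lambda) = \sum_{k^*} \widetilde{h^{M,L}}_{kk^*}(\lambda)\, g_{k^*}/\langle k^*\rangle$ and applying Cauchy--Schwarz in $k^*$, any split of the weight $\langle k^*\rangle^{-1}$ between the two factors yields $|(\zeta_{M,L})_k(\lambda)| \lesssim \|\widetilde{h^{M,L}}_{kk^*}(\lambda)\|_{\ell^2_{k^*}}\cdot\big(\sum_{k^*}|g_{k^*}|^2/\langle k^*\rangle^2\big)^{1/2}$, and the second factor is $\sim (m_M - m_{M/2})^{1/2} \sim 1$, not $\sim M^{-1}$. (The $\ell^\infty$ bound $|g_{k^*}| \lesssim \tau^{-\theta}\langle k^*\rangle^\theta$ from \eqref{simplebd} does not help because Cauchy--Schwarz always lands on the $\ell^2$ norm of $g$, which over the annulus $M/2 < \langle k^*\rangle \leq M$ has $\sim M^2$ terms.) So Cauchy--Schwarz gives $\|\zeta_{M,L}\|_{X^{s,b_2}} \lesssim M^s\|h^{M,L}\|_{Z^b} \lesssim M^{s+1/2+\gamma_0}L^{-1/2}$, which for $s=-\theta$ grows like $M^{1/2}$ and diverges on summation over $M$. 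The factor $M^{-1}$ that you need (to land on $M^{s - 1/2 + \gamma_0}$) is exactly the square-root gain from randomness: it comes from Lemma \ref{largedev}, which — because $h^{M,L}$ is $\mathcal{B}^+_{\leq L}$-measurable and hence independent of $(g_{k^*})_{\langle k^*\rangle \sim M}$ — gives $|\sum_{k^*} a_{k^*} g_{k^*}| \lesssim M^\theta(\sum_{k^*}|a_{k^*}|^2)^{1/2}$ with $a_{k^*} = \widetilde{h^{M,L}}_{kk^*}(\lambda)/\langle k^*\rangle$. That is the content of \eqref{estfirst} in the paper's proof, and it is unavoidable.

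Part (2) has a more structural problem. You propose to write $\mathcal{N}_n(v) - \mathcal{N}_n(v') = \sum (\text{terms with one factor } v-v')$ and treat $v-v'$ as a type-(D) input with a gain coming from \eqref{converge1}. This fails twice. First, $v_N - v_{N'}$ is not a type-(D) object at any frequency scale: restricted to frequency $\sim M > N$, its Gaussian piece $e^{it\Delta}\Delta_M f$ has $X^b$ norm $\sim 1$ and its paracontrolled piece $\sum_L \zeta_{M,L}$ has $X^b$ norm $\sim M^{-1/2+\gamma_0}$ (again by Lemma \ref{largedev} $+$ the $Z^b$ bound), neither of which matches the required $M^{-1+\gamma}$ of \eqref{input3}. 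Second, your interpolation is quantitatively incoherent: you want to upgrade $\|v_N - v_{N'}\|_{X^{-\theta, b_2}} \lesssim N^{-\theta/2}$ to an $N^{-\gamma}$ gain, but in the paper's hierarchy $\theta \ll \delta^{50} \ll \delta^{3/4} = \gamma$, so $N^{-\theta/2}$ is much \emph{larger} than $N^{-\gamma}$, and interpolating against an $X^{b}$ bound that is $O(1)$ cannot manufacture a smaller exponent. What the paper does instead is decompose $v_{N'} - v_N = -\sum_{N < M \leq N'} y_M^\dagger$, decompose \emph{each} $y_M^\dagger$ into genuine (G)/(C)/(D) inputs via the ansatz, and then prove the direct bound $\|\mathcal{N}_n(v^{(1)},\dots,v^{(n)})\|_{C^0_t H^{-\varepsilon}(J)} \lesssim \tau^{-\theta}(N^{(1)})^{-\gamma}$ for such inputs by running a simplified version of the Section \ref{case1} argument (fix time $t$; reduce to counting; use $\min(1,(N^{(1)})^{-1}N_0)$ and $\gamma \ll \varepsilon$). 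The $N^{-\gamma}$ decay of the difference then comes simply from the fact that every term of the difference forces at least one input at frequency $N^{(1)} > N$, summed over $M$. Your remarks on the over-pairing bookkeeping (the input-simple vs.\ simple distinction and the $A_0$ mechanism of Section \ref{conclude}) and on uniformity of the exceptional sets are on target and consistent with the paper's treatment.
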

\begin{proof} (1) We only need to prove (\ref{converge2}). By taking a summation we may assume $N'=2N$, and it suffices to prove that $\|y_{N'}-e^{it\Delta}(\Delta_{N'}f(\omega))\|_{X^{b_2}(J)}\leq (N')^{-\frac{1}{2}+\gamma_0+\frac{\theta}{2}}$. Now an extension of this function is given by
\[y^\dagger=\sum_{L}\zeta_{N',L}^\dagger+z_{N'}^\dagger,\] see Sections \ref{decompsol} and \ref{apriori}, where $\zeta_{N',L}^\dagger$ is defined from $h^{N',L,\dagger}$ by (\ref{linearity}) and (\ref{matrices}), and $h^{N',L,\dagger}$ and $z_{N'}^\dagger$ satisfy (\ref{induct2}) and (\ref{induct6}). This controls the second term; to bound the first term, we use the $\mathcal{B}_{\leq L}^+$ measurability of $h^{N',L,\dagger}$ and Lemma \ref{largedev}, and perform the same reduction step as in the proof of Claim \ref{pairclaim} using differentiability in $\lambda$ of $\widetilde{h}_{kk^*}(\lambda)$ with $h=h^{N',L,\dagger}$, to bound ${\tau}^{-1}N'$-certainly that
\begin{equation}\label{estfirst}\sum_L\|\zeta_{N,L}^\dagger\|_{X^{b_2}}\lesssim\sum_{L}(N')^{-1}\|h^{N',L,\dagger}\|_{Z^b}\lesssim (N')^{-\frac{1}{2}+\gamma_0+\frac{\theta}{2}}.\end{equation} The right hand side of (\ref{estfirst}) may involve a ${\tau}^{-\theta}$ factor, but this loss can always by recovered by attaching $\chi_{\tau}$ since $y^\dagger(0)=0$.

(2) The bounds for $W^n$ follows from the bounds for $\mathcal{N}_n$ and the formulas (\ref{prop3.2:eq1}) and (\ref{prop3.2:eq2}), noticing that $:\mathrel{|v|^{2r}v}:$ and $:\mathrel{|v|^{2r}}:$ are input-simple. As for $\mathcal{N}_n$, by decomposing \[v_N^\dagger=\sum_{N'\leq N}y_{N'}^\dagger,\quad y_{N'}^\dagger=\chi(t)e^{it\Delta}(\Delta_{N'}f(\omega))+\sum_{L}\zeta_{N',L}^\dagger+z_{N'}^\dagger,\] it suffices to ${\tau}^{-1}N^{(1)}$-certainly bound $\mathcal{N}_n(v^{(1)},\cdots,v^{(n)})$ in $C_t^0H_x^{-\varepsilon}(J)$ by ${\tau}^{-\theta}(N^{(1)})^{-\gamma}$ for $v^{(j)}$ as in the assumptions of Proposition \ref{multi0}. The proof is a much easier variant of the arguments in Section \ref{case1}, so we will only sketch the most important points.

First, since the $\partial_t$ derivative of all the $v^{(j)}$'s are bounded by $(N^{(1)})^{C}$ (by restricting the size of $\lambda_j$ variables as we did in Section \ref{case1}), by dividing $J$ into $(N^{(1)})^{\delta^{-1}}$ intervals we may reduce to  $(N^{(1)})^{C\delta^{-1}}$ exceptional sets and thus fix a time $t\in J$. This gets rid of all the $\lambda_j$ variables (so we are considering $v_{k_j}^{(j)}$ and $h_{k_jk_j^*}^{(j)}$), and by a simple $H_t^{\frac{1}{2}+}\hookrightarrow C_t^0$ argument, the estimates (\ref{input2+}) and (\ref{input3}) remain true with the obvious changes. Now by repeating the arguments in Section \ref{case1} (in a simplified situation without $\lambda_j$ integrations) and Section \ref{conclude} (which deals with over-pairings) we get that
\[\|\Delta_{N_0}\mathcal{N}_n(v^{(1)},\cdots,v^{(n)})\|_{L^2}^2\lesssim {\tau}^{-\theta}(N^{(1)})^{C\kappa^{-1}}(N^{(1)})^{C\gamma}\cdot(\#S)\prod_{j=1}^nN_j^{-2}\prod_{i=1}^p\frac{N_{2i-1}}{R_i},\] where $N_{2i-1}\sim N_{2i}\gtrsim R_i$ and
\begin{multline*}S=\bigg\{(k,k_1,\cdots,k_n)\in(\mathbb{Z}^2)^{n+1}:\sum_{j=1}^n\iota_jk_j=k,\quad |k|\lesssim N_0,\\|k_j|\lesssim N_j\,(1\leq j\leq n),\quad|k_{2i-1}-k_{2i}|\lesssim R_i(N^{(1)})^{C\kappa^{-1}}\,(1\leq i\leq p)\bigg\},\end{multline*} and a simple counting estimate yields
\[\|\Delta_{N_0}\mathcal{N}_n(v^{(1)},\cdots,v^{(n)})\|_{L^2}^2\lesssim {\tau}^{-\theta}(N^{(1)})^{C\gamma}\min(1,(N^{(1)})^{-1}N_0)\lesssim {\tau}^{-\theta}N_0^{\varepsilon}(N^{(1)})^{-\gamma}\] as by our choice $\gamma\ll\varepsilon$, which concludes the proof. With the $w_N$ perturbations the proof works the same way, except that the constants may depend also on $A$ and $\alpha$.
\end{proof}
Before ending this section, we would like to shift the point of view from the probability space $(\Omega,\mathcal{B},\mathbb{P})$ to the spaces $\mathcal{V}$ and $\mathcal{V}_N$. Given $0<\tau\ll 1$, all the above proof has allowed us to identify a Borel set $E_\tau$ of $\mathcal{V}$ with $\rho(E_\tau)\geq 1-C_{\theta}e^{-\tau^{-\theta}}$, such that when $u_{\mathrm{in}}=v_{\mathrm{in}}\in E_\tau$, all the results in Sections \ref{structuresol} and \ref{multiest}, including Propositions \ref{localmain}, \ref{localmain2} and \ref{prop:stability}, are true.

In reality we will be using finite dimensional truncations of $E_\tau$, namely $E_{\tau}^{\overline{N}}=\Pi_{\overline{N}}E_\tau$. Clearly when $\Pi_{\overline{N}}u_{\mathrm{in}}\in E_{\tau}^{\overline{N}}$, all the quantitative estimates proved before will remain true if all the frequencies $N,N',L$, etc., are $\leq\overline{N}$. We moreover know that $\rho_{\overline{N}}(E_{\tau}^{\overline{N}})\geq \rho(E_\tau)\geq 1-C_{\theta}e^{-\tau^{-\theta}}$; since the Radon-Nikodym derivative $\frac{\mathrm{d}\mu_{\overline{N}}^\circ}{\mathrm{d}\rho_{\overline{N}}}$ is uniformly bounded in $L^2(\mathrm{d}\rho_{\overline{N}})$, we have that
\begin{equation}\label{measbound}\mu_{\overline{N}}^\circ(E_{\tau}^{\overline{N}})\geq 1-C\sqrt{\rho_{\overline{N}}(\mathcal{V}_{\overline{N}}\backslash E_{\tau}^{\overline{N}})}\geq 1-C_{\theta}e^{-\tau^{-\theta}}.\end{equation} Finally, due to the gauge symmetry of (\ref{truncnls}) and (\ref{gauged}), we may assume that $E_\tau$ (and $E_\tau^{\overline{N}}$) is rotation invariant, i.e. $e^{i\alpha} E_\tau=E_\tau$ for $\alpha\in\mathbb{R}$.
\section{Global well-posedness and measure invariance}\label{global}
In this section we will prove Theorem \ref{main}. Recall the sets $E_\tau^{\overline{N}}$ defined at the end of Section \ref{stability}. Denote the solution flow of (\ref{truncnls}) by $\Phi_t^{N}$ and the solution flow of (\ref{gauged}) by $\Psi_t^N$, which are mappings from $\mathcal{V}_N$ to itself. Define successively the sets
\begin{equation}\label{defsigma1}F_{T,K}^{\overline{N}}=\bigcap_{|j|\leq K}(\Psi_{\frac{jT}{K}}^{\overline{N}})^{-1}E_{\frac{T}{K}}^{\overline{N}},
\end{equation}
\begin{equation}\label{defsigma2}G_{T,K,A,D}^{\overline{N},\alpha}=\bigg\{v\in \mathcal{V}_{\overline{N}}:\exists t\in[-D,D]\textrm{ s.t. }\Psi_t^{\overline{N}}v=v'+v'',\,\,v'\in F_{T,K}^{\overline{N}},\,\,\|v''\|_{L^2}\leq A{\overline{N}}^{-1+\gamma}(\log\overline{N})^\alpha\bigg\},
\end{equation}
\begin{equation}\label{defsigma3}\Sigma=\bigcup_{D\geq 1}\bigcap_{T\geq 2^{10}D}\bigcup_{K\gg T;A,\alpha\geq 1}\limsup_{\overline{N}\to\infty}\Pi_{\overline{N}}^{-1}G_{T,K,A,D}^{\overline{N},\alpha}.
\end{equation} Here $\Pi_{\overline{N}}^{-1}G=G\times\mathcal{V}_{\overline{N}}^{\perp}$ is the cylindrical set. We understand that $T,K,A,D$ all belong to some given countable set (say powers of two), and $\alpha$ is an integer. All these sets are Borel, since in (\ref{defsigma2}) we may replace the $\leq$ sign by the $<$ sign, and then restrict to rational $t$ by continuity. We will start by proving global well-posedness and then measure invariance.
\begin{prop}\label{globalexist} The set $\Sigma$ satisfies $\mu(\mathcal{V}\backslash\Sigma)=0$, and $W^{2r+1}(u)\in H^{-\varepsilon}$ is well-defined for $u\in\Sigma$. For any $u_{\mathrm{in}}\in\Sigma$, the solutions $u_N(t)=\Phi_t^N\Pi_Nu_{\mathrm{in}}$ to (\ref{truncnls}) converge to some $u(t)=\Phi_tu_{\mathrm{in}}$ in $C_t^0H_x^{-\varepsilon}([-T,T])$ for any $T>0$. This $u$ is a distributional solution to (\ref{nls}), and $u(t)\in\Sigma$ for each $t$. The mappings $\Phi_t:\Sigma\to\Sigma$ satisfy $\Phi_0=\mathrm{Id}$ and $\Phi_{t+t'}=\Phi_t\Phi_{t'}$.
\end{prop}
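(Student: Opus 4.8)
\textbf{Plan of proof for Proposition \ref{globalexist}.} The strategy is the classical Bourgain globalization argument adapted to our setting: use the invariance of the finite-dimensional measure $\mu_{\overline N}^\circ$ together with the local theory (Proposition \ref{localmain}) and the stability/commutator estimates (Propositions \ref{prop:stability} and \ref{conv}) to show that, $\mu$-almost surely, the truncated solutions $u_N(t)$ (equivalently $v_N(t)$ via the gauge transform) stay in the good set $E_\tau^{\overline N}$ at a dense sequence of times and hence can be continued indefinitely, while the differences $v_N(t)-v_{N'}(t)$ remain controlled on each bounded time interval. The set $\Sigma$ in \eqref{defsigma3} is precisely designed so that an element $u_{\mathrm{in}}\in\Sigma$ has, for every large $T$ (and some $D\geq 1$), infinitely many $\overline N$ for which $\Pi_{\overline N}u_{\mathrm{in}}$ lies in $G_{T,K,A,D}^{\overline N,\alpha}$, which is the quantitative statement that the local theory can be iterated $\sim K$ times on $[-T,T]$ starting from a point whose $L^2$-distance to the genuine good set is $O(\overline N^{-1+\gamma}(\log\overline N)^\alpha)$.

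\textbf{First I would establish $\mu(\mathcal V\setminus\Sigma)=0$.} Fix $D$ and $T\geq 2^{10}D$. For the truncated flow $\Psi_t^{\overline N}$ on $\mathcal V_{\overline N}$, the measure $\mu_{\overline N}^\circ$ is invariant (Proposition \ref{measurefact}), so by \eqref{measbound} and a union bound over the $2K+1$ times $jT/K$, $\mu_{\overline N}^\circ(\mathcal V_{\overline N}\setminus F_{T,K}^{\overline N})\leq C_\theta K e^{-(K/T)^\theta}$, which is $\ll 1$ once $K\gg T$ is chosen large (depending on $T$). Hence $\mu_{\overline N}^\circ(G_{T,K,A,D}^{\overline N,\alpha})\geq \mu_{\overline N}^\circ(F_{T,K}^{\overline N})\geq 1-C_\theta K e^{-(K/T)^\theta}$ uniformly in $\overline N$. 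Since $\frac{\mathrm d\mu_{\overline N}^\circ}{\mathrm d\rho_{\overline N}}$ is bounded in every $L^q$, and $\mu=\mu_{\overline N}^\circ\times\rho_{\overline N}^\perp$ up to a measure converging in total variation to $\mu$ (Proposition \ref{measurefact}), one gets $\mu\big(\Pi_{\overline N}^{-1}G_{T,K,A,D}^{\overline N,\alpha}\big)\geq 1-o_{T}(1)$ uniformly in $\overline N$; then Fatou gives $\mu\big(\limsup_{\overline N}\Pi_{\overline N}^{-1}G_{T,K,A,D}^{\overline N,\alpha}\big)\geq 1-o_T(1)$, so the inner union over $K,A,\alpha$ has $\mu$-measure $1-o_T(1)$, the intersection over $T\geq 2^{10}D$ has measure $1$ after sending $T\to\infty$ (here one should be a little careful: the $o_T(1)$ must be summable, which it is since we may pick $K$ growing fast enough in $T$), and finally the union over $D$ has full measure. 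That $W^{2r+1}(u)\in H^{-\varepsilon}$ is well-defined on $\Sigma$ follows from Proposition \ref{nonlin} once $\Sigma$ is shown to have full $\mu$-measure (equivalently full $\rho$-measure).

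\textbf{Next I would construct the global flow and prove convergence.} Take $u_{\mathrm{in}}\in\Sigma$, pick the witnessing $D$, and for each $T$ choose $K,A,\alpha$ and an infinite sequence $\overline N\to\infty$ with $\Pi_{\overline N}u_{\mathrm{in}}\in G_{T,K,A,D}^{\overline N,\alpha}$. By definition of $G_{T,K,A,D}^{\overline N,\alpha}$ and of $F_{T,K}^{\overline N}$, the gauged solution $v_{\overline N}(t)$ lies $O(\overline N^{-1+\gamma}(\log\overline N)^\alpha)$-close in $L^2$ to a point of $E_{T/K}^{\overline N}$ at some $t_0\in[-D,D]$, and from there, stepping by $T/K$, Proposition \ref{prop:stability}(2) lets us propagate the estimate $\|v_{\overline N}-v_{\overline N'}\|_{X^b}\lesssim \overline N^{-1+\gamma}(\log\overline N)^{\alpha+O(K)}$ on each sub-interval, at the cost of one extra log per step; since the number of steps is $\sim K$ (fixed once $T$ is fixed), this accumulates to a bound $\lesssim_{T}\overline N^{-1+\gamma}(\log\overline N)^{O_T(1)}$ on all of $[-T,T]$, which still tends to $0$. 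Combined with the commutator estimate \eqref{commutator} to compare $v_N$ with $\Pi_N v_{\overline N}$ for $N\le\overline N$, this shows $\{v_N(t)\}$ is Cauchy in $C_t^0 H_x^{-\theta}([-T,T])$ (and even in the better spaces of Proposition \ref{conv}(1)), hence converges to some $v(t)$; un-gauging via \eqref{invgauge}, using that the gauge factor $\exp(-(r+1)i\int_0^t\mathcal A[W_N^{2r}(v_N)])$ converges by Proposition \ref{conv}(2), yields the limit $u(t)=\Phi_t u_{\mathrm{in}}$ of $u_N(t)=\Phi_t^N\Pi_N u_{\mathrm{in}}$ in $C_t^0H_x^{-\varepsilon}([-T,T])$. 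Passing to the limit in the Duhamel formulation of \eqref{truncnls}, again using Proposition \ref{conv}(2) to pass the nonlinearity to the limit, shows $u$ solves \eqref{nls} distributionally. The group property $\Phi_{t+t'}=\Phi_t\Phi_{t'}$ and $\Phi_0=\mathrm{Id}$ are inherited from the exact group property of each $\Phi_t^N$ together with the convergence (approximating $u_{\mathrm{in}}$, applying $\Phi_{t'}^N$, then $\Phi_t^N$, and matching limits). Finally $u(t)\in\Sigma$ for every $t$: the defining property of $\Sigma$ at level $(D,T)$ is invariant under the $\Psi^{\overline N}$ flows up to shifting the time-window, so $\Phi_t u_{\mathrm{in}}$ satisfies it with $D$ replaced by $D+|t|$, and then with any $D'\geq D+|t|$; this is exactly why the union over $D$ appears in \eqref{defsigma3}.

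\textbf{The main obstacle} I anticipate is the bookkeeping in the iteration of Proposition \ref{prop:stability}(2): each application loses a logarithmic factor, so one must verify that after $\sim K$ steps (with $K$ depending on $T$ but fixed) the error $\overline N^{-1+\gamma}(\log \overline N)^{\alpha+K}$ is still $o(1)$ as $\overline N\to\infty$ — this forces the quantifier structure in \eqref{defsigma3} (the $\alpha\geq 1$ and $A\geq 1$ unions absorbing the growing constants and log powers) and must be threaded carefully through the measure estimate so that the $o_T(1)$ losses are summable in $T$. The second, more routine, subtlety is that $\Sigma$ and $E_\tau$ were arranged to be gauge (rotation) invariant, so that working with the gauged equation \eqref{gauged} throughout and un-gauging at the end is legitimate; checking that the convergence of the gauge phase is uniform on $[-T,T]$ uses precisely Proposition \ref{conv}(2) with the $w_N$-perturbation allowed, which is why that proposition was stated in that generality. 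The measure-invariance statement (part (2) of Theorem \ref{main}) is deferred; here we only need the qualitative facts that each $\mu_{\overline N}^\circ$ is $\Psi^{\overline N}$-invariant and that $\mu_{\overline N}\to\mu$ in total variation, both from Proposition \ref{measurefact}.
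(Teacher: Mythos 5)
Your proposal follows essentially the same route as the paper: prove full measure of $\Sigma$ via Fatou, invariance of $\mu_{\overline N}^\circ$, and $\mu_{\overline N}\to\mu$ in total variation, then for $u_{\mathrm{in}}\in\Sigma$ iterate Proposition~\ref{prop:stability}(2) and the commutator estimate~\eqref{commutator} along time steps of length $T/K$ to get Cauchy bounds for $v_N$ on $[-T,T]$, un-gauge using Proposition~\ref{conv}(2), pass to the limit in Duhamel, and verify $u(t)\in\Sigma$ by shifting $D$. One minor imprecision: in the measure argument you flag a ``summability over $T$'' worry, but this does not arise --- for each fixed $T$, since the counting estimate $\mu_{\overline N}^\circ(\mathcal V_{\overline N}\setminus F_{T,K}^{\overline N})\leq C_\theta Ke^{-(K/T)^\theta}$ holds uniformly in $\overline N$ and $K$ ranges over all $K\gg T$, the set $\bigcup_{K}\limsup_{\overline N}\Pi_{\overline N}^{-1}F_{T,K}^{\overline N}$ already has $\mu$-measure exactly $1$ (take $\sup_K$), so $\Sigma$ is a countable intersection of full-measure sets with no summability condition needed.
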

\begin{proof} We first prove $\mu(\mathcal{V}\backslash\Sigma)=0$. By definition we have
\begin{equation}\label{setinclusion}\Sigma\supset\bigcap_{T\geq 2^{10}}\bigcup_{K\gg T}\limsup_{\overline{N}\to\infty}\Pi_{\overline{N}}^{-1}F_{T,K}^{\overline{N}},
\end{equation} so it suffices to prove for any fixed $T\geq 2^{10}$ that
\[\sup_{K\gg T}\mu\bigg(\limsup_{\overline{N}\to\infty}\Pi_{\overline{N}}^{-1}F_{T,K}^{\overline{N}}\bigg)=1.\] Now by Fatou's lemma and the fact that the total variation of $\mu-\mu_{\overline{N}}$ converges to $0$, we have
\[\mu\bigg(\limsup_{\overline{N}\to\infty}\Pi_{\overline{N}}^{-1}F_{T,K}^{\overline{N}}\bigg)\geq\limsup_{\overline{N}\to\infty}\mu_{\overline{N}}\big(\Pi_{\overline{N}}^{-1}F_{T,K}^{\overline{N}}\big)=\limsup_{\overline{N}\to\infty}\mu_{\overline{N}}^\circ\big(F_{T,K}^{\overline{N}}\big).\] By invariance of $\mathrm{d}\mu_{\overline{N}}^\circ$ under the flow $\Psi_{t}^{\overline{N}}$ (Proposition \ref{measurefact}) we know that
\[\mu_{\overline{N}}^\circ\big(F_{T,K}^{\overline{N}}\big)\geq 1-(2K+1)\mu_{\overline{N}}^\circ\big(\mathcal{V}_{\overline{N}}\backslash E_{\frac{T}{K}}^{\overline{N}}\big)\geq 1-C_\theta Ke^{-(KT^{-1})^\theta}\] uniformly in $\overline{N}$, and the right hand side converges to $1$ as $K\to\infty$, so $\mu(\mathcal{V}\backslash\Sigma)=0$.

Now suppose $u_{\mathrm{in}}\in\Sigma$. By definition we may choose some $D$, then for any $T\geq 2^{10}D$ we can find $(K,A,\alpha)$ such that $\Pi_{\overline{N}}u_{\mathrm{in}}\in G_{T,K,A,D}^{\overline{N},\alpha}$ for infinitely many $\overline{N}$. We may fix this $T$ (hence also $(K,A,\alpha)$) and this $\overline{N}$, so that $\|\Psi_{t_0}^{\overline{N}}\Pi_{\overline{N}}u_{\mathrm{in}}-v'\|_{L^2}\leq A\overline{N}^{-1+\gamma}(\log\overline{N})^\alpha$ for some $t_0\in[-D,D]$ and $v'\in F_{T,K}^{\overline{N}}$. We proceed in three steps.

\emph{Step 1: analyzing $v'$}. We first prove that, for any $N\leq \overline{N}$ and $|j|\leq K$ there holds that
\begin{equation}\label{commute1}\|\Pi_N\Psi_{\frac{jT}{K}}^{\overline{N}}v'-\Psi_{\frac{jT}{K}}^N\Pi_Nv'\|_{L^2}\leq BN^{-1+\gamma}(\log N)^{|j|}
\end{equation} for some $B$ depending only on $(T,K)$. This is obviously true for $j=0$; suppose this is true for $j$, since $\Psi_{\frac{jT}{K}}^{\overline{N}}v'\in E_{\frac{T}{K}}^{\overline{N}}$, by Proposition \ref{prop:stability} (1) we have
\[\|\Pi_N\Psi_{\frac{(j\pm 1)T}{K}}^{\overline{N}}v'-\Psi_{\frac{\pm T}{K}}^N\Pi_N\Psi_{\frac{jT}{K}}^{\overline{N}}v'\|_{L^2}\leq N^{-1+\gamma}
\] (note that as $K\gg T$ the local theory is applicable on intervals of length $\frac{T}{K}$), and
\[\|\Psi_{\frac{\pm T}{K}}^N\Pi_N\Psi_{\frac{jT}{K}}^{\overline{N}}v'-\Psi_{\frac{(j\pm 1)T}{K}}^N\Pi_Nv'\|_{L^2}\leq B'N^{-1+\gamma}(\log N)^{|j|+1}\] by Proposition \ref{prop:stability} (2) and (\ref{commute1}), where $B'$ depends only on $B$ and $(T,K)$, so (\ref{commute1}) holds also for $j\pm 1$ which concludes the inductive proof. By the same argument, we can show that (\ref{commute1}) remains true with $\frac{jT}{K}$ replaced by any $t\in[-T,T]$ and $|j|$ replaced by $K$.

Similarly, since $\Psi_{\frac{jT}{K}}^{\overline{N}}v'\in E_{\frac{T}{K}}^{\overline{N}}$ for each $|j|\leq K$, by combining Propositions \ref{prop:stability} and \ref{conv}, we conclude that for any $N\leq N'\leq\overline{N}$,
\begin{equation}\label{propv'}\sup_{t\in[-T,T]}\|\Psi_{t}^N\Pi_Nv'-\Psi_{t}^{N'}\Pi_{N'}v'\|_{H^{-\theta}}\leq  O_{T,K}(1) N^{-\frac{\theta}{2}},
\end{equation}
\begin{equation}\label{propv'2}\sup_{t\in[-T,T]}\|W_N^n(\Psi_{t}^N\Pi_Nv')-W_{N'}^n(\Psi_{t}^{N'}\Pi_{N'}v')\|_{H^{-\varepsilon}}\leq  O_{T,K}(1) N^{-\gamma},
\end{equation} and the same is true if $W_N^n$ and $W_{N'}^n$ in (\ref{propv'2}) is replaced by $\Pi_NW_N^n$ and $\Pi_{N'}W_{N'}^n$.

\emph{Step 2: linking $u_{\mathrm{in}}$ to $v'$}. Recall that $\|\Psi_{t_0}^{\overline{N}}\Pi_{\overline{N}}u_{\mathrm{in}}-v'\|_{L^2}\leq A\overline{N}^{-1+\gamma}(\log\overline{N})^\alpha$. Since $|t_0|\leq D\ll T$, by iterating Proposition \ref{prop:stability} (2) we deduce that $\|\Pi_{\overline{N}}u_{\mathrm{in}}-\Psi_{-t_0}^{\overline{N}}v'\|_{L^2}\leq A'\overline{N}^{-1+\gamma}(\log\overline{N})^{\alpha+K}$ where $A'$ depends only on $(T,K,A,\alpha)$. Writing $-t_0=\frac{jT}{K}+t'$ with $|j|\leq 2^{-8} K$ and $|t'|\leq\frac{T}{K}$, we may apply Proposition \ref{prop:stability} (2) again and combine this with (\ref{commute1}) and similar estimates to deduce for any $N\leq\overline{N}$ that
\begin{equation}\label{link}\sup_{t\in[-\frac{T}{2},\frac{T}{2}]}\|\Psi_t^N\Pi_Nu_{\mathrm{in}}-\Psi_{t-t_0}^N\Pi_Nv'\|_{L^2}\leq BN^{-1+\gamma}(\log N)^{\alpha+2K}
\end{equation} with $B$ depending only on $(T,K,A,\alpha)$. By (\ref{propv'}), (\ref{propv'2}), (\ref{link}) and Proposition \ref{conv}, we conclude for all $N\leq N'\leq \overline{N}$ that \begin{equation}\label{propuin}\sup_{t\in[-\frac{T}{2},\frac{T}{2}]}\|\Psi_{t}^N\Pi_Nu_{\mathrm{in}}-\Psi_{t}^{N'}\Pi_{N'}u_{\mathrm{in}}\|_{H^{-\theta}}\leq O_{T,K,A,\alpha}(1) N^{-\frac{\theta}{2}},
\end{equation}
\begin{equation}\label{propuin2}\sup_{t\in[-\frac{T}{2},\frac{T}{2}]}\|W_N^n(\Psi_{t}^N\Pi_Nu_{\mathrm{in}})-W_{N'}^n(\Psi_{t}^{N'}\Pi_{N'}u_{\mathrm{in}})\|_{H^{-\varepsilon}}\leq  O_{T,K,A,\alpha}(1) N^{-\gamma},
\end{equation} and the same is true for projections of $W_N^n$.

\emph{Step 3: completing the proof}. Now, for fixed $(D,T,K,A,\alpha)$ we know that there exists infinitely many $\overline{N}$ such that (\ref{propuin}) and (\ref{propuin2}) are true for all $N\leq N'\leq \overline{N}$, so (\ref{propuin}) and (\ref{propuin2}) are simply true for all $N\leq N'$. This implies the convergence of $\Psi_{t}^N\Pi_Nu_{\mathrm{in}}$ in $C_t^0H_x^{-\varepsilon}([-\frac{T}{2},\frac{T}{2}])$, and we will define $\Psi_t=\lim_{N\to\infty}\Psi_t^N\Pi_N$. Since by the definition of gauge transform we have 
\begin{equation}\label{linkphipsi}\Phi_t^N\Pi_Nu_{\mathrm{in}}=\Psi_t^N\Pi_Nu_{\mathrm{in}}\cdot e^{-iB_N(t)},\quad B_N(t)=(r+1)\int_0^t\mathcal{A}[W_N^{2r}(\Psi_t^N\Pi_Nu_{\mathrm{in}})]\,\mathrm{d}t',
\end{equation} (\ref{propuin}) and (\ref{propuin2}) also implies the convergence of $\Phi_{t}^N\Pi_Nu_{\mathrm{in}}$ in $C_t^0H_x^{-\varepsilon}([-\frac{T}{2},\frac{T}{2}])$, as well as the convergence of $\Pi_NW_N^{2r+1}(\Phi_t^N\Pi_Nu_{\mathrm{in}})$ in the same space. As $u_N=\Phi_{t}^N\Pi_Nu_{\mathrm{in}}$ solves the equation (\ref{truncnls}) with right hand side being $\Pi_NW_N^{2r+1}(u_N)$, we know that the limit $u=\lim_{N\to\infty}u_N$ solves (\ref{nls}) in the distributional sense. Let $u(t)=\Phi_tu$, the group properties of $\Phi_t$ follow from the group properties of $\Psi_t^N$ and limiting arguments similar to the above.

Finally we prove that $\Phi_{t_1}u_{\mathrm{in}}\in \Sigma$ for $u_{\mathrm{in}}\in\Sigma$ and any $t_1$. Let $D$ be associated with the assumption $u_{\mathrm{in}}\in\Sigma$, and fix $D_1\gg D+|t_1|$. For any $T\geq 2^{10}D_1$ there exists $(K,A,\alpha)$ such that $\Pi_{\overline{N}}u_{\mathrm{in}}\in G_{T,K,A,D}^{\overline{N},\alpha}$ for infinitely many $\overline{N}$. It suffices to show that for such $\overline{N}$ we must have $\Pi_{\overline{N}}\Phi_{t_1}u_{\mathrm{in}}\in G_{T,K,B,D_1}^{\overline{N},\alpha+3K}$ with $B$ depending only on $(T,K,A,\alpha)$. Since $\Pi_{\overline{N}}\Phi_{t_1}u_{\mathrm{in}}$ and $\Pi_{\overline{N}}\Psi_{t_1}u_{\mathrm{in}}$ only differs by a rotation and the sets we constructed are rotation invariant, we only need to prove the same thing for $\Pi_{\overline{N}}\Psi_{t_1}u_{\mathrm{in}}$.

Now, on the one hand we know for some $|t_0|\leq D$ that $\|\Pi_{\overline{N}}u_{\mathrm{in}}-\Psi_{-t_0}^{\overline{N}}v'\|_{L^2}\leq A'\overline{N}^{-1+\gamma}(\log\overline{N})^{\alpha+K}$ for some $A'$ depending only on $(T,K,A,\alpha)$ (see \emph{Step 2}) and similarly $\|\Psi_{t_1}^{\overline{N}}\Pi_{\overline{N}}u_{\mathrm{in}}-\Psi_{t_1-t_0}^{\overline{N}}v'\|_{L^2}\leq A'\overline{N}^{-1+\gamma}(\log\overline{N})^{\alpha+2K}$. On the other hand by taking limits in (\ref{commute1}) and (\ref{link}) we also get $\|\Pi_{\overline{N}}\Psi_{t_1}u_{\mathrm{in}}-\Psi_{t_1}^{\overline{N}}\Pi_{\overline{N}}u_{\mathrm{in}}\|_{L^2}\leq A'\overline{N}^{-1+\gamma}(\log\overline{N})^{\alpha+2K}$, and hence $\|\Pi_{\overline{N}}\Psi_{t_1}u_{\mathrm{in}}-\Psi_{t_1-t_0}^{\overline{N}}v'\|_{L^2}\leq A'\overline{N}^{-1+\gamma}(\log\overline{N})^{\alpha+2K}$. Applying Proposition \ref{prop:stability} (2) again we get that $\|\Psi_{t_0-t_1}^{\overline{N}}\Pi_{\overline{N}}\Psi_{t_1}u_{\mathrm{in}}-v'\|_{L^2}\leq B\overline{N}^{-1+\gamma}(\log\overline{N})^{\alpha+3K}$ with $|t_0-t_1|\leq D_1$ and $B\leq B(T,K,A,\alpha)$, thus by definition $\Pi_{\overline{N}}\Psi_{t_1}u_{\mathrm{in}}\in G_{T,K,B,D_1}^{\overline{N},\alpha+3K}$. This completes the proof. \end{proof}
\begin{prop}\label{last} For any Borel subset $E\subset\Sigma$ and any $t_0\in\mathbb{R}$, we have $\mu(E)=\mu(\Phi_{t_0}E)$.
\end{prop}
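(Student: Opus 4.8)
The strategy is to pass the known invariance of the finite-dimensional measures $\mathrm{d}\mu_{\overline{N}}^\circ$ under the truncated flows $\Psi_t^{\overline{N}}$ (Proposition \ref{measurefact}) to the limiting flow, using the quantitative convergence established in Proposition \ref{globalexist} together with the total variation convergence $\mu_{\overline{N}}\to\mu$ from Proposition \ref{measurefact}. Since $\Phi_{t_0}$ and $\Psi_{t_0}$ differ only by the (pointwise) gauge rotation, and $\mathrm{d}\mu$ is rotation invariant, it suffices to prove $\mu(E)=\mu(\Psi_{t_0}E)$; moreover by the group property $\Psi_{t_0}\Psi_{-t_0}=\mathrm{Id}$ on $\Sigma$ it suffices to prove the inequality $\mu(\Psi_{t_0}E)\ge\mu(E)$ for all Borel $E\subset\Sigma$ (applying this with $E$ and with $\Psi_{t_0}E$, $t_0\to -t_0$, gives equality). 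First I would reduce to $E$ open, or more conveniently to sets of the form $E=\Sigma\cap U$ with $U$ a cylindrical open set depending on finitely many Fourier modes, since these generate the Borel $\sigma$-algebra and one can approximate a general Borel set from outside by such sets in $\mu$-measure using regularity of $\mu$ (which is mutually absolutely continuous with the Gaussian measure $\mathrm{d}\rho$).

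The core step is a one-sided semicontinuity estimate. Fix $U$ cylindrical open, $E=\Sigma\cap U$, and $t_0$. For each truncation parameter $\overline N$ I would compare $\mu_{\overline N}^\circ\bigl(\Psi_{t_0}^{\overline N}(\Pi_{\overline N}E)\bigr)$ with $\mu_{\overline N}^\circ(\Pi_{\overline N}E)$: these are equal by invariance of $\mathrm{d}\mu_{\overline N}^\circ$. Then I would relate $\Psi_{t_0}^{\overline N}\Pi_{\overline N}u_{\mathrm{in}}$ to $\Psi_{t_0}u_{\mathrm{in}}$ using the uniform-in-$\overline N$ convergence $\Psi_t^N\Pi_N u_{\mathrm{in}}\to\Psi_t u_{\mathrm{in}}$ in $C_t^0H_x^{-\theta}$ established in Proposition \ref{globalexist} (valid on $[-\tfrac T2,\tfrac T2]$ for any $T$, with $u_{\mathrm{in}}$ ranging over the relevant $G_{T,K,A,D}^{\overline N,\alpha}$-type sets, hence over a set of full $\mu$-measure). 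Because $U$ is open and cylindrical, $\mathbf{1}_{\Psi_{t_0}^{\overline N}v\in U}$ converges (as $\overline N\to\infty$, for $\mu$-a.e.\ $v$, using that $H^{-\theta}$ convergence controls finitely many Fourier modes) to $\mathbf{1}_{\Psi_{t_0}v\in U}$ on the complement of the boundary; one then uses Fatou and the total variation convergence of $\mu_{\overline N}$ to $\mu$ to pass to the limit. Concretely: $\mu(\Psi_{t_0}E)\le\liminf_{\overline N}\mu_{\overline N}(\{v:\Psi_{t_0}^{\overline N}\Pi_{\overline N}v\in U\text{ up to small perturbation}\})$ is delicate, so instead I would run the argument in the direction $\mu(E)=\mu(\Sigma\cap U)\le\liminf\mu_{\overline N}^\circ(\Pi_{\overline N}U')$ for slightly enlarged open $U'$, use invariance, and then $\limsup\mu_{\overline N}^\circ(\Psi_{t_0}^{\overline N}\Pi_{\overline N}U')\le\mu(\Psi_{t_0}\overline{U'})$, finally shrinking $U'\downarrow U$. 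Squeezing the enlargement/shrinkage using regularity of $\mu$ and the arbitrariness of $T,K$ yields $\mu(\Psi_{t_0}E)=\mu(E)$.

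The main obstacle I anticipate is the bookkeeping of the exceptional sets and the perturbative errors: the convergence statements in Proposition \ref{globalexist} come with a logarithmic loss $(\log N)^{\alpha+K}$ and hold only on $G_{T,K,A,D}^{\overline N,\alpha}$ for infinitely many $\overline N$ (not all), and with constants depending on $(T,K,A,\alpha)$. Thus one cannot simply take a single limit; one has to organize the limit along the subsequence of $\overline N$ on which $\Pi_{\overline N}u_{\mathrm{in}}$ lands in the good set, and show that the $\mu_{\overline N}^\circ$-measure of the good set tends to $1$ (this is exactly the content of the measure estimate \eqref{measbound} and the Borel--Cantelli-type structure of $\Sigma$ in \eqref{defsigma3}, already exploited in the proof of Proposition \ref{globalexist}). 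A second, more routine subtlety is that $\Psi_{t_0}^{\overline N}\Pi_{\overline N}v$ and $\Psi_{t_0}v$ are only $H^{-\theta}$-close after discarding an $L^2$-perturbation of size $\overline N^{-1+\gamma}(\log\overline N)^{\alpha+K}$ — but since this perturbation vanishes as $\overline N\to\infty$ and $U$ depends on finitely many modes, it is harmless for deciding membership in $U$ in the limit. The rotation-invariance of $E_\tau$ (hence of $\Sigma$ and of the approximating sets), noted at the end of Section \ref{stability}, is what lets us freely pass between $\Phi_{t_0}$ and $\Psi_{t_0}$ throughout.
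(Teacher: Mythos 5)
Your high-level architecture agrees with the paper's: use invariance of $\mathrm{d}\mu_N^\circ$ under the finite-dimensional flow, the total-variation convergence $\mu_N\to\mu$ from Proposition~\ref{measurefact}, and the commutator/convergence estimates inherited from Propositions~\ref{prop:stability}, \ref{conv} and the proof of Proposition~\ref{globalexist}. But you take a genuinely different route thereafter, and that route has several gaps.

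The paper reduces to $E$ \emph{compact} in $H^{-\varepsilon}$ and contained in a single $\limsup_{\overline N}G_{T,K,A,D}^{\overline N,\alpha}$, proves the geometric inclusion $\Pi_N\Phi_{t_0}E\subset\Phi_{t_0}^N\bigl(\Pi_NE+\mathfrak{B}_{L^2}(\epsilon_N)\bigr)$, applies invariance of $\mu_N^\circ$, and then closes the limit via the compactness statement \eqref{compactness}. You instead approximate from outside by cylindrical open $U$ and try to pass to the limit with a Fatou-type argument. The compactness in the paper does real work: it is what makes $\limsup_N\Pi_N^{-1}(\Pi_NE+\mathfrak{B}_{L^2}(\epsilon_N))\subset E$ true, which in turn gives the one-sided inequality $\mu(\Phi_{t_0}E)\le\mu(E)$ cleanly. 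Your proposal lacks a substitute for this step.

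Concretely, three issues. First, the reduction from $\Phi_{t_0}$ to $\Psi_{t_0}$ is not a one-liner. The phase in \eqref{linkphipsi} is \emph{solution-dependent}, so $\Phi_{t_0}E$ and $\Psi_{t_0}E$ differ by the nonlinear map $v\mapsto e^{-i\widetilde B(v)}v$; that this map preserves $\mu$ is plausible (it boils down to $\widetilde B$ being invariant under the global phase rotation, hence $\sum_k\partial_{\theta_k}\widetilde B=0$ in finite dimensions) but requires its own argument, and for non-rotation-invariant $E$ the identity $\Phi_{t_0}E=\Psi_{t_0}E$ fails. The paper avoids this entirely by working with $\Phi$ throughout and directly establishing the convergence rate of $B_N(t)$ to $B(t)$, which converts the $\Psi_t^N$-vs.-$\Psi_t$ commutator estimate into a $\Phi_t^N$-vs.-$\Phi_t$ one. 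Second, your closing step ``shrinking $U'\downarrow U$'' lands you at $\mu(\Psi_{t_0}(\Sigma\cap\overline U))$ rather than $\mu(\Psi_{t_0}(\Sigma\cap U))$, and passing from $\overline U$ to $U$ requires $\mu(\Psi_{t_0}(\Sigma\cap\partial U))=0$, i.e.\ a form of quasi-invariance — which is circular. You would either need to work with preimages $\Psi_{t_0}^{-1}U=\{v:\Psi_{t_0}v\in U\}$ instead of images (which removes the circularity, at the cost of verifying $\mu(\partial U)=0$ for a suitable family of $U$'s, doable but not free), or adopt the paper's inside-by-compacts approximation and the opposite inequality $\mu(\Phi_{t_0}E)\le\mu(E)$, for which the geometric inclusion plus invariance is clean. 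Third, the uniform commutator bound \eqref{commfin1} and the $B_N$ convergence hold with constants depending on $(T,K,A,\alpha)$; you allude to this, but your reduction to cylindrical $U$ does not arrange for the data to lie in a \emph{single} good set $G_{T,K,A,D}^{\overline N,\alpha}$, which is what makes these bounds uniform over the set being measured. The paper explicitly performs this reduction before doing anything else; without it, the rates in the squeeze argument degenerate.
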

\begin{proof} The map $\Phi_t$ is a limit of continuous mappings, so it is Borel measurable. By taking limits, we may assume the set $E$ is compact in $H^{-\varepsilon}$ topology. We may also assume that $|t_0|\leq 1$, and that for some fixed $(T,K,A,\alpha,D)$ with $K\gg T\geq 2^{10}D$ we have $E\subset\limsup_{\overline{N}\to\infty}G_{T,K,A,D}^{\overline{N},\alpha}$. By the proof of Proposition \ref{globalexist} we can deduce that  for $u\in E$ and $|t|\leq 2$, \begin{equation}\label{commfin1}\|\Psi_t^{N}\Pi_Nu-\Pi_N\Psi_tu\|_{L^2}\lesssim N^{-1+\gamma}(\log N)^{\alpha+3K}\end{equation} with constants depending on $(T,K,A,\alpha)$ (same below). Moreover, concerning the phase $B_N(t)$ involved in the gauge transform, namely
\begin{equation*}B_N(t)=(r+1)\int_0^t\mathcal{A}[W_N^{2r}(\Psi_t^N\Pi_Nu)]\,\mathrm{d}t',
\end{equation*} we can show that as $N\to\infty$, $B_N(t)$ converges to its limit $B(t)$ at a rate $\|B_N(t)-B(t)\|_{C_t^0([-2,2])}\lesssim N^{-1+\gamma}(\log N)^{\alpha+4K}$. In fact we may first reduce to short time intervals where local theory is applicable, then notice that
\[\int_0^t\mathcal{A}[W_N^{2r}(\Psi_t^N\Pi_Nu)]\,\mathrm{d}t'=\mathcal{A}\mathcal{I}[W_N^{2r}(\Psi_t^N\Pi_Nu)]\] for $|t|\ll 1$, and apply Proposition \ref{multi0}, more precisely (\ref{mainmult2}), with the observation that the mean $\mathcal{A}$ restricts the two highest input frequencies in any multilinear expression $\mathcal{N}_n$ occurring in $W_N^{2r}$ to be comparable, i.e. $N^{(1)}\sim N^{(2)}$. We omit the details.

With the explicit convergence rate of $B_N(t)$, we see that (\ref{commfin1}) holds with $\Psi_t^N$ and $\Psi_t$ replaced by $\Phi_t^N$ and $\Phi_t$, and with $3K$ replaced by $4K$. This gives, for $|t|\leq 1$, that
\[\Pi_N\Phi_tE\subset\Phi_t^N\Pi_NE+\mathfrak{B}_{L^2}(A_1N^{-1+\gamma}(\log N)^{\alpha+4K})\subset\Phi_t^N(\Pi_NE+\mathfrak{B}_{L^2}(A_2N^{-1+\gamma}(\log N)^{\alpha+5K})),\] where $A_{1,2}$ are constants depending only on $(T,K,A,\alpha)$, and $\mathfrak{B}_{L^2}(R)$ is the ball of radius $R$ in $L^2$ centered at the origin; note that the second subset relation follows from long-time stability, which is also a consequence of the proof of Proposition \ref{globalexist}. By invariance of $\mathrm{d}\mu_N^\circ$ under $\Phi_t^N$ we have that
\begin{multline}\mu_N(\Phi_{t_0}E)\leq\mu_N^\circ(\Pi_N\Phi_{t_0}E)\leq\mu_N^\circ\Phi_{t_0}^N(\Pi_NE+\mathfrak{B}_{L^2}(A_2N^{-1+\gamma}(\log N)^{\alpha+5K}))\\=\mu_N^\circ(\Pi_NE+\mathfrak{B}_{L^2}(A_2N^{-1+\gamma}(\log N)^{\alpha+5K})).\end{multline} It then suffices to prove that
\begin{equation}\label{compactness}\limsup_{N\to\infty}\Pi_N^{-1}(\Pi_NE+\mathfrak{B}_{L^2}(A_2N^{-1+\gamma}(\log N)^{\alpha+5K}))\subset E,\end{equation} which would imply $\mu(\Phi_{t_0}E)\leq \mu(E)$, and conclude the proof by time reversibility\footnote{By repeating the proof of Proposition \ref{globalexist} we can show that $\Phi_t^N\Pi_Nu\to\Phi_tu$ in $H^{-\varepsilon}$ uniformly for $u\in E$ and $|t|\leq 2$, so $\Phi_{t_0}E$ is also compact in $H^{-\varepsilon}$ and satisfies similar properties as $E$.}. To prove (\ref{compactness}), suppose $u$ is such that $\|\Pi_N(u-u_N)\|_{L^2}\leq A_2N^{-1+\gamma}(\log N)^{\alpha+5K}$ with $u_N\in E$ for infinitely many $N$, then by compactness we may assume $u_N\to v\in E$ in $H^{-\varepsilon}$, so $u_N\to u$ coordinate-wise and $u_N\to v$ coordinate-wise, hence $u=v\in E$ and the proof is complete.
\end{proof}
\begin{rem} With a little more effort, we can show that the structural information for the short time solution, i.e. (\ref{ansatz1}), is propagated for arbitrarily long time. This follows by iterating short time intervals using measure invariance, and applying (\ref{commutator}) and (\ref{perturbation2}) in Proposition \ref{prop:stability}, in the same way as in the proof of global well-posedness (Proposition \ref{globalexist}) above. We omit the details.
\end{rem}


\begin{thebibliography}{99}
\bibitem{Aiz} M. Aizenman. Geometric analysis of $\varphi^4$ fields and Ising models. Part I and II, \emph{Comm. Math. Phys.} {\bf 86} (1982), 1--48.
\bibitem{ADC} M. Aizenman and H. Duminil-Copin. Marginal triviality of the scaling limits of critical 4D Ising and $\varphi^4$ models. \emph{Ann. of Math. (2)}  {\bf 194} no.1 (2021) 163--235.
\bibitem{AC}  S. Albeverio and A. Cruzeiro. Global flows with invariant (Gibbs) measures for Euler and Navier-Stokes two dimensional fluids, \emph{Comm. Math. Phys.} {\bf 129} (1990), 431--444.
\bibitem{BaBe} I. Bailleul and F. Bernicot. High order paracontrolled calculus. \emph{Forum of Math. Sigma} {\bf 7} (2016), e44.
\bibitem{BaBe2} I. Bailleul, I. and F. Bernicot.  Heat semigroup and singular PDEs, \emph{J. Funct. Anal.} {\bf 270} no. 9 (2016), 3344--3452.
\bibitem{BaBeFr} I. Bailleul, F. Bernicot and D. Frey. Space-time paraproducts for paracontrolled calculus, 3D-PAM and multiplicative Burgers equations, \emph{Ann. Sci. \'Ec Norm. Sup\'er. (4)} {\bf 51} no. 6 (2018), 1399--1456.
\bibitem{BOP}  \'A. B\'enyi, T. Oh and O. Pocovnicu. On the probabilistic Cauchy theory of the cubic nonlinear Schr\"{o}dinger equation on $\mathbb{R}^d$. \emph{Trans. Amer. Math. Soc. Ser. B} {\bf 2} (2015), 1--50.
\bibitem{BOhP1}  \'A. B\'enyi, T. Oh and O. Pocovnicu. Higher order expansions for the probabilistic local Cauchy theory of the cubic nonlinear Schr\"{o}dinger equation on $\mathbb{R}^3$, {\em Trans. Amer. Math. Soc. Ser. B.} {\bf 6} no. 4 (2019), 114--160.
\bibitem{Bogachev} V. I. Bogachev. {\em Gaussian measures}, Mathematical Surveys and Monographs {\bf 62}. American Mathematical Society, Providence, RI, (1998), xii+433.
\bibitem{Bourgain94} J. Bourgain. Periodic nonlinear Schr\"{o}dinger equation and invariant measures, \emph{Comm. Math. Phys.} {\bf 166} (1994), 1--26.
\bibitem{Bourgain} J. Bourgain. Invariant measures for the 2D-defocusing nonlinear Schr\"{o}dinger equation, \emph{Comm. Math. Phys.} {\bf 176} (1996), 421--445.
\bibitem{Bourgain2} J. Bourgain. {\em Global solutions of nonlinear Schr\"odinger equations},  Amer. Math. Soc. Colloq. Pub. {\bf 46}, Amer. Math. Soc. (1999).
\bibitem{Bou99} J. Bourgain. Nonlinear Schr\"{o}dinger equations. In \emph{Hyperbolic equations and frequency interactions (Park City, UT, 1995)}, volume {\bf 5} of \emph{IAS/Park City Math. Ser.}, pages 3--157. Amer. Math. Soc., Providence, RI, 1999.
\bibitem{BB4} J. Bourgain and A. Bulut. Almost sure global well-posedness for the radial nonlinear Schr\"odinger equation on the unit ball II: the 3d case, {\em J. Eur. Math. Soc. (JEMS)} {\bf 16} no. 6 (2014), 1289--1325.
\bibitem{Bringmann} B. Bringmann. Almost sure local well-posedness for a derivative nonlinear wave equation. \emph{Int. Math. Res. Not. (IMRN)} {no. 11} (2021) 8657--8697.
\bibitem{BDNY22} B. Bringmann, Y. Deng, A. Nahmod and H. Yue. Invariant Gibbs measures for the three dimensional cubic nonlinear wave equation. To appear in \emph{Invent. Math.}, arXiv:2205.03893, 2022.
\bibitem{BrySl} D. Brydges and G. Slade. Statistical Mechanics of the 2-Dimensional Focusing Nonlinear Schr\"odinger Equation, \emph{Commun. Math. Phys.} {\bf 182} (1996), 485-504. 
\bibitem{BTT} N. Burq, L. Thomann and N. Tzvetkov. Remarks on the Gibbs measures for nonlinear dispersive equations, \emph{Ann. Fac. Sci. Toulouse Math.}, Ser. 6 Vol. {\bf 27} no. 3 (2018), 527--597.
\bibitem{BTlocal} N. Burq and N. Tzvetkov. Random data Cauchy theory for supercritical wave equations I: local theory, \emph{Invent. Math.}  {\bf 173}  no. 3 (2008), 449--475.
\bibitem{CCh} R. Catellier and K. Chouk.  Paracontrolled distributions and the 3-dimensional stochastic quantization equation,  {\em Ann Prob.} {\bf 46} no. 5 (2018), 2621--2679.
\bibitem{ChW} A. Chandra and H. Weber.  Stochastic PDEs, regularity structures, and interacting particle systems, \emph{Ann. Fac. Sci. Toulouse Math.}  (6) {\bf 26} no. 4 (2017), 847--909.
\bibitem{CoOh} J. Colliander and T. Oh. Almost sure well-posedness of the cubic nonlinear Schr\"{o}dinger equation below $L^2(\mathbb{T})$, {\em Duke Math. J.}  {\bf 161} no. 3 (2012), 367--414.
\bibitem{DD} G. Da Prato and A. Debussche. Two-dimensional Navier-Stokes equations driven by a space-time white noise, \emph{J. Funct. Anal.} {\bf 196} no. 1 (2002), 180--210.
 \bibitem{DD2} G. Da Prato and A. Debussche. Strong solutions to the stochastic quantization equations, \emph{Ann. Probab.} {\bf 31} no .4 (2003), 1900--1916.
\bibitem{DaT} G. Da Prato and L. Tubaru. Wick powers in stochastic PDEs: an introduction, Technical Report UTM 711, 2006, 39 pp. \url{http://eprints.biblio.unitn.it/1189/1/UTM711.pdf}.
\bibitem{Deng} Y. Deng. Two dimensional nonlinear Schr\"{o}dinger equation with random radial data, \emph{Anal. PDE} {\bf 5} no. 5 (2012), 913--960.
\bibitem{Deng2} Y. Deng. Invariance of the Gibbs measure for the Benjamin-Ono equation. \emph{J. Eur. Math. Soc. (JEMS)} {\bf 17}  no. 5 (2015), 1107--1198.
\bibitem{DNY} Y. Deng, A. Nahmod and H. Yue. Optimal local well-posedness for the periodic derivative nonlinear Schr\"{o}dinger equation. \emph{Comm. Math. Phys.} {\bf 384} no. 2 (2021), 1061--1107.
\bibitem{DNY22} Y. Deng, A. Nahmod and H. Yue. Random tensors, propagation of randomness, and nonlinear dispersive equations. \emph{Invent. Math.} {\bf 228} no. 2 (2022), 539--686.
\bibitem{DNY24} Y. Deng, A. Nahmod and H. Yue. The Probabilistic scaling paradigm. \emph{Vietnam J. Math.} (2024) \url{https://doi.org/10.1007/s10013-023-00672-w}.
\bibitem{DTV} Y. Deng, N. Tzvetkov, and N. Visciglia. Invariant Measures and Long Time Behavior for the Benjamin-Ono Equation III. \emph{Comm. Math. Phys.} {\bf 339} no. 3 (2015), 815--857.
\bibitem{DLM0}  B. Dodson, J. L\"uhrmann and D. Mendelson, Almost sure local well-posedness and scattering for the 4D cubic nonlinear Schr\"odinger equation, {\em Adv. Math.}  {\bf 347} (2019), 619--676. 
\bibitem{Fre85} L. Friedlander,  An invariant measure for the equation $u_{tt}-u_{xx}+u^3=0$. \emph{Comm. Math. Phys.} {\bf 98} (1985), 1--16.
\bibitem{FrH}  P. K. Friz and M. Hairer. {\em A course on rough paths, with an introduction to regularity structures}, Universitext. Springer, Cham, 2014. xiv+251 pp.
\bibitem{Fro} J. Fr\"ohlich.  On the triviality of $\lambda \phi^4_d$ theories and the approach to the critical point in $d_{(-)}> 4$ dimensions, \emph{Nuclear Physics B} {\bf 200}, issue 2 (1982), 281--296.
\bibitem{GJ1} J. Glimm and A. Jaffe. {\em Quantum physics,  A functional integral point of view}, Second edition, Springer-Verlag, New York, 1987. xxii+535 pp.
\bibitem{GIP} M. Gubinelli, P. Imkeller and N. Perkowski. Paracontrolled distributions and singular PDEs, \emph{Forum Math. Pi} 3, e6, {\bf 75} pp, 2015.
\bibitem{GIP2} M. Gubinelli, P. Imkeller and N. Perkowski.  A Fourier analytic approach to pathwise stochastic integration, \emph{Electron. J. Probab.} {\bf 21} no. 2, (2016) 37 pp.  
\bibitem{GKO} M. Gubinelli, H. Koch and T. Oh. Renormalization of the two-dimensional stochastic nonlinear wave equations, \emph{Trans. Amer. Math. Soc.} {\bf 370} no. 10 (2018), 7335--7359.
\bibitem{GKO2} M. Gubinelli, H. Koch and T. Oh. Paracontrolled approach to the three-dimensional stochastic nonlinear wave equation with quadratic nonlinearity. \emph{J. Eur. Math. Soc. (JEMS)} {\bf 26}  no. 3 (2024), 817--874
\bibitem{GP}  M. Gubinelli and N. Perkowski. Lectures on singular stochastic PDEs, \emph{Ensaios Matem\'aticos, Mathematical Surveys}, 29. Sociedade Brasileira de Matem\'atica, Rio de Janeiro. (2015), 89 pp.
\bibitem{GP2} M. Gubinelli and N. Perkowski. KPZ reloaded, \emph{Comm. Math. Phys.} {\bf 349} no. 1 (2017), 165--269.
\bibitem{GP3} M. Gubinelli and N. Perkowski.  Energy solutions of KPZ are unique, \emph{J. Amer. Math. Soc.} {\bf 31} (2018), no. 2, 427--471.
\bibitem{GP4} M. Gubinelli and N. Perkowski. An introduction to singular SPDEs, in \emph{Stochastic partial differential equations and related fields}, 69--99, \emph{Springer Proc. Math. Stat.},  229, Springer, Cham, 2018. 
\bibitem{GO15} Z. Guo and T. Oh. Non-existence of solutions for the periodic cubic NLS below $L^2$. \emph{Int. Math. Res. Not. (IMRN)} {no. 6} (2018),  1656--1729.
\bibitem{Hairer0} M. Hairer. Solving the KPZ equation, \emph{Ann. of Math. (2)} {\bf 178}  (2013), 559--664.
\bibitem{Hairer} M. Hairer. A theory of regularity structures, \emph{Invent. Math.} {\bf 198} no. 2 (2014), 269--504.
\bibitem{Hairer2}  M. Hairer. Singular Stochastic PDE, \emph {Proceedings of the ICM-Seoul}, Vol. I, (2014), 685-709.
\bibitem{Hairer3} M. Hairer. Introduction to regularity structures, \emph{ Braz. J. Probab. Stat.} {\bf 29} no. 2 (2015), 175--210. 
\bibitem{Hairer4} M. Hairer.  Regularity structures and the dynamical $\Phi^4_3$ model, \emph{Current Developments in Mathematics 2014}, Int. Press, Somerville, MA,  (2016), 1--49.
\bibitem{Hairer5} M. Hairer.  Renormalisation of parabolic stochastic PDEs,  \emph{Jpn. J. Math.} {\bf 13} no. 2 (2018), 187--233.     
\bibitem{HLab} M. Hairer and C. Labb\'e. The reconstruction theorem in Besov spaces, \emph{J. Funct. Anal.} {\bf 273} (2017), 2578--2618. 
\bibitem{IW} K. Iwata.   An infinite-dimensional stochastic differential equation with state space $C(\mathbf{R})$. \emph{Probab. Theory Related Fields} {\bf 74} no. 1 (1987), 141--159.
\bibitem{KM} C. Kenig and D. Mendelson, The focusing energy-critical nonlinear wave equation with random initial data. {\em Int. Math. Res. Not.} {\bf 19} (2021), 14508--14615.
\bibitem{KMV} R. Killip, J. Murphy and M. Visan, Almost sure scattering for the energy-critical NLS with radial data below $H^1(\mathbb{R}^4)$, {\em Comm. Partial Differential Equations} {\bf 44} (2019), no. 1, 51--71.
\bibitem{KRS} S. Klainerman, I. Rodnianski and J. Szeftel. The bounded $L^2$ curvature conjecture, \emph{Invent. Math.} {\bf 202} no. 1 (2015), 91--216.
\bibitem{Kupia}  A. Kupiainen. Renormalization group and stochastic PDEs, \emph{Annales Henri Poincar\'e} {\bf 17} no. 3 (2016), 497--535.
\bibitem{LRS} J. Lebowitz, R. Rose and E. Speer. Statistical mechanics of the nonlinear Schr\"{o}dinger equation, \emph{J. Statist. Phys.} {\bf 50} (1988), 657--687.
\bibitem{LM} J. L\"uhrmann and D. Mendelson.  Random data Cauchy theory for nonlinear wave equations of power type on $\mathbb{R}^3$. \emph{Comm. Partial Differential Equations} {\bf 39} no.12 (2014), 2262--2283.
\bibitem{MouWe3}  J.C. Mourrat and H. Weber. The dynamic  $\Phi^4_3$ model comes down from infinity, \emph{Comm. Math. Phys.} {\bf 356} no. 3 (2017), 673--753.
\bibitem{MouWeXu} J.C. Mourrat, H. Weber and W. Xu.  Construction of $\Phi^4_3$ diagrams for pedestrians, in \emph{From particle systems to partial differential equations}, 1--46, \emph{Springer Proc. Math. Stat.}, {\bf 209}, Springer, Cham, 2017.
\bibitem{NORS} A. Nahmod, T. Oh, L. Rey-Bellet and G. Staffilani. Invariant weighted Wiener measures and almost sure global well-posedness for the periodic derivative NLS, \emph{J. Eur. Math. Soc. (JEMS)} {\bf 14} no. 4 (2012), 1275--1330.
\bibitem{NPST}  A. Nahmod, N.  Pavlovic, G. Staffilani and N. Totz.  Global Flows with Invariant Measures for the Inviscid Modified SQG Equations,  \emph{Stoch. Partial Differ. Equ. Anal. Comput.} {\bf 6} no. 2 (2018), 184--210.
\bibitem{NRSS} A. Nahmod, L. Rey-Bellet, S. Sheffield and G. Staffilani. 
Absolute continuity of Brownian bridges under certain gauge transformations,  \emph{Math. Res. Lett.} {\bf 18} no. 5 (2011), 875--887.
\bibitem{NS} N. Nahmod and G. Staffilani. Almost sure well-posedness for the periodic 3D quintic nonlinear Schr\"odinger equation below the energy space, \emph{ J. Eur. Math. Soc. (JEMS)} {\bf 17} no. 7 (2015), 1687--1759.
\bibitem{Nel1} E. Nelson. A quartic interaction in two dimensions, 1966 Mathematical Theory of Elementary Particles,
\emph{Proc. Conf., Dedham, Mass. 1965}  69--73,  MIT Press, Cambridge, Mass.
\bibitem{Nel2} E. Nelson. Construction of quantum fields from Markoff fields, \emph{J. Functional Analysis} {\bf 12} (1973), 97--112.
\bibitem{Oh} T. Oh. Invariant Gibbs measures and a.s. global well-posedness for coupled KdV systems, \emph{Diff. Int. Eq.} {\bf 22} no. 7--8 (2009), 637--668.
\bibitem{OT} T. Oh and L. Thomann. A pedestrian approach to the invariant Gibbs measures for the 2-d defocusing nonlinear Schr\"{o}dinger equations, \emph{Stoch. Partial Differ. Equ. Anal. Comput.} {\bf 6} no. 3 (2018), 397--445.
\bibitem{OT2} T. Oh and L. Thomann. Invariant Gibbs measure for the 2-d defocusing nonlinear wave equations. \emph{Ann. Fac. Sci. Toulouse Math.} {\bf 29} no. 1 (2020),  1--26.
\bibitem{Perk} N. Perkowski. Para-controlled distributions and singular SPDEs, \url{https://personal-homepages.mis.mpg.de/perkow/paracontrolled-leipzig.pdf}
\bibitem{Giordi} G. Richards. Invariance of the Gibbs measure for the periodic quartic gKdV, \emph{Ann. Inst. H. Poincar\'e Anal. Non Lin\'eaire}, {\bf 33} no. 3 (2016), 699--766.
\bibitem{Simon} B. Simon.  \emph{The $P(\varphi)_2$ Euclidean (quantum) field theory}, Princeton Series in Physics. Princeton University Press, Princeton, N.J., 1974. xx+392 pp.
\bibitem{ST} H. Smith and D. Tataru. Sharp local well-posedness results for the nonlinear wave equation, \emph{Ann. of Math.} (2) {\bf 162} no. 1 (2005), 291--366.
\bibitem{dS} A-S. de Suzzoni. Invariant measures for the cubic eave equation on the unit ball in $\mathbb{R}^3$, \emph{Dyn. Partial Differ. Equ.} {\bf 8} no. 2 (2011),  127--148.
\bibitem{Sy} M. Sy. Invariant measure and long time behavior of regular solutions of the Benjamin-Ono equation, \emph{Anal. PDE} {\bf 11} no. 8 (2018), 1841--1879.
\bibitem{Thomann} L. Thomann. Invariant Gibbs measures for dispersive PDEs, Lecture Notes from \emph{Hamiltonian dynamics, PDEs and waves on the Amalfi coast}, (2016)  \url{http://www.iecl.univ-lorraine.fr/~Laurent.Thomann/Gibbs_Thomann_Maiori.pdf}.
\bibitem{TTz} L. Thomann and N. Tzvetkov.  Gibbs measure for the periodic derivative nonlinear Schr\"odinger equation, \emph{Nonlinearity}, {\bf 23} (2010), 2771--2791.
 \bibitem{Tz0} N. Tzvetkov. Invariant measures for the Nonlinear Schr\"odinger equation on the disc. \emph{Dyn. Partial Differ. Equ.} {\bf 3} (2006), 111--160.
\bibitem{Tz} N. Tzvetkov.  Construction of a Gibbs measure associated to the periodic Benjamin-Ono equation,
\emph{Probab. Theory Related Fields} {\bf 146} (2010), 481--514.
\bibitem{WangYue} W. Wang and H. Yue. Almost sure existence of global weak solutions to the Boussinesq equations. \emph{Dynamics of Partial Differential Equations} {\bf 17} no. 2 (2020), 165--183.
\bibitem{HYue} H. Yue. Almost sure well-posedness for the cubic nonlinear Schr\"odinger equation in the super-critical regime on $\mathbb{T}^d$. \emph{Stochastics and Partial Differential Equations: Analysis and Computations} {\bf 9} (2021), 243--294.
\bibitem{ZhF} T. Zhang and D. Fang. Random dada Cauchy theory for the generalized incompressible Navier-Stokes equations. \emph{J. Math. Fluid. Mech.} {\bf 14} no. 2 (2012), 311--324.
\bibitem{Zhi94} P. E. Zhidkov. An invariant measure for a nonlinear wave equation. \emph{Nonlinear Anal.} {\bf 22} (1994), 319--325.

\end{thebibliography}
\end{document}